\newtheorem{proposition}{Proposition}[section]
\renewcommand{\ldots}{\dotsc}
\def\ds{\displaystyle}
\newcommand{\R}{\mathbb{R}}
\newcommand{\C}{\mathbb{C}}
\newcommand{\dd}[2]{{\displaystyle \frac{d #1}{d #2}}}
\newcommand{\circledcoloreditem}[1]{\tikz[baseline=(current bounding box.south east)] \draw node[draw,fill=white,text=black,circle,inner sep=0pt] {\scriptsize \makebox[3mm]{#1}};} 
\newcommand{\rectitem}[1]{\tikz[baseline=(current bounding box.south east)] \draw node[fill=white,text=black,rectangle,inner sep=0pt] {\scriptsize{#1}};}
\title{Numerical Periodic Normalization for Codim 2 Bifurcations of
Limit Cycles}
\author{F. Della Rossa$^\ddag$\thanks{DEI, Politecnico di Milano, V. Ponzio 35/5, 20133 Milano,
Italy (dellarossa@elet.polimi.it).}\and V. De Witte
\thanks{Department of Applied Mathematics and Computer Science,
Ghent University, Krijgslaan 281-S9, B-9000, Gent, Belgium
(Virginie.DeWitte@UGent.be, Willy.Govaerts@UGent.be).} \and W.
Govaerts $^\dag$ \and Yu.~A. Kuznetsov\thanks{Department of
Mathematics, Utrecht University, Budapestlaan 6, P.O. Box 80010,
3508 TA Utrecht, The Netherlands (I.A.Kouznetsov@uu.nl).} }
\begin{document}

\maketitle

\begin{abstract}
Periodic normal forms for the codim 2 bifurcations of limit cycles up to a 3-dimensional center manifold in generic autonomous ODEs and computational formulas for their
coefficients are derived. The formulas are independent of the
dimension of the phase space and involve solutions of certain
boundary-value problems on the interval $[0,T]$, where $T$ is the
period of the critical cycle, as well as multilinear functions
from the Taylor expansion of the right-hand sides near the cycle.
The formulas allow us to distinguish between various bifurcation
scenarios near codim 2 bifurcations. Our formulation makes it
possible to use robust numerical boundary-value algorithms based
on orthogonal collocation, rather than shooting techniques, which
greatly expands its applicability. The actual implementation is
described in detail with numerical examples.
\end{abstract}

{\it Keywords: normal forms, limit cycles, bifurcations, codimension two}

\section{Introduction}
\label{sec1}
Isolated periodic orbits (limit cycles) of smooth differential
equations
\begin{equation} \label{ODE}
\dot{u}=f(u,p),\ \ u \in {\mathbb{R}}^n,\ p \in {\mathbb{R}}^m,
\end{equation}
play an important role in applications. In generic systems of the
form (\ref{ODE}) depending on one control parameter (i.e. with
$m=1$) a hyperbolic limit cycle exists for an open interval of
parameter values $p$. At a boundary of such an interval, the limit
cycle may become non-hyperbolic, so that either a cycle {\em limit
point} ({\em saddle-node}), or a {\em period-doubling} ({\em
flip}), or a {\em torus} ({\em Neimark-Sacker}) bifurcation occurs.
In two-parameter generic systems (\ref{ODE}) (i.e. with $m=2$)
these bifurcations happen at certain curves in the parameter
plane. These curves of codim 1 bifurcations can meet tangentially
or intersect transversally at some codim 2 points characterized by
a double degeneracy of the limit cycle, which play the role of
organizing centers for local dynamics, i.e. near the critical
cycle and for nearby parameter values. In some cases, such codim 2
bifurcations imply the appearance of ``chaotic motions".

The codim 2 bifurcations of limit cycles in generic systems
(\ref{ODE}) are well understood with the help of the corresponding
Poincar\'{e} maps and their normal forms (see for example
\cite{Io:79,GuHo:83,Ar:83,Ku:2004,GoGhKuMe:07}). However,
applications of these results to the analysis of concrete systems
(\ref{ODE}) are exceptional, since they require accurate
higher-order derivatives of the Poincar\'{e} map which are hardly
available numerically \cite{ToGeTe:98,ADOL-C,GuMe:2000,KuMe:2005}.

We note that there exists software, e.g. CAPD \cite{capd}, TIDES \cite{tides,barrio:06} that allows to compute up to any precision level the solution of an ODE using a Taylor series method in a variable stepsize - variable order formulation. It can also compute, up to any order, the partial derivatives of the solution with respect to the initial conditions. When applied to compute a periodic orbit by a shooting method, this will also provide the derivatives of the Poincar\'{e} map.

Though this is a valuable approach in the case of a single periodic orbit, it is neither practical nor efficient in a continuation context since the periodic orbits are not obtained as the solution to a set of equations. Also, the shooting method does not have the high order convergence properties of the method of approximation by piecewise polynomials with collocation in the Gauss points that is routinely used in standard software such as {\sc auto} \cite{AUTO97}, {\sc
content} \cite{CONTENT}, and {\sc matcont} \cite{sac2003,MATCONT}. Moreover, the number of derivatives of the Poincar\'{e} map to be computed is $O(n^k)$ if derivatives up to order $k$ are
needed (in several cases $k=5$). Even for moderate values of $n$ this involves a great deal of unnecessary work since in
our situation the normal form itself is known in advance and we need only compute its coefficients. We will show that this can be done without computing the derivatives of the Poincar\'{e} map.

Indeed, recently an alternative numerical method to analyse codim 1 limit
cycle bifurcations has been developed and implemented in
\cite{KuDoGoDh:05}. It is based on the periodic normalization
proposed in \cite{IoJo:80,Io:88,IoAd:92} and completely avoids the
numerical computation of Poincar\'{e} maps and their derivatives.
Instead, the computation of the normal form coeffcients is reduced
to solving certain linear boundary value problems (BVP), where
only the partial derivatives of the RHS of (\ref{ODE}) are used
\cite{Elf:87,ElIoTi:87,Ku:99}. In our implementation in MATCONT,
we discretize these BVPs by orthogonal collocation with
piecewise-polynomial functions.

In the present paper, we apply the approach developed in
\cite{KuDoGoDh:05} to codim 2 bifurcations of limit cycles. It
should be noted that already in \cite{ChWa:86} normal forms for
some codim 2 bifurcations of cycles in (\ref{ODE}) were derived,
while \cite{Io:88} contains the periodic normal forms for many
codim 2 bifurcations of cycles, as well as a general normalization
technique applicable at any codimension. However, in neither of
these publications explicit formulas for the normal form
coefficients were given. The derivation of such formulas is the
primary contribution of this paper.

The paper is organized as follows. In Section~\ref{Section:Normalforms}
we fix notation and list the periodic normal forms for codim 2
bifurcations of limit cycles. Then we derive explicit formulas to
compute the critical normal form coefficients for these
bifurcations, which we order by the dimension $n_c$ of the cycle
center manifold (i.e. the total number of {\em critical
multipliers} with $|\mu|=1$). We restrict to the cases $n_c=2$ and $3$. The
formulas are independent of the dimension of the phase space and
involve solutions of certain BVPs on the interval $[0,T]$, where
$T$ is the period of the critical cycle, as well as multilinear
functions from the Taylor expansion of the right-hand sides of
(\ref{ODE}) near the cycle.
A derivation of the critical periodic normal forms based on
\cite{Io:88} is given in Appendix \ref{Appendix:1}, while their
relationships with the normal forms of the Poincar\'{e} maps are
discussed in Appendix \ref{Appendix:2}.

\section{Periodic normal forms on the center manifold}
\label{Section:Normalforms}
Write (\ref{ODE}) at the critical parameter values as
\begin{equation}\label{eq:P.1}
\dot{u}=F(u),
\end{equation}
and suppose that there is a limit cycle $\Gamma$ corresponding to a
periodic solution $u_0(t)=u_0(t+T),$ where $T>0$ is its (minimal)
period. Develop $F(u_0(t)+v)$ into the Taylor series
\begin{equation} \label{eq:MULT}
\begin{array}{rcl}
F(u_0(t)+v) &=& F(u_0(t)) + \\
&&A(t)v + {\displaystyle \frac{1}{2}B(t;v,v) + \frac{1}{3!}C(t;v,v,v)} + \\
&&{\displaystyle\frac{1}{4!}D(t;v,v,v,v) +
\frac{1}{5!}E(t;v,v,v,v,v) + O(\|v\|^6)},
\end{array}
\end{equation}
where
$$
A(t)v=F_u(u_0(t))v,~~B(t;v_1,v_2)=F_{uu}(u_0(t))[v_1,v_2],~~
C(t;v_1,v_2,v_3)=F_{uuu}(u_0(t))[v_1,v_2,v_3],
$$
etc. The multilinear forms $A,B,C,D,$ and $E$ are periodic in $t$
with period $T$ but this dependence will often not be indicated
explicitly.
\par
Consider the initial-value problem for the fundamental matrix
solution $Y(t)$, namely,
\begin{equation} \label{P.2}
\frac{dY}{dt}=A(t)Y,\ \ Y(0)=I_n,
\end{equation}
where $I_n$ is the $n\times n$ identity matrix. The eigenvalues of
the monodromy matrix $M=Y(T)$ are called ({\em Floquet}) {\em
multipliers} of the limit cycle. The multipliers with $|\mu|=1$ are
called {\em critical}. There is always a ``trivial" critical
multiplier $\mu_n=1$. We denote the total number of critical
multipliers by $n_c$ and assume that the limit cycle is
non-hyperbolic, i.e. $n_c > 1$. In this case, there exists an
invariant $n_c$-dimensional {\em critical center manifold}
$W^c(\Gamma) \subset {\mathbb R}^n$ near $\Gamma$\footnote{This
manifold should not be confused with the $(n_c-1)$-dimensional center
manifold of the corresponding Poincar\'{e} map.}.

It is well known \cite{Ar:83,Ku:2004}, that in generic
two-parameter systems (\ref{ODE}) only eleven codim 2 bifurcations
occur. To describe the normal forms of (\ref{eq:P.1}) on the
critical center manifold $W^c(\Gamma)$ for these codim 2 cases, we
parameterize  $W^c(\Gamma)$ near $\Gamma$ by $(\tau,\xi)$, where
$\tau \in [0,kT]$ for $k \in \{1,2,3,4\}$ and $\xi$ is a real or
complex vector, depending on the bifurcation. It follows from
\cite{Io:88} that it is possible to select the $\tau$- and
$\xi$-coordinates so that the restriction of (\ref{eq:P.1}) to the
corresponding critical center manifold $W^c(\Gamma)$ with $n_c=2$ or $n_c=3$ will take one
of the following {\it periodic normal forms} (for derivation see Appendix
\ref{Appendix:1}). In Section \ref{Section:Theory} we will see that in two cases, namely the cusp of cycles bifurcation and the fold-flip bifurcation, a further simplification is possible in the normal form, and more specifically in the transformation of time.

\subsection{Bifurcations with a 2D center manifold}
We list here the critical periodic normal forms with $n_c=2$ and
briefly describe bifurcations in their generic unfoldings (see
\cite{Ar:83,Ku:2004} for more details).

\subsubsection{Cusp of cycles bifurcation}
The cycle has a {\em cusp of cycles} bifurcation ({\tt CPC}) if the
eigenvalue $\mu_1=\mu_n=1$ of $Y(T)$ corresponds to a
two-dimensional Jordan block and the monodromy matrix has no other critical multipliers. The two-dimensional periodic
normal form at the {\tt CPC} bifurcation is
\begin{equation} \label{eq:NF-CPC}
\begin{cases}
 \dd{\tau}{t}=1-\xi+\alpha_1 \xi^2 + \alpha_2 \xi^3 + \ldots, \smallskip \\
 \dd{\xi}{t}=c \xi^3 + \ldots,
\end{cases}
\end{equation}
where $\tau \in [0,T]$, $\xi$ is a real coordinate on
$W^c(\Gamma)$ that is transverse to $\Gamma$, $\alpha_1,\alpha_2,c
\in \R$ and the dots denote the $O(\xi^4)$-terms, which are
$T$-periodic in $\tau$. If $c \neq 0$, the limit cycle $\Gamma$ is
triple. In generic two-parameter systems (\ref{ODE}), three
hyperbolic limit cycles exist in a cuspidal wedge approaching the
codim 2 point and delimited by two bifurcations curves, where two
cycles collide and disappear via the saddle-node bifurcation.

\subsubsection{Generalized period-doubling bifurcation}
The cycle has a {\em generalized period-doub\-ling bifurcation}
({\tt GPD}) if the trivial eigenvalue $\mu_n=1$ of the monodromy
matrix $Y(T)$ is simple and there is only another critical simple
eigenvalue $\mu_1=-1$. The two-dimensional periodic normal form at
the {\tt GPD} bifurcation is
\begin{equation} \label{eq:NF-GPD}
\begin{cases}
\dd{\tau}{t}=\ds1+\alpha_1 \xi^2+\alpha_2 \xi^4 + \ldots,\smallskip \\
\dd{\xi}{t}=\ds e \xi^5 + \ldots,
\end{cases}
\end{equation}
where $\tau \in [0,2 T]$, $\xi$ is a real coordinate on
$W^c(\Gamma)$ that is transverse to $\Gamma$, $\alpha_1,\alpha_2,e
\in \R$ and the dots denote the $O(\xi^6)$-terms, which are
$2T$-periodic in $\tau$. If $e \neq 0$, at most  two period doubled
limit cycles can bifurcate from the critical limit cycle $\Gamma$.
In generic two-parameter systems (\ref{ODE}), the {\tt GPD}-point
in the period-doubling bifurcation curve separates its sub- and
super-critical branches and is the origin of a unique saddle-node
bifurcation curve, where two period doubled cycles collide and disappear.

\subsection{Bifurcations with a 3D center manifold}
We now list the critical periodic normal forms with $n_c=3$ and
briefly describe bifurcations in their generic unfoldings (see
\cite{Ar:83,Ku:2004}). In all cases, ``chaotic motions" are
possible.

\subsubsection{Chenciner bifurcation}
The cycle has a {\em Chenciner} bifurcation ({\tt CH}) if the
trivial critical eigenvalue $\mu_n=1$ is simple and there are only
two more critical simple multipliers $\mu_{1,2}=e^{\pm i\theta}$
with $\theta\neq \frac{2 \pi}{j}$, for $j=1,2,3,4,5,6$. The three-dimensional periodic normal form at the {\tt CH} bifurcation can
be written as
\begin{equation} \label{eq:NF-GNS}
\begin{cases}
 \dd{\tau}{t}=1+\alpha_1 |\xi|^2+\alpha_2 |\xi|^4 + \ldots, \smallskip \\
 \dd{\xi}{t}= i \omega \xi + i c \xi |\xi|^2+ e \xi |\xi|^4 + \ldots, \smallskip \\
\end{cases}
\end{equation}
where $\tau \in [0,T]$, $\omega = \theta/T$, $\xi$ is a complex
coordinate on $W^c(\Gamma)$ transverse to $\Gamma$,
$\alpha_1,\alpha_2, c \in \R,\, e \in \C$ and the dots denote the
$O(\|\xi^6\|)$-terms, which are $T$-periodic in $\tau$. In generic
two-parameter systems (\ref{ODE}), at the {\tt CH}-point the
Neimark-Sacker bifurcation changes its criticality (i.e. the
bifurcating invariant torus changes its stability). A complicated
bifurcation set responsible for ``collision" and destruction of
two tori of opposite stability is rooted at this codim 2 point.

\subsubsection{Strong resonance 1:1 bifurcation}
The cycle has a {\em strong resonance $1:1$} bifurcation ({\tt R1}) if the
trivial critical eigenvalue $\mu_n=1$ corresponds to a
three-dimensional Jordan block. The three-dimensional periodic
normal form at the {\tt R1} bifurcation is
\begin{equation} \label{eq:NF-11C}
\begin{cases}
 \ds\dd{\tau}{t}=1-\xi_1+\alpha \xi_1^2+\ldots, \smallskip \\
 \ds\dd{\xi_1}{t}= \xi_2+\xi_1\xi_2+\ldots, \smallskip \\
 \ds\dd{\xi_2}{t}= a \xi_1^2+ b \xi_1 \xi_2 +\ldots,
\end{cases}
\end{equation}
where $\tau \in [0,T]$, $(\xi_1,\xi_2)$ are real coordinates on
$W^c(\Gamma)$ transverse to $\Gamma$, $\alpha, a, b \in \R$ and
the dots denote the $O(\|\xi^3\|)$-terms, which are $T$-periodic
in $\tau$. In generic two-parameter systems (\ref{ODE}), in the
{\tt R1}-point is located on the saddle-node of cycles curve. At
this point, a torus bifurcation curve is rooted together with
global homoclinic bifurcation curves, along which the stable and
the unstable invariant manifolds of a saddle cycle are tangent.
The intersection of the invariant manifolds generates a
Poincar\'{e} homoclinic structure with the associated periodic and
``chaotic motions".

\subsubsection{Strong resonance 1:2 bifurcation}
The cycle has a {\em strong resonance $1:2$} bifurcation ({\tt R2}) if the
trivial critical eigenvalue $\mu_n=1$ is simple and there are only
two more critical multipliers $\mu_1=\mu_2=-1$ corresponding to a
two-dimensional Jordan block. The three-dimensional periodic
normal form at the {\tt R2} bifurcation is
\begin{equation} \label{eq:NF-12C}
\begin{cases}
 \dd{\tau}{t}=1+\alpha \xi_1^2+ \ldots , \smallskip \\
 \dd{\xi_1}{t}=\xi_2+\alpha\xi_1^2\xi_2 + \ldots , \smallskip \\
 \dd{\xi_2}{t}=a \xi_1^3+b\xi_1^2\xi_2 + \ldots,
\end{cases}
\end{equation}
where $\tau \in [0,2T]$, $(\xi_1,\xi_2)$ are real coordinates on
$W^c(\Gamma)$ transverse to $\Gamma$, $\alpha, a, b \in \R$ and
the dots denote the $O(\|\xi^4\|)$-terms, which are $2T$-periodic
in $\tau$. In generic two-parameter systems (\ref{ODE}), the {\tt
R2}-point is the end-point of a torus bifurcation curve. The
period-doubling bifurcation curve passes through this point, and
(depending on the normal form coeffcients) a torus bifurcation
curve of the period doubled limit cycle can originate there. As in
the {\tt R1}-case, global bifurcation curves related to
homoclinic tangencies can be present.

\subsubsection{Strong resonance 1:3 bifurcation}
The cycle has a {\em strong resonance $1:3$} bifurcation ({\tt R3}) if the
trivial critical eigenvalue $\mu_n=1$ is simple and there are only
two more critical simple multipliers $\mu_{1,2}=e^{\pm i\frac{2
\pi}{3}}$. The three-dimensional periodic normal form at the {\tt
R3} bifurcation can be written as
\begin{equation} \label{eq:NF-13C}
\begin{cases}
 \dd{\tau}{t}=1+\alpha_1 |\xi|^2+\alpha_2 \xi^3+\alpha_3 \bar \xi^3+ \ldots, \smallskip \\
 \dd{\xi}{t}=b \bar \xi^2+  c \xi |\xi|^2 + \ldots, \smallskip \\
\end{cases}
\end{equation}
where $\tau \in [0,3T]$, $\xi$ is a complex coordinate on
$W^c(\Gamma)$ transverse to $\Gamma$, $\alpha_1 \in \R$, $\alpha_2, \alpha_3, b, c \in
\C$ with $\alpha_3 = \bar \alpha_2$ and the dots denote the $O(\|\xi^4\|)$-terms, which are
$3T$-periodic in $\tau$. In generic two-parameter systems
(\ref{ODE}), near the {\tt R3}-point a homoclinic Poincar\'{e}
structure of the $3T$-periodic limit cycle destroys the torus that
is born at the Neimark-Sacker bifurcation curve passing through
this point. Homoclinic tangencies are rooted there.

\subsubsection{Strong resonance 1:4 bifurcation}
The cycle has a {\em strong resonance $1:4$} bifurcation ({\tt R4}) if the
trivial critical eigenvalue $\mu_n=1$ is simple and there are only
two more critical simple multipliers $\mu_{1,2}=e^{\pm
i\frac{\pi}{2}}$. The three-dimensional periodic normal form at
the {\tt R4} bifurcation can be written as
\begin{equation} \label{eq:NF-14C}
\begin{cases}
 \dd{\tau}{t}=1+\alpha_1 |\xi|^2+\alpha_2 \xi^4+\alpha_3 \bar \xi^4+ \ldots, \smallskip \\
 \dd{\xi}{t}= c \xi |\xi|^2 + d \bar \xi^3 + \ldots, \smallskip \\
\end{cases}
\end{equation}
where $\tau \in [0,4T]$, $\xi$ is a complex coordinate on
$W^c(\Gamma)$ transverse to $\Gamma$, $\alpha_1 \in \R$, $\alpha_2, \alpha_3, c, d \in
\C$ with $\alpha_3 = \bar \alpha_2$ and the dots denote the $O(\|\xi^5\|)$-terms, which are
$4T$-periodic in $\tau$. In generic two-parameter systems
(\ref{ODE}), at the {\tt R4}-point there can be eight different
situations, depending upon the values taken by the parameter $c$
and $d$. In the simplest case a homoclinic structure associated to
a $4T$-periodic cycle destroys an invariant torus that is born at
the Neimark-Sacker bifurcation curve that passes trough this
point.

\subsubsection{Fold-flip bifurcation}
The cycle has a {\em fold flip} bifurcation ({\tt LPPD}) if the
trivial critical eigenvalue $\mu_1=\mu_n=1$ is double non
semi-simple and there is only one more critical multiplier
$\mu_2=-1$. The three-dimensional periodic normal form at the {\tt
LPPD} bifurcation is
\begin{equation} \label{eq:NF-FF}
\begin{cases}
 \ds\dd{\tau}{t}=1-\xi_1 +\alpha_{20} \xi_1^2 +\alpha_{02} \xi_2^2 + \alpha_{30}\xi_1^3+\alpha_{12}\xi_1\xi_2^2+\ldots, \smallskip \\
 \ds\dd{\xi_1}{t}= a_{20} \xi_1^2 +a_{02} \xi_2^2 + a_{30}\xi_1^3+a_{12}\xi_1\xi_2^2+\ldots, \smallskip \\
 \ds\dd{\xi_2}{t}= b_{11}\xi_1\xi_2+b_{21}\xi_1^2\xi_2+b_{03} \xi_2^3+\ldots,
\end{cases}
\end{equation}
where $\tau \in [0,2T]$, $(\xi_1,\xi_2)$ are real coordinates on
$W^c(\Gamma)$ transverse to $\Gamma$, all the coefficients are
real and the dots denote the $O(\|\xi^4\|)$-terms, which are
$2T$-periodic in $\tau$. In generic two-parameter systems
(\ref{ODE}), the period-doubling and saddle-node of cycles
bifurcation curves are tangent at the {\tt LPPD}-point, where
(depending on the normal form coefficients) a Neimark-Sacker
bifurcation curve of the $2T$-periodic cycle can be rooted. Global
bifurcations of heteroclinic structures and invariant tori are
also possible.

%

\section{Computation of critical coefficients}
\label{Section:Theory}
In view of the above, we can assume that a parametrization of the
center manifold $W^c(\Gamma)$ has been selected so that the
restriction of (\ref{eq:P.1}) to this manifold has one of the
normal forms given in Section \ref{Section:Normalforms}. We then apply the so-called {\it homological equation
approach} \cite{HB:2002}: the Taylor expansions of $T$-, $2T$-, $3T$- or
$4T$-periodic unknown functions involved in these parametrizations
can be found by solving appropriate BVPs on $[0,T]$ so that
(\ref{eq:P.1}) restricted to $W^c(\Gamma)$ has the
corresponding periodic normal form. The coefficients of the normal
forms arise from the solvability conditions for the BVPs as
integrals of scalar products over $[0,T]$, involving nonlinear
terms of (\ref{eq:P.1}) near the periodic solution $u_0$, as well
as the critical eigenfunctions and higher order expansion terms of the center manifold. The Taylor expansion coefficient functions are usually unique up to the addition of a multiple of a known eigenfunction. This has to be fixed by adding an integral condition. Among other things this leads to the fact that normal form coefficients are not unique but implications for the underlying dynamical systems are independent of this. We also remark that the solvability of all the equations up to the maximal order of the normal form has to be checked. Finally, we note that the coefficients related to the transformation of time will only be computed when needed in the computation of the critical coefficients in the normal form for the state variables.

\subsection{Cusp of cycles bifurcation}\label{Section:Cusp}
The two-dimensional critical center manifold $W^c(\Gamma)$ at the
{\tt CPC} bifurcation can be parametrized locally by $(\tau,\xi)$
as
\begin{equation} \label{eq:CM_CPC}
u=u_0 + \xi v + H(\tau,\xi),\ \ \tau \in [0,T],\ \xi \in {\mathbb
R},
\end{equation}
where $H$ satisfies $H(T,\xi)=H(0,\xi)$ and has the Taylor
expansion
\begin{equation}\label{eq:H_CPC}
 H(\tau,\xi)=\frac{1}{2}h_2 \xi^2 + \frac{1}{6}h_3 \xi^3 + O(\xi^4) ,
\end{equation}
where $u_0$ and all $h_j$ are functions of $\tau$, with $h_j(T)=h_j(0)$, for $j = 2, 3$, while the generalized eigenfunction $v$ is given by
\begin{equation} \label{eq:EigenFunc_CPC}
\left\{\begin{array}{rcl}
 \dot{v}-A(\tau) v - F(u_0) & = & 0,\ \tau \in [0,T], \\
 v(T)-v(0) & = & 0,\\
 \int_{0}^{T} {\langle v, F(u_0)\rangle d\tau} & = & 0.\\
\end{array}
\right.
\end{equation}
The function $v$ exists due to Lemma~2 of \cite{Io:88}. 
Let $\varphi^*$ be a nontrivial solution of the adjoint
eigenvalue problem
\begin{equation}\label{eq:AdjEigenFunc}
\left\{\begin{array}{rcl}
 \dot{\varphi}^*+A^{\rm T}(\tau)\varphi^* & = & 0,\ \tau \in [0,T], \\
 \varphi^*(T)-\varphi^*(0) & = & 0,
\end{array}
\right.
\end{equation}
and the generalized adjoint eigenfunction $v^*$ a solution of 
\begin{equation}\label{eq:AdjGenEigenFunc}
\left\{\begin{array}{rcl}
\dot{v}^*+A^{\rm T}(\tau) v^*+\varphi^* & = & 0,\ \tau \in [0,T], \\
v^*(T)-v^*(0) & = & 0,
\end{array}
\right.
\end{equation}
which is now defined up to the addition of a multiple of $\varphi^*$.
Note that the first equation of (\ref{eq:EigenFunc_CPC}) implies
\begin{equation} \label{eq:Ortho}
 \int_0^T \langle \varphi^*, F(u_0) \rangle \; d\tau = 0
\end{equation}
for $\varphi^*$ satisfying \eqref{eq:AdjEigenFunc}. Indeed, 
\[
 \int_0^{T} \langle \varphi^*,F(u_0) \rangle \; d\tau = \int_0^{T} \langle \varphi^*,\left(\frac{d}{d\tau}-A(\tau)\right) v \rangle \; d\tau =
  - \int_0^{T} \langle \left(\frac{d}{d\tau}+A^T(\tau)\right)\varphi^*, v \rangle \; d\tau =0
\]
due to (\ref{eq:AdjEigenFunc}). 

Moreover, due to spectral
assumptions at the CPC-point, we can also assume
\begin{equation} \label{eq:Normo}
\int_{0}^{T} {\langle \varphi^*,v \rangle d\tau} = 1.
\end{equation}
Notice that this assumption gives us another
normalization for free, since because of \eqref{eq:EigenFunc_CPC} and
\eqref{eq:AdjGenEigenFunc} we have
\begin{multline}\label{eq:NormoGen}
 \int_{0}^{T} {\langle v^*,F(u_0) \rangle d\tau} = \int_{0}^{T} {\langle v^*,\left(\dd{}{\tau}-A(\tau)\right) v \rangle
 d\tau}\\
 = -\int_{0}^{T} {\langle \left(\dd{}{\tau}+A^T(\tau)\right)v^*,v \rangle d\tau} =\int_{0}^{T} {\langle \varphi^*,v \rangle d\tau} = 1,
\end{multline}
i.e. we have normalized the eigenfunction of the
adjoint problem with the generalized one of the original
problem and the generalized eigenfunction of the adjoint
problem with the eigenfunction of the original problem. So $\varphi^*$ is the unique solution of the BVP 
\begin{equation} \label{eq:AdjEigenFunc_CPC}
\left\{\begin{array}{rcl}
 \dot{\varphi}^*+A^{\rm T}(\tau)\varphi^* & = & 0,\ \tau \in [0,T], \\
 \varphi^*(T)-\varphi^*(0) & = & 0,\\
 \int_{0}^{T} {\langle \varphi^*,v \rangle d\tau} - 1 & = & 0.
\end{array}
\right.
\end{equation}
Now, we still need an integral condition for the adjoint generalized eigenfunction $v^*$. In all cases, for the computation of an adjoint generalized eigenfunction we will require the inproduct with an original eigenfunction to be zero. Here, the inproduct with $v$ is appropriate. Therefore we obtain 
\begin{equation} \label{eq:AdjGenEigenFunc_CPC}
\left\{\begin{array}{rcl}
 \dot{v}^*+A^{\rm T}(\tau)v^* + \varphi^* & = & 0,\ \tau \in [0,T], \\
 v^*(T)-v^*(0) & = & 0,\\
 \int_{0}^{T} {\langle v^*,v \rangle d\tau} & = & 0.
\end{array}
\right.
\end{equation}

Now, we substitute (\ref{eq:CM_CPC}) into (\ref{eq:P.1}), using
(\ref{eq:MULT}), (\ref{eq:NF-CPC}) and (\ref{eq:H_CPC}), as well
as
$$
 \frac{du}{dt}=\frac{\partial u}{\partial \xi}\frac{d\xi}{dt} + \frac{\partial u}{\partial \tau}\frac{d\tau}{dt}.
$$
This gives
\begin{multline*}
 \dot u_0 + \xi \left(\dot v-\dot u_0\right) + \xi^2 \left(\alpha_1\dot u_0-\dot v +\frac{1}{2}
 h_2\right) + \xi^3 \left(\alpha_2\dot u_0 + \alpha_1 \dot v - \frac{1}{2} \dot h_2 + \frac{1}{6} \dot h_3 + c
 v\right) + O(\xi^4) \\
 = F(u_0)+ \xi A(\tau) v + \frac{1}{2}\xi^2\left(A(\tau) h_2 + B(\tau;v,v)\right) + \frac{1}{6}\xi^3\left(A(\tau) h_3 + 3 B(\tau;h_2,v) + C(\tau;v,v,v)\right) + O(\xi^4),
\end{multline*}
where dots denote the derivatives with respect to $\tau$.

Collecting the $\xi^0$-terms we get the identity
$$
\dot{u}_0=F(u_0),
$$
since $u_0$ is the periodic solution of \eqref{eq:P.1}.

The $\xi^1$-terms provide another identity, namely
$$
\dot{v}-\dot u_0=A(\tau) v,
$$
due to \eqref{eq:EigenFunc_CPC}.

From collecting the $\xi^2$-terms we obtain an equation for $h_2$
\begin{equation} \label{eq:xi^2}
 \dot h_2 - A(\tau) h_2 = B(\tau;v,v) + 2 \dot v - 2 \alpha_1 \dot u_0.
\end{equation}
Now, we project the left-hand side of this equation on the adjoint null-eigenfunction and obtain
\[
 \int_0^{T} \langle \varphi^*,\left(\frac{d}{d\tau}-A(\tau)\right) h_2 \rangle \; d\tau =
  - \int_0^{T} \langle \left(\frac{d}{d\tau}+A^T(\tau)\right)\varphi^*, h_2 \rangle \; d\tau =0
\]
due to (\ref{eq:AdjEigenFunc}). We can use this result to impose the so-called {\em
Fredholm solvability condition}, i.e. also project the right-hand side of \eqref{eq:xi^2} on $\varphi^*$ which then has to be equal to $0$
\[
  \int_0^{T} \langle \varphi^*, B(\tau;v,v) + 2 \dot v - 2 \alpha_1 \dot u_0 \rangle \; d\tau =
  \int_0^{T} \langle \varphi^*, B(\tau;v,v) + 2 A(\tau) v \rangle \; d\tau=0.
\]
Notice that this condition is actually trivially satisfied, due to the fact
that we are at a cusp of cycles point, so that the second order normal form
coefficient
\[
 b=\frac{1}{2} \int_0^{T} \langle \varphi^*, B(\tau;v,v) + 2 A(\tau) v \rangle \; d\tau
\]
(see \cite{KuDoGoDh:05}) vanishes. Hence equation
\eqref{eq:xi^2} is solvable, independent of the value of $\alpha_1$. 
So, for any value of $\alpha_1$ we get an equation for $h_2$ to be solved in the
space of vector-functions on $[0,T]$ satisfying $h_2(T)=h_2(0)$.
Notice that if $h_2$ satisfies \eqref{eq:xi^2}, $h_2+\varepsilon
F(u_0)$ also satisfies \eqref{eq:xi^2}, due to the fact that
$F(u_0)$ is in the kernel of the operator
$\frac{d}{d\tau}-A(\tau)$ and to the linearity of this operator. Now, the orthogonality condition with $v^*$ determines the value of $\varepsilon$ such that we can define $h_2$ as the unique solution of 
\begin{equation}\label{eq:h2_CPC}
\left\{\begin{array}{rcl}
 \dot h_2 - A(\tau) h_2 - B(\tau;v,v) - 2 A v-2F(u_0) + 2 \alpha_1 F(u_0) &=& 0,\ \tau \in [0,T],\\
 h_2(T)-h_2(0)&=&0, \\
 \int_0^T \langle v^*, h_2 \rangle\; d\tau&=&0.
\end{array}\right.
\end{equation}

Collecting the $\xi^3$-terms we finally obtain an equation in
$h_3$ which allows us to determine the normal form coefficient $c$ of \eqref{eq:NF-CPC}
\begin{equation*} 
 \dot h_3 - A(\tau) h_3 = -6 \alpha_2 \dot u_0 - 6 \alpha_1 \dot v + 3 \dot h_2 - 6 c v + 3 B(\tau;h_2,v)+C(\tau;v,v,v).
\end{equation*}
As before, the null-eigenfunction of the adjoint operator
$-\frac{d}{d\tau} - A^{\rm T}(\tau)$ is $\varphi^*$. Thus, the Fredholm
solvability condition implies that
$$
 \int_0^{T} \langle \varphi^*, -6 \alpha_2 \dot u_0 - 6 \alpha_1 \dot v + 3 \dot h_2 - 6 c v + 3 B(\tau;h_2,v)+C(\tau;v,v,v)\rangle\; d\tau=0.
$$
Using \eqref{eq:EigenFunc_CPC}, \eqref{eq:Normo} and
\eqref{eq:Ortho}, we get the expression
\begin{equation} \label{eq:c-CPC}
 c=\frac{1}{6}\int_0^T \langle \varphi^*,- 6 \alpha_1 A(\tau) v + 3 A(\tau) h_2 + 3 B(\tau;v,v) + 6 A(\tau) v + 3 B(\tau;h_2,v) + C(\tau;v,v,v) \rangle\; d\tau,
\end{equation}
where $v$ and $ \varphi^*$ are defined by \eqref{eq:EigenFunc_CPC}
and \eqref{eq:AdjEigenFunc_CPC}, while $h_2$ satisfies \eqref{eq:h2_CPC}.

In what follows we will prove that the choice of $\alpha_1$ does not influence the value of the critical normal form coefficient $c$. Two solutions $h_2$ differ by $h_2^{(2)}-h_2^{(1)}=-2(\alpha_1^{(2)}-\alpha_1^{(1)})v$, from which
\begin{eqnarray*}
	c^{(2)}-c^{(1)} & = & \frac{1}{6} \int_0^T \langle \varphi^*,-6(\alpha_1^{(2)}-\alpha_1^{(1)})A(\tau)v-6A(\tau)(\alpha_1^{(2)}-\alpha_1^{(1)})v-6(\alpha_1^{(2)}-\alpha_1^{(1)}) B(\tau;v,v)\rangle\; d\tau\\
	& = & (\alpha_1^{(2)}-\alpha_1^{(1)}) \int_0^T \langle \varphi^*,-2A(\tau)v-B(\tau;v,v)\rangle\; d\tau\\
	& = & (\alpha_1^{(2)}-\alpha_1^{(1)}) \int_0^T \langle \varphi^*,-2A(\tau)v+2A(\tau)v\rangle\; d\tau\\
	& = & 0,
\end{eqnarray*}
since $b$ vanishes. So, for simplicity we take $\alpha_1=0$ which further simplifies normal form (\ref{eq:NF-CPC}).

Therefore, the critical coefficient $c$ in the periodic normal
form \eqref{eq:NF-CPC} has been computed. The bifurcation is
nondegenerate if $c\neq 0$. 

\subsection{Generalized period-doubling bifurcation} \label{Section:DFlip}
The two-dimensional critical center manifold $W^c(\Gamma)$ at the
{\tt GPD} bifurcation can be parametrized locally by $(\tau,\xi)$
as
\begin{equation} \label{eq:CM_GPD}
 u=u_0 +\xi v + H(\tau,\xi),\ \ \tau \in [0,2T],\ \xi \in {\mathbb R},
\end{equation}
where the function $H$ satisfies $H(2T,\xi)=H(0,\xi)$. It has the
Taylor expansion
\begin{equation} \label{eq:H_GPD}
 H(\tau,\xi)=\frac{1}{2}h_2 \xi^2 +\frac{1}{6} h_3 \xi^3 + \frac{1}{24} h_4 \xi^4+\frac{1}{120} h_5 \xi^5 + O(\xi^6),
\end{equation}
with $h_j(2T)=h_j(0)$, 
%
with
\begin{equation} \label{eq:EigenFunc_GPD}
\left\{\begin{array}{rcl}
\dot{v} - A(\tau)v  & = & 0,\ \tau \in [0,T], \\
v(T)+v(0) & = & 0,\\
\int_{0}^{T} {\langle v,v\rangle d\tau} - 1 & = & 0,
\end{array}
\right.
\end{equation}
and 
\begin{equation*} \label{eq:EigenFunc2T_GPD}
v(\tau +T) = -v(\tau) \mbox{ for } \tau \in [0,T].
\end{equation*}
The function $v$ exists due to Lemma~5 of \cite{Io:88}.

The functions $h_{i}$, $i=1\ldots5$, can be found by solving
appropriate BVPs, assuming that \eqref{eq:P.1} restricted to
$W^c(\Gamma)$ has the periodic normal form \eqref{eq:NF-GPD}. From (\ref{eq:CM_GPD}) and (\ref{eq:H_GPD}) it follows that $h_i(\tau+T) = h_i(\tau)$ for $i$ even and $h_i(\tau+T) = -h_i(\tau)$ for $i$ odd, for $\tau \in [0,T]$. Indeed, since we are at a generalized period-doubling point $u(\tau,\xi) = u(\tau+T,-\xi)$, so 
$$\sum_i \frac{1}{i!} h_i(\tau) \xi^i = \sum_i \frac{1}{i!} h_i(\tau+T) (-1)^i \xi^i,$$
and thus
$$h_i(\tau) = (-1)^ih_i(\tau+T),$$
from which the stated follows. This makes it possible to restrict our considerations to the interval $[0,T]$ instead of $[0,2T]$.

The coefficients $\alpha_1$, $\alpha_2$ and $e$ arise from the
solvability conditions for the BVPs as integrals of scalar
products over the interval $[0,T]$. Specifically, these scalar
products involve among other things the terms up to the fifth order of \eqref{ODE}
near the periodic solution $u_0$, the eigenfunction $v$, the adjoint eigenfunction $\varphi^*$ satisfying
\begin{equation} \label{eq:AdjEigenFunc2T-2_GPD}
\left\{\begin{array}{rcl}
 \dot{\varphi}^*+A^{\rm T}(\tau)\varphi^* & = & 0,\ \tau \in [0,T], \\
 \varphi^*(T)-\varphi^*(0) & = & 0,\\
 \int_0^T \langle \varphi^*,F(u_0)\rangle\; d\tau - 1 & = & 0,
\end{array}
\right.
\end{equation}
and a similar adjoint eigenfunction $v^*$ satisfying
\begin{equation}  \label{eq:AdjEigenFunc_GPD}
\left\{\begin{array}{rcl}
 \dot{v}^*+A^{\rm T}(\tau)v^* & = & 0,\ \tau \in [0,T], \\
 v^*(T)+v^*(0) & = & 0,\\
 \int_{0}^{T} {\langle v^*,v\rangle d\tau} - 1 & = & 0.
\end{array}
\right.
\end{equation}

To derive the normal form coefficient, we proceed as in
Section~\ref{Section:Cusp}, namely, we substitute
(\ref{eq:CM_GPD}) into (\ref{eq:P.1}) and use (\ref{eq:MULT}), as
well as \eqref{eq:NF-GPD} and (\ref{eq:H_GPD}).

Collecting the $\xi^0$-terms in the resulting equation gives us the
identity
$$
\dot{u}_0=F(u_0),
$$
since $u_0$ is the $T$-periodic solution of (\ref{eq:P.1}).

The $\xi^1$-terms provide another identity
$$
\dot{v}=A(\tau)v,
$$
due to (\ref{eq:EigenFunc_GPD}) and (\ref{eq:EigenFunc2T_GPD}).

By collecting the $\xi^2$-terms, we obtain the equation for $h_2$,
\begin{equation} \label{eq:Eq_h_2_GPD}
\dot{h}_2 - A(\tau) h_2 = B(\tau;v,v) - 2\alpha_1 \dot{u}_0 ,
\end{equation}
to be solved in the space of functions satisfying
$h_2(T)=h_2(0)$. In this space, the differential operator
$\frac{d}{d\tau}-A(\tau)$ is singular with null-function $\dot{u}_0$. Thus, the following Fredholm solvability condition is involved
$$
 \int_0^{T} \langle \varphi^*, B(\tau;v,v) - 2\alpha_1 \dot{u}_0 \rangle\; d\tau=0,
$$
which leads to the expression
\begin{equation} \label{eq:a_GPD}
 \alpha_1 = \frac{1}{2}\int_0^{T} \langle \varphi^*,B(\tau;v,v) \rangle\; d\tau,
\end{equation}
where $v$ and $\varphi^*$ are defined by (\ref{eq:EigenFunc_GPD})
and (\ref{eq:AdjEigenFunc2T-2_GPD}), respectively.

With $\alpha_1$ defined in this way, let $h_2$ be a solution of (\ref{eq:Eq_h_2_GPD}) in the space of functions satisfying $h_2(0)=h_2(T)$. Notice also here that if
$h_2$ is a solution of \eqref{eq:Eq_h_2_GPD}, then also
$h_2+\varepsilon_1 F(u_0)$ satisfies
\eqref{eq:Eq_h_2_GPD}, since $F(u_0)$ is in the kernel of
the operator $\frac{d}{d\tau}-A(\tau)$. In
order to obtain a unique solution (without projection on the null
eigenspace) we impose the following orthogonality condition which determines the value of $\varepsilon_1$
$$
\int_0^{T} \langle \varphi^*,h_2\rangle \; d\tau =0.
$$
Thus $h_2$ is the unique solution of the BVP
\begin{equation}  \label{eq:BVP_h_2_GPD}
\left\{\begin{array}{rcl}
\dot{h}_2-A(\tau) h_2 - B(\tau;v,v) + 2\alpha_1 F(u_0) & = & 0,\ \tau \in [0,T], \\
h_2(T)- h_2(0) & = & 0,\\
\int_0^{T} \langle \varphi^*,h_2\rangle \; d\tau & = & 0.
\end{array}
\right.
\end{equation}

Collecting the $\xi^3$-terms, we get the equation for $h_3$,
\begin{equation} \label{eq:Eq_h_3_GPD}
\dot{h}_3 - A(\tau) h_3 = C(\tau;v,v,v)+ 3B(\tau;v,h_2) -
6\alpha_1 \dot{v},
\end{equation}
to be solved in the space of functions satisfying
$h_3(T)=-h_3(0)$. In this space the differential
operator $\frac{d}{d\tau}-A(\tau)$ has a one-dimensional
nullspace, spanned by $v$, and \eqref{eq:Eq_h_3_GPD} is solvable only if the
RHS of this equation lies in the reachable space of that
operator. Using
\eqref{eq:EigenFunc_GPD}, we can
rewrite the right-hand side as
\[
C(\tau;v,v,v)+ 3B(\tau;v,h_2) - 6\alpha_1 A(\tau) v.
\]
But the Fredholm solvability condition
\begin{equation} \label{eq:GPD_c}
\int_0^{T} \langle v^*,C(\tau;v,v,v)+ 3B(\tau;v,h_2) - 6\alpha_1
A(\tau) v\rangle \; d\tau =0
\end{equation}
is trivially satisfied due
to the fact that we are in a generalized period-doubling point and
so the cubic coefficient of the normal form
\[
 c=\frac{1}{6} \int_0^{T} \langle v^*,C(\tau;v,v,v)+ 3B(\tau;v,h_2) - 6\alpha_1 A(\tau) v\rangle \; d\tau
\]
(for it's definition see \cite{KuDoGoDh:05}) vanishes. Since the RHS
of \eqref{eq:Eq_h_3_GPD} is in the range space of the operator
$\frac{d}{d\tau}-A(\tau)$, we can solve the equation in order to find $h_3$
as the unique solution of the BVP
\begin{equation}  \label{eq:BVP_h_3_GPD}
 \left\{\begin{array}{rcl}
  \dot{h}_3-A(\tau) h_3 - C(\tau;v,v,v)- 3B(\tau;v,h_2) + 6\alpha_1 A(\tau) v & = & 0,\ \tau \in [0,T], \\
  h_3(T) + h_3(0) & = & 0,\\
  \int_0^{T} \langle v^*,h_3\rangle \; d\tau & = & 0.
 \end{array}
\right.
\end{equation}

By collecting the $\xi^4$-terms, we get the equation for $h_4$,
\begin{multline*} 
\dot{h}_4-A(\tau) h_4= D(\tau;v,v,v,v)+6 C(\tau;v,v,h_2)+3
B(\tau;h_2,h_2)\\+4 B(\tau;v, h_3)-12 \alpha_1 \dot{h}_2-24
\alpha_2 \dot{u}_0,
\end{multline*}
to be solved in the space of functions satisfying
$h_4(T)=h_4(0)$. Formulation of the Fredholm solvability condition
\[
\int_0^{T} \langle \varphi^*,D(\tau;v,v,v,v)+6 C(\tau;v,v,h_2)+3
B(\tau;h_2,h_2)+4 B(\tau;v, h_3)-12 \alpha_1 \dot{h}_2-24 \alpha_2
\dot{u}_0 \rangle\; d\tau=0
\]
gives us an equation for the parameter $\alpha_2$
\[
\alpha_2=\frac{1}{24}\int_0^{T} \langle
\varphi^*,D(\tau;v,v,v,v)+6 C(\tau;v,v,h_2)+3 B(\tau;h_2,h_2)+4
B(\tau;v, h_3)-12 \alpha_1 \dot{h}_2\rangle\; d\tau
\]
which can be simplified considering \eqref{eq:Eq_h_2_GPD} into
\begin{multline*} 
\alpha_2=\frac{1}{24}\int_0^{T} \langle
\varphi^*,D(\tau;v,v,v,v)+6 C(\tau;v,v,h_2)+3 B(\tau;h_2,h_2)+\\4
B(\tau;v, h_3)-12 \alpha_1 (A(\tau) h_2 + B(\tau;v,v))\rangle\;
d\tau\;+\;\alpha_1^2,
\end{multline*}
where $\alpha_1$ is given by \eqref{eq:a_GPD}, and $h_2$, $h_3$,
$v$ and $\varphi^*$ are the solutions of the BVPs
\eqref{eq:BVP_h_2_GPD}, \eqref{eq:BVP_h_3_GPD},
\eqref{eq:EigenFunc_GPD} and \eqref{eq:AdjEigenFunc2T-2_GPD}, respectively.

Using this value of $\alpha_2$ we can find $h_4$ by
\begin{equation} \label{eq:BVP_h_4_GPD}
\left\{\begin{array}{rcl}
 \dot{h}_4-A(\tau) h_4 - D(\tau;v,v,v,v)-6 C(\tau;v,v,h_2)-3 B(\tau;h_2,h_2)-&& \\
       4 B(\tau;v,h_3)+12 \alpha_1 (A(\tau) h_2 + B(\tau;v,v) - 2\alpha_1 F(u_0))+24 \alpha_2 F(u_0) & = & 0,\ \tau \in [0,T], \\
 h_4(T)- h_4(0) & = & 0,\\
 \int_0^{T} \langle \varphi^*,h_4 \rangle \; d\tau & = & 0.
\end{array}
\right.
\end{equation}

Finally, by collecting the $\xi^5$-terms, we get the equation for
$h_5$,
\begin{multline*} 
\dot{h}_5-A(\tau) h_5=E(\tau;v,v,v,v,v)+10 D(\tau;v,v,v,h_2)+15
C(\tau;v,h_2,h_2)\\+10 C(\tau;v,v,h_3)+10 B(\tau;h_2,h_3)+5
B(\tau;v, h_4)-120 \alpha_2 \dot{v}-20 \alpha_1 \dot{h}_3-120 e v,
\end{multline*}
which has to be solved in the space of functions satisfying $h_5(T)=-h_5(0)$. Since the operator
$\frac{d}{d\tau}-A(\tau)$ has a one-dimensional null-space, we can
write 
\begin{eqnarray*}
\int_0^{T} \langle v^*,E(\tau;v,v,v,v,v)+10 D(\tau;v,v,v,h_2)+15
C(\tau;v,h_2,h_2)+10 C(\tau;v,v,h_3)+\\10 B(\tau;h_2,h_3)+5
B(\tau;v, h_4)-120 \alpha_2 \dot{v}-20 \alpha_1 \dot{h}_3-120 e v
\rangle \; d\tau =0,
\end{eqnarray*}
which makes it possible to compute the parameter $e$ of the normal
form \eqref{eq:NF-GPD}. Using the normalization of \eqref{eq:AdjEigenFunc_GPD}, \eqref{eq:BVP_h_3_GPD} and
\eqref{eq:GPD_c} gives
\begin{multline} \label{eq:c-GPD}
e=\frac{1}{120} \int_0^{T} \langle v^*,E(\tau;v,v,v,v,v)+10
D(\tau;v,v,v,h_2)+15 C(\tau;v,h_2,h_2)+\\10 C(\tau;v,v,h_3)+10
B(\tau;h_2,h_3)+5 B(\tau;v, h_4)-120 \alpha_2 A(\tau) v - 20
\alpha_1 A(\tau) h_3 \rangle \; d\tau.
\end{multline}
A check that this quantity doesn't vanish, guarantees us that the
codim-2 bifurcation is non-degenerate.

\subsection{Chenciner bifurcation}\label{Section:GNS}

The three-dimensional critical center manifold $W^c(\Gamma)$ at
the {\tt CH} bifurcation can be parametrized locally by
$(\tau,\xi)$ as
\begin{equation}
u=u_0+\xi v + \bar{\xi} \bar{v}(\tau) + H(\tau,\xi,\bar{\xi}),\ \
\tau \in [0,T],\ \xi \in {\mathbb C}, \label{eq:CM_GNS}
\end{equation}
where the real function $H$ satisfies
$H(T,\xi,\bar{\xi})=H(0,\xi,\bar{\xi})$, and has the Taylor
expansion
\begin{eqnarray}
H(\tau,\xi,\bar{\xi}) & = & \frac{1}{2}h_{20}\xi^2 + h_{11} \xi
\bar{\xi}
+\frac{1}{2}h_{02} \bar{\xi}^2 \nonumber\\
 & + & \frac{1}{6}h_{30}\xi^3 + \frac{1}{2}h_{21}\xi^2 \bar{\xi} +
\frac{1}{2}h_{12}\xi \bar{\xi}^2 + \frac{1}{6}h_{03}\bar{\xi}^3 \nonumber \\
 & + &\frac{1}{24}h_{40}(\tau)\xi^4 + \frac{1}{6}h_{31}(\tau)\xi^3 \bar{\xi}
 +  \frac{1}{4}h_{22}(\tau)\xi^2 \bar{\xi}^2 +
 \frac{1}{6}h_{13}(\tau)\xi \bar{\xi}^3 + \frac{1}{24}h_{04}(\tau)\bar{\xi}^4 \nonumber \\
 & + &\frac{1}{120}h_{50}(\tau)\xi^5 + \frac{1}{24}h_{41}(\tau)\xi^4 \bar{\xi}
 +  \frac{1}{12}h_{32}(\tau)\xi^3 \bar{\xi}^2 + \frac{1}{12}h_{23}(\tau)\xi^2 \bar{\xi}^3 +
 \frac{1}{24}h_{14}(\tau)\xi \bar{\xi}^4 \nonumber \\ &+& \frac{1}{120}h_{05}(\tau)\bar{\xi}^5
 +  O(|\xi|^6),  \label{eq:H_GNS}
\end{eqnarray}
with $h_{ij}(T)=h_{ij}(0)$ and $h_{ij}=\bar{h}_{ji}$ so that
$h_{ii}$ is real,  while $v$ and its conjugate $\bar v$ are
defined as
\begin{equation}
\left\{\begin{array}{rcl}
\dot{v}(\tau)-A(\tau)v + i\omega\, v & = & 0,\ \tau \in [0,T], \\
v(T)-v(0) & = & 0,\\
\int_{0}^{T} {\langle v,v\rangle d\tau} - 1 & = & 0.
\end{array}
\right. \label{eq:EigenFunc_GNS}
\end{equation}
These functions exist due to Lemma~2 of \cite{Io:88}.

If we assume that (\ref{eq:P.1}) restricted to $W^c(\Gamma)$ has
the periodic normal form (\ref{eq:NF-GNS}), as in the previous cases, we can find the functions $h_{ij}(\tau)$
by solving appropriate BVPs. 

First we introduce the two needed adjoint eigenfunctions. The
first one, namely $\varphi^*$, satisfies \eqref{eq:AdjEigenFunc2T-2_GPD},
and the second one, namely $v^*$, satisfies
\begin{equation}
\left\{\begin{array}{rcl} \dot{v}^*(\tau)+A^{\rm T}(\tau)v^* +
i\omega \,v^* & = & 0,\
\tau \in [0,T], \\
v^*(T)-v^*(0) & = & 0,\\
\int_{0}^{T} {\langle v^*,v\rangle d\tau} - 1 & = & 0.
\end{array}
\right. \label{eq:AdjEigenFunc_GNS}
\end{equation}

As usual, we substitute \eqref{eq:CM_GNS} into \eqref{eq:P.1}, use
\eqref{eq:MULT}, \eqref{eq:NF-GNS}, and (\ref{eq:H_GNS}), as well
as the homological equation
$$
\frac{du}{dt}=\frac{\partial u}{\partial \xi}\frac{d\xi}{dt} +
\frac{\partial u}{\partial \bar{\xi}}\frac{d\bar{\xi}}{dt} +
\frac{\partial u}{\partial \tau}\frac{d\tau}{dt},
$$
and collect the corresponding terms in order to find the needed
coefficients of \eqref{eq:NF-GNS}.

The $\xi$-independent and the linear terms give rise to the usual
identities
$$
 \dot{u}_0=F(u_0), \quad \dot{v}-A(\tau) v+i \omega v=0, \quad \dot{\bar v}-A(\tau) \bar v - i \omega \bar
 v=0.
$$

Collecting the coefficients of the $\xi^2$- or $\bar{\xi}^2$-terms
leads to the equation
\[
\dot h_{20} -A(\tau) h_{20} + 2 i \omega h_{20} = B(\tau;v,v)
\]
or its complex-conjugate. This equation has a unique solution
$h_{20}$ satisfying $h_{20}(T)=h_{20}(0)$, since due to the spectral
assumptions $e^{2i\omega T}$ is not a multiplier of the critical cycle. Thus, $h_{20}$ can be found by solving
\begin{equation}
\left\{\begin{array}{rcl}
 \dot h_{20}-A(\tau) h_{20} +2i \omega h_{20} - B(\tau;v,v) & = & 0,\ \tau \in [0,T], \\
 h_{20}(T)-h_{20}(0) & = & 0.
\end{array}
\right. \label{eq:h20-GNS}
\end{equation}

By collecting the $\xi\bar\xi$-terms we obtain an equation for $h_{11}$, namely
\[
\dot{h}_{11}-A(\tau)h_{11}=B(\tau;v,\bar{v})-\alpha_1\dot{u}_0,
\]
to be solved in the space of the functions satisfying
$h_{11}(T)=h_{11}(0)$. In this space the operator
$\frac{d}{d\tau}-A(\tau)$ has a range space with codimension one. As before, the null-eigenfunction of
the adjoint operator $-\frac{d}{d\tau} - A^{\rm T}(\tau)$ is
$\varphi^*$, given by \eqref{eq:AdjEigenFunc2T-2_GPD}, and thus because of the Fredholm
solvability condition, we can easily obtain the needed value for
$\alpha_1$
\begin{equation}\label{eq:a1-CH}
 \alpha_1=\int_0^T \langle \varphi^*, B(\tau;v,\bar v) \rangle d\tau.
\end{equation}
With $\alpha_1$ defined in this way, let $h_{11}$ be the unique
solution of the BVP
\begin{equation}
\left\{\begin{array}{rcl}
 \dot{h}_{11}-A(\tau) h_{11}-B(\tau;v,\bar{v})+\alpha _1 \dot{u}_0 & = & 0,\ \tau \in [0,T], \\
 h_{11}(T)-h_{11}(0) & = & 0,\\
 \int_{0}^{T} {\langle \varphi^*, h_{11}\rangle d\tau} & = & 0.
\end{array}
\right. \label{eq:h11-CH}
\end{equation}

The coefficient in front of the third order terms in (\ref{eq:NF-GNS}) is purly imaginary since the first Lyapunov coefficient vanishes at a Chenciner point. We are now ready to compute this coefficient. In fact, if we collect the $\xi^2\bar\xi$-terms we
obtain
\[
 \dot h_{21}- A(\tau) h_{21} +i \omega  h_{21}=C(\tau;v,v,\bar{v})+2 B(\tau;v,h_{11})+B(\tau;\bar{v},h_{20}) - 2 i c v - 2 \alpha _1 \dot v,
\]
to be solved in the space of functions satisfying
$h_{21}(T)=h_{21}(0)$. In this space the operator
$\frac{d}{d\tau}-A(\tau)+i \omega$ is singular, since $e^{i\omega
T}$ is
 a multiplier of the critical cycle. So we can impose the usual Fredholm
solvability condition, taking (\ref{eq:AdjEigenFunc_GNS}) into
account
\begin{equation}\label{eq:GNS_Fredholm_21}
\int_0^T \langle v^*,C(\tau;v,v,\bar{v})+2
B(\tau;v,h_{11})+B(\tau;\bar{v},h_{20}) - 2 i c v - 2 \alpha _1
\dot v \rangle d \tau = 0.
\end{equation}
From this equation we can find the value of the coefficient $c$ of
the normal form \eqref{eq:NF-GNS}
\begin{equation}
 c=-\frac{i}{2} \int_0^T \langle v^*,C(\tau;v,v,\bar{v})+2
B(\tau;v,h_{11})+B(\tau;\bar{v},h_{20}) - 2 \alpha _1 A(\tau) v
\rangle d \tau +\alpha_1\omega \label{eq:c-GNS}
\end{equation}
and, with $c$ defined in this way, we can find $h_{21}$ as the
unique solution of the BVP
\begin{equation}
\left\{\begin{array}{rcl}
 \dot h_{21}-A(\tau) h_{21}+i \omega h_{21}-C(\tau;v,v,\bar{v})-2 B(\tau;v,h_{11})&&\\
 - B(\tau;\bar{v},h_{20}) +2 i c v + 2 \alpha _1 (A(\tau) v -i \omega v) & = & 0,\ \tau \in [0,T], \\
 h_{21}(T)-h_{21}(0) & = & 0, \\
 \int_{0}^{T} {\langle v^*,h_{21}\rangle d\tau} & = & 0.
\end{array}
\right. \label{eq:h21-GNS}
\end{equation}

Collecting the $\xi^3$-terms gives us an equation for $h_{30}$
\[
\dot h_{30}-A(\tau) h_{30}+3 i \omega
h_{30}=C(\tau;v,v,v)+3B(\tau;v, h_{20}),
\]
which has a unique solution $h_{30}$ satisfying
$h_{30}(T)=h_{30}(0)$, since $e^{3i\omega T}$ is not a multiplier
of the critical cycle by the spectral assumptions. Thus, $h_{30}$
is the unique solution of the BVP
\begin{equation}
\left\{\begin{array}{rcl}
 \dot h_{30}-A(\tau) h_{30}+3 i \omega h_{30}-C(\tau;v,v,v)-3B(\tau;v, h_{20}) & = & 0,\ \tau \in [0,T], \\
 h_{30}(T)-h_{30}(0) & = & 0.
\end{array}
\right. \label{eq:h30-GNS}
\end{equation}

By collecting the $\xi^3\bar \xi$-terms we obtain an equation for
$h_{31}$
\begin{eqnarray*}
\dot h_{31}-A(\tau) h_{31}+2 i \omega
h_{31}&=&D(\tau;v,v,v,\bar{v})+3 C(\tau;v,v, h_{11})+3
C(\tau;v,\bar{v}, h_{20})+ 3 B(\tau;h_{11}, h_{20})\\&&+3
B(\tau;v, h_{21})+B(\tau;\bar{v},h_{30}) - 6 i c h_{20}-3 \alpha
_1 \dot h_{20}
\end{eqnarray*}
which has a unique solution $h_{31}$ satisfying
$h_{31}(T)=h_{31}(0)$, since $e^{2i\omega T}$ is not a multiplier
of the critical cycle by the spectral assumptions. Thus, $h_{31}$
is the unique solution of the BVP
\begin{equation}
\left\{\begin{array}{rcl}
 \dot h_{31}-A(\tau) h_{31}+2 i \omega  h_{31}-D(\tau;v,v,v,\bar{v})-3 C(\tau;v,v, h_{11})&& \\
 -3 C(\tau;v,\bar{v}, h_{20})- 3 B(\tau;h_{11}, h_{20}) -3 B(\tau;v, h_{21}) &&\\
 -B(\tau;\bar{v},h_{30})+ 6 i c h_{20}+ 3 \alpha _1  (A(\tau) h_{20} - 2 i \omega h_{20} + B(\tau;v,v))& = & 0,\ \tau \in [0,T], \\
 h_{31}(T)-h_{31}(0) & = & 0.
\end{array}
\right. \label{eq:h31-GNS}
\end{equation}

Taking into account the $|\xi|^4$-terms gives an equation for
$h_{22}$
\begin{eqnarray*}
 \dot h_{22}-A(\tau) h_{22}&=& D(\tau;v,v,\bar{v},\bar v)+C(\tau;v,v,h_{02}) \\
 &&+4 C(\tau;v,\bar{v}, h_{11})+C(\tau;\bar{v},\bar v, h_{20})+2 B(\tau;h_{11},h_{11})+2 B(\tau;v,h_{12})\\
 &&+B(\tau;h_{02},h_{20}) +2 B(\tau;\bar{v}, h_{21})- 4\alpha _1 \dot h_{11}- 4 \alpha _2 \dot u_0,
\end{eqnarray*}
to be solved in the space of functions satisfying
$h_{22}(T)=h_{22}(0)$. In this space the operator
$\frac{d}{d\tau}-A(\tau)$ has a range space with codimension one which is orthogonal to $\varphi^*$. So one Fredholm solvability condition is involved, namely
\begin{multline*}
 \int_0^T \langle \varphi^*, D(\tau;v,v,\bar{v},\bar v)+C(\tau;v,v,h_{02})+ 4
C(\tau;v,\bar{v}, h_{11})+C(\tau;\bar{v},\bar v,
h_{20})+2 B(\tau;h_{11},h_{11})\\+2
B(\tau;v,h_{12})+B(\tau;h_{02},h_{20}) +2 B(\tau;\bar{v}, h_{21})- 4 \alpha
_1 \dot h_{11}- 4 \alpha _2 \dot u_0 \rangle d \tau = 0,
\end{multline*}
which allows us to compute the value of the coefficient $\alpha_2$
of our normal form
\begin{multline}\label{eq:a2-GNS}
 \alpha_2=\frac{1}{4} \int_0^T \langle \varphi^*, D(\tau;v,v,\bar{v},\bar v)+C(\tau;v,v,h_{02}) + 4 C(\tau;v,\bar{v}, h_{11}) \\
  +C(\tau;\bar{v},\bar v, h_{20})+2 B(\tau;h_{11},h_{11})+2B(\tau;v,h_{12}) + B(\tau;h_{02},h_{20})\\
  +2 B(\tau;\bar{v}, h_{21})- 4 \alpha_1 (A(\tau) h_{11} + B(\tau;v, \bar v)) \rangle d \tau+ \alpha_1^2.
\end{multline}
Using this value for $\alpha_2$ we can find $h_{22}$ as the
unique solution of the BVP
\begin{equation}
\left\{\begin{array}{rcl}
 \dot h_{22}-A(\tau) h_{22} - D(\tau;v,v,\bar{v},\bar v)- C(\tau;v,v,h_{02}) \\
 - 4 C(\tau;v,\bar{v}, h_{11})-C(\tau;\bar{v},\bar v, h_{20})-2 B(\tau;h_{11},h_{11})-2 B(\tau;v,h_{12})\\
 -B(\tau;h_{02},h_{20}) -2 B(\tau;\bar{v}, h_{21})\\
 + 4 \alpha _1 (A(\tau) h_{11}+ B(\tau;v, \bar v)-\alpha_1 F(u_0)) + 4 \alpha _2 F(u_0)& = & 0,\ \tau \in [0,T], \\
 h_{22}(T)-h_{22}(0) & = & 0,\\
 \int_0^T \langle \varphi^*,h_{22}\rangle d \tau & = & 0.
\end{array}
\right. \label{eq:h22-GNS}
\end{equation}

Finally, by collecting the $\xi^3 \bar \xi^2$-terms we get an
equation for $h_{32}$
\begin{eqnarray*}
 \dot h_{32}-A(\tau) h_{32}+i \omega  h_{32}=E(\tau;v,v,v,\bar{v},\bar v)+D(\tau;v,v,v,h_{02})+6 D(\tau;v,v,\bar{v},h_{11}) \\
 +3 D(\tau;v,\bar{v},\bar v,h_{20})+ 6 C(\tau;v,h_{11},h_{11})+3 C(\tau;v,v,h_{12})+ 3 C(\tau;v, h_{02}, h_{20})\\
 + 6 C(\tau;\bar{v},h_{11}, h_{20})+6 C(\tau;v, \bar{v}, h_{21})+C(\tau;\bar{v},\bar v, h_{30})+3 B(\tau;h_{12},h_{20})+6 B(\tau;h_{11}, h_{21})\\
 +3 B(\tau;v, h_{22})+B(\tau;h_{02}, h_{30})+2B(\tau;\bar{v}, h_{31})\\
 - 12 e v-6 i c h_{21}-12 \alpha _2 \dot v-6 \alpha _1 \dot h_{21}
\end{eqnarray*}
that, since the operator is singular, allows us, using the first of
\eqref{eq:EigenFunc_GNS} as well as the first and the last of
\eqref{eq:h21-GNS} and \eqref{eq:GNS_Fredholm_21}, to compute the critical coefficient $e$ of
\eqref{eq:NF-GNS} imposing the Fredholm solvability condition,
obtaining
\begin{eqnarray}\label{eq:e-GNS}
e=\frac{1}{12} \int_0^T \langle v^*&,& E(\tau;v,v,v,\bar{v},\bar v)+D(\tau;v,v,v,h_{02})+6 D(\tau;v,v,\bar{v},h_{11}) \nonumber \\
 &&+3 D(\tau;v,\bar{v},\bar v,h_{20})+ 6 C(\tau;v,h_{11},h_{11})+3 C(\tau;v,v,h_{12})\nonumber\\
 &&+ 3 C(\tau;v, h_{02}, h_{20})+ 6 C(\tau;\bar{v},h_{11}, h_{20})+6 C(\tau;v, \bar{v}, h_{21})+C(\tau;\bar{v},\bar v, h_{30}) \nonumber\\
 &&+3 B(\tau;h_{12},h_{20})+6 B(\tau;h_{11}, h_{21})+ 3 B(\tau;v, h_{22})+B(\tau;h_{02}, h_{30})\nonumber \\
 &&+2B(\tau;\bar{v}, h_{31})-12 \alpha _2 A(\tau) v - 6 \alpha _1 (A(\tau) h_{21}+2B(\tau;v,h_{11})+C(\tau;v,v,\bar{v})\nonumber\\
 &&+B(\tau;\bar v,h_{20})-2\alpha_1 Av) \rangle d \tau + i \omega \alpha_2+ ic \alpha_1-\alpha_1^2 i\omega.
\end{eqnarray}
As stated in Appendix \ref{Appendix:2}, we define the second Lyapunov coefficient as
\[
L_2(0)= \Re(e).
\]
If this coefficient does not vanish, no more degeneracies happen at this
codim $2$ point.\\

Since we have to check all equations up to the fifth order, we still have to look at the $\xi^4$-terms, the $\xi^5$-terms and the $\xi^4 \bar \xi$-terms, which give respectively
\begin{eqnarray*}
 \dot h_{40}-A(\tau) h_{40}+4i \omega  h_{40}&=&D(\tau;v,v,v,v)+6C(\tau;v,v,h_{20})+3 B(\tau;h_{20},h_{20})\\
 && B(\tau;v,h_{30}),
\end{eqnarray*}
\begin{eqnarray*}
 \dot h_{50}-A(\tau) h_{50}+5i \omega  h_{50}&=&E(\tau;v,v,v,v,v)+10D(\tau;v,v,v,h_{20})+ 10 C(\tau;v,v,h_{30})\\
 &&+15 C(\tau;v,h_{20},h_{20})+ 10 B(\tau;h_{20},h_{30})+5 B(\tau;v, h_{40})
\end{eqnarray*}
and 
\begin{eqnarray*}
 \dot h_{41}-A(\tau) h_{41}+3i \omega  h_{41}&=&E(\tau;v,v,v,v,\bar v)+6 D(\tau;v,v,\bar v,h_{20})+4 D(\tau;v,v, v,h_{11})\\
 &&+ 4 C(\tau;v,\bar v,h_{30})+8 C(\tau;v,h_{20},h_{11})+6 C(\tau;v,v,h_{21})\\
 &&+C(\tau;\bar v,h_{20},h_{20})+ 6 B(\tau;h_{20},h_{21})+  B(\tau;\bar v,h_{40})\\
 &&+ 4 B(\tau;v,h_{31})+ 4 B(\tau;h_{30},h_{11})-3\alpha_1 \dot{h}_{30}-12ich_{30}.
\end{eqnarray*}
No solvability conditions have to be satisfied.\\

Since we are in a complex eigenvalues case, $v$ is determined up to a factor $\gamma$, for which $\gamma^{\rm H}\gamma = 1$. Then $v^*$, $h_{20}$, $h_{21}$, $h_{30}$, $h_{31}$ are replaced by $\gamma v^*$, $\gamma^2h_{20}$, $\gamma h_{21}$, $\gamma^3h_{30}$, $\gamma^2h_{31}$ respectively, but $\alpha_1, \alpha_2, c$ and $e$ are not affected by this factor.

\subsection{Strong resonance 1:1 bifurcation} \label{Section:Res1:1}
The three-dimensional critical center manifold $W^c(\Gamma)$ at
the {\tt R1} bifurcation can be parameterized locally by
$(\tau,\xi)$ as 
\begin{equation} \label{eq:R1_CM}
u=u_0+\xi_1 v_1 + \xi_2 v_2 + H(\tau,\xi),\ \ \tau \in [0,T],\
\xi=(\xi_1,\xi_2) \in {\mathbb R^2},
\end{equation}
where $H$ satisfies $H(T,\xi)=H(0,\xi)$ and has the Taylor
expansion 
\begin{equation} \label{eq:R1_H}
 H(\tau,\xi)=\frac{1}{2}h_{20}\xi_1^2 + h_{11}\xi_1 \xi_2 + \frac{1}{2}h_{02}\xi_2^2 
  + O(\|\xi\|^3) ,
\end{equation}
where the functions $h_{20}$, $h_{11}$ and $h_{02}$ are
$T$-periodic in $\tau$, where $v_1$ and $v_2$ are the generalized
eigenfunctions associated with the trivial multiplier and defined as
the unique solutions of the BVP
\begin{eqnarray} \label{eq:R1_EigenFunc}
&&\left\{\begin{array}{rcl}
\dot{v}_1-A(\tau)v_1 - F(u_0) & = & 0,\ \tau \in [0,T], \\
v_1(T)-v_1(0) & = & 0,\\
\int_{0}^{T} {\langle v_1,F(u_0)\rangle d\tau} & = & 0,
\end{array}
\right.
\end{eqnarray}
and
\begin{eqnarray} \label{eq:R1_EigenFunc_2}
&&\left\{\begin{array}{rcl}
\dot{v}_2-A(\tau)v_2 + v_1 & = & 0,\ \tau \in [0,T], \\
v_2(T) - v_2(0) & = & 0,\\
\int_{0}^{T} {\langle v_2,F(u_0)\rangle d\tau} & = & 0,
\end{array}
\right.
\end{eqnarray}
respectively. The functions $v_1$ and $v_2$ exist and are different due to
Lemma~2 of \cite{Io:88}. Following our approach to find the
value of the normal form constants, we define $\varphi^*$ as
a solution of the adjoint eigenfunction problem
\eqref{eq:AdjEigenFunc}, $v_1^*$ as a solution of the adjoint
generalized eigenfunction problem \eqref{eq:AdjGenEigenFunc} and
$v_2^*$ as a solution of
\begin{equation*}  
\left\{\begin{array}{rcl}
\dot{v_2}^*(\tau)+A^{\rm T}(\tau)v_2^* + v_1^* & = & 0,\ \tau \in [0,T], \\
v_2^*(T)-v_2^*(0) & = & 0.
\end{array}
\right.
\end{equation*}
First, notice that the Fredholm solvability condition gives us immediately the following scalar products
\begin{equation} \label{eq:R1_Ortho}
 \int_0^T\langle\varphi^*,F(u_0)\rangle d \tau =
 \int_0^T\langle\varphi^*,v_1\rangle d \tau =
 \int_0^T\langle F(u_0),v_1^*\rangle d \tau = 0.
\end{equation}
Due to the spectral assumptions at the {\tt R1} point we are free to
assume that
\begin{equation} \label{eq:R1_Normo}
\int_0^T\langle\varphi^*,v_2\rangle d \tau = 1.
\end{equation}
Appending this condition to the eigenproblem, we can find
the eigenfunctions $\varphi^*$ as the unique
solutions of the BVP
\begin{equation}
\left\{\begin{array}{rcl}
\dot{\varphi}^*+A^{\rm T}(\tau)\varphi^* & = & 0,\ \tau \in [0,T], \\
\varphi^*(T)-\varphi^*(0) & = & 0,\\
\int_{0}^{T} {\langle \varphi^*,v_2 \rangle d\tau} - 1 & = & 0.
\end{array}
\right. \label{eq:R1_AdjEigenFunc}
\end{equation}

As already mentioned in the cusp of cycles case, we will choose adjoint generalized eigenfunctions orthogonal to an original eigenfunction. Therefore, $v_1^*$ and $v_2^*$ are obtained as the solution of
\begin{equation}
\left\{\begin{array}{rcl}
\dot{v_1}^*+A^{\rm T}(\tau)v_1^* - \varphi^*& = & 0,\ \tau \in [0,T], \\
v_1^*(T)-v_1^*(0) & = & 0,\\
\int_{0}^{T} {\langle v_1^*, v_2 \rangle d\tau} & = & 0,
\end{array}
\right. \label{eq:R1_AdjGenEigenFunc}
\end{equation}
and
\begin{equation}
\left\{\begin{array}{rcl}
 \dot{v_2}^*(\tau)+A^{\rm T}(\tau)v_2^* + v_1^*& = & 0,\ \tau \in [0,T], \\
 v_2^*(T)-v_2^*(0) & = & 0,\\
 \int_{0}^{T} {\langle v_2^*, v_2 \rangle d\tau} & = & 0,
\end{array}
\right. \label{eq:R1_AdjGenEigenFunc2}
\end{equation}
respectively. Notice that, as in the cusp of cycles case, we have normalized in
\eqref{eq:R1_Normo} the adjoint eigenfunction with the last
generalized eigenfunction, which gives us in addition
\begin{equation*} 
\int_0^T\langle v_1^*,v_1\rangle d \tau = \int_0^T\langle
v_2^*,F(u_0)\rangle d \tau =1.
\end{equation*}

As usual, to derive the value of the normal form coefficients we substitute
\eqref{eq:R1_CM} into \eqref{eq:P.1}, we use \eqref{eq:MULT} as
well as \eqref{eq:NF-11C} and \eqref{eq:R1_H} and get different
equalities for every degree of $\xi$. Remark that in fact the solvability of
all the equations up to the maximal order of the normal form has to be checked. We will pay extra attention to it in this section. 

By collecting the $\xi^0$-terms we get the identity
$$
\dot{u}_0=F(u_0).
$$

The linear terms provide two other identities, namely
$$
\dot{v_1}-A(\tau) v_1-F(u_0)=0, \qquad \dot{v}_2-Av_2 +v_1= 0,
$$
cf. \eqref{eq:R1_EigenFunc} and \eqref{eq:R1_EigenFunc_2}.

By collecting the $\xi_1^2$-terms we find an equation for $h_{20}$
\begin{equation} \label{eq:R1_xi1^2}
 \dot{h}_{20}-A(\tau) h_{20}=-2\alpha \dot{u}_0 + 2\dot{v}_1+B(\tau;v_1,v_1)- 2 a v_2,
\end{equation}
to be solved in the space of periodic functions on $[0,T]$. In
this space, the differential operator $\frac{d}{d\tau}-A(\tau)$ is
singular with a range orthogonal to $\varphi^*$.
Thus a Fredholm solvability condition is involved, namely
\[
\int_0^T\langle \varphi^*,-2\alpha \dot{u}_0 + 2\dot{v}_1+B(\tau;v_1,v_1)- 2 a v_2 \rangle d
\tau = 0.
\]
The equations \eqref{eq:R1_Ortho}, \eqref{eq:R1_Normo}, and \eqref{eq:R1_EigenFunc} let us obtain the following value for $a$
\begin{eqnarray} \label{eq:R1_a}
 a=\frac{1}{2}\int_0^T\langle\varphi^*, 2A(\tau) v_1+B(\tau;v_1,v_1) \rangle d\tau.
\end{eqnarray}
Notice that in the RHS of \eqref{eq:R1_xi1^2} we have no freedom
which could change the value of the coefficient $a$.
This confirms the theoretically proved fact that the $\xi_1^2$-term
of normal form \eqref{eq:NF-11C} is resonant. Notice moreover that
parameter $\alpha$ is undetermined, which gives us two degrees of
freedom for $h_{20}$. In fact, if $h_{20}$ is a solution of
\eqref{eq:R1_xi1^2}, then also $\tilde h_{20} = h_{20} +
\varepsilon^{I}_{20} F(u_0)+\varepsilon^{II}_{20} v_1$ is a
solution, due to the fact that $F(u_0)$ spans the nullspace of the
operator $\frac{d}{dt}-A(\tau)$ and that we can tune $\alpha$ as
desirable:
\begin{equation} \label{tuningh20}
 \dd{\tilde h_{20}}{t}-A(\tau) \tilde h_{20} =  \dd{h_{20}}{t}-A(\tau) h_{20} +  \varepsilon^{II}_{20} \left(\dd{v_1}{t}-A(\tau) v_1\right)
 = \dd{h_{20}}{t}-A(\tau) h_{20} + \varepsilon^{II}_{20} \dot u_0.\end{equation}

By collecting the $\xi_1 \xi_2$-terms we find an equation for
$h_{11}$
\begin{equation}\label{eq:r1_xi1xi2}
 \dot{h}_{11}-A(\tau) h_{11}=B(\tau;v_1, v_2)+\dot{v}_2-h_{20}-b v_2-v_1,
\end{equation}
to be solved in the space of $T$-periodic functions. As in the
previous case, a solvability condition is involved
\[
 \int_0^T\langle \varphi^*,B(\tau;v_1, v_2)+\dot{v}_2-h_{20}-b v_2-v_1\rangle d \tau = 0.
\]
Equation \eqref{eq:R1_Normo} as well as \eqref{eq:R1_EigenFunc_2} and \eqref{eq:R1_Ortho} let us rewrite this
condition as
\[
 b= \int_0^T\langle \varphi^*,B(\tau;v_1, v_2)+A(\tau) v_2\rangle d \tau -\int_0^T\langle \varphi^*,h_{20}\rangle d \tau.
\]
Using \eqref{eq:R1_AdjGenEigenFunc}, \eqref{eq:R1_xi1^2}, \eqref{eq:R1_Ortho} and \eqref{eq:R1_AdjGenEigenFunc} we can
rewrite the second term of the right-hand side as
\begin{multline*}
 \int_0^T\langle \left(\frac{d}{d\tau}+A^T(\tau)\right)v_1^*,h_{20}\rangle d \tau
 = \int_0^T\langle v_1^*,\left(-\frac{d}{d\tau}+A(\tau)\right)h_{20}\rangle d
 \tau\\
 = -\int_0^T\langle v_1^*,-2\alpha \dot{u}_0 + 2\dot{v}_1+B(\tau;v_1,v_1)- 2 a v_2\rangle d
 \tau
 = -\int_0^T\langle v_1^*, 2 Av_1+B(\tau;v_1,v_1) \rangle d \tau
\end{multline*}
obtaining an equation for $b$ which involves only the original and adjoint eigenfunctions
\begin{equation} \label{eq:r1_b}
 b= \int_0^T\langle \varphi^*,B(\tau;v_1, v_2)+A(\tau) v_2\rangle d\tau + \int_0^T\langle v_1^*, 2 Av_1+B(\tau;v_1,v_1) \rangle d \tau.
\end{equation}
Notice that the freedom that we have on $h_{20}$ can not be used
to change the value to coefficient $b$ (and so the $\xi_1
\xi_2$-term of the normal form \eqref{eq:NF-11C} is resonant). Indeed, $h_{20}$ is defined up to a multiple of $F(u_0)$ and $v_1$, but both vectors are orthogonal to $\varphi^*$, see the first two orthogonality conditions in \eqref{eq:R1_Ortho}. However the
presence of $h_{20}$ in the RHS gives us three degrees of freedom for $h_{11}$. In fact, if $h_{11}$ is a solution of
\eqref{eq:r1_xi1xi2}, also $\tilde h_{11} = h_{11} +
\varepsilon^{I}_{11} F(u_0)-\varepsilon^{I}_{20}
v_1+\varepsilon^{II}_{20} v_2$ is a solution, since
\begin{multline*}
 \dd{\tilde h_{11}}{t}-A(\tau) \tilde h_{11} =  \dd{h_{11}}{t}-A(\tau) h_{11} -  \varepsilon^{I}_{20} \left(\dd{v_1}{t}-A(\tau)
 v_1\right) +  \varepsilon^{II}_{20} \left(\dd{v_2}{t}-A(\tau)
 v_2\right) \\
 = \dd{h_{11}}{t}-A(\tau) h_{11} - \varepsilon^{I}_{20} F(u_0)  - \varepsilon^{II}_{20} v_1.
\end{multline*}

Collecting the $\xi_2^2$-terms gives us the following equation for $h_{02}$
\[
 \dot h_{02} - A (\tau) h_{02} =  B(\tau, v_2, v_2) - 2 h_{11},
\]
to be solved in the space of $T$-periodic functions. This
equation should be solvable, so the RHS
should lay in the reachable space of the operator $\frac{d}{dt}-A(\tau)$:
\[
 \int_0^T\langle \varphi^*,B(\tau,v_2,v_2) - 2 h_{11} \rangle d \tau = 0.
\]
This condition can be satisfied by correctly tuning $h_{11}$. In fact, $\varepsilon^{II}_{20}$ is not yet determined, so $h_{11}$ can have a projection on $v_2$. Due to(\ref{eq:R1_Normo}) $v_2$ does
not lay in the reachable space of the $\frac{d}{dt}-A(\tau)$
operator, and therefore we can impose that
\[
 \int_0^T\langle \varphi^*, h_{11} \rangle d \tau  = \frac{1}{2} \int_0^T\langle \varphi^*,B(\tau,v_2,v_2) \rangle d \tau.
\]

This last solvability condition determines $\varepsilon^{II}_{20}$ uniquely, and since $\varepsilon^{II}_{20}$ determines the value of $\alpha$, see (\ref{eq:R1_xi1^2}) and (\ref{tuningh20}), also $\alpha$ is now uniquely determined. So the center manifold expansion \eqref{eq:NF-11C} has now become unique. Note that in fact the value of $\alpha$ is not needed since, as shown in Appendix \ref{Appendix:2}, it does not affect the bifurcation scenario. Remark also that in order to compute the necessary coefficients $a$ and $b$ by equations \eqref{eq:R1_a} and \eqref{eq:r1_b}, the second order expansion of the center manifold
is not needed. Indeed, we have rewritten the formulas of the normal form coefficients in terms of the original and adjoint eigenfunctions. $h_{20}$ or $h_{11}$ are not needed, therefore we don't write down the BVPs for their unique solutions.

\subsection{Strong resonance 1:2 bifurcation} \label{Section:Res1:2}
The three-dimensional critical center manifold $W^c(\Gamma)$ at
the {\tt R2} bifurcation can be parametrized locally by
$(\tau,\xi)$ as
\begin{equation}
u=u_0+\xi_1 v_1 + \xi_2 v_2 + H(\tau,\xi),\ \ \tau \in [0,2T],\
\xi=(\xi_1,\xi_2) \in {\mathbb R^2}, \label{eq:CM_12C}
\end{equation}
where $H$ satisfies $H(2T,\xi)=H(0,\xi)$ and has the Taylor
expansion
\begin{multline}
 H(\tau,\xi)=\frac{1}{2}h_{20}\xi_1^2+ h_{11}\xi_1 \xi_2 + \frac{1}{2}h_{02}\xi_2^2 + \\
    \frac{1}{6}h_{30}\xi_1^3+ \frac{1}{2}h_{21}\xi_1^2\xi_2+\frac{1}{2}h_{12}\xi_1\xi_2^2+\frac{1}{6}h_{03}\xi_2^3 
    + O(\|\xi\|^4) , \label{eq:H_12C}
\end{multline}
where all functions $h_{ij}$ are $2T$-periodic, the eigenfunction corresponding to eigenvalue $-1$ is given by
\begin{eqnarray} \label{eq:EigenFunc_12C}
&&\left\{\begin{array}{rcl}
\dot{v}_1-A(\tau)v_1 & = & 0,\ \tau \in [0,T], \\
v_1(T)+v_1(0) & = & 0,\\
\int_{0}^{T} {\langle v_1,v_1\rangle d\tau} -1 & = & 0,\\
\end{array}
\right. 
\end{eqnarray} 
and the generalized eigenfunction by
\begin{eqnarray} \label{eq:EigenFunc_12C_2}
&&\left\{\begin{array}{rcl}
\dot{v}_2-A(\tau)v_2 + v_1 & = & 0,\ \tau \in [0,T], \\
v_2(T) + v_2(0) & = & 0,\\
\int_{0}^{T} {\langle v_2,v_1\rangle d\tau} & = & 0,\\
\end{array}
\right.
\end{eqnarray}
with
\begin{equation} \label{eq:2TEigenfunctions_12C}
v_1(\tau+T) := -v_1(\tau) \mbox{  and  }
v_2(\tau+T) := -v_2(\tau) \mbox{ for } \tau \in [0,T].
\end{equation}
The functions $v_1$ and $v_2$ exist due to
Lemma~5 of \cite{Io:88}.
The functions $h_{ij}$ of \eqref{eq:H_12C} can be
found by solving appropriate BVPs, assuming that \eqref{eq:P.1}
restricted to $W^c(\Gamma)$ has normal form
\eqref{eq:NF-12C}. As in the generalized period-doubling case, we first deduce a property for these functions $h_{ij}$. More general than in the GPD case, we here have that $u(\tau,\xi_1,\xi_2) = u(\tau+T,-\xi_1,-\xi_2)$. This implies that 
$$\sum_{i,j} \frac{1}{i!j!} h_{ij}(\tau) \xi_1^i\xi_2^j = \sum_{i,j} \frac{1}{i!j!} h_{ij}(\tau+T) (-1)^{i+j} \xi_1^i\xi_2^j,$$
and thus
$$h_{ij}(\tau) = (-1)^{i+j}h_{ij}(\tau+T),$$
from which follows that $h_{ij}(\tau+T) = h_{ij}(\tau)$ for $i+j$ even and $h_{ij}(\tau+T) = -h_{ij}(\tau)$ for $i+j$ odd, for $\tau \in [0,T]$. Taking these periodicity properties into account, we can reduce our observations to the discussion of the interval $[0,T]$ instead of $[0,2T]$.

The coefficients $\alpha$, $a$ and $b$ arise
from the solvability conditions for the BVPs as integrals of
scalar products over the interval $[0,T]$. Specifically, those
scalar products involve among other things the quadratic and cubic terms of
\eqref{eq:MULT} near the periodic solution $u_0$, the
eigenfunction $v_1$. The adjoint eigenfunction $\varphi^*$ associated to the trivial
multiplier is the $T$-periodic solution of \eqref{eq:AdjEigenFunc2T-2_GPD}. The adjoint
eigenfunction $v_1^*$ is the unique solution of the problem
\begin{eqnarray}
&&\left\{\begin{array}{rcl}
 \dot{v}_1^*(\tau)+A^{\rm T}(\tau)v_1^* & = & 0,\ \tau \in [0,T], \\
 v_1^*(T)+v_1^*(0) & = & 0 \\
 \int_{0}^{T} {\langle v_1^*,v_2\rangle d\tau} -1 & = & 0.
\end{array} \right.
\label{eq:AdjEigenFunc_12C}
\end{eqnarray}
Note that we can indeed require this normalization since
$v_2$ is the last generalized eigenfunction of the original problem and therefore not orthogonal to all the eigenfunctions
of the adjoint problem. We further define the generalized adjoint eigenfunction $v_2^*$ as the
unique solution of
\begin{eqnarray}
&&\left\{\begin{array}{rcl}
 \dot{v}_2^*(\tau)+A^{\rm T}(\tau)v_2^* - v_1^* & = & 0,\ \tau \in [0,T], \\
 v_2^*(T)+v_2^*(0) & = & 0 \\
 \int_{0}^{T} {\langle v_2,v_2^*\rangle d\tau} & = & 0,
\end{array} \right.
\label{eq:GenAdjEigenFunc_12C}
\end{eqnarray}
since, as above, $v_1^*$ is not orthogonal to $v_2$. Moreover, we have
\begin{multline*}
 \int_{0}^{T} {\langle v_2^*,v_1 \rangle d\tau} = - \int_{0}^{T} {\langle v_2^*,\left(\dd{}{\tau}-A(\tau)\right) v_2 \rangle
 d\tau}\\
 = \int_{0}^{T} {\langle \left(\dd{}{\tau}+A^T(\tau)\right)v_2^*,v_2 \rangle d\tau} =\int_{0}^{T} {\langle v_1^*,v_2\rangle d\tau} = 1.
\end{multline*}

Note that
\begin{eqnarray}
 && \int_{0}^{T} {\langle v_2,v_1\rangle d\tau} = \int_{0}^{T} {\langle v_1^*,v_1\rangle d\tau} = \int_{0}^{T} {\langle v_2^*,v_2\rangle d\tau} = 0. \label{eq:Ortho2_12C}
\end{eqnarray}

To derive the normal form coefficients, we proceed as in the previous sections, namely, we substitute \eqref{eq:CM_12C} into
\eqref{eq:P.1}, and use \eqref{eq:MULT} as well as
\eqref{eq:NF-12C} and \eqref{eq:H_12C}.

By collecting the $\xi^0$-terms we get the identity
$$
\dot{u}_0=F(u_0).
$$

The linear terms provide two other identities, namely
$$
\dot{v}_1=A(\tau) v_1, \qquad v_1+\dot{v}_2 = A(\tau) v_2,
$$
in correspondance with \eqref{eq:EigenFunc_12C} and \eqref{eq:EigenFunc_12C_2}.

Collecting the $\xi_2^2$-terms gives us an equation for $h_{02}$
\begin{equation*} 
\dot{h}_{02}-A(\tau) h_{02} = B(\tau;v_2,v_2)-2 h_{11},
\end{equation*}
to be solved in the space of functions satisfying
$h_{02}(T)=h_{02}(0)$. In this space, the differential operator
$\frac{d}{d\tau}-A(\tau)$ is singular and its null-space is
spanned by $\dot u_0$. The Fredholm solvability
condition
\[
 \int_0^{T} \langle \varphi^*, B(\tau;v_2,v_2)-2 h_{11} \rangle\; d\tau=0
\]
gives us a normalization condition for function $h_{11}$, i.e.
\begin{equation*}
  \int_0^{T} \langle \varphi^*, h_{11} \rangle\;d\tau = \frac{1}{2} \int_0^{T} \langle \varphi^*, B(\tau;v_2,v_2) \rangle\;d\tau.
\end{equation*}

By collecting the $\xi_1\xi_2$-terms we obtain the differential equation for $h_{11}$
\begin{equation*}
\dot{h}_{11}- A(\tau) h_{11}=B(\tau;v_1,v_2)-h_{20},
\end{equation*}
which must be solved in the space of functions
satisfying $h_{11}(T)=h_{11}(0)$. The orthogonality condition
\[
 \int_0^{T} \langle \varphi^*, B(\tau;v_1,v_2)-h_{20} \rangle\; d\tau=0
\]
gives us a normalization condition for $h_{20}$, i.e.
\begin{equation}\label{eq:norm_h20_12C}
  \int_0^{T} \langle \varphi^*, h_{20} \rangle\;d\tau = \int_0^{T} \langle \varphi^*, B(\tau;v_1,v_2) \rangle\;d\tau.
\end{equation}

By collecting the $\xi_1^2$-terms we find an equation for $h_{20}$
\begin{equation} \label{eq:xi1^2}
\dot{h}_{20}-A(\tau) h_{20} = B(\tau;v_1,v_1)-2\alpha  \dot{u}_0,
\end{equation}
to be solved in the space of functions satisfying
$h_{20}(T)=h_{20}(0)$. In this space, the differential operator
$\frac{d}{d\tau}-A(\tau)$ is singular and its null-space is
spanned by $\dot u_0$. The Fredholm solvability condition
\[
 \int_0^{T} \langle \varphi^*, B(\tau;v_1,v_1) - 2 \alpha \dot u_0\rangle\; d\tau=0
\]
leads to the expression
\begin{equation}\label{eq:alfa_12C}
 \alpha = \frac{1}{2}\int_0^{T} \langle \varphi^*, B(\tau;v_1,v_1) \rangle\;d\tau,
\end{equation}
where $v_1$ is defined in \eqref{eq:EigenFunc_12C}.

With $\alpha$ defined in this way we have to find a normalization
condition which makes the  solution of \eqref{eq:xi1^2} unique. Indeed, if $h_{20}$ is a solution of \eqref{eq:xi1^2} with $h_{20}(T)=h_{20}(0)$,
also $\tilde h_{20}=h_{20}+\varepsilon_1 \dot u_0$ is a solution, since $\dot u_0$ spans the kernel of
the operator $\frac{d}{dt}-A(\tau)$ in the space of $T$-periodic
functions. The projection along the space generated by $\dot
u_0$ is fixed by solvability condition
\eqref{eq:norm_h20_12C}. So $h_{20}$ can be found as the unique solution of the BVP
\begin{equation} \label{eq:h20_12C}
 \left\{\begin{array}{rcl}
  \dot{h}_{20}-A(\tau) h_{20} - B(\tau;v_1,v_1)+2\alpha  F(u_0) &=& 0,\ \tau \in [0,T],\\
  h_{20}(T)-h_{20}(0) & = & 0, \\
  \int_0^{T} \langle \varphi^*, h_{20} \rangle\;d\tau &=& \int_0^{T} \langle \varphi^*, B(\tau;v_1,v_2) \rangle\;d\tau.
 \end{array}\right.
\end{equation}

In the line of the previous observations, we can define $h_{11}$ as the unique solution of the BVP
\begin{equation} \label{eq:h11_12C}
 \left\{\begin{array}{rcl}
  \dot{h}_{11}-A(\tau) h_{11} - B(\tau;v_1,v_2) + h_{20} &=& 0,\ \tau \in [0,T]\\
  h_{11}(T)-h_{11}(0) & = & 0, \\
  \int_0^{T} \langle \varphi^*, h_{11} \rangle\;d\tau &=&\frac{1}{2} \int_0^{T} \langle \varphi^*, B(\tau;v_2,v_2) \rangle\;d\tau,
 \end{array}\right.
\end{equation}
with $h_{20}$ defined in \eqref{eq:h20_12C}.

By collecting the $\xi_1^3$-terms we get an equation for $h_{30}$
\begin{equation} \label{eq:xi1^3}
 \dot{h}_{30}- A(\tau) h_{30} = C(\tau;v_1,v_1,v_1)+3 B(\tau;v_1,h_{20}) -6 a v_2 - 6 \alpha  \dot{v}_1,
\end{equation}
which again must be solved in the space of functions
satisfying $h_{30}(T)=-h_{30}(0)$. Taking the integral condition of \eqref{eq:AdjEigenFunc_12C} into account, we obtain
\begin{equation} \label{eq:a_12C}
 a=\frac{1}{6}\int_0^{T} \langle v_1^*, C(\tau;v_1,v_1,v_1)+3 B(\tau;v_1,h_{20}) - 6 \alpha  A(\tau) v_1 \rangle\;d\tau,
\end{equation}
where $\alpha$ is defined by \eqref{eq:alfa_12C}, $h_{20}$ is the solution of \eqref{eq:h20_12C} and $v_1$ and $v_1^*$ are
defined in \eqref{eq:EigenFunc_12C} and \eqref{eq:EigenFunc_12C_2}, respectively. As remarked before, it is important to note that if $h_{30}$ is a solution of
\eqref{eq:xi1^3} with $h_{30}(T)=h_{30}(0)$, also $\tilde h_{30} = h_{30} + \epsilon^I_{30}
v_1$ is a solution, since $v_1$ spans the nullspace of
the operator $\frac{d}{dt}-A(\tau)$. 

Collecting the $\xi_1^2\xi_2$-terms we get the equation for
$h_{21}$
\begin{equation} \label{eq:r12_h21}
 \dot{h}_{21}-A(\tau) h_{21}=-h_{30}-2 b v_2-2 \alpha\dot{v}_2-2\alpha v_1+C(\tau;v_1,v_1,v_2)+ B(\tau;h_{20},v_2)+ 2 B(\tau;h_{11},v_1),
\end{equation}
to be solved in the space of functions satisfying $h_{21}(T)=-h_{21}(0)$. The solvability of this equation implies
\[
 \int_0^{T} \langle v_1^*,-h_{30}-2b v_2-2 \alpha\dot{v}_2-2\alpha v_1+C(\tau;v_1,v_1,v_2)+ B(\tau;h_{20},v_2)+ 2 B(\tau;h_{11},v_1)\rangle d \tau = 0.
\]
Notice that the $b \xi_1^2\xi_2$-term in normal form
\eqref{eq:NF-12C} is resonant: in fact we cannot use the freedom
on $h_{30}$ to make the normal form parameter $b$ zero since 
\[
 \int_0^{T} \langle v_1^*, \tilde h_{30} \rangle d \tau =  \int_0^{T} \langle v_1^*, h_{30} + \varepsilon^{I}_{30} v_1 \rangle d \tau
 = \int_0^{T} \langle v_1^*, h_{30}\rangle d \tau.
\]

Using the normalization from \eqref{eq:AdjEigenFunc_12C} and (\ref{eq:Ortho2_12C}) gives us the following expression for $b$
\[
 b=\frac{1}{2}\int_0^{T} \langle v_1^*,-2\alpha A(\tau) v_2+C(\tau;v_1,v_1,v_2)+ B(\tau;h_{20},v_2)+ 2B(\tau;h_{11},v_1)\rangle d \tau - \frac{1}{2}\int_0^{T} \langle v_1^*,h_{30} \rangle d \tau.
\]
There is no need to compute explicitly the cubic expansion of the center manifold since the last term of this sum can be rewritten as
\begin{multline*}
 \int_0^T\langle \left(\frac{d}{d\tau}+A^T(\tau)\right)v_2^*,h_{30}\rangle d \tau
 = \int_0^T\langle v_2^*,\left(-\frac{d}{d\tau}+A(\tau)\right)h_{30}\rangle d \tau\\
 = -\int_0^T\langle v_2^*, C(\tau;v_1,v_1,v_1)+3 B(\tau;v_1,h_{20}) -6 a v_2 - 6 \alpha  A v_1 \rangle d \tau\\
 = -\int_0^T\langle v_2^*, C(\tau;v_1,v_1,v_1)+3 B(\tau;v_1,h_{20}) - 6 \alpha  A v_1 \rangle d \tau,
\end{multline*}
obtaining
\begin{multline} \label{eq:b_12C}
 b=\frac{1}{2} \int_0^{T} \langle v_1^*,-2\alpha A(\tau) v_2+C(\tau;v_1,v_1,v_2)+ B(\tau;h_{20},v_2)+ 2B(\tau;h_{11},v_1)\rangle d \tau \\
 + \frac{1}{2}\int_0^T\langle v_2^*, C(\tau;v_1,v_1,v_1)+3 B(\tau;v_1,h_{20}) - 6 \alpha  A v_1 \rangle d \tau,
\end{multline}
where $h_{20}$ is defined in \eqref{eq:h20_12C} and $\alpha$
calculated in \eqref{eq:alfa_12C}. Notice that, since $h_{30}$ appears on the RHS
of equation \eqref{eq:r12_h21}, we have two degrees of freedom on $h_{21}$. In fact, if $h_{21}$ is a solution of \eqref{eq:r12_h21}, also
$\tilde h_{21} = h_{21} + \varepsilon_{21}^I v_1 +
\varepsilon_{30}^{I} v_2$ is a solution since
\[
 \dd{\tilde h_{21}}{t}-A(\tau) \tilde h_{21} =  \dd{h_{21}}{t}-A(\tau) h_{21} +  \varepsilon^{I}_{30} \left(\dd{v_2}{t}-A(\tau) v_2\right)
 = \dd{h_{21}}{t}-A(\tau) h_{21} - \varepsilon^{I}_{30} v_1.
\]

By collecting the $\xi_1 \xi_2^2$-terms we get the equation for
$h_{12}$
\[
 \dot{h}_{12}-A(\tau) h_{12}= C(\tau,v_1,v_2,v_2)+B(\tau,v_1,h_{02})+2 B(\tau,v_2,h_{11}) - 2 h_{21},
\]
to be solved in the space of functions satisfying $h_{12}(T)=-h_{12}(0)$. The Fredholm
solvability condition implies that
\[
 \int_0^{T} \langle v_1^*, C(\tau,v_1,v_2,v_2)+B(\tau,v_1,h_{02})+2 B(\tau,v_2,h_{11})- 2 h_{21}\rangle d \tau = 0.
\]
As mentioned before, $h_{21}$ has a component in the direction of $v_2$, which is not orthogonal to the adjoint eigenfunction $v_1^*$, so it
is possible to impose
\[
 \int_0^{T} \langle v_1^*, h_{21}\rangle d \tau =  \frac{1}{2}\int_0^{T} \langle v_1^*, C(\tau,v_1,v_2,v_2)+B(\tau,v_1,h_{02})+2 B(\tau,v_2,h_{11})\rangle d \tau.
\]
This condition defines $\varepsilon_{30}^{I}$ uniquely; the
freedom of $\varepsilon_{21}^{I}$ gives us as usual another freedom
on $h_{12}$ in the direction of $v_2$.

Finally, collecting the $\xi_2^3$-terms gives
\[
 \dot{h}_{03}-A(\tau) h_{03}= C(\tau, v_2,v_2,v_2) + 3 B(v_2, h_{02}) - 3 h_{12},
\]
to be solved in the space of functions satisfying $h_{03}(T)=-h_{03}(0)$. The Fredholm
solvability condition is
\[
 \int_0^{T} \langle v_1^*, C(\tau, v_2,v_2,v_2) + 3 B(v_2, h_{02}) - 3 h_{12} \rangle d \tau = 0,
\]
which can be satisfied imposing
\[
 \int_0^{T} \langle v_1^*, h_{12} \rangle d \tau = \frac{1}{3} \int_0^{T} \langle v_1^*, C(\tau, v_2,v_2,v_2) + 3 B(v_2, h_{02}) \rangle d \tau.
\]
This last condition determines the value of $\varepsilon_{21}^{I}$ and thus the third order center
manifold expansion is uniquely determined. However, since this third order expansion
of the center manifold is not needed for the computation of the critical coefficients, we
don't write down those conditions.

\subsection{Strong resonance 1:3 bifurcation} \label{Section:Res1:3}
The three-dimensional critical center manifold $W^c(\Gamma)$ at
the {\tt R3} bifurcation can be parametrized locally by
$(\tau,\xi)$ as
\begin{equation}
 u=u_0+\xi v + \bar{\xi} \bar{v}(\tau) + H(\tau,\xi,\bar{\xi}),\ \ \tau \in [0,3T],\ \xi \in {\mathbb C}, \label{eq:CM_R3}
\end{equation}
where the real function $H$ satisfies
$H(3T,\xi,\bar{\xi})=H(0,\xi,\bar{\xi})$ and has the Taylor
expansion
\begin{eqnarray}
 H(\tau,\xi,\bar{\xi}) & = & \frac{1}{2}h_{20} \xi^2
 + h_{11}\xi \bar{\xi}
 +\frac{1}{2}h_{02} \bar{\xi}^2
 + \frac{1}{6} h_{30}\xi^3 +
 \frac{1}{2}h_{21}\xi^2 \bar{\xi} \nonumber \\
 &+& \frac{1}{2}h_{12}\xi \bar{\xi}^2
 + \frac{1}{6}h_{03}\bar{\xi}^3
 +  O(|\xi|^4), \label{eq:H_R3}
\end{eqnarray}
with $h_{ij}(3T)=h_{ij}(0)$ and $h_{ij}=\bar{h}_{ji}$ so that
$h_{ii}$ is real. The eigenfunction $v$ is defined
as the unique solution of the BVP
\begin{equation}
\left\{\begin{array}{rcl}
\dot{v}(\tau)-A(\tau)v & = & 0,\ \tau \in [0,T], \\
v(T)- e^{i \frac{2\pi}{3}} v(0) & = & 0,\\
\int_{0}^{T} {\langle v,v\rangle d\tau} - 1 & = & 0,
\end{array}
\right. \label{eq:EigenFunc_R3}
\end{equation}
and extended on the interval $[0,3T]$ using the equivariance
property of the normal form, i.e.
\begin{eqnarray*}
v(\tau+T):= e^{i \frac{2\pi}{3}} v(\tau) \mbox{ and } v(\tau+2T):= e^{i \frac{4\pi}{3}} v(\tau) \mbox{ for } \tau \in [0,T].
\end{eqnarray*}
%
The definition of the conjugate eigenfunction $\bar v$ follows immediately. These functions exist due to Lemma 2 of \cite{Io:88}.

As usual the functions $h_{ij}$ can be found by solving
appropriate BVPs, assuming that \eqref{eq:P.1} restricted to
$W^c(\Gamma)$ has the periodic normal form \eqref{eq:NF-13C}. Also here we can deduce a property for the functions $h_{ij}$. The definition of $v(\tau)$ in $[0,3T]$ states that $u(\tau,\xi,\bar{\xi}) = u(\tau+T,e^{-i2\pi/3}\xi,e^{i2\pi/3}\bar{\xi})$. Therefore,
$$\sum_{k,l} \frac{1}{k!l!} h_{kl}(\tau) \xi^k\bar{\xi}^l = \sum_{k,l} \frac{1}{k!l!} h_{kl}(\tau+T) (e^{-i2\pi/3})^k \xi^k (e^{i2\pi/3})^l\bar{\xi}^l,$$
and thus
$$h_{kl}(\tau)= h_{kl}(\tau+T) (e^{-i2\pi/3})^k (e^{i2\pi/3})^l,$$
for $\tau \in [0,T]$. This for example implies that $h_{kk}$ is T-periodic. These periodicity properties allow us to just concentrate on the interval $[0,T]$

The adjoint eigenfunction $\varphi^*$ corresponding to the trivial multiplier is the unique $T$-periodic solution of BVP \eqref{eq:AdjEigenFunc2T-2_GPD}. The adjoint eigenfunction $v^*$ satisfies
\begin{equation}
\left\{\begin{array}{rcl}
 \dot{v}^*(\tau)+A^{\rm T}(\tau)v^* & = & 0,\ \tau \in [0,T], \\
 v^*(T)-e^{i \frac{2\pi}{3}} v^*(0) & = & 0,\\
 \int_{0}^{T} {\langle v^*, v\rangle d\tau} - 1 & = & 0.
\end{array}
\right. \label{eq:AdjEigenFunc_R3}
\end{equation}
Similarly, we obtain $\bar v^*$. 

We now write down the homological equation and
compare term by term. The constant and linear terms give us
as usual
\[
 \dot u_0=F(u_0), \qquad \dot v - A(\tau) v = 0, \qquad \dot {\bar v} - A(\tau) \bar v =0.
\]

From the $\xi^2$- or $\bar \xi^2$-terms we obtain the following equation (or
its complex conjugate)
\[
\dot{h}_{20} - A(\tau) h_{20}= B(\tau;v,v)-2 \bar{b} \bar{v},
\]
to be solved in the space of functions satisfying $h_{20}(
T)= e^{i \frac{4\pi}{3}}h_{20}(0)$. In this space the operator
$\frac{d}{d\tau}-A(\tau)$ has a range space with codimension one which is orthogonal to $\bar v^*$. So one Fredholm
solvability condition is involved, namely
\[
 \int_0^{T} \langle \bar v^*, B(\tau;v,v)-2 \bar{b} \bar{v} \rangle d \tau = 0,
\]
which makes it possible to obtain the value of the coefficient $b$. In fact,
\begin{equation}\label{eq:b-R3}
 b=\frac{1}{2} \int_0^T \langle  v^*, B(\tau;\bar v,\bar v) \rangle d\tau.
\end{equation}
Using this value for $b$ we can find $h_{20}$ as the unique
solution of the BVP
\begin{equation}
\left\{\begin{array}{rcl}
 \dot{h}_{20} - A(\tau) h_{20} - B(\tau; v,v) +  2 \bar{b} \bar{v} & = & 0,\ \tau \in [0,T], \\
 h_{20}(T)- e^{i\frac{4 \pi}{3}}h_{20}(0) & = & 0,\\
 \int_{0}^{T} {\langle \bar v^*, h_{20}\rangle d\tau} & = & 0.
\end{array}
\right. \label{eq:h20-R3}
\end{equation}

By collecting the $\xi\bar\xi$-terms we obtain an equation for
$h_{11}$
\[
\dot{h}_{11}-A(\tau) h_{11}=B(\tau;v,\bar{v})-\alpha _1 \dot{u}_0,
\]
to be solved in the space of functions satisfying
$h_{11}(T)=h_{11}(0)$. The Fredholm solvability condition with $\varphi^*$ gives us the value of
$\alpha_1$
\begin{equation}\label{eq:a1-R3}
 \alpha_1=\int_0^T \langle \varphi^*, B(\tau;v,\bar v) \rangle d\tau.
\end{equation}
With $\alpha_1$ defined in this way, let $h_{11}$ be the unique
solution of the BVP
\begin{equation}
\left\{\begin{array}{rcl}
 \dot{h}_{11}-A(\tau) h_{11}-B(\tau;v,\bar{v})+\alpha _1 \dot{u}_0 & = & 0,\ \tau \in [0,T], \\
 h_{11}(T)-h_{11}(0) & = & 0,\\
 \int_{0}^{T} {\langle \varphi^*, h_{11}\rangle d\tau} & = & 0.
\end{array}
\right. \label{eq:h11-R3}
\end{equation}

Finally, collecting the $\xi^2\bar\xi$-terms gives an equation
for $h_{21}$
\[
 \dot{h}_{21}-A(\tau) h_{21} = C(\tau;v,v,\bar{v})+2 B(\tau;v, h_{11})+ B(\tau; \bar{v}, h_{20}) - 2 c v - 2 \bar{b} h_{02}- 2 \alpha_1\dot{v},
\]
to be solved in the space of the functions satisfying
$h_{21}(T)=e^{i \frac{2\pi}{3}}h_{21}(0)$. Therefore, there must hold that
\[
\int_0^{T} \langle v^*, C(\tau;v,v,\bar{v})+2 B(\tau;v, h_{11})+
B(\tau; \bar{v}, h_{20}) - 2 c v - 2 \bar{b} h_{02}- 2 \alpha_1\dot{v}
\rangle d \tau = 0,
\]
such that parameter $c$ of \eqref{eq:NF-13C} is determined by 
\[
 c = \frac{1}{2}\int_0^T \langle v^*, C(\tau;v,v,\bar{v})+2 B(\tau;v, h_{11})+ B(\tau; \bar{v}, h_{20}) - 2 \alpha_1A v \rangle d \tau,
\]
where $\alpha_1$ and $b$ are defined by
\eqref{eq:a1-R3} and \eqref{eq:b-R3}, respectively, and $v$, $h_{11}$ and $h_{20}$
are the unique solutions of the BVPs \eqref{eq:EigenFunc_R3},
\eqref{eq:h11-R3} and \eqref{eq:h20-R3}.\\

Since we have to check the solvability of all the equations up to the maximal order of the normal form, we also collect the $\xi^3$-terms and obtain
\[
 \dot{h}_{30}-A(\tau) h_{30} = C(\tau;v,v,v)+3 B(\tau;v, h_{20}) - 6 \bar{b} h_{11}- 6 \alpha_2\dot{u}_0,
\]
to be solved in the space of the functions satisfying
$h_{30}(T)=h_{30}(0)$. Therefore, there must hold that
\[
\int_0^{T} \langle \varphi^*,C(\tau;v,v,v)+3 B(\tau;v, h_{20}) - 6 \bar{b} h_{11}- 6 \alpha_2\dot{u}_0
\rangle d \tau = 0,
\]
which determines the value of $\alpha_2$, namely
\[\alpha_2=\int_0^{T} \langle \varphi^*,C(\tau;v,v,v)+3 B(\tau;v, h_{20})\rangle d \tau.
\]

Remark that as in the Chenciner case $v$ is not uniquely determined. Indeed, when $v$ is a solution of (\ref{eq:EigenFunc_R3}) and $\gamma \in \mathbb{C}$ with $\gamma^{\rm H}\gamma = 1$, then $\gamma v$ is also a solution. Then the adjoint function is given by $\gamma v^*$, and $b$ and $h_{20}$ are replaced by $\bar{\gamma}^3b$ and $\gamma^2h_{20}$, respectively. The normal form coefficient $c$ stays the same. However, the normal form coefficient $b$ is multiplied with $\bar{\gamma}^3$. This doesn't affect the bifurcation analysis since this normal form coefficient only has to be different from zero, and obviously $\gamma\neq 0$. Moreover, the analysis around the bifurcation point is independent from the sign of $b$.
\medskip

%

\subsection{Strong resonance 1:4 bifurcation} \label{Section:Res1:4}
The three-dimensional critical center manifold $W^c(\Gamma)$ at
the {\tt R4} bifurcation can be parametrized locally by
$(\tau,\xi)$ as
\begin{equation}
 u =u_0+ \xi v + \bar{\xi} \bar{v}(\tau) + H(\tau,\xi,\bar{\xi}),\ \ \tau
\in [0,4T],\ \xi \in {\mathbb C}, \label{eq:CM_R4}
\end{equation}
where the real function $H$ satisfies
$H(4T,\xi,\bar{\xi})=H(0,\xi,\bar{\xi})$ and has the Taylor
expansion
\begin{eqnarray}
H(\tau,\xi,\bar{\xi}) & = & \frac{1}{2}h_{20} \xi^2 + h_{11} \xi
\bar{\xi} +\frac{1}{2}h_{02} \bar{\xi}^2
+  \frac{1}{6}  h_{30}\xi^3 +  \frac{1}{2}h_{21}\xi^2 \bar{\xi} \nonumber \\
&+& \frac{1}{2}h_{12}\xi \bar{\xi}^2 +
\frac{1}{6}h_{03}\bar{\xi}^3
  +  O(|\xi|^4), \label{eq:H_R4}
\end{eqnarray}
with $h_{ij}(4T)=h_{ij}(0)$ and $h_{ij}=\bar{h}_{ji}$ so that
$h_{ii}$ is real, while $v$ is defined by
\begin{equation}
\left\{\begin{array}{rcl}
\dot{v}-A(\tau)v  & = & 0,\ \tau \in [0,T], \\
v(T)-e^{i \frac{\pi}{2}}v(0) & = & 0,\\
\int_{0}^{T} {\langle v,v\rangle d\tau} - 1 & = & 0,
\end{array}
\right. \label{eq:EigenFunc_R4}
\end{equation}
extended on $[0,4T]$ using the equivariance property of the
normal form, i.e.
\begin{eqnarray*}
 v(\tau+T)&:=&e^{i \frac{\pi}{2}} v(\tau) = i v(\tau), \\
 v(\tau+2T)&:=&e^{i \pi} v(\tau) = - v(\tau), \\
 v(\tau+3T)&:=&e^{i \frac{3 \pi}{2}} v(\tau) = -i v(\tau),
\end{eqnarray*}
for $\tau \in [0,T]$.

The definition of the conjugate $\bar v$ follows from this. These functions exist due to Lemma 2 of \cite{Io:88}. As usual the
functions $h_{ij}$ can be found by solving appropriate BVPs,
assuming that \eqref{eq:P.1} restricted to $W^c(\Gamma)$ has the
periodic normal form \eqref{eq:NF-14C}. Similar to the R1:3 case, there holds that 
$$h_{kl}(\tau)= h_{kl}(\tau+T) (e^{-i\pi/2})^k (e^{i\pi/2})^l,$$
for $\tau \in [0,T]$.

The adjoint eigenfunction $\varphi^*$ is defined
by the $T$-periodic solution of \eqref{eq:AdjEigenFunc2T-2_GPD} and $v^*$ satisfies
\begin{equation}
\left\{\begin{array}{rcl}
 \dot{v}^*(\tau)+A^{\rm T}(\tau)v^* & = & 0,\ \tau \in [0,T], \\
 v^*(T)-e^{i \frac{\pi}{2}}v^*(0) & = & 0,\\
 \int_{0}^{T} {\langle v^*,v\rangle d\tau} - 1 & = &
 0.
\end{array}
\right. \label{eq:AdjEigenFunc_R4}
\end{equation}
Similarly, we obtain $\bar v^*$. 

The constant and the linear terms give the identities
\[
 \dot u_0=F(u_0), \qquad \dot v - A(\tau) = 0, \qquad \dot {\bar v} - A(\tau) \bar v  = 0.
\]
From the $\xi^2$- or $\bar \xi^2$-terms the following equation (or
its complex conjugate) is obtained
\begin{eqnarray}\label{R4h20}
\dot h_{20}-A(\tau)h_{20}=B(\tau;v,v).
\end{eqnarray}
Notice that this equation is non-singular in the space of
functions satisfying $h_{20}(T)= -h_{20}(0)$. So $h_{20}$ is given as the unique
solution of the BVP
\begin{equation}
\left\{\begin{array}{rcl}
 \dot h_{20}-A(\tau) h_{20} - B(\tau;v,v) & = & 0,\ \tau \in [0,T], \\
 h_{20}(T)+h_{20}(0) & = & 0.
\end{array}
\right. \label{eq:h20-R4}
\end{equation}

By collecting the $\xi\bar\xi$-terms we obtain an equation for
$h_{11}$
\[
 \dot{h}_{11}-A(\tau) h_{11}=B(\tau;v,\bar{v})-\alpha _1 \dot{u}_0,
\]
to be solved in the space of functions satisfying
$h_{11}(T)=h_{11}(0)$. The Fredholm
solvability condition
\[
 \int_0^{T} \langle \varphi^*,B(\tau;v,\bar{v})-\alpha _1 \dot{u}_0\rangle d \tau = 0
\]
gives us the possibility to obtain the value of $\alpha_1$, namely given by
\eqref{eq:a1-R3}. With this value of $\alpha_1$, $h_{11}$ is the
unique solution of BVP \eqref{eq:h11-R3}.

The $\xi\bar\xi^2$-terms give an equation for $h_{12}$
\[
\dot h_{12}-A(\tau) h_{12}=C(\tau;v,\bar{v},\bar
v)+B(\tau;v,h_{02})+2B(\tau;\bar{v},h_{11})-2 \bar{c} \bar{v} -2
\alpha _1 \dot{\bar{v}},
\]
to be solved in the space of functions satisfying
$h_{12}(T)=-i h_{12}(0)$. The Fredholm
solvability condition leads us to the value
of $\bar c$, namely
\begin{equation}\label{eq:c_R4}
 \bar c = \frac{1}{2}\int_0^T \langle \bar v^*, C(\tau;v,\bar{v},\bar v)+B(\tau;v,h_{02})+2B(\tau;\bar{v},h_{11}) - 2 \alpha _1 A(\tau) \bar{v} \rangle d\tau,
\end{equation} 
where $\alpha_1$ is defined in \eqref{eq:a1-R3}, and $v$, $h_{11}$ and
$h_{02}$ are the unique solutions of the BVPs
\eqref{eq:EigenFunc_R4}, \eqref{eq:h11-R3} and the complex conjugate of \eqref{eq:h20-R4}. Taking the complex conjugate gives us the critical coefficient $c$.

Finally, by collecting the $\bar\xi^3$-terms we obtain an equation
for $h_{03}$
\[
 \dot h_{03} -A(\tau) h_{03} =C(\tau;\bar{v},\bar{v},\bar{v})+3 B(\tau;\bar{v},h_{02})-6 d v,
\]
to be solved in the space of the functions satisfying
$h_{03}(T)=ih_{03}(0)$. The non-trivial Fredholm solvability
condition
\[
\int_0^{T} \langle v^*, C(\tau;\bar{v},\bar{v},\bar{v})+3
B(\tau;\bar{v},h_{02})-6 d v \rangle d \tau = 0
\]
gives us the value of the critical coefficient $d$ of
\eqref{eq:NF-14C}, namely
\begin{equation}
 d = \frac{1}{6}\int_0^T \langle v^*, C(\tau;\bar{v},\bar{v},\bar{v})+3 B(\tau;\bar{v},h_{02}) \rangle
 d\tau.
\end{equation} \label{eq:d_R4}
So we finally obtain the value of 
\[
a=\frac{c}{|d|}
\]
which makes it possible to understand which bifurcation scenario of the {\tt R4} resonance we have.\\

Also in this case $v$ is not uniquely determined, since for every $\gamma \in \mathbb{C}$ with $\gamma^{\rm H}\gamma = 1$,  $\gamma v$ is a solution. Then the adjoint eigenfunction is given by $\gamma v^*$, and $h_{20}$ is replaced by $\gamma^2h_{20}$. The normal form coefficient $c$ stays the same, but instead of $d$ we get $\bar{\gamma}^4d$. However, this again doesn't influence the bifurcation analysis since the study is determined by the above defined $a$ for which we need only $|d|$.
\medskip


\subsection{Fold-Flip bifurcation} \label{Section:FoldFlip}
The three-dimensional critical center manifold $W^c(\Gamma)$ at
the {\tt LPPD} bifurcation can be parametrized locally by
$(\tau,\xi)$ as 
\begin{equation}
u=u_0+\xi_1 v_1 + \xi_2 v_2 + H(\tau,\xi),\ \ \tau \in [0,2T],\
\xi=(\xi_1,\xi_2) \in {\mathbb R^2}, \label{eq:CM_FF}
\end{equation}
where $H$ satisfies $H(2T,\xi)=H(0,\xi)$ and has the Taylor
expansion 
\begin{multline}
 H(\tau,\xi)=\frac{1}{2}h_{20}\xi_1^2 + h_{11}\xi_1 \xi_2+ \frac{1}{2}h_{02}\xi_2^2 
 + \\
    \frac{1}{6}h_{30}\xi_1^3+\frac{1}{2}h_{21}\xi_1^2\xi_2+\frac{1}{2}h_{12}\xi_1\xi_2^2+\frac{1}{6}h_{03}\xi_2^3
    + O(\|\xi\|^4) , \label{eq:H_FF}
\end{multline}
while the eigenfunctions $v_1$ and $v_2$ are given by 
\begin{eqnarray} \label{eq:EigenFunc_FF}
&&\left\{\begin{array}{rcl}
\dot{v}_1-A(\tau)v_1 - F(u_0) & = & 0,\ \tau \in [0,T], \\
v_1(T)-v_1(0) & = & 0,\\
\int_{0}^{T} {\langle v_1,F(u_0)\rangle d\tau} & = & 0,\\
\end{array}
\right. 
\end{eqnarray}
and
\begin{eqnarray}\label{eq:EigenFunc_FF_2}
&&\left\{\begin{array}{rcl}
\dot{v}_2-A(\tau)v_2 & = & 0,\ \tau \in [0,T], \\
v_2(T) + v_2(0) & = & 0,\\
\int_{0}^{T} {\langle v_2,v_2\rangle d\tau} - 1  & = & 0\\
\end{array}
\right.
\end{eqnarray}
with
\begin{equation} \label{eq:2TEigenfunctions_FF}
v_1(\tau+T):=v_1(\tau) \mbox{ and } v_2(\tau +T):= -v_2(\tau) \mbox{ for } \tau \in [0,T].
\end{equation}

The functions $v_1$ and $v_2$ exist because of Lemma~2 and Lemma~5
of \cite{Io:88}. 
The functions $h_{ij}$ can be found by solving appropriate
BVPs, assuming that \eqref{eq:P.1} restricted to $W^c(\Gamma)$ has
the periodic normal form \eqref{eq:NF-FF}. Moreover, similar as before, $u(\tau,\xi_1,\xi_2) = u(\tau+T,\xi_1,-\xi_2)$ such that
$$h_{ij}(\tau)= (-1)^jh_{ij}(\tau+T),$$
for $\tau \in [0,T]$. As before, we will reduce all computations to the interval $[0,T]$.

The coefficients of the normal
form  arise from the solvability conditions for the BVPs as
integrals of scalar products over the interval $[0,T]$.
Specifically, those scalar products involve among other things the quadratic and
cubic terms of \eqref{eq:MULT} near the periodic solution $u_0$,
the generalized eigenfunction $v_1$ and eigenfunction $v_2$, and the
adjoint eigenfunctions $\varphi^*$, $v_1^*$ and $v_2^*$ as solution
of the problems
\begin{eqnarray}
&&\left\{\begin{array}{rcl}
 \dot{\varphi}^*+A^{\rm T}(\tau)\varphi^* & = & 0,\ \tau \in [0,T], \\
 \varphi^*(T)-\varphi^*(0) & = & 0, \label{eq:AdjEigenFunc_FF}\\
 \int_{0}^{T} {\langle \varphi^*,v_1 \rangle d\tau} -1 & = & 0,
 \end{array} \right.
 \end{eqnarray}
 \begin{eqnarray}
 &&\left\{\begin{array}{rcl}
 \dot{v}_1^*+A^{\rm T}(\tau) v_1^*+\varphi^* & = & 0,\ \tau \in [0,T], \\
 v_1^*(T)-v_1^*(0) & = & 0, \label{eq:AdjEigenFunc_FF_2}\\
 \int_{0}^{T} {\langle v_1^*,v_1 \rangle d\tau} & = & 0,
 \end{array} \right. 
 \end{eqnarray}
and
 \begin{eqnarray}
 &&\left\{\begin{array}{rcl}
 \dot{v}_2^*+A^{\rm T}(\tau) v_2^* & = & 0,\ \tau \in [0,T], \\
 v_2^*(T)+v_2^*(0) & = & 0, \label{eq:AdjEigenFunc_FF_3}\\
 \int_{0}^{T} {\langle v_2^*,v_2\rangle d\tau} -1 & = & 0.
 \end{array} \right.
\end{eqnarray}
Note that the integral conditions are possible due to the spectral
assumptions at the {\tt LPPD} point. The following orthogonality conditions hold automatically
\begin{eqnarray}
 \int_{0}^{T} {\langle \varphi^*,F(u_0)\rangle d\tau}
 =\int_{0}^{T} {\langle \varphi^*,v_2\rangle d\tau}
 =\int_{0}^{T} {\langle v_1^*,v_2\rangle d\tau}&&=\\
 \nonumber
 =\int_{0}^{T} {\langle v_2^*,v_1\rangle d\tau}
 =\int_{0}^{T} {\langle v_2^*,F(u_0)\rangle d\tau}&&= 0,
\label{eq:ortho_FF}
\end{eqnarray}
and since we have normalized the adjoint eigenfunction associated
to multiplier 1 with the last generalized eigenfunction, we
have for free that
\begin{multline}\label{eq:FF_normo}
 1 = \int_{0}^{T} {\langle \varphi^*,v_1 \rangle d\tau} = - \int_{0}^{T} {\langle \left( \dd{}{\tau} + A^T(\tau)\right)v_1^*,v_1 \rangle d\tau} \\
 = \int_{0}^{T} {\langle v_1^*,\left( \dd{}{\tau} - A(\tau)\right)v_1 \rangle d\tau}
 = \int_{0}^{T} {\langle v_1^*, F(u_0) \rangle d\tau}.
\end{multline}

As usual, to derive the normal form coefficients we write down the
homological equation and compare term by term. 

By collecting the constant and linear terms we get the identities
$$
 \dot{u}_0=F(u_0), \qquad \dot v_1=A(\tau) v_1 +F(u_0), \qquad \dot{v}_2=A(\tau) v_2,
$$
and the complex conjugate of the last equation.

By collecting the $\xi_1^2$-terms we find an equation for $h_{20}$
\begin{equation}\label{eq:h20_FF}
 \dot h_{20}-A(\tau) h_{20}=B(\tau;v_1,v_1)-2 a_{20} v_1-2 \alpha_{20} \dot u_0+2 \dot v_1,
\end{equation}
to be solved in the space of functions satisfying
$h_{20}(T)=h_{20}(0)$. In this space, the differential operator
$\frac{d}{d\tau}-A(\tau)$ is singular and its null-space is
spanned by $\dot u_0$. The Fredholm solvability
condition
\[
 \int_0^{T} \langle \varphi^*,B(\tau;v_1,v_1)-2 a_{20} v_1-2 \alpha_{20} \dot u_0+2 \dot v_1 \rangle\; d \tau =0
\]
gives us the possibility to calculate parameter $a_{20}$ of
our normal form, i.e.
\begin{equation} \label{eq:a20_FF}
 a_{20}=\frac{1}{2}\int_0^{T} \langle \varphi^*,B(\tau;v_1,v_1)+2 A(\tau) v_1 \rangle\; d \tau.
\end{equation}

With $a_{20}$ tuned in this way equation \eqref{eq:h20_FF} is
solvable, for any value of parameter $\alpha_{20}$. As in the Cusp of cycles case, we are free to choose parameter $\alpha_{20}$ as we want, and we take $\alpha_{20}=0$. This choice will not influence our final conclusion about the kind of situation we are in.

In order to make the solution of \eqref{eq:h20_FF} unique, we have to fix the projection on
the null-space of the operator, more specifically in the direction of $F(u_0)$. Therefore we impose the orthogonality with the adjoint generalized
eigenfunction $v_1^*$, and obtain $h_{20}$ as the unique solution
of the BVP
\begin{equation} \left\{\begin{array}{rcl}
 \dot h_{20}-A(\tau) h_{20}-B(\tau;v_1,v_1)+2 a_{20} v_1+2 \alpha_{20} F(u_0) - 2 A(\tau) v_1 - 2 F(u_0) & = & 0,\ \tau \in [0,T], \\
 h_{20}(T)-h_{20}(0) & = & 0, \\
 \int_{0}^{T} {\langle v_1^*,h_{20}\rangle d\tau} & = & 0.
\end{array}
\right. \label{eq:h20-FF}
\end{equation}

By collecting the $\xi_1 \xi_2$-terms we obtain a singular equation
for $h_{11}$
\[
\dot h_{11} - A(\tau) h_{11}=B(\tau;v_1, v_2)-b_{11} v_2+\dot v_2,
\]
to be solved in the space of the functions that satisfy
$h_{11}(T)=-h_{11}(0)$. The Fredholm solvability condition
\[
 \int_0^{T} \langle v_2^*,B(\tau;v_1, v_2)-b_{11} v_2+\dot v_2 \rangle\; d \tau = 0
\]
gives us the possibility, using (\ref{eq:EigenFunc_FF_2}) and (\ref{eq:AdjEigenFunc_FF_3}), to calculate
coefficient $b_{11}$ of our normal form
\begin{equation} \label{eq:b11_FF}
 b_{11}=\int_0^{T} \langle v_2^*,B(\tau;v_1, v_2)+A(\tau) v_2 \rangle\; d \tau.
\end{equation}
With $b_{11}$ defined in this way we can compute $h_{11}$ as the
unique solution of the BVP
\begin{equation}
\left\{\begin{array}{rcl}
 \dot h_{11}-A(\tau) h_{11}-B(\tau;v_1, v_2)+b_{11} v_2-A(\tau) v_2 & = & 0,\ \tau \in [0,T], \\
 h_{11}(T)+h_{11}(0) & = & 0, \\
 \int_{0}^{T} {\langle v_2^*,h_{11}\rangle d\tau} & = & 0.
\end{array}
\right. \label{eq:h11-FF}
\end{equation}

Collecting the $\xi_2^2$-terms gives a singular equation for
$h_{02}$
\begin{equation} \label{eq:h02_FF}
 \dot h_{02}-A(\tau) h_{02}= B(\tau;v_2,v_2)- 2 a_{02} v_1 - 2 \alpha_{02} \dot u_0
\end{equation}
and, since this equation has to be solvable, the following Fredholm solvability
condition is involved
\[
 \int_0^{T} \langle \varphi^*,B(\tau;v_2,v_2)- 2 a_{02} v_1 - 2 \alpha_{02} \dot u_0 \rangle\; d \tau = 0,
\]
from which we obtain an equation for $a_{02}$
\begin{equation} \label{eq:a02_FF}
 a_{02} = \frac{1}{2}\int_0^{T} \langle \varphi^*,B(\tau;v_2,v_2) \rangle\; d \tau.
\end{equation}
So (\ref{eq:h02_FF}) is
solvable, for any value of the parameter $\alpha_{02}$. For simplicity, we take $\alpha_{02}=0$.\\
Notice that also here, the solution of \eqref{eq:h02_FF} is orthogonal to the adjoint eigenfunction $\varphi^*$. Since we have to fix the projection in the direction of eigenfunction $\dot u_0$, we define $h_{02}$ as the unique solution of 
\begin{equation}\label{eq:h02-FF}
\left\{\begin{array}{rcl}
 \dot h_{02}-A(\tau) h_{02}-B(\tau;v_2,v_2)+2 a_{02} v_1+2 \alpha_{02} F(u_0) & = & 0,\ \tau \in [0,T], \\
 h_{02}(T)-h_{02}(0) & = & 0, \\
 \int_{0}^{T} {\langle v_1^*,h_{02}\rangle d\tau} & = & 0.
\end{array}
\right.
\end{equation}

By collecting the $\xi_1^3$-terms we get a singular equation for
$h_{30}$
\[
 \dot{h}_{30}-A(\tau) h_{30}= C(\tau;v_1,v_1,v_1)+3 B(\tau;h_{20},v_1)-6 a_{20} h_{20}-6 a_{30} v_1+3 \dot{h}_{20}-6 \alpha _{30} \dot{u}_0-6 \alpha _{20} \dot{v}_1,
\]
where the Fredholm solvability condition
\[
 \int_0^{T} \langle \varphi^*,C(\tau;v_1,v_1,v_1)+3 B(\tau;h_{20},v_1)-6 a_{20} h_{20}-6 a_{30} v_1+3 \dot{h}_{20}-6 \alpha _{30} \dot{u}_0-6 \alpha _{20} \dot{v}_1\rangle\; d
 \tau = 0
\]
gives us the value of $a_{30}$
\begin{eqnarray}
 a_{30} = \frac{1}{6}\int_0^{T} \langle \varphi^*&,& C(\tau;v_1,v_1,v_1)+3 B(\tau;h_{20},v_1)-6 a_{20} h_{20}
 \\ \nonumber
 &&+3 (A(\tau) h_{20}+B(\tau;v_1,v_1)) +6 (1-\alpha _{20}) A(\tau) v_1 \rangle\; d
 \tau - a_{20}. \label{eq:a30_FF}
\end{eqnarray}

Similarly, by collecting the $\xi_1^2\xi_2$-terms we get a singular equation for
$h_{21}$
\begin{eqnarray*}
 \dot{h}_{21}-A(\tau) h_{21}&=&C(\tau;v_1,v_1,v_2)+B(\tau;h_{20},v_2)+2 B(\tau;h_{11},v_1) -2 a_{20} h_{11} \\
&-&2 b_{11} h_{11}-2 b_{21} v_2 +2 \dot{h}_{11}-2 \alpha _{20}
\dot{v}_2,
\end{eqnarray*}
where the Fredholm solvability condition
\begin{eqnarray*}
 \int_0^{T} \langle v_2^*&,&C(\tau;v_1,v_1,v_2)+B(\tau;h_{20},v_2)+2 B(\tau;h_{11},v_1) \\
 &&-2 a_{20} h_{11}-2 b_{11} h_{11}-2 b_{21} v_2 +2 \dot{h}_{11}-2 \alpha _{20} \dot{v}_2 \rangle\; d
 \tau = 0
\end{eqnarray*}
produces the value of $b_{21}$
\begin{eqnarray} \label{eq:b21_FF} \\\nonumber
 b_{21} = \frac{1}{2}\int_0^{T} \langle v_2^*, C(\tau;v_1,v_1,v_2)+B(\tau;h_{20},v_2)+2 B(\tau;h_{11},v_1) -2 a_{20} h_{11} &&\\ \nonumber
 -2 b_{11} h_{11}  + 2 (A(\tau) h_{11}+B(\tau;v_1, v_2)) +2 (1-\alpha _{20}) A(\tau) v_2 \rangle&\;& d \tau -b_{11}.
\end{eqnarray}

By collecting the $\xi_1\xi_2^2$-terms we obtain a singular equation for
$h_{12}$
\begin{eqnarray*}
 \dot{h}_{12}-A(\tau) h_{12}=C(\tau;v_1,v_2,v_2)+B(\tau;h_{02},v_1)+2 B(\tau;h_{11},v_2)\\
 -2 b_{11} h_{02}-2 a_{02} h_{20}-2 a_{12} v_1+\dot{h}_{02}-2 \alpha _{12} \dot{u}_0-2 \alpha_{02} \dot{v}_1,
\end{eqnarray*}
where its solvability requires the following equality
\begin{eqnarray*}
 \int_0^{T} \langle \varphi^*&,&C(\tau;v_1,v_2,v_2)+B(\tau;h_{02},v_1)+2 B(\tau;h_{11},v_2)\\
 &&-2 b_{11} h_{02}-2 a_{02} h_{20}-2 a_{12} v_1+\dot{h}_{02}-2 \alpha _{12} \dot{u}_0-2 \alpha_{02} \dot{v}_1 \rangle\; d
 \tau = 0,
\end{eqnarray*}
such that
\begin{eqnarray} \label{eq:a12_FF} \\ \nonumber
 a_{12} = \frac{1}{2}\int_0^{T} \langle \varphi^*,C(\tau;v_1,v_2,v_2)+B(\tau;h_{02},v_1)+2 B(\tau;h_{11},v_2)-2 b_{11} h_{02}  && \\ \nonumber
 -2 a_{02} h_{20} +A(\tau) h_{02}+ B(\tau;v_2,v_2) - 2 \alpha_{02} A(\tau) v_1 \rangle&\;& d
 \tau - a_{02}.
\end{eqnarray}

Finally, the $\xi_2^3$-terms give us the value of the last needed critical coefficient. The equation
for $h_{03}$ is
\[
 \dot{h}_{03}-A(\tau) h_{03}=C(\tau;v_2,v_2,v_2)+3 B(\tau;h_{02},v_2)-6 a_{02} h_{11}-6 b_{03} v_2-6 \alpha _{02} \dot{v}_2,
\]
with Fredholm solvability condition
\[
 \int_0^{T} \langle v_2^*,C(\tau;v_2,v_2,v_2)+3 B(\tau;h_{02},v_2)-6 a_{02} h_{11}-6 b_{03} v_2-6 \alpha _{02} \dot{v}_2 \rangle\; d
 \tau = 0,
\]
and thus
\begin{equation} \label{eq:b03_FF}
 b_{03} = \frac{1}{6} \int_0^{T} \langle v_2^*,C(\tau;v_2,v_2,v_2)+3 B(\tau;h_{02},v_2)-6 a_{02} h_{11}-6 \alpha _{02} A(\tau) v_2 \rangle\; d \tau.
\end{equation}

\section{Implementation issues}
\label{Section:Implementation}
Numerical implementation of the formulas derived in the preceding
sections requires the evaluation of integrals of scalar functions
over $[0,T]$ and the solution of nonsingular linear BVPs with
integral constraints. Such tasks can be carried out with
continuation software such as {\sc auto} \cite{AUTO97}, {\sc
content} \cite{CONTENT}, and {\sc matcont} \cite{sac2003,MATCONT}.
In these software packages, periodic solutions to (\ref{ODE}) are
computed with the method of {\em orthogonal collocation} with
piecewise polynomials applied to properly formulated BVPs.

The standard BVP for the periodic solutions
is formulated on the unit interval $[0,1]$, so that the period $T$ becomes a parameter,
and it involves an integral phase condition:
\begin{eqnarray}\label{BVP_LC}
\left\{ \begin{array}{rcl}
\dot{x}(\tau) - Tf(x(\tau),\alpha) & = & 0,\ \tau \in [0,1], \\
x(0) - x(1) & = & 0,\\
\int_0^1 \langle x(\tau), \dot{\xi}(\tau) \rangle ~d\tau & = & 0,
\end{array} \right.
\end{eqnarray}
where $\xi$ is a previously calculated periodic solution to a nearby problem, rescaled to $[0,1]$.

In the orthogonal collocation method \cite{BVPbook:95}, problem (\ref{BVP_LC}) is replaced by the following discretization:
\begin{equation}
\left\{\begin{array}{rcl}
\displaystyle\sum_{j=0}^m x_{i,j} \dot \ell_{i,j}(\zeta_{i,k})
- T f\left(\sum_{j=0}^m x_{i,j}\ell_{i,j}(\zeta_{i,k}),\alpha\right) & = & 0 ,\qquad \\
x_{0,0} - x_{N-1,m} & = & 0, \\[12pt]
\displaystyle\sum_{i=0}^{N-1} \sum_{j=0}^{m-1} \sigma_{i,j}
\langle x_{i,j},\dot{\xi}_{i,j}\rangle +\sigma_{N,0}
\langle x_{N,0},\dot{\xi}_{N,0} \rangle & = & 0 .
\end{array}\right.
\label{BVP_LC_Num}
\end{equation}
The points $x_{i,j}$ form the approximation of $x(\tau)$ with $m+1$ equidistant mesh points
\begin{eqnarray*}
\tau_{i,j} = \tau_i + \frac{j}{m}(\tau_{i+1}-\tau_i),~~~j=0,1,\ldots,m,
\end{eqnarray*}
in each of the $N$ intervals $[\tau_i,\tau_{i+1}]$, where
\begin{eqnarray*}
0 = \tau_0 < \tau_1 < \cdots < \tau_N = 1.
\end{eqnarray*}

The $\ell_{i,j}(\tau)$'s are the Lagrange basis polynomials, while the points
$\zeta_{i,j}\ (j=1,\ldots,m)$ are Gauss points \cite{BoSw:73},
i.e. the roots of the Legendre polynomial of degree $m$,
all relative to the interval $[\tau_i, \tau_{i+1}]$.

With this choice of collocation points $\zeta_{i,j}$, the approximation error
at the mesh points has order of accuracy $m$,
$$
\|x(\tau_{i,j}) - x_{i,j}\| = \mathcal{O}(h^m),
$$
where $h=\max_{i=1,2,\ldots,N}\{t_i\},\ t_i=\tau_i-\tau_{i-1}\ (i=1,\ldots,N)$,
while for the coarse mesh points $\tau_i$ the error has order of accuracy $2m$,
$$
\|x(\tau_{i}) - x_{i,0}\| = \mathcal{O}(h^{2m})
$$
(``superconvergence").

The integration weight $\sigma_{i,j}$ of $\tau_{i,j}$ is given by
$w_{j+1}t_{i+1}$ for $0\leq i\leq N-1$ and $0<j<m$.
For $i=0,\ldots,N-2$, the integration weight
of $\tau_{i,m}$ ($\tau_{i,m}=\tau_{i+1,0}$) is given by
$\sigma_{i,m}=w_{m+1}t_{i+1}+w_1t_{i+2}$,
and the integration weights of $\tau_0$ and $\tau_N$ are given by
$w_1t_1$ and $w_{m+1}t_N$, respectively. In the above expressions,
$w_{j+1}$ is the Lagrange quadrature coefficient.

The numerical continuation of solutions of (\ref{BVP_LC_Num}) leads to structured,
sparse linear systems,
which in {\sc auto} \cite{AUTO97} and {\sc content} \cite{CONTENT} are
solved by an efficient, specially adapted elimination algorithm that computes
the multipliers as a by-product, without explicitly using the Poincar\'{e} map.
To detect codim 1 bifurcations, one can specify test functions that
are based on computing multipliers \cite{DoKeKe:91b,AUTO97}
or on solving appropriate bordered linear BVPs \cite{DoGoKu}.

\subsection{Discretization symbols}\label{Section: Discretization symbols}

It is convenient to discretize all computed functions using the same mesh as in
(\ref{BVP_LC_Num}). For a given vector function $\eta \in {{\cal C}}^1([0,1],{\mathbb R^n})$
we consider three different discretizations:

\begin{itemize}
\item{$\eta_M \in {\mathbb R^{(Nm+1)n}}$, the vector of the function values at the mesh points;}

\item{$\eta_C \in {\mathbb R^{Nmn}}$, the vector of the function values at the collocation points;}

\item{$\eta_W= [\begin{smallmatrix} \eta_{W_1} \\ \eta_{W_2} \end{smallmatrix}]   
\in {\mathbb R^{Nmn}}\times{\mathbb R^n}$, where $\eta_{W_1}$ is the vector of the function values
at the collocation points multiplied by the Gauss-Legendre weights
and the lengths of the corresponding mesh intervals, and $\eta_{W_2}=\eta(0)$.}
\end{itemize}

Formally we also introduce the structured sparse matrix $L_{C\times M}$ that converts
a vector $\eta_M$ of function values at the mesh points into a vector $\eta_C$ of its
values at the collocation points, namely, $\eta_C=L_{C\times M}\eta_M$. This matrix is never
formed explicitly; its entries are approximated by the $\ell_{i,j}(\zeta_{i,k})$-coefficients
in (\ref{BVP_LC_Num}). We also need a matrix $A_{C\times M}$ such that
$A_{C\times M}\eta_M=(A(t)\eta(t))_C$. Again this matrix need not be formed explicitly.
On the other hand, we do need the matrix $(D-TA(t))_{C\times M}$ explicitly; it is
defined by $(D-TA(t))_{C\times M}\eta_M=(\dot{\eta}(t)-TA(t)\eta(t))_C$.
Finally, let the tensors $B_{C\times M\times M}$ and $C_{C\times M\times M\times M}$ be
defined by $B_{C\times M\times M}\eta_{1M}\eta_{2M}=(B(t;\eta_1(t),\eta_2(t)))_C$ and
$$
C_{C\times M\times M\times M}\eta_{1M}\eta_{2M}\eta_{3M}=(C(t;\eta_1(t),\eta_2(t),\eta_3(t)))_C
$$
for all $\eta_i \in {{\cal C}}^1([0,1],{\mathbb R^n})$.
(These tensors are not formed explicitly.)

Let $f(t),g(t)\in {{\cal C}}^0([0,1],{\mathbb R})$ be two scalar
functions. Then the integral $\int_0^1f(t)dt$ is represented by $
\sum_{i=0}^{N-1}\sum_{j=1}^m
\omega_j(f_C)_{i,j}t_{i+1}=\sum_{i=0}^{N-1}\sum_{j=1}^{m}(f_{W_1})_{i,j},
$ where $(f_C)_{i,j}=f(\zeta_{i,j})$ and $\omega_j$ is the
Gauss-Legendre quadrature coefficient. The integral $\int_0^{1}
f(t)g(t)dt$ is approximated with Gauss-Legendre by $f_{W_1}^{\rm
T}g_C \approx f^{\rm T}_{W_1}L_{C\times M}g_M$, where equality
holds if $g(t)$ is a piecewise polynomial of degree $m$ or less on
the given mesh. For vector functions $f(t),g(t)\in {{\cal
C}}^0([0,1],{\mathbb R^n})$, the integral $\int_0^{1}\langle
f(t),g(t)\rangle\;dt$ is formally approximated by the same
expression: $f_{W_1}^{\rm T}g_C \approx f^{\rm T}_{W_1}L_{C\times
M}g_M$, where again we have equality if $g(t)$ is a piecewise
polynomial of degree $m$ or less on the given mesh. Concerning the
accuracy of the quadrature formulas, we first note that accuracy
is not an important issue for the phase integral in
(\ref{BVP_LC}), as this equation only selects a specific solution
from the continuum of solutions obtained by phase shifts.
Similarly, the discretization of the normalization integrals does not affect the inherent
accuracy, including superconvergence at the main mesh points
$\tau_i$ of the solution of the discretized BVP. Discretization of
integrals, as specified above, follows the standard Gauss
quadrature error, which has order of accuracy $2m$ if, as
mentioned, the function $g(t)$ is a piecewise polynomial of degree
$m$ or less on the given mesh and if $f(t)$ is sufficiently smooth
(in a piecewise sense). Otherwise, still assuming sufficient
piecewise smoothness, the order of accuracy of the numerical
integrals is $m+1$ if $m$ is odd, and $m+2$ if $m$ is even. In
particular, for the often used choice $m=4$, the integrals would
then have order of accuracy $6$.

\subsection{Cusp of cycles bifurcation} \label{cusp_impl} The first task is to rescale the computed functions to the interval $[0,1]$. We start by defining $u_1(t) = u_0(Tt) = u_0(\tau)$ for $t \in [0,1]$. The linear BVP's (\ref{eq:EigenFunc_CPC}) and (\ref{eq:AdjEigenFunc_CPC}) are replaced by
\begin{equation}\label{eq:EigenFunc_CPCdiscr}
\left\{\begin{array}{rcl}
 \dot{v}_1(t)-TA(t) v_1(t) - TF(u_1(t)) & = & 0,\ t \in [0,1], \\
 v_1(1)-v_1(0) & = & 0,\\
 \int_{0}^{1} {\langle v_1(t), F(u_1(t))\rangle dt} & = & 0,\\
\end{array}
\right.
\end{equation}
with $v(\tau) = v_1(\tau/T)$ and
\begin{equation*}
\left\{\begin{array}{rcl}
 \dot{\varphi}_1^*(t)+TA^{\rm T}(t)\varphi^*_1(t) & = & 0,\ t \in [0,1], \\
 \varphi_1^*(1)-\varphi^*_1(0) & = & 0,\\
 \int_{0}^{1} {\langle \varphi_1^*(t),v_1(t) \rangle dt} - 1 & = & 0,
\end{array}
\right.
\end{equation*}
where $\varphi^*(\tau) = \varphi^*_1(\tau/T)/T$, respectively. We also need to rescale the adjoint generalized eigenfunction defined by (\ref{eq:AdjGenEigenFunc_CPC}):
\begin{equation*}
\left\{\begin{array}{rcl}
\dot{v}^*_1(t)+TA^{\rm T}(t) v^*_1(t)+T\varphi^*_1(t) & = & 0,\ t \in [0,1], \\
v^*_1(1)-v^*_1(0) & = & 0,\\
\int_{0}^{1} {\langle v^*_1(t),v_1(t) \rangle dt} & = & 0,
\end{array}
\right.
\end{equation*}
with $v^*(\tau) = v_1^*(\tau/T)/T$. Now, $\alpha_1=0$ and $h_{2,1}$ is the unique solution of the BVP
\begin{equation}\label{eq:h2_CPCdiscr}
\left\{\begin{array}{rcl}
 \dot h_{2,1}(t) - T A(t) h_{2,1}(t) - T B(t;v_1(t),v_1(t)) - 2TA(t)v_1(t)-2TF(u_1(t)) + 2 \alpha_{1} T F(u_1(t)) &=& 0,\ t \in [0,1],\\
 h_{2,1}(1)-h_{2,1}(0)&=&0, \\
 \int_0^1 \langle v^*_1(t), h_{2,1}(t) \rangle\; dt&=&0,
\end{array}\right.
\end{equation}
where $h_2(\tau) = h_{2,1}(\tau/T)$.
Therefore, we obtain
\begin{eqnarray}\label{eq:c-CPCdiscr}
    c&=&\frac{1}{6}\int_0^1 \langle \varphi^*_1(t),- 6 \alpha_{1} A(t) v_1(t) + 3 A(t) h_{2,1}(t) + 3 B(t;v_1(t),v_1(t)) \\
    &&+ 6 A(t) v_1(t) + 3 B(t;h_{2,1}(t),v_1(t)) + C(t;v_1(t),v_1(t),v_1(t)) \rangle\; dt.\nonumber
\end{eqnarray}

We now determine the matrix solutions for the several functions. We compute $v_{1M}$ by solving the discretization of (\ref{eq:EigenFunc_CPCdiscr})
\begin{eqnarray}\label{eq:EigenFunc_CPCimpl}
\left[\begin{array}{cc}
\begin{array}{c}
(D-TA(t))_{C \times M}\\
\delta_0-\delta_1
\end{array} & p\\
g_{W_1}^{\rm T} L_{C \times M} & 0
\end{array}
\right]
\left[\begin{array}{c}
v_{1M}\\
a
\end{array}\right]
= \left[\begin{array}{c}
Tf_{C}\\
0_{n\times 1}\\
0
\end{array}\right]
,\end{eqnarray}
where $a$ equals zero since the $M\times M$ upper left part of the big matrix is singular, $g(t) = F(u_1(t))$, and $p$ is obtained by solving the following system
\begin{eqnarray*}
\begin{bmatrix}p^{\rm T} & b \end{bmatrix} \left[\begin{array}{cc}
\begin{array}{c}
(D-TA(t))_{C \times M}\\
\delta_0-\delta_1
\end{array} & r_1\\
r_2 & 0\\
\end{array}
\right]
= \begin{bmatrix}
0_{M\times 1} & 1\end{bmatrix}
,\end{eqnarray*}
with $r_1$ and $r_2$ random vectors. In the solution of this system $b=0$; in (\ref{eq:EigenFunc_CPCimpl}) we then use the normalized $p$. This technique guarantees that we are working with well-defined systems.\\

We will compute $\varphi_{1W}^*$ instead of $\varphi_{1M}^*$ since $\varphi_{1W}^*$ can be computed with a matrix which is very similar to the matrix from (\ref{eq:EigenFunc_CPCimpl}). Formally, the computation of $\varphi_{1W}^*$ is based on Proposition \ref{Proposition1} from the appendix since
$$\left[\begin{array}{c}
\dot{\varphi}_1^*(t)+TA^{\rm T}(t)\varphi^*_1(t) \\
\varphi_1^*(1)-\varphi^*_1(0) 
\end{array}
\right]= 0,$$
we have that $\left[\begin{array}{c}\varphi_{1}^*\\\varphi_{1}^*(0)\end{array} \right]$ is orthogonal to the range of $\left[\begin{array}{c}
D-TA(t)\\
\delta_0-\delta_1
\end{array}
\right]$. By discretization we obtain
$$(\varphi_{1}^*)_W^{\rm T} \left[\begin{array}{c}
(D-TA(t))_{C \times M}\\
\delta_0-\delta_1
\end{array}
\right] = 0.$$
Therefore, $\varphi_{1W}^*$ can be obtained by solving
$$\begin{bmatrix}
(\varphi_{1}^*)_W^{\rm T} & a
\end{bmatrix}\left[\begin{array}{cc}
\begin{array}{c}
(D-TA(t))_{C \times M}\\
\delta_0-\delta_1
\end{array}&p\\
q^{\rm T} & 0
\end{array}
\right] = \begin{bmatrix}0_{M\times 1} & 1\end{bmatrix},$$
where $a$ equals zero and $q$ is the normalized right null-vector of $\left[\begin{array}{c}(D-TA(t))_{C \times M}\\ \delta_0-\delta_1\end{array}\right]$. We then approximate $I = \int_{0}^{1} {\langle \varphi_1^*(t),v_1(t) \rangle dt}$ by $I_1 = (\varphi_{1}^*)_{W_1}^{\rm T}L_{C\times M}v_{1M}$. $\varphi_{1W}^*$ is then rescaled to ensure that $I_1 = 1$.

It is more efficient to compute $v_{1W}^*$ than $v_{1M}^*$, since $v_{1}^*$ will be used only to compute integrals of the form $\int_0^1 <v_1^*(t),\zeta(t)>dt$. Proposition \ref{Proposition4} learns us how to determine $v_1^*$. Indeed, 
$$
\left\langle\left[\begin{array}{c}v_1^*\\v_1^*(0)\end{array}\right],\left[\begin{array}{c}\dot{h}-TAh\\ h(0) \;\;\;\;\;\; -h(1)\end{array} \right]\right\rangle = -\left\langle\left[\begin{array}{c}-T\varphi_1^*\\0\end{array}\right],\left[\begin{array}{c}h\\ 0\end{array} \right]\right\rangle,$$
for all appropriate functions $h$, thus $v_1^*$ can be obtained by solving
$$\begin{bmatrix}(v_{1}^*)_W^{\rm T} & a \end{bmatrix}
\left[\begin{array}{cc}
(D-TA(t))_{C \times M}& v_{1C}\\
\delta_0-\delta_1 & 0_{n\times 1}\\
q^{\rm T} & 0
\end{array}
\right]
= \begin{bmatrix}(T\varphi^*_{1})_{W_1}^{\rm T}L_{C\times M} & 0\end{bmatrix},$$
where $a$ equals zero and $p$ is defined above. 

Next, $(h_{2,1})_M$ is found by solving the discretization of (\ref{eq:h2_CPCdiscr}), namely,
$$\left[\begin{array}{cc}
\begin{array}{c}
(D-TA(t))_{C \times M}\\
\delta_0-\delta_1
\end{array} & p\\
(v_{1}^*)_{W_1}^{\rm T} L_{C \times M} & 0
\end{array}
\right]
\left[\begin{array}{c}
h_{2,1M} \\
a
\end{array}\right]
=
\left[\begin{array}{c}
    T B_{C\times M \times M}v_{1M}v_{1M}+2TA_{C\times M}v_{1M}+2Tg_C\\
    0_{n \times 1}\\
    0
\end{array}\right],$$
with $a=0$ and $p$ defined above.

Finally, (\ref{eq:c-CPCdiscr}) is approximated by
\begin{eqnarray*}
    c&=&\frac{1}{6}(\varphi^*_{1})_{W_1}^{\rm T}(3 A_{C\times M} h_{2,1M} + 3 B_{C\times M \times M}v_{1M}v_{1M} \\
    &&+ 6 A_{C \times M} v_{1M} + 3 B_{C\times M \times M}h_{2,1M}v_{1M} + C_{C\times M \times M \times M}v_{1M}v_{1M}v_{1M}).
\end{eqnarray*}

\subsection{Generalized period-doubling bifurcation}
As done in Section \ref{cusp_impl}, we first rescale the computed quantities to the interval $[0,1]$. The linear BVPs (\ref{eq:EigenFunc_GPD})  and (\ref{eq:AdjEigenFunc2T-2_GPD}) are replaced by
\begin{equation*}
\left\{\begin{array}{rcl}
\dot{v}_1(t) - T A(t)v_1(t)  & = & 0,\ t \in [0,1], \\
v_1(1)+v_1(0) & = & 0,\\
\int_{0}^{1} {\langle v_1(t),v_1(t)\rangle dt} - 1 & = & 0,
\end{array}
\right.
\end{equation*}
where $v(\tau) = v_1(\tau/T)/\sqrt{T}$, and
\begin{equation*}
\left\{\begin{array}{rcl}
 \dot{\varphi}_1^*(t)+TA^{\rm T}(t)\varphi_1^*(t) & = & 0,\ t \in [0,1], \\
 \varphi^*_1(1)-\varphi^*_1(0) & = & 0,\\
 \int_0^1 \langle \varphi^*_1(t),F(u_1(t))\rangle\; dt - 1 & = & 0,
\end{array}
\right.
\end{equation*}
with $\varphi^*(\tau) = \varphi_1^*(\tau/T)/T$. This leads to the expression
$$ \alpha_{1,1} = \frac{1}{2} \int_0^{1} \langle \varphi_1^*(t),B(t;v_1(t),v_1(t)) \rangle\; dt,$$
with $\alpha_{1,1}=T\alpha_1$.

The adjoint eigenfunction, defined by (\ref{eq:AdjEigenFunc_GPD}), determines the following rescaled $v_1^*$:
\begin{equation}\label{eq:AdjEigenFunc_GPDdiscr}
\left\{\begin{array}{rcl}
 \dot{v}_1^*(t)+T A^{\rm T}(t)v_1^*(t) & = & 0,\ t \in [0,1], \\
 v^*_1(1)+v^*_1(0) & = & 0,\\
 \int_{0}^{1} {\langle v^*_1(t),v_1(t)\rangle dt} - 1 & = & 0,
\end{array}
\right.
\end{equation}
with $v^*(\tau) = v_1^*(\tau/T)/\sqrt{T}$.

Let $h_{2,1}$ be the unique solution of the BVP
\begin{equation*}  
\left\{\begin{array}{rcl}
\dot{h}_{2,1}(t)-T A(t) h_{2,1}(t) - T B(t;v_1(t),v_1(t)) + 2\alpha_{1,1} T F(u_1(t)) & = & 0,\ t \in [0,1], \\
h_{2,1}(1)- h_{2,1}(0) & = & 0,\\
\int_0^{1} \langle \varphi^*_1(t),h_{2,1}(t)\rangle \; dt & = & 0,
\end{array}
\right.
\end{equation*}
where $h_2(\tau) = h_{2,1}(\tau/T)/T$, and $h_{3,1}$ be the unique solution of the BVP
\begin{equation*}  
 \left\{\begin{array}{rcl}
  \dot{h}_{3,1}(t)-T A(t) h_{3,1}(t) - T C(t;v_1(t),v_1(t),v_1(t))- 3T B(t;v_1(t),h_{2,1}(t)) + 6\alpha_{1,1} T A(t) v_1(t) & = & 0,\ t \in [0,1], \\
  h_{3,1}(1) + h_{3,1}(0) & = & 0,\\
  \int_0^{1} \langle v^*_1(t),h_{3,1}(t)\rangle \; dt & = & 0,
 \end{array}
\right.
\end{equation*}
where $h_3(\tau) = h_{3,1}(\tau/T)/(\sqrt{T}T)$.

Therefore, we obtain
\begin{multline*} 
\alpha_{2,1}=\frac{1}{24}\int_0^{1} \langle
\varphi^*_1(t),D(t;v_1(t),v_1(t),v_1(t),v_1(t))+6 C(t;v_1(t),v_1(t),h_{2,1}(t))+3 B(t;h_{2,1}(t),h_{2,1}(t))+\\4
B(t;v_1(t), h_{3,1}(t))-12 \alpha_{1,1} (A(t) h_{2,1}(t) + B(t;v_1(t),v_1(t)))\rangle\;
dt\;+\;\alpha_{1,1}^2,
\end{multline*}
with $\alpha_{2,1}=T^2\alpha_2$.

Now, $h_{4,1}$ is obtained as unique solution of the following BVP
\begin{equation*} 
\left\{\begin{array}{rcl}
 \dot{h}_{4,1}(t)-T A(t) h_{4,1}(t) - T D(t;v_1(t),v_1(t),v_1(t),v_1(t))-6T C(t;v_1(t),v_1(t),h_{2,1}(t))&&\\
 -3 T B(t;h_{2,1}(t),h_{2,1}(t))-4T B(t;v_1(t),h_{3,1}(t))+12 \alpha_{1,1}T (A(t) h_{2,1}(t) + B(t;v_1(t),v_1(t)) &&\\
 - 2\alpha_{1,1} F(u_1(t)))+24 \alpha_{2,1}T F(u_1(t)) & = & 0,\ t \in [0,1], \\
 h_{4,1}(1)- h_{4,1}(0) & = & 0,\\
 \int_0^{1} \langle \varphi^*_1(t),h_{4,1}(t) \rangle \; dt & = & 0,
\end{array}
\right.
\end{equation*}
with $h_4(\tau) = h_{4,1}(\tau/T)/T^2$.

Finally, we can write down the critical normal form coefficient
\begin{multline*} 
e=\frac{1}{120\,T^2} \int_0^{1} \langle v^*_1(t), E(t;v_1(t),v_1(t),v_1(t),v_1(t),v_1(t))+10
D(t;v_1(t),v_1(t),v_1(t),h_{2,1}(t))\\+15 C(t;v_1(t),h_{2,1}(t),h_{2,1}(t))+10 C(t;v_1(t),v_1(t),h_{3,1}(t))+10
B(t;h_{2,1}(t),h_{3,1}(t))\\+5 B(t;v_1(t), h_{4,1}(t))-120 \alpha_{2,1} A(t) v_1(t) -20
\alpha_{1,1} A(t) h_{3,1}(t) \rangle \; dt.
\end{multline*}

We now come to the implementation details in MatCont. We compute $v_{1M}$ by solving
\begin{eqnarray*}
\left[\begin{array}{cc}
\begin{array}{c}
(D-TA(t))_{C \times M} \\
\delta_0+\delta_1
\end{array} & p_1\\
q_1^{\rm T} & 0
\end{array}
\right]
\left[\begin{array}{c}
v_{1M}\\
a_1
\end{array}\right]
=
\left[\begin{array}{c}
    0_{C\times 1}\\0_{n \times 1}\\1
\end{array}\right]
,\end{eqnarray*}
with $p_1$ and $q_1$ the rescaled unit vectors of 
\begin{eqnarray*}
\left[\begin{array}{cc}
\begin{array}{c}
(D-TA(t))_{C \times M} \\
\delta_0+\delta_1
\end{array} & r_1\\
r_2^{\rm T} & 0
\end{array}
\right]
\left[ \begin{array}{c}
    q_1 \\ a_2
\end{array}\right]
= \left[ \begin{array}{c}
    0_{C\times 1} \\ 0_{n\times 1} \\1
\end{array}\right]\end{eqnarray*}
and
\begin{eqnarray*}
\begin{bmatrix}p_1^{\rm T} & a_3\end{bmatrix}
\left[\begin{array}{cc}
\begin{array}{c}
(D-TA(t))_{C \times M} \\
\delta_0+\delta_1
\end{array} & r_1\\
r_2^{\rm T} & 0
\end{array}
\right]
= \begin{bmatrix}0_{M\times 1} & 1 \end{bmatrix} ,\end{eqnarray*}
where $r_1$ and $r_2$ are random vectors. Every $a_i$ is equal to zero. We normalize $v_{1M}$ by requiring
$\sum_{i=0}^{N-1}\sum_{j=0}^m\sigma_j\langle(v_{1M})_{i,j},(v_{1M})_{i,j}\rangle =1,$ where $\sigma_j$ is the Lagrange quadrature coefficient.

The discretization of (\ref{eq:AdjEigenFunc_GPDdiscr}) can be computed with the same matrix, see Proposition \ref{Proposition2} of the appendix,
\begin{eqnarray*}
\begin{bmatrix}(v_{1}^*)_W^{\rm T} & a \end{bmatrix}\left[\begin{array}{cc}
\begin{array}{c}
(D-TA(t))_{C \times M} \\
\delta_0+\delta_1
\end{array} & p_1\\
q_1^{\rm T} & 0
\end{array}
\right]
= \begin{bmatrix}0_{M\times 1} & 1\end{bmatrix},\end{eqnarray*}
where $a=0$. We then approximate $I = \int_{0}^{1} {\langle v_1^*(t),v_1(t) \rangle dt}$ by $I_1 = (v_{1}^*)_{W_1}^{\rm T}L_{C\times M}v_{1M}$. $v_{1W}^*$ is then rescaled to ensure that $I_1 = 1$.

An analogous matrix is used to compute $\varphi^*_{1W}$:
\begin{eqnarray*}
\begin{bmatrix}(\varphi_{1}^*)_W^{\rm T} & a \end{bmatrix}\left[\begin{array}{cc}
\begin{array}{c}
(D-TA(t))_{C \times M} \\
\delta_0-\delta_1
\end{array} & p\\
q^{\rm T} & 0
\end{array}
\right]
=  \begin{bmatrix}0_{M\times 1} & 1\end{bmatrix},\end{eqnarray*}
where $q$ is the normalized right null-vector and $p$ the normalized left null-vector of $\left[\begin{array}{c}
(D-TA(t))_{C\times M}\\
\delta_0-\delta_1
\end{array}
\right]$, and $a$ is equal to zero. In what follows we will use these definitions for $p, q, p_1$ and $q_1$. We then approximate $I = \int_{0}^{1} {\langle \varphi_1^*(t),F(u_1(t)) \rangle dt}$ by $I_1 = (\varphi_{1}^*)_{W_1}^{\rm T}g_C$ and normalize $\varphi_{1W}^*$ to ensure that $I_1 = 1$.

This then leads to the discretization of the expression for
$\alpha_{1,1}$:
\begin{equation} \label{eq:alpha11-GPD}
\alpha_{1,1} = \frac{1}{2} (\varphi_{1}^*)_{W_1}^{\rm T}B_{C
\times M \times M}v_{1M}v_{1M}.
\end{equation}

Now, $h_{2,1}$ and $h_{3,1}$ are found by solving the following systems:
\begin{eqnarray*}
\left[\begin{array}{cc}
\begin{array}{c}
(D-TA(t))_{C \times M} \\
\delta_0-\delta_1
\end{array} & p\\
(\varphi^*_1)_{W_1}^{\rm T} L_{C \times M} & 0
\end{array}
\right]
\left[\begin{array}{c}
h_{2,1M}\\a
\end{array}\right]=
\left[\begin{array}{c}
    T B_{C \times M \times M}v_{1M}v_{1M} - 2\alpha_{1,1} T g_C\\
    0_{n\times 1}\\
    0
\end{array}
\right]
\end{eqnarray*}
and
\begin{eqnarray*}
\left[\begin{array}{cc}
\begin{array}{c}
(D-TA(t))_{C \times M} \\
\delta_0+\delta_1
\end{array} & p_1 \\
(v^*_1)_{W_1}^{\rm T} L_{C \times M} & 0
\end{array}
\right]
\left[\begin{array}{c}
h_{3,1M}\\ b
\end{array}\right]=
\begin{bmatrix}\mbox{rhs} \\ 0_{n\times 1} \\ 0\end{bmatrix}
,\end{eqnarray*}
respectively, with
$$\mbox{rhs} = T C_{C\times M \times M\times M}v_{1M}v_{1M}v_{1M}+ 3T B_{C\times M \times M}v_{1M}h_{2,1M} - 6\alpha_{1,1} T A_{C \times M} v_{1M}.$$
$a$ and $b$ will be zero.

Thus,
\begin{multline}\label{eq:alpha21-GPD}
\alpha_{2,1}=\frac{1}{24} (\varphi^*_{1})_{W_1}^{\rm T}\Bigg(D_{C\times M \times M \times M \times M}v_{1M}v_{1M}v_{1M}v_{1M}+6 C_{C\times M\times M \times M}v_{1M}v_{1M}h_{2,1M}\\+3B_{C\times M \times M}h_{2,1M}h_{2,1M}+4
B_{C\times M \times M}v_{1M}h_{3,1M}\\-12 \alpha_{1,1} ( A_{C\times M}h_{2,1M} + B_{C\times M\times M}v_{1M}v_{1M})\Bigg)+\;\alpha_{1,1}^2.
\end{multline}

Then, $h_{4,1}$ is found by
\begin{eqnarray*}
\left[\begin{array}{cc}
\begin{array}{c}
(D-TA(t))_{C \times M} \\
\delta_0-\delta_1
\end{array} & p \\
(\varphi^*_1)_{W_1}^{\rm T} L_{C \times M} & 0
\end{array}
\right]
\left[\begin{array}{c}
h_{4,1M}\\ a
\end{array}\right]=
\begin{bmatrix}
\mbox{rhs}\\0_{n\times 1} \\0\end{bmatrix}
,\end{eqnarray*}
with
\begin{eqnarray*}
     \mbox{rhs} & = &
     T D_{C\times M \times M \times M \times M}v_{1M}v_{1M}v_{1M}v_{1M}+6T C_{C\times M\times M \times M}v_{1M}v_{1M}h_{2,1M}\\
     &&+3 T B_{C\times M \times M}h_{2,1M}h_{2,1M}+4T B_{C\times M \times M}v_{1M}h_{3,1M}\\
     &&-12 \alpha_{1,1}T (A_{C\times M}h_{2,1M}+ B_{C\times M \times M}v_{1M}v_{1M} - 2\alpha_{1,1} g_C)-24 \alpha_{2,1}T g_C
\end{eqnarray*}
and $a = 0$.

Now, we have all ingredients for the computation of the normal form coefficient
\begin{multline} \label{eq:e1-GPD}
e=\frac{1}{120T^2} (v^*_{1})_{W_1}^{\rm T}\Bigg(E_{C\times M \times M \times M \times M\times
 M}v_{1M}v_{1M}v_{1M}v_{1M}v_{1M}+10D_{C\times M \times M \times M \times M}v_{1M}v_{1M}v_{1M}h_{2,1M}\\+15 C_{C\times M \times M \times M}v_{1M}h_{2,1M}h_{2,1M}+10 C_{C\times M \times M \times M}v_{1M}v_{1M}h_{3,1M}+10
B_{C\times M \times M}h_{2,1M}h_{3,1M}\\+5 B_{C\times M \times M}v_{1M}h_{4,1M}-120 \alpha_{2,1} A_{C\times M} v_{1M} -20
\alpha_{1,1} A_{C\times M}h_{3,1M}\Bigg).
\end{multline}

\subsection{Chenciner bifurcation}
As before, we rescale the computed quantities to the interval $[0,1]$. The linear BVPs (\ref{eq:EigenFunc_GNS}), (\ref{eq:AdjEigenFunc2T-2_GPD}) and (\ref{eq:AdjEigenFunc_GNS}) are replaced by
\begin{equation}
\left\{\begin{array}{rcl}
\dot{v}_1(t)-T A(t)v_1(t) + i\omega T\, v_1(t) & = & 0,\ t \in [0,1], \\
v_1(1)-v_1(0) & = & 0,\\
\int_{0}^{1} {\langle v_1(t),v_1(t)\rangle dt} - 1 & = & 0,
\end{array}
\right. \label{eq:EigenFunc_GNSdiscr}
\end{equation}
with $v(\tau) = v_1(\tau/T)/\sqrt{T}$,
\begin{equation*} 
\left\{\begin{array}{rcl}
 \dot{\varphi}_1^*(t)+T A^{\rm T}(t)\varphi^*_1(t) & = & 0,\ t \in [0,1], \\
 \varphi^*_1(1)-\varphi^*_1(0) & = & 0,\\
 \int_0^1 \langle \varphi^*_1(t),F(u_1(t))\rangle\; dt - 1 & = & 0,
\end{array}
\right.
\end{equation*}
with $\varphi^*(\tau) = \varphi^*_1(\tau/T)/T$, and
\begin{equation*}
\left\{\begin{array}{rcl}
\dot{v}_1^*(t)+T A^{\rm T}(t)v^*_1(t) +i\omega T \,v^*_1(t) & = & 0,\ t \in [0,1], \\
v^*_1(1)-v^*_1(0) & = & 0,\\
\int_{0}^{1} {\langle v^*_1(t),v_1(t)\rangle dt} - 1 & = & 0,
\end{array}
\right. 
\end{equation*}
where $v^*(\tau) = v_1^*(\tau/T)/\sqrt{T}$, respectively.

Then $h_{20}$ is approximated by
\begin{equation}
\left\{\begin{array}{rcl}
 \dot h_{20,1}(t)-T A(t) h_{20,1}(t) +2\ i \omega T h_{20,1}(t) - T B(t;v_1(t),v_1(t)) & = & 0,\ t \in [0,1], \\
 h_{20,1}(1)-h_{20,1}(0) & = & 0,
\end{array}
\right. \label{eq:h20-GNSdiscr}
\end{equation}
where $h_{20}(\tau) = h_{20,1}(\tau/T)/T$.

Before being able to compute the approximation to $h_{11}$ we need $\alpha_1$, defined by (\ref{eq:a1-CH}):
\begin{equation*}
 \alpha_{1,1}= \int_0^1 \langle \varphi^*_1(t), B(t;v_1(t),\bar v_1(t)) \rangle dt,
\end{equation*}
with $\alpha_{1,1}=T\alpha_1$. Therefore, we get
\begin{equation*}
\left\{\begin{array}{rcl}
 \dot{h}_{11,1}(t)-T A(t) h_{11,1}(t)-TB(t;v_1(t),\bar{v}_1(t))+\alpha _{1,1}T F(u_1(t)) & = & 0,\ t \in [0,1], \\
 h_{11,1}(1)-h_{11,1}(0) & = & 0,\\
 \int_{0}^{1} {\langle \varphi^*_1(t), h_{11,1}(t)\rangle dt} & = & 0,
\end{array}
\right. 
\end{equation*}
with $h_{11}(\tau) = h_{11,1}(\tau/T)/T$.

Now, we can compute normal form coefficient $c$:
\begin{equation*}
 c_1=-\frac{i}{2} \int_0^1 \langle v^*_1(t), C(t;v_1(t),v_1(t),\bar{v}_1(t))+2
B(t;v_1(t),h_{11,1}(t))+B(t;\bar{v}_1(t),h_{20,1}(t)) - 2 \alpha _{1,1} A(t) v_1(t) \rangle dt + \alpha_{1,1} \omega, 
\end{equation*}
with $c_1=Tc$. With $c_1$ defined in this way, $h_{21M}$ can be computed as follows
\begin{equation*}
\left\{\begin{array}{rcl}
 \dot h_{21,1}(t)-T A(t) h_{21,1}(t)+i \omega T h_{21,1}(t)-TC(t;v_1(t),v_1(t),\bar{v}_1(t))-2TB(t;v_1(t),h_{11,1}(t))&&\\
 - TB(t;h_{20,1}(t),\bar{v}_1(t)) +2 i c_1 Tv_1(t) + 2 \alpha _{1,1} T(A(t) v_1(t) -i \omega v_1(t)) & = & 0,\ t \in [0,1], \\
 h_{21,1}(1)-h_{21,1}(0) & = & 0, \\
 \int_{0}^{1} {\langle v^*_1(t),h_{21,1}(t)\rangle dt} & = & 0,
\end{array}
\right. 
\end{equation*}
where $h_{21}(\tau) = h_{21,1}(\tau/T)/(\sqrt{T}T)$.

Next, the rescaling of $h_{30}$ gives us
\begin{equation*}
\left\{\begin{array}{rcl}
 \dot h_{30,1}(t)-T A(t) h_{30,1}(t)+3 i \omega T h_{30,1}(t)-TC(t;v_1(t),v_1(t),v_1(t))-3TB(t;v_1(t), h_{20,1}(t)) & = & 0,\ t \in [0,1], \\
 h_{30,1}(1)-h_{30,1}(0) & = & 0,
\end{array}
\right. 
\end{equation*}
with $h_{30}(\tau) = h_{30,1}(\tau/T)/(\sqrt{T}T)$.

Now, we need the rescaled $h_{31,1}$ before being able to compute coefficient $\alpha_{2,1}$ 
\begin{equation}
\left\{\begin{array}{rcl}
 \dot h_{31,1}(t)-T A(t) h_{31,1}(t)+2 i \omega T h_{31,1}(t)- T D(t;v_1(t),v_1(t),v_1(t),\bar{v}_1(t))&&\\
 -3T C(t;v_1(t),v_1(t),h_{11,1}(t))-3T C(t;v_1(t),\bar{v}_1(t), h_{20,1}(t))- 3T B(t;h_{11,1}(t), h_{20,1}(t)) &&\\
 -3T B(t;v_1(t), h_{21,1}(t))-TB(t;\bar{v}_1(t),h_{30,1}(t)) &&\\
 + 6 i c_1 T h_{20,1}(t)+ 3 \alpha _{1,1} T (A(t) h_{20,1}(t) - 2 i \omega h_{20,1}(t) + B(t;v_1(t),v_1(t)))& = & 0,\ t \in [0,1], \\
 h_{31,1}(1)-h_{31,1}(0) & = & 0,
\end{array}
\right. \label{eq:h31-GNSdiscr}
\end{equation}
where $h_{31}(\tau) = h_{31,1}(\tau/T)/T^2$, so
\begin{multline*}
 \alpha_{2,1}=\frac{1}{4} \int_0^1 \langle \varphi^*_1(t),D(t;v_1(t),v_1(t),\bar{v}_1(t),\bar v_1(t))+C(t;v_1(t),v_1(t),h_{02,1}(t)) + 4 C(t;v_1(t),\bar{v}_1(t), h_{11,1}(t)) \\
  +C(t;\bar{v}_1(t),\bar v_1(t), h_{20,1}(t))+2B(t;h_{11,1}(t),h_{11,1}(t))+2B(t;v_1(t),h_{12,1}(t)) + B(t;h_{02,1}(t),h_{20,1}(t))\\
  +2 B(t;\bar{v}_1(t), h_{21,1}(t))- 4 \alpha_{1,1} (A(t) h_{11,1}(t) + B(t;v_1(t), \bar v_1(t))) \rangle dt+ \alpha_{1,1}^2,
\end{multline*}
with $\alpha_{2,1}=T^2\alpha_2$.

Now, we still need $h_{22,1}(t)$:
\begin{equation*}
\left\{\begin{array}{rcl}
 \dot h_{22,1}(t)-T A(t) h_{22,1}(t) - T D(t;v_1(t),v_1(t),\bar{v}_1(t),\bar v_1(t))-T C(t;v_1(t),v_1(t),h_{02,1}(t))\\
 - 4T C(t;v_1(t),\bar{v}_1(t), h_{11,1}(t))-TC(t;\bar{v}_1(t),\bar v_1(t), h_{20,1}(t))-2T B(t;h_{11,1}(t),h_{11,1}(t))\\
 -2T B(t;v_1(t),h_{12,1}(t))-T B(t;h_{02,1}(t),h_{20,1}(t)) -2T B(t;\bar{v}_1(t), h_{21,1}(t))\\
 + 4 \alpha_{1,1}T (A(t) h_{11,1}(t)+ B(t;v_1(t), \bar v_1(t))-\alpha_{1,1} F(u_1(t)))+ 4 \alpha _{2,1} T F(u_1(t)) & = & 0,\ t \in [0,1], \\
 h_{22,1}(1)-h_{22,1}(0) & = & 0,\\
 \int_0^1 \langle \varphi^*_1(t),h_{22,1}(t)\rangle dt & = & 0,
\end{array}
\right. 
\end{equation*}
where $h_{22}(\tau) = h_{22,1}(\tau/T)/T^2$.

Therefore, we can compute the critical coefficient $e$:
\begin{eqnarray*}
e&=&\frac{1}{12T^2} \int_0^1 \langle v^*_1(t), E(t;v_1(t),v_1(t),v_1(t),\bar{v}_1(t),\bar v_1(t))+ D(t;v_1(t),v_1(t),v_1(t),h_{02,1}(t))\nonumber \\
&&+6 D(t;v_1(t),v_1(t),\bar{v}_1(t),h_{11,1}(t))+3D(t;v_1(t),\bar{v}_1(t),\bar v_1(t),h_{20,1}(t)) \nonumber \\
&& + 6 C(t;v_1(t),h_{11,1}(t),h_{11,1}(t))+3 C(t;v_1(t),v_1(t),h_{12,1}(t))\nonumber\\
 &&+ 3 C(t;v_1(t), h_{02,1}(t), h_{20,1}(t)) + 6 C(t;\bar{v}_1(t),h_{11,1}(t), h_{20,1}(t))+6 C(t;v_1(t), \bar{v}_1(t), h_{21,1}(t))\nonumber\\
 &&+C(t;\bar{v}_1(t),\bar v_1(t), h_{30,1}(t)) \nonumber\\
 &&+3 B(t;h_{12,1}(t),h_{20,1}(t))+6 B(t;h_{11,1}(t), h_{21,1}(t))+ 3 B(t;v_1(t), h_{22,1}(t))\nonumber \\
  &&+B(t;h_{02,1}(t), h_{30,1}(t))+2B(t;\bar{v}_1(t), h_{31,1}(t))-12 \alpha _{2,1}A(t)v_1(t)\nonumber\\
  &&-6 \alpha_{1,1}(A(t) h_{21,1}(t)+2 B(t;v_1(t),h_{11,1}(t))+ C(t;v_1(t),v_1(t),\bar{v}_1(t))\nonumber \\
&& +B(t;h_{20,1}(t),\bar{v}_1(t)) -2 \alpha _{1,1} A(t) v_1(t) ) \rangle dt + \alpha_{2,1}i\frac{\omega}{T^2}+\alpha_{1,1}i \frac{c_1}{T^2} - \alpha_{1,1}^2 i \frac{\omega}{T^2}.
\end{eqnarray*}

We now impose the matrix solutions for the functions. We compute $v_{1M}$ by solving the discretization of (\ref{eq:EigenFunc_GNSdiscr})
\begin{eqnarray*}
\left[\begin{array}{cc}
\begin{array}{c}
(D-TA(t)+i\omega T L)_{C \times M}\\
\delta_0-\delta_1
\end{array} & p_2\\
q_2^{\rm H} & 0
\end{array}
\right]
\left[\begin{array}{c}
v_{1,1M}\\
a
\end{array}
\right]
= \left[\begin{array}{c}
0_{C\times 1}\\0_{n\times 1}\\1
\end{array}\right],
\end{eqnarray*}
with $a=0$, where $q_2$ is the normalized right null-vector of the complex matrix $K=\allowbreak\left[\begin{array}{c}(D-TA(t)+i\omega T L)_{C \times M}\\ \delta_0-\delta_1\end{array}\right]$ and $p_2$ the normalized right null-vector of $K^{\rm H}$. This vector is then rescaled so that $\int_0^1<v_1(t),v_1(t)>dt=1$. For the computation of $\varphi_{1M}$, we use Proposition \ref{Proposition1} to obtain
\begin{eqnarray*}
\begin{bmatrix}(\varphi^*_{1})^{\rm T}_W & a \end{bmatrix}
\left[\begin{array}{cc}
\begin{array}{c}
(D-TA(t))_{C \times M}\\
\delta_0-\delta_1
\end{array} & p\\
q^{\rm T} & 0
\end{array}
\right]
=  \begin{bmatrix} 0_{M\times 1} & 1\end{bmatrix},\end{eqnarray*}
where $a$ equals zero. We then approximate $I = \int_0^1 {\langle \varphi_1^*(t),F(u_1(t)) \rangle dt}$ by $I_1=(\varphi^*_{1})_{W_1}^{\rm T}L_{C \times M}g_M$ and we rescale $\varphi^*_{1W}$ so that $I_1=1$.

For the computation of $v_1^*$ we apply Proposition \ref{Proposition3} from the Appendix. Since $v_1^*$ lies in the kernel of the over there defined $\phi_2$, we have that
$\left[\begin{array}{c}v_1^*\\ v_1^*(0)\end{array}\right]\bot~\phi_1({\cal C}^1([0,1],\allowbreak {\mathbb C^n}))$. The eigenfunction $v_1^*$ is thus computed by solving
\begin{eqnarray*}
\begin{bmatrix}(v_{1}^*)_W^{\rm H} & a\end{bmatrix}
\left[\begin{array}{cc}
\begin{array}{c}
(D-TA(t)+i\omega T L)_{C \times M}\\
\delta_0-\delta_1
\end{array} & p_2\\
q_2^{\rm H} & 0
\end{array}
\right]
=  \begin{bmatrix}0_{M\times 1} & 1\end{bmatrix}.\end{eqnarray*}
We then approximate $I = \int_0^1 {\langle v_1^*(t),v_1(t) \rangle dt}$ by $I_1=(v^*_{1})_{W_1}^{\rm H}L_{C \times M}v_{1M}$ and we rescale $v^*_{1W}$ so that $I_1=1$.

(\ref{eq:h20-GNSdiscr}) is approximated by
\begin{eqnarray*}
\left[\begin{array}{c}
(D-TA(t)+2\,i\omega T L)_{C \times M}\\
\delta_0-\delta_1 \\
\end{array}
\right]
h_{20,1M}
=  \left[\begin{array}{c}
    TB_{C \times M \times M}v_{1M}v_{1M}\\0_{n \times 1}
\end{array}\right]
.\end{eqnarray*}

The coefficient $\alpha_{1,1}$ can be approximated as
\begin{equation*}
 \alpha_{1,1}=(\varphi^*_{1})_{W_1}^{\rm T} B_{C \times M \times M}v_{1M}\bar v_{1M}
\end{equation*}
which gives then all the information to determine the real function $h_{11,1}$:
\begin{eqnarray*}
\left[\begin{array}{cc}
\begin{array}{c}
(D-TA(t))_{C \times M}\\
\delta_0-\delta_1
\end{array} & p\\
(\varphi^*_{1})_{W_1}^{\rm T}L_{C\times M} & 0
\end{array}
\right]
\left[\begin{array}{c}
h_{11,1M}\\
a
\end{array}\right]
=  \left[\begin{array}{c}
    TB_{C \times M \times M}v_{1M}\bar{v}_{1M}-\alpha_{1,1}Tg_C\\
    0_{n\times 1}\\0
\end{array}\right]
,\end{eqnarray*}
where $a$ equals zero.

Then an approximation for the rescaled normal form coefficient $c_1$ is given by
\begin{equation*}
 c_1=-\frac{i}{2} (v^*_{1})_{W_1}^{\rm H}(C_{C \times M \times M \times M}v_{1M}v_{1M}\bar{v}_{1M}+2
B_{C \times M \times M}v_{1M}h_{11,1M}+B_{C \times M \times M}\bar{v}_{1M}h_{20,1M} - 2 \alpha _{1,1} A_{C \times M} v_{1M}
)+\alpha_{1,1}\omega.
\end{equation*}

Next, we determine the third order coefficients of the center manifold expansion, namely
\begin{eqnarray*}
\left[\begin{array}{cc}
\begin{array}{c}
(D-TA(t)+i\omega T L)_{C \times M}\\
\delta_0-\delta_1
\end{array} & p_2\\
(v_{1}^*)_{W_1}^{\rm H}L_{C\times M} & 0
\end{array}
\right]
\left[\begin{array}{c}
h_{21,1M}\\a\end{array}\right]
=  \left[\begin{array}{c}
    \mbox{rhs}\\0_{n\times 1}\\0
\end{array} \right]
,\end{eqnarray*}
where
\begin{eqnarray*}
\mbox{rhs} &= &TC_{C \times M \times M \times M}v_{1M}v_{1M}\bar{v}_{1M}+2T B_{C \times M \times M}v_{1M}h_{11,1M}+T B_{C \times M \times M}h_{20,1M}\bar{v}_{1M}-2 i c_1 T L_{C \times M}v_{1M}\\
&&- 2 \alpha _{1,1}T (A_{C \times M}v_{1M}-i \omega L_{C \times M}v_{1M})
\end{eqnarray*}
and $a=0$, and
\begin{eqnarray*}
\left[\begin{array}{c}
(D-TA(t)+3i\omega T L)_{C \times M}\\
\delta_0-\delta_1
\end{array}
\right]
h_{30,1M}
=  \left[\begin{array}{c}
TC_{C \times M \times M \times M}v_{1M}v_{1M}v_{1M}+3T B_{C \times M \times M}v_{1M}h_{20,1M}\\
 0_{n\times 1}
\end{array}\right]
.\end{eqnarray*}

The approximation to (\ref{eq:h31-GNSdiscr}) is given by
\begin{eqnarray*}
\left[\begin{array}{c}
(D-TA(t)+2i\omega T L)_{C \times M}\\
\delta_0-\delta_1
\end{array}
\right]
h_{31,1M}
= \left[ \begin{array}{c}
 \mbox{rhs}\\0_{n\times 1} \end{array}\right]
,\end{eqnarray*}
with
\begin{eqnarray*}
\mbox{rhs} & = & TD_{C \times M \times M \times M \times M}v_{1M}v_{1M}v_{1M}\bar{v}_{1M}+3T C_{C \times M \times M \times M}v_{1M}v_{1M}h_{11,1M}  +3T C_{C \times M \times M \times M}v_{1M}\bar{v}_{1M}h_{20,1M}\\
&&+ 3 T B_{C \times M \times M}h_{11,1M}h_{20,1M}+3T B_{C \times M \times M}v_{1M}h_{21,1M} +TB_{C \times M \times M}\bar{v}_{1M}h_{30,1M}\\
&&-6 i c_1 T L_{C \times M} h_{20,1M}- 3 \alpha _{1,1}T (A_{C \times M}h_{20,1M}- 2 i \omega L_{C \times M}h_{20,1M}+ B_{C \times M \times M}v_{1M}v_{1M})
\end{eqnarray*}
while
\begin{multline*}
 \alpha_{2,1}=\frac{1}{4} (\varphi^*_{1})_{W_1}^{\rm T}(D_{C \times M \times M \times M\times M}v_{1M}v_{1M}\bar{v}_{1M}\bar v_{1M}+C_{C \times M \times M \times M}v_{1M}v_{1M}h_{02,1M}+ 4 C_{C \times M \times M \times M}v_{1M}\bar{v}_{1M}h_{11,1M} \\
  +C_{C \times M \times M \times M}\bar{v}_{1M}\bar v_{1M}h_{20,1M}+2B_{C \times M \times M}h_{11,1M}h_{11,1M}+2B_{C \times M \times M}v_{1M}h_{12,1M}+ B_{C \times M \times M}h_{02,1M}h_{20,1M}\\
  +2 B_{C \times M \times M}\bar{v}_{1M} h_{21,1M}- 4 \alpha_{1,1} (A_{C \times M}h_{11,1M}+ B_{C \times M \times M}v_{1M}\bar v_{1M}))+ \alpha_{1,1}^2.
\end{multline*}

Now, we are able to compute $h_{22,1}$ by
\begin{eqnarray*}
\left[\begin{array}{cc}
\begin{array}{c}
(D-TA(t))_{C \times M}\\
\delta_0-\delta_1
\end{array} & p\\
(\varphi^*_1)^{\rm T}_{W_1}L_{C \times M} & 0
\end{array}
\right]
\left[\begin{array}{c}
h_{22,1M}\\a \end{array}\right]
= \left[\begin{array}{c}
 \mbox{rhs}\\0_{n\times 1}\\0\end{array}\right]
,\end{eqnarray*}
with
\begin{eqnarray*}
\mbox{rhs} & = & TD_{C \times M \times M \times M\times M}v_{1M}v_{1M}\bar{v}_{1M}\bar v_{1M}+ TC_{C \times M \times M \times M}v_{1M}v_{1M}h_{02,1M}- 4 \alpha _{2,1} T g_C  \\
&&+ 4T C_{C \times M \times M \times M}v_{1M}\bar{v}_{1M}h_{11,1M}+2T B_{C \times M \times M}h_{11,1M}h_{11,1M}+2T B_{C \times M \times M}v_{1M}h_{12,1M} \\
 &&+TC_{C \times M \times M \times M}\bar{v}_{1M}\bar v_{1M}h_{20,1M}+T B_{C \times M \times M}h_{02,1M}h_{20,1M}+2T B_{C \times M \times M}\bar{v}_{1M}h_{21,1M} \\
  &&- 4 \alpha_{1,1}T ( A_{C \times M} h_{11,1M}+ B_{C \times M \times M}v_{1M}\bar v_{1M}-\alpha_1 g_C) 
\end{eqnarray*}
and $a=0$. For the fifth order coefficient of the normal form, we then obtain
\begin{eqnarray*}
e&=&\frac{1}{12T^2} (v^*_{1})_{W_1}^{\rm H}(E_{C \times M \times M \times M \times M \times M}v_{1M}v_{1M}v_{1M}\bar{v}_{1M}\bar v_{1M}+D_{C \times M \times M \times M\times M}v_{1M}v_{1M}v_{1M}h_{02,1M}\nonumber \\
&&+6 D_{C \times M \times M \times M\times M}v_{1M}v_{1M}\bar{v}_{1M}h_{11,1M}+3 D_{C \times M \times M \times M\times M}v_{1M}\bar{v}_{1M}\bar v_{1M}h_{20,1M} \nonumber \\
&& + 6 C_{C \times M \times M \times M}v_{1M}h_{11,1M}h_{11,1M}+3 C_{C \times M \times M \times M}v_{1M}v_{1M}h_{12,1M}\nonumber\\
 &&+ 3 C_{C \times M \times M \times M}v_{1M}h_{02,1M}h_{20,1M} + 6 C_{C \times M \times M \times M}\bar{v}_{1M}h_{11,1M}h_{20,1M}+6 C_{C \times M \times M \times M}v_{1M}\bar{v}_{1M}h_{21,1M}\nonumber\\
 &&+ C_{C \times M \times M \times M}\bar{v}_{1M}\bar v_{1M} h_{30,1M}+3 B_{C \times M \times M}h_{12,1M}h_{20,1M} \nonumber\\
 &&+6 B_{C \times M \times M}h_{11,1M}h_{21,1M}+ 3 B_{C \times M \times M}v_{1M}h_{22,1M}\\
  &&+B_{C \times M \times M}h_{02,1M} h_{30,1M}+2B_{C \times M \times M}\bar{v}_{1M}h_{31,1M}-12 \alpha _{2,1}A_{C \times M}v_{1M} \nonumber\\
  &&-6 \alpha_{1,1}(A_{C \times M}h_{21,1M}+2 B_{C \times M \times M}v_{1M}h_{11,1M}+ C_{C \times M \times M \times M}v_{1M}v_{1M}\bar{v}_{1M}\nonumber \\
&& +B_{C \times M \times M}h_{20,1M}\bar{v}_{1M} -2 \alpha _{1,1} A_{C \times M} v_{1M}))+\alpha_{2,1}i \frac{\omega}{T^2}+\alpha_{1,1}i\frac{c_1}{T^2}-\alpha_{1,1}^2i\frac{\omega}{T^2}.
\end{eqnarray*}

\subsection{Strong resonance 1:1 bifurcation}
Again, we rescale the computed quantities to the interval $[0,1]$. The linear BVPs (\ref{eq:R1_EigenFunc}) and (\ref{eq:R1_EigenFunc_2}) are replaced by
\begin{eqnarray*}
\left\{\begin{array}{rcl}
\dot{v}_{1,1}(t)-TA(t)v_{1,1}(t) - T F(u_1(t)) & = & 0,\ t \in [0,1], \\
v_{1,1}(1)-v_{1,1}(0) & = & 0,\\
\int_{0}^{1} {\langle v_{1,1}(t),F(u_1(t))\rangle dt} & = & 0,
\end{array}
\right.
\end{eqnarray*}
where $v_1(\tau) = v_{1,1}(\tau/T)$, and
\begin{eqnarray*}\label{eq:R1_EigenFunc2discr}
\left\{\begin{array}{rcl}
\dot{v}_{2,1}(t)-T A(t)v_{2,1}(t) + T v_{1,1}(t) & = & 0,\ t \in [0,1], \\
v_{2,1}(1) - v_{2,1}(0) & = & 0,\\
\int_{0}^{1} {\langle v_{2,1}(t),F(u_1(t))\rangle dt} & = & 0,
\end{array}
\right.
\end{eqnarray*}
with $v_2(\tau) = v_{2,1}(\tau/T)$, respectively.

The rescaling of the adjoint eigenfunction, defined by (\ref{eq:R1_AdjEigenFunc}), gives us
\begin{equation*}
\left\{\begin{array}{rcl}
\dot{\varphi}^*_1(t)+T A^{\rm T}(t)\varphi^*_1(t) & = & 0,\ t \in [0,1], \\
\varphi^*_1(1)-\varphi^*_1(0) & = & 0,\\
\int_{0}^{1} {\langle \varphi^*_1(t),v_{2,1}(t) \rangle dt} - 1 & = & 0,
\end{array}
\right.
\end{equation*}
where $\varphi^*(\tau) = \varphi^*_1(\tau/T)/T$. This leads to the expression
\begin{eqnarray*}
 a=\frac{1}{2}\int_0^1\langle\varphi^*_1(t), 2A(t) v_{1,1}(t)+B(t;v_{1,1}(t),v_{1,1}(t)) \rangle dt.
\end{eqnarray*}

Definition (\ref{eq:R1_AdjGenEigenFunc}) of the first generalized adjoint eigenfunction is rescaled as
\begin{equation*}
\left\{\begin{array}{rcl}
\dot{v}_{1,1}^*(t)+T A^{\rm T}(t)v_{1,1}^*(t) -T\varphi^*_1(t)& = & 0,\ t \in [0,1], \\
v_{1,1}^*(1)-v_{1,1}^*(0) & = & 0,\\
\int_{0}^{1} {\langle v_{1,1}^*(t),v_{2,1}(t) \rangle dt} & = & 0,
\end{array}
\right.
\end{equation*}
with $v_1^*(\tau) = v_{1,1}^*(\tau/T)/T$, which then gives all the information we need to compute the critical coefficient $b$
\begin{equation}
 b= \int_0^1\langle \varphi_1^*(t),B(t;v_{1,1}(t), v_{2,1}(t))+A(t) v_{2,1}(t)\rangle dt + \int_0^1\langle v_{1,1}^*(t), 2 A(t)v_{1,1}(t)+B(t;v_{1,1}(t),v_{1,1}(t))\rangle dt.\label{eq:r1_bdiscr}
\end{equation}

The generalized eigenfunctions are computed as 
\begin{eqnarray*}
\left[\begin{array}{cc}
\begin{array}{c}
(D-TA(t))_{C \times M}\\
\delta_0-\delta_1
\end{array} & p\\
g_{W_1}^{\rm T} L_{C \times M} & 0
\end{array}
\right]
\left[\begin{array}{c}
v_{1,1M}\\
a
\end{array}\right]
=  \left[\begin{array}{c}
Tg_C\\
    0_{n \times 1}\\
    0
\end{array}\right]\end{eqnarray*}
and
\begin{eqnarray*}
\left[\begin{array}{cc}
\begin{array}{c}
(D-TA(t))_{C \times M} \\
\delta_0-\delta_1 \\
\end{array} & p\\
g_{W_1}^{\rm T} L_{C \times M} & 0
\end{array}
\right]
\left[\begin{array}{c}
v_{2,1M}\\
a
\end{array}\right]
= \left[\begin{array}{c}
    -Tv_{1,1C}\\
    0_{n \times 1}\\
    0
\end{array}\right],\end{eqnarray*}
with $p$ the left null-vector, as before.

As always, we compute the adjoint eigenfunctions by means of  the transposed of the usual matrix, and obtain here $\varphi_{1W}^*$ instead of $\varphi_{1M}^*$. Formally, the computation of $\varphi_{1W}^*$ is based on Proposition \ref{Proposition1} from the appendix and as in the cusp of cycles case we obtain then
$$
\begin{bmatrix} (\varphi_{1}^*)_W^{\rm T} & a\end{bmatrix}\left[\begin{array}{cc}
\begin{array}{c}
(D-TA(t))_{C \times M} \\
\delta_0-\delta_1\\
\end{array}
& p\\
q^{\rm T} & 0
\end{array}
\right] = \begin{bmatrix}0_{M\times 1} & 1\end{bmatrix},$$
where $a$ equals zero. We approximate $I = \int_{0}^{1} {\langle \varphi_1^*(t),v_{2,1}(t) \rangle dt}$ by $I_1 = (\varphi_{1}^*)_{W_1}^{\rm T}\allowbreak L_{C\times M}v_{2,1M}$ and rescale $\varphi_{1W}^*$ to ensure that $I_1 = 1$.

Having found $v_{1,1M}$ and $\varphi_{1W}^*$, the first normal form coefficient of interest can be computed as
\begin{eqnarray*}
 a=\frac{1}{2}(\varphi^*_{1})_{W_1}^{\rm T}(2A_{C\times M}v_{1,1M}+B_{C\times M \times M}v_{1,1M}v_{1,1M}).
\end{eqnarray*}

Now we still need $v_{1,1}^*$. The technique of Proposition \ref{Proposition4} from the appendix is used to obtain $(v_{1,1}^*)_{W}$, namely
$$\begin{bmatrix}(v_{1,1}^*)_{W}^{\rm T} & a\end{bmatrix}\left[\begin{array}{cc}
(D-TA(t))_{C \times M} & v_{2,1C}\\
\delta_0-\delta_1 & 0_{n\times 1}\\
q^{\rm T} & 0
\end{array}
\right] = \begin{bmatrix}-T(\varphi_{1}^*)_{W_1}^{\rm T}L_{C\times M} & 0 \end{bmatrix},$$
where $a = 0$.

Finally (\ref{eq:r1_bdiscr}) is approximated by
\begin{equation*}
 b= (\varphi_{1}^*)_{W_1}^{\rm T}(B_{C \times M \times M}v_{1,1M}v_{2,1M}+A_{C \times M}v_{2,1M}) + (v_{1,1}^*)_{W_1}^{\rm T}(2 A_{C \times M}v_{1,1M}+B_{C \times M \times M}v_{1,1M}v_{1,1M}).
\end{equation*}

\subsection{Strong resonance 1:2 bifurcation}
As before, we rescale the computed quantities to the interval $[0,1]$. The linear BVPs (\ref{eq:EigenFunc_12C}) and (\ref{eq:EigenFunc_12C_2}) are replaced by
\begin{eqnarray} \label{eq:EigenFunc_12Cdiscr}
&&\left\{\begin{array}{rcl}
\dot{v}_{1,1}(t)-T A(t)v_{1,1}(t) & = & 0,\ t \in [0,1], \\
v_{1,1}(1)+v_{1,1}(0) & = & 0,\\
\int_{0}^{1} {\langle v_{1,1}(t),v_{1,1}(t)\rangle dt} -1 & = & 0,\\
\end{array}
\right.
\end{eqnarray}
with $v_1(\tau) = v_{1,1}(\tau/T)/\sqrt{T}$, and
\begin{eqnarray*}
\left\{\begin{array}{rcl}
\dot{v}_{2,1}(t)-TA(t)v_{2,1}(t) + Tv_{1,1}(t) & = & 0,\ t \in [0,1], \\
v_{2,1}(1) + v_{2,1}(0) & = & 0,\\
\int_{0}^{1} {\langle v_{2,1}(t),v_{1,1}(t)\rangle dt} & = & 0,\\
\end{array}
\right.
\end{eqnarray*}
where $v_2(\tau) = v_{2,1}(\tau/T)/\sqrt{T}$.

The rescaling of the adjoint eigenfunction gives us
\begin{equation*}
\left\{\begin{array}{rcl}
\dot{\varphi}^*_1(t)+T A^{\rm T}(t)\varphi^*_1(t) & = & 0,\ t \in [0,1], \\
\varphi^*_1(1)-\varphi^*_1(0) & = & 0,\\
\int_{0}^{1} {\langle \varphi^*_1(t),F(u_1(t)) \rangle dt} - 1 & = & 0,
\end{array}
\right.
\end{equation*}
where $\varphi^*(\tau) = \varphi^*_1(\tau/T)/T$, so we can compute
\begin{equation*}
 \alpha_1 = \frac{1}{2}\int_0^{1} \langle \varphi^*_1(t), B(t;v_{1,1}(t),v_{1,1}(t)) \rangle\;dt,
\end{equation*}
with $\alpha_1=T\alpha$. With $\alpha_1$ defined in this way, let $h_{20,1}$ be the unique solution of the BVP
\begin{equation} \label{eq:h20_12Cdiscr}
 \left\{\begin{array}{rcl}
  \dot{h}_{20,1}(t)-T A(t) h_{20,1}(t) - T B(t;v_{1,1}(t),v_{1,1}(t))+2 \alpha_1 T F(u_1(t)) &=& 0,\ t \in [0,1],\\
  h_{20,1}(1)-h_{20,1}(0) & = & 0, \\
  \int_0^{1} \langle \varphi^*_1(t), h_{20,1}(t) \rangle\;dt &=& \int_0^1 \langle \varphi_1^*(t),B(v_{1,1}(t),v_{2,1}(t))\rangle\;dt,
 \end{array}\right.
\end{equation}
where $h_{20}(\tau) = h_{20,1}(\tau/T)/T$.

The rescaling of the adjoint eigenfunction corresponding with multiplier $-1$ and the adjoint generalized eigenfunction gives us
\begin{eqnarray*}
&&\left\{\begin{array}{rcl}
 \dot{v}_{1,1}^*(t)+T A^{\rm T}(t)v_{1,1}^*(t) & = & 0,\ t \in [0,1], \\
 v_{1,1}^*(1)+v_{1,1}^*(0) & = & 0, \\
 \int_{0}^{1} {\langle v_{1,1}^*(t),v_{2,1}(t)\rangle dt} -1 & = & 0,
\end{array} \right.
\end{eqnarray*}
with $v_1^*(\tau)=v_{1,1}^*(\tau/T)/\sqrt{T}$, and 
\begin{equation*} 
 \left\{\begin{array}{rcl}
  \dot{v}_{2,1}^*(t)+T A^{\rm T}(t) v_{2,1}^*(t) - T v_{1,1}^*(t) &=& 0,\ t \in [0,1],\\
  v_{2,1}^*(1)+v_{2,1}^*(0) & = & 0, \\
  \int_0^{1} \langle v_{2,1}(t), v_{2,1}^*(t) \rangle\;dt &=&0,
 \end{array}\right.
\end{equation*}
where $v_{2}^*(\tau) = v_{2,1}^*(\tau/T)/\sqrt{T}$, respectively.

The rescaled critical coefficient is then
\begin{equation} \label{eq:a_12Cdiscr}
 a_1=\frac{1}{6}\int_0^{1} \langle v_{1,1}^*(t), C(t;v_{1,1}(t),v_{1,1}(t),v_{1,1}(t))+3 B(t;v_{1,1}(t),h_{20,1}(t)) - 6 \alpha_1  A(t) v_{1,1}(t) \rangle\;dt,
\end{equation}
with $a_1=Ta$.

We replace (\ref{eq:h11_12C}) by
\begin{equation*} 
 \left\{\begin{array}{rcl}
  \dot{h}_{11,1}(t)-T A(t) h_{11,1}(t) - T B(t;v_{1,1}(t),v_{2,1}(t)) + T h_{20,1}(t) &=& 0,\ t \in [0,1],\\
  h_{11,1}(1)-h_{11,1}(0) & = & 0, \\
  \int_0^{1} \langle \varphi^*_1(t), h_{11,1}(t) \rangle\;dt &=&\frac{1}{2}\int_0^1 \langle \varphi_1^*(t),B(v_{2,1}(t),v_{2,1}(t))\rangle\;dt,
 \end{array}\right.
\end{equation*}
with $h_{11}(\tau) = h_{11,1}(\tau/T)/T$, to finally obtain that
\begin{multline*} 
 b=\frac{1}{2T}\int_0^{1} \langle v_{1,1}^*(t),-2\alpha_1A(t)v_{2,1}(t) + C(t;v_{1,1}(t),v_{1,1}(t),v_{2,1}(t))\\+ B(t;h_{20,1}(t),v_{2,1}(t))+ 2B(t;h_{11,1}(t),v_{1,1}(t))\rangle dt \\+ \frac{1}{2T}\int_0^{1} \langle v_{2,1}^*(t),C(t;v_{1,1}(t),v_{1,1}(t),v_{1,1}(t))+ 3B(t;v_{1,1}(t),h_{20,1}(t))-6\alpha_1A(t)v_{1,1}(t)\rangle dt.
\end{multline*}

We now come to the second part of the implementation details and define the matrix solutions for the several functions. We compute $v_{1,1M}$ by solving the discretization of (\ref{eq:EigenFunc_12Cdiscr})
\begin{eqnarray}\label{eq:EigenFunc_12Cimpl}
\left[\begin{array}{cc}
\begin{array}{c}
(D-TA(t))_{C \times M}\\
\delta_0+\delta_1
\end{array} & p_1\\
q_1^{\rm T}  & 0
\end{array}
\right]
\left[\begin{array}{c}
v_{1,1M}\\a\end{array}\right]
= \left[\begin{array}{c}
    0_{C\times 1}\\0_{n\times 1}\\1
\end{array}\right],
\end{eqnarray}
where $a = 0$. We then normalize $v_{1,1M}$ by requiring $\sum_{i=0}^{N-1}\sum_{j=0}^m\sigma_j\langle(v_{1,1M})_{i,j},\allowbreak(v_{1,1M})_{i,j}\rangle =1,$ where $\sigma_j$ is the Lagrange quadrature coefficient.

We have now obtained the value of $v_{1,1}$ in the mesh points. However, since $v_{1,1}$ is used in the integral condition for $v_{2,1}$, we have to transfer this vector to the collocation points and multiply it with the Gauss-Legendre weights and the lenghts of the corresponding intervals, giving us vector $(v_{1,1})_{W_1}$. Then, $v_{2,1M}$ can be found by solving the following system
\begin{eqnarray*}
\left[\begin{array}{cc}
\begin{array}{c}
(D-TA(t))_{C \times M}\\
\delta_0+\delta_1
\end{array} & p_1\\
(v_{1,1})_{W_1}^{\rm T} L_{C \times M} & 0
\end{array}
\right]
\left[\begin{array}{c}
v_{2,1M}\\ a
\end{array}\right]
= \left[\begin{array}{c}
    -Tv_{1,1C}\\
    0_{n\times 1}\\
    0
\end{array}
\right]
,\end{eqnarray*}
where $a$ equals zero.

The adjoint eigenfunction corresponding to the trivial eigenvalue is computed with an analogous matrix as in (\ref{eq:EigenFunc_12Cimpl}):
\begin{eqnarray*}
\begin{bmatrix}(\varphi_{1}^*)_W^{\rm T} & a_1\end{bmatrix}
\left[\begin{array}{cc}
\begin{array}{c}
(D-TA(t))_{C \times M}\\
\delta_0-\delta_1
\end{array} & p\\
q^{\rm T} & 0
\end{array}
\right]
=\begin{bmatrix}0_{M\times 1} & 1\end{bmatrix},
\end{eqnarray*}
while the adjoint eigenfunction corresponding to eigenvalue $-1$ can be found by
\begin{eqnarray*}
\begin{bmatrix}(v_{1,1}^*)_W^{\rm T} & a_2\end{bmatrix}
\left[\begin{array}{cc}
\begin{array}{c}
(D-TA(t))_{C \times M}\\
\delta_0+\delta_1
\end{array} & p_1\\
q_1^{\rm T} & 0
\end{array}
\right]
=\begin{bmatrix}0_{M\times 1} & 1\end{bmatrix},
\end{eqnarray*}
where $a_1$ and $a_2$ are equal to zero. $\varphi_{1W}^*$ is then rescaled to make sure that $(\varphi_{1}^*)_{W_1}^{\rm T}\allowbreak L_{C\times M}g_M=1$ and $v_{1,1W}^*$ so that $(v_{1,1}^*)_{W_1}^{\rm T}L_{C\times M}v_{2,1M}=1$.

Making use of Proposition \ref{Proposition5}, we obtain the adjoint generalized eigenfunction by solving
\begin{eqnarray*}
\begin{bmatrix}(v_{2,1}^*)_W^{\rm T} & a\end{bmatrix}
\left[\begin{array}{cc}
(D-TA(t))_{C \times M} & v_{2,1C}\\
\delta_0+\delta_1 & 0_{n\times 1}\\
q_1^{\rm T} & 0
\end{array}
\right]
=\begin{bmatrix}-T(v_{1,1}^*)_{W_1}^{\rm T}L_{C\times M} & 0\end{bmatrix}.
\end{eqnarray*}
Having found $\varphi_{1W_1}^*$ and $v_{1,1M}$, $\alpha_1$ can be computed as
\begin{equation*}
 \alpha_1 = \frac{1}{2}(\varphi^*_{1})_{W_1}^{\rm T}B_{C \times M \times M}v_{1,1M}v_{1,1M}.
\end{equation*}

Now, $h_{20,1M}$ is found by solving the discretization of (\ref{eq:h20_12Cdiscr}), namely,
\begin{eqnarray*}
\left[\begin{array}{cc}
\begin{array}{c}
(D-TA(t))_{C \times M} \\
\delta_0-\delta_1
\end{array}& p \\
(\varphi_{1}^*)_{W_1}^{\rm T} L_{C \times M} & 0
\end{array}
\right]
\left[\begin{array}{c}
h_{20,1M}\\
a
\end{array}\right]
= \left[\begin{array}{c}
    TB_{C \times M \times M}v_{1,1M}v_{1,1M}-2\alpha_1 T g_C\\
    0_{n \times 1}\\
    (\varphi_1^*)_{W_1}^{\rm T}B_{C\times M \times M}v_{1,1M}v_{2,1M}
\end{array}
\right]
,\end{eqnarray*}
and (\ref{eq:a_12Cdiscr}) is approximated by
\begin{equation*}
 a_1=\frac{1}{6}(v_{1,1}^*)_{W_1}^{\rm T}\left(C_{C \times M \times M \times M}v_{1,1M}v_{1,1M}v_{1,1M}+3 B_{C \times M \times M}v_{1,1M}h_{20,1M} - 6 \alpha_1  A_{C \times M} v_{1,1M}\right).
\end{equation*}

Next, $h_{11,1M}$ is found by
\begin{eqnarray*}
\left[\begin{array}{cc}
\begin{array}{c}
(D-TA(t))_{C \times M} \\
\delta_0-\delta_1\\
\end{array} & p\\
(\varphi_{1}^*)_{W_1}^{\rm T} L_{C \times M} & 0
\end{array}
\right]
\left[\begin{array}{c}
    h_{11,1M}\\
    a
\end{array}\right]
= \left[\begin{array}{c}
    TB_{C \times M \times M}v_{1,1M}v_{2,1M} - T h_{20,1C}\\
    0_{n \times 1}\\
    \frac{1}{2}(\varphi_1^*)_{W_1}^{\rm T}B_{C\times M \times M}v_{2,1M}v_{2,1M}
\end{array}
\right]
.\end{eqnarray*}

Finally, we obtain
\begin{multline*} 
 b=\frac{1}{2T} (v_{1,1}^*)_{W_1}^{\rm T}(-2\alpha_1A_{C \times M} v_{2,1M} \\+  C_{C \times M \times M \times M}v_{1,1M}v_{1,1M}v_{2,1M}+ B_{C \times M \times M}h_{20,1M}v_{2,1M}+ 2B_{C \times M \times M}h_{11,1M}v_{1,1M})\\
 +\frac{1}{2T}(v_{2,1}^*)_{W_1}^{\rm T}(C_{C \times M \times M \times M}v_{1,1M}v_{1,1M}v_{1,1M}+ 3B_{C \times M \times M}v_{1,1M}h_{20,1M}-6\alpha_1A_{C\times M}v_{1,1M}).
\end{multline*}

\subsection{Strong resonance 1:3 bifurcation}
As before, we rescale the computed quantities to the interval $[0,1]$. The BVPs for the eigenfunction and its adjoint belonging to eigenvalue $e^{i \frac{2\pi}{3}}$ are replaced by
\begin{eqnarray}
&&\left\{\begin{array}{rcl}
\dot{v}_1(t)-TA(t)v_1(t) & = & 0,\ t \in [0,1], \\
v_1(1)- e^{i \frac{2\pi}{3}} v_1(0) & = & 0,\\
\int_{0}^{1} {\langle v_1(t),v_1(t)\rangle dt} - 1 & = & 0,
\end{array}
\right.\label{eq:EigenFunc_R3discr}
\end{eqnarray}
with $v(\tau) = v_{1}(\tau/T)/\sqrt{T}$, and
\begin{eqnarray*}
\left\{\begin{array}{rcl}
 \dot{v}_1^*(t)+TA^{\rm T}(t)v_1^*(t) & = & 0,\ t \in [0,1], \\
 v_1^*(1)-e^{i \frac{2\pi}{3}} v_1^*(0) & = & 0,\\
 \int_{0}^{1} {\langle v_1^*(t), v_1(t)\rangle dt} - 1 & = & 0,
\end{array}
\right. 
\end{eqnarray*}
where $v^*(\tau) = v^*_{1}(\tau/T)/\sqrt{T}$. The rescaling of the adjoint eigenfunction corresponding to the trivial multiplier gives
\begin{eqnarray}\label{phi*R3}
\left\{\begin{array}{rcl}
 \dot{\varphi}_1^*(t)+TA^{\rm T}(t)\varphi_1^*(t) & = & 0,\ t \in [0,1], \\
 \varphi_1^*(1)-\varphi_1^*(0) & = & 0,\\
 \int_{0}^{1} {\langle \varphi_1^*(t), F(u_1(t))\rangle dt} - 1 & = & 0,
\end{array}
\right.
\end{eqnarray}
with $\varphi^*(\tau) = \varphi^*_{1}(\tau/T)/T$. 

These eigenfunctions make it already possible to compute the following $2$ rescaled normal form coefficients
\begin{eqnarray}
	\alpha_{1,1}= \int_0^1 \langle \varphi_1^*(t), B(v_1(t),\bar v_1(t)) \rangle dt,
	\label{alpha11R3}
\end{eqnarray}
where $\alpha_{1,1} = T\alpha_1$, and
$$b_1=\frac{1}{2} \int_0^1 \langle v^*_1(t), B(\bar v_1(t),\bar v_1(t)) \rangle dt,$$
with $b_1 = \sqrt{T}b$.

The rescaled second order functions in the center manifold expansion are solutions of
\begin{equation*}
\left\{\begin{array}{rcl}
 \dot{h}_{20,1}(t) - TA(t) h_{20,1}(t) - T B(v_1(t),v_1(t)) +  2 \bar{b}_1 T \bar{v}_1(t) & = & 0,\ t \in [0,T], \\
 h_{20,1}(1)-e^{i\frac{4 \pi}{3}}h_{20,1}(0) & = & 0,\\
 \int_{0}^{1} {\langle \bar v_1^*(t), h_{20,1}(t)\rangle dt} & = & 0,
\end{array}
\right.
\end{equation*}
with $h_{20}(\tau) = h_{20,1}(\tau/T)/T$, and 
\begin{equation}\label{h111R3}
\left\{\begin{array}{rcl}
 \dot{h}_{11,1}(t)-TA(t) h_{11,1}-T B(v_1(t),\bar{v}_1(t))+\alpha _{1,1} T F(u_1(t)) & = & 0,\ t \in [0,1], \\
 h_{11,1}(1)-h_{11,1}(0) & = & 0,\\
 \int_{0}^{1} {\langle \varphi^*_1(t), h_{11,1}(t)\rangle dt} & = & 0,
\end{array}
\right.
\end{equation}
with $h_{11}(\tau)=h_{11,1}(\tau/T)/T$. This all results then in
$$c = \frac{1}{2T}\int_0^1 \langle v^*_1(t), C(v_1(t),v_1(t),\bar{v}_1(t))+2 B(v_1(t), h_{11,1}(t))+ B(\bar{v}_1(t), h_{20,1}(t)) - 2 \alpha_{1,1} A v_1(t) \rangle dt.$$

We now come to the implementation details in MatCont. Eigenfunction $v_1$, determined by (\ref{eq:EigenFunc_R3discr}), is computed by
\begin{eqnarray*}
\left[\begin{array}{cc}
\begin{array}{c}
(D-TA(t))_{C \times M}\\
\delta_0 -e^{-i \frac{2\pi}{3}}\delta_1
\end{array} & p_3\\
q_3^{\rm H} & 0
\end{array}
\right] \left[\begin{array}{c}v_{1M}\\a\end{array}\right]
=\left[\begin{array}{c}
    0_{C\times 1}\\
    0_{n\times 1}\\
    1
\end{array}\right],
\end{eqnarray*}
with $a=0$. We then normalize $v_{1M}$ by requiring $\sum_{i=0}^{N-1}\sum_{j=0}^m\sigma_j\langle(v_{1M})_{i,j},(v_{1M})_{i,j}\rangle =1,$ where $\sigma_j$ is the Lagrange quadrature coefficient. $q_3$ is the normalized right null-vector of $K=\left[\begin{array}{c}(D-TA(t))_{C \times M}\\ \delta_0-e^{-i\theta}\delta_1\end{array}\right]$ and $p_3$ the normalized right null-vector of $K^{\rm H}$, with $\theta=\frac{2\pi}{3}$.

To compute the adjoint eigenfunction $v_1^*$, we apply Proposition \ref{Proposition6} from the appendix with $\theta=\frac{2\pi}{3}$. Since $v_1^*\in {\rm Ker}(\phi_2)$, this function can be obtained by solving
\begin{eqnarray}\label{v11*R3discr}
\begin{bmatrix} (v_1^*)^{\rm H}_W & a\end{bmatrix}
\left[\begin{array}{cc}
\begin{array}{c}
(D-TA(t))_{C \times M}\\
\delta_0 -e^{-i \frac{2\pi}{3}}\delta_1
\end{array} & p_3\\
q_3^{\rm H} & 0
\end{array}
\right]
=\begin{bmatrix} 0_{M\times 1} & 1\end{bmatrix}.
\end{eqnarray}
$v_{1W}^*$ is rescaled such that $(v_{1}^*)_{W_1}^{\rm H}L_{C\times M}v_{1M} = 1$. The adjoint eigenfunction corresponding to eigenvalue $1$ is discretized by
\begin{eqnarray}\label{phi*R3disc}
\begin{bmatrix} (\varphi_1^*)^{\rm T}_W & a\end{bmatrix}
\left[\begin{array}{cc}
\begin{array}{c}
(D-TA(t))_{C \times M}\\
\delta_0 -\delta_1
\end{array} & p\\
q^{\rm T} & 0
\end{array}
\right]
=\begin{bmatrix} 0_{M\times 1} & 1\end{bmatrix},
\end{eqnarray}
where $a$ equals zero. $\varphi_{1W}^*$ is rescaled so that $(\varphi_{1}^*)_{W_1}^{\rm T}L_{C\times M}g_M = 1$.

The normal form coefficients $\alpha_{1,1}$ and $b_1$ become
\begin{eqnarray}
	\alpha_{1,1}=(\varphi_1^*)^{\rm T}_{W_1} B_{C\times M \times M}v_{1M}\bar{v}_{1M}	
	\label{alpha11R3discr}
\end{eqnarray}
and
$$b_1 = \frac{1}{2}(v_1^*)^{\rm H}_{W_1} B_{C\times M \times M}\bar{v}_{1M}\bar{v}_{1M}.$$

By computing first the complex conjugate of $h_{20,1}$ we can use the same matrix as in (\ref{v11*R3discr}), except for the last line which represents the integral condition, to get
\begin{eqnarray*}
\left[\begin{array}{cc}
\begin{array}{c}
(D-TA(t))_{C \times M}\\
\delta_0 -e^{-i \frac{2\pi}{3}}\delta_1
\end{array} & p_3\\
(v_1^*)^{\rm H}_{W_1}L_{C \times M} & 0
\end{array}
\right] \left[\begin{array}{c}\bar{h}_{20,1M}\\a\end{array}\right]
=\left[\begin{array}{c}
    T B_{C\times M \times M}\bar v_{1M}\bar v_{1M}-2b_1 T v_{1C} \\
    0_{n\times 1}\\
    0
\end{array}\right].
\end{eqnarray*}

To obtain the discretization of $h_{11,1}$, the following system is solved
\begin{eqnarray}\label{h111R3discr}
\left[\begin{array}{cc}
\begin{array}{c}
(D-TA(t))_{C \times M}\\
\delta_0 -\delta_1
\end{array} & p\\
(\varphi_1^*)^{\rm T}_{W_1}L_{C\times M} & 0
\end{array}
\right] \left[\begin{array}{c}h_{11,1M}\\a\end{array}\right]
=\left[\begin{array}{c}
    T B_{C\times M \times M}v_{1M}\bar{v}_{1M}-\alpha_{1,1} T g_C \\
    0_{n\times 1}\\
    0
\end{array}\right],
\end{eqnarray}
which gives us
$$c =  \frac{1}{2T} (v^*_1)^{\rm H}_{W_1}(C_{C\times M\times M \times M}v_{1M}v_{1M}\bar{v}_{1M}+2 B_{C\times M \times M}v_{1M}h_{11,1M}+ B_{C\times M \times M}\bar{v}_{1M}h_{20,1M} - 2 \alpha_{1,1} A_{C\times M} v_{1M}).$$

\subsection{Strong resonance 1:4 bifurcation}
The eigenfunction and the adjoint eigenfunction corresponding to multiplier $e^{i\frac{\pi}{2}}$ are given by the solution of
\begin{eqnarray*}
&&\left\{\begin{array}{rcl}
\dot{v}_1(t)-TA(t)v_1(t) & = & 0,\ t \in [0,1], \\
v_1(1)- e^{i \frac{\pi}{2}} v_1(0) & = & 0,\\
\int_{0}^{1} {\langle v_1(t),v_1(t)\rangle dt} - 1 & = & 0,
\end{array}
\right.
\end{eqnarray*}
with $v(\tau) = v_{1}(\tau/T)/\sqrt{T}$ and
\begin{eqnarray*}
\left\{\begin{array}{rcl}
 \dot{v}_1^*(t)+TA^{\rm T}(t)v_1^*(t) & = & 0,\ t \in [0,1], \\
 v_1^*(1)-e^{i \frac{\pi}{2}} v_1^*(0) & = & 0,\\
 \int_{0}^{1} {\langle v_1^*(t), v_1(t)\rangle dt} - 1 & = & 0,
\end{array}
\right.
\end{eqnarray*}
where $v^*(\tau) = v^*_{1}(\tau/T)/\sqrt{T}$, respectively. $\varphi^*, h_{11}$ and $\alpha_1$ are replaced by $\varphi_1^*, h_{11,1}$ and $\alpha_{1,1}$, defined by (\ref{phi*R3}), (\ref{h111R3}) and (\ref{alpha11R3}), respectively.

The rescaling of (\ref{eq:h20-R4}) gives
\begin{equation*}
\left\{\begin{array}{rcl}
 \dot h_{20,1}(t)-TA(t) h_{20,1}(t) - T B(v_1(t),v_1(t)) & = & 0,\ t \in [0,1], \\
 h_{20,1}(1)+h_{20,1}(0) & = & 0,
\end{array}
\right.
\end{equation*}
with $h_{20}(\tau)=h_{20,1}(\tau/T)/T$. 

The critical coefficients are then given by
\begin{equation*}
 \bar c = \frac{1}{2T}\int_0^1 \langle \bar v^*_1(t), C(v_1(t),\bar{v}_1(t),\bar{v}_1(t))+B(v_1(t),h_{02,1}(t))+2 B(\bar{v}_1(t),h_{11,1}(t)) - 2 \alpha _{11} A(t) \bar{v}_1(t) \rangle dt
\end{equation*}
and
\begin{equation*}
 d = \frac{1}{6T}\int_0^1 \langle v^*_1(t), C(\bar{v}_1(t),\bar{v}_1(t),\bar{v}_1(t))+3 B(\bar{v}_1(t),h_{02,1}(t)) \rangle
 dt.
\end{equation*}

The eigenfunction and its adjoint, corresponding to the complex eigenvalue, are discretized by
\begin{eqnarray*}
\left[\begin{array}{cc}
\begin{array}{c}
(D-TA(t))_{C \times M}\\
\delta_0 -e^{-i \frac{\pi}{2}}\delta_1
\end{array} & p_3\\
q_3^{\rm H} & 0
\end{array}
\right] \left[\begin{array}{c}v_{1M}\\a\end{array}\right]
=\left[\begin{array}{c}
    0_{C\times 1}\\
    0_{n\times 1}\\
    1
\end{array}\right]
\end{eqnarray*}
and
\begin{eqnarray*}
\begin{bmatrix} (v_1^*)^{\rm H}_W & a\end{bmatrix}
\left[\begin{array}{cc}
\begin{array}{c}
(D-TA(t))_{C \times M}\\
\delta_0 -e^{-i \frac{\pi}{2}}\delta_1
\end{array} & p_3\\
q_3^{\rm H} & 0
\end{array}
\right]
=\begin{bmatrix} 0_{M\times 1} & 1\end{bmatrix},
\end{eqnarray*}
respectively. For the computation of the adjoint function we have applied Proposition \ref{Proposition6} with $\theta = \frac{\pi}{2}$. We then normalize $v_{1M}$ by requiring $\sum_{i=0}^{N-1}\sum_{j=0}^m\sigma_j\langle(v_{1M})_{i,j},\allowbreak(v_{1M})_{i,j}\rangle =1,$ where $\sigma_j$ is the Lagrange quadrature coefficient. $v_{1W}^*$ is rescaled so that $(v_{1}^*)_{W_1}^{\rm H}\allowbreak L_{C\times M}v_{1M}=1$.

(\ref{phi*R3disc}) and the corresponding normalization, (\ref{h111R3discr}) and (\ref{alpha11R3discr}) determine $\varphi_1^*, h_{11,1}$ and $\alpha_{1,1}$, respectively. An approximation to $h_{20}$ is obtained by
\begin{eqnarray*}
\left[\begin{array}{c}
(D-TA(t))_{C \times M}\\
\delta_0 +\delta_1
\end{array}
\right] h_{20,1M}
=\left[\begin{array}{c}
    T B_{C\times M \times M}v_{1M}v_{1M} \\
    0_{n\times 1}
\end{array}\right].
\end{eqnarray*}

We are now able to compute the two needed normal form coefficients:
$$\bar{c} =  \frac{1}{2T} (v^*_1)^{\rm T}_{W_1}(C_{C\times M\times M \times M}v_{1M}\bar{v}_{1M}\bar{v}_{1M}+ B_{C\times M \times M}v_{1M}h_{02,1M}+2 B_{C\times M \times M}\bar{v}_{1M}h_{11,1M} - 2 \alpha_{1,1} A_{C\times M} \bar{v}_{1M})$$
and
$$d =  \frac{1}{6T} (v^*_1)^{\rm H}_{W_1}(C_{C\times M\times M \times M}\bar{v}_{1M}\bar{v}_{1M}\bar{v}_{1M}+3B_{C\times M \times M}\bar{v}_{1M}h_{02,1M}).$$

\subsection{Fold-Flip bifurcation}
The rescaling of the eigenfunctions (\ref{eq:EigenFunc_FF}) and (\ref{eq:EigenFunc_FF_2}) gives us
\begin{eqnarray} \label{eq:EigenFunc_FFdiscr1}
&&\left\{\begin{array}{rcl}
\dot{v}_{1,1}(t)-T A(t)v_{1,1}(t) - T F(u_1(t)) & = & 0,\ t \in [0,1], \\
v_{1,1}(1)-v_{1,1}(0) & = & 0,\\
\int_{0}^{1} {\langle v_{1,1}(t),F(u_1(t))\rangle dt} & = & 0,\\
\end{array}
\right.
\end{eqnarray}
and
\begin{eqnarray} \label{eq:EigenFunc_FFdiscr2}
&&\left\{\begin{array}{rcl}
\dot{v}_{2,1}(t)-T A(t)v_{2,1}(t) & = & 0,\ t \in [0,1], \\
v_{2,1}(1) + v_{2,1}(0) & = & 0,\\
\int_{0}^{1} {\langle v_{2,1}(t),v_{2,1}(t)\rangle dt} - 1  & = & 0,\\
\end{array}
\right.
\end{eqnarray}
respectively, with $v_1(\tau) = v_{1,1}(\tau/T)$ and $v_{2}(\tau) = v_{2,1}(\tau/T)/\sqrt{T}$.

The rescaled adjoint eigenfunctions can be obtained by solving
\begin{eqnarray*}
&&\left\{\begin{array}{rcl}
 \dot{\varphi}^*_1(t)+T A^{\rm T}(t)\varphi^*_1(t) & = & 0,\ t \in [0,1], \\
 \varphi^*_1(1)-\varphi^*_1(0) & = & 0, \\
 \int_{0}^{1} {\langle \varphi^*_1(t),v_{1,1}(t) \rangle dt} -1 & = & 0,
 \end{array} \right. 
 \end{eqnarray*}
\begin{eqnarray*}
 &&\left\{\begin{array}{rcl}
 \dot{v}_{1,1}^*(t)+T A^{\rm T}(t) v_{1,1}^*(t)+T\varphi^*_1(t) & = & 0,\ t \in [0,1], \\
 v_{1,1}^*(1)-v_{1,1}^*(0) & = & 0, \\
 \int_{0}^{1} {\langle v_{1,1}^*(t),v_{1,1}(t) \rangle dt} & = & 0,
 \end{array} \right.
 \end{eqnarray*}
 \begin{eqnarray*}
 &&\left\{\begin{array}{rcl}
 \dot{v}_{2,1}^*(t)+TA^{\rm T}(t) v_{2,1}^*(t) & = & 0,\ t \in [0,1], \\
 v_{2,1}^*(1)+v_{2,1}^*(0) & = & 0, \\
 \int_{0}^{1} {\langle v_{2,1}^*(t),v_{2,1}(t)\rangle dt} -1 & = & 0,
 \end{array} \right. \\ \nonumber
\end{eqnarray*}
with $\varphi^*(\tau)=\varphi^*_1(\tau/T)/T, v_1^*(\tau) = v_{1,1}^*(\tau/T)/T$ and $v_{2}^*(\tau) = v^*_{2,1}(\tau/T)/\sqrt{T}$.

The two coefficients in front of the $\xi_1^2$-terms are given by
\begin{equation*}
 a_{20}=\frac{1}{2}\int_0^{1} \langle \varphi^*_1(t),B(v_{1,1}(t),v_{1,1}(t))+2 A(t) v_{1,1}(t) \rangle\; d t
\end{equation*}
and $\alpha_{20}= 0$.

The second order functions of the center manifolfd expansion are defined by the BVPs
\begin{equation*}
\left\{\begin{array}{rcl}
 \dot h_{20,1}(t)-TA(t) h_{20,1}(t)-TB(v_{1,1}(t),v_{1,1}(t))+2 a_{20} T v_{1,1}(t)+2 \alpha_{20} T F(u_1(t))&& \\
 - 2T A(t) v_{1,1}(t) - 2 TF(u_1(t)) & = & 0,\ t \in [0,1], \\
 h_{20,1}(1)-h_{20,1}(0) & = & 0, \\
 \int_{0}^{1} {\langle v_{1,1}^*(t),h_{20,1}(t)\rangle dt} & = & 0,
\end{array}
\right.
\end{equation*}
with $h_{20}(\tau)=h_{20,1}(\tau/T)$,
\begin{equation*}
\left\{\begin{array}{rcl}
 \dot h_{11,1}(t)-TA(t) h_{11,1}(t)-TB(v_{1,1}(t), v_{2,1}(t))+Tb_{11} v_{2,1}(t)-TA(t) v_{2,1}(t) & = & 0,\ t \in [0,1], \\
 h_{11,1}(1)+h_{11,1}(0) & = & 0, \\
 \int_{0}^{1} {\langle v_{2,1}^*(t),h_{11,1}(t)\rangle dt} & = & 0,
\end{array}
\right.
\end{equation*}
with $h_{11}(\tau)=h_{11,1}(\tau/T)/\sqrt{T}$ and
\begin{equation*}
\left\{\begin{array}{rcl}
 \dot h_{02,1}(t)-TA(t) h_{02,1}(t)-T B(v_{2,1}(t),v_{2,1}(t))+2a_{02,1} T v_{1,1}(t)+2 \alpha_{02,1} T F(u_1(t)) & = & 0,\ t \in [0,1], \\
 h_{02,1}(1)-h_{02,1}(0) & = & 0, \\
 \int_{0}^{1} {\langle v_{1,1}^*(t),h_{02,1}(t)\rangle dt} & = & 0,
\end{array}
\right.
\end{equation*}
with $h_{02}(\tau)=h_{02,1}(\tau/T)/T$, where
\begin{equation*}
 b_{11}=\int_0^{1} \langle v_{2,1}^*(t),B(v_{1,1}(t), v_{2,1}(t))+A(t) v_{2,1}(t) \rangle\; dt,
\end{equation*}
\begin{equation*}
 a_{02,1} = \frac{1}{2}\int_0^{1} \langle \varphi^*_1(t),B(v_{2,1}(t),v_{2,1}(t)) \rangle\; d t
\end{equation*}
with $a_{02,1}=Ta_{02}$ and $\alpha_{02}=0$.

The rescaling of the last four normal form coefficients of interest gives
\begin{eqnarray*}
 a_{30} = \frac{1}{6}\int_0^{1} \langle \varphi^*_1(t)&,& C(v_{1,1}(t),v_{1,1}(t),v_{1,1}(t))+3 B(h_{20,1},v_{1,1}(t))-6 a_{20} h_{20,1}(t)
 \\ \nonumber
 &&+3 (A(t) h_{20,1}(t)+B(v_{1,1}(t),v_{1,1}(t))) +6 (1-\alpha _{20}) A(t) v_{1,1}(t) \rangle\; d
 t - a_{20},
\end{eqnarray*}

\begin{eqnarray*}  \\\nonumber
 b_{21} = \frac{1}{2}\int_0^{1} \langle v_{2,1}^*(t), C(v_{1,1}(t),v_{1,1}(t),v_{2,1}(t))+B(h_{20,1}(t),v_{2,1}(t))+2 B(h_{11,1}(t),v_{1,1}(t)) -2 a_{20} h_{11,1}(t) &&\\ \nonumber
 -2 b_{11} h_{11,1}(t)  + 2 (A(t) h_{11,1}(t)+B(v_{1,1}(t), v_{2,1}(t))) +2 (1-\alpha _{20}) A(t) v_{2,1}(t) \rangle&\;& d t -b_{11},
\end{eqnarray*}

\begin{eqnarray*}  \\ \nonumber
 a_{12} = \frac{1}{2T}\int_0^{1} \langle \varphi^*_1(t),C(v_{1,1}(t),v_{2,1}(t),v_{2,1}(t))+B(h_{02,1}(t),v_{1,1}(t))+2 B(h_{11,1}(t),v_{2,1}(t))-2 b_{11} h_{02,1}(t)  && \\ \nonumber
 -2a_{02,1} h_{20,1}(t) +A(t) h_{02,1}(t)+ B(v_{2,1}(t),v_{2,1}(t)) - 2 \alpha_{02,1}A(t) v_{1,1}(t) \rangle&\;& d
t - \frac{a_{02,1}}{T}
\end{eqnarray*}
and
\begin{equation*}
 b_{03} = \frac{1}{6T} \int_0^{1} \langle v_{2,1}^*(t),C(v_{2,1}(t),v_{2,1}(t),v_{2,1}(t))+3 B(h_{02,1}(t),v_{2,1}(t))-6 a_{02,1} h_{11,1}(t)-6 \alpha _{02,1}A(t) v_{2,1}(t) \rangle\; dt.
\end{equation*}

The discretization of the functions determined in (\ref{eq:EigenFunc_FFdiscr1}) and (\ref{eq:EigenFunc_FFdiscr2}) is given by
\begin{eqnarray*}
\left[\begin{array}{cc}
\begin{array}{c}
(D-TA(t))_{C \times M}\\
\delta_0-\delta_1
\end{array} & p\\
g^{\rm T}_{W_1}L_{C \times M} & 0
\end{array}
\right]
\left[\begin{array}{c}
v_{1,1M}\\a_1 \end{array}\right]
=  \left[\begin{array}{c}
    Tg_C\\0_{n\times 1}\\0
\end{array}\right]
\end{eqnarray*}
and
\begin{eqnarray*}
\left[\begin{array}{cc}
\begin{array}{c}
(D-TA(t))_{C \times M}\\
\delta_0+\delta_1
\end{array} & p_1\\
q_1^{\rm T} & 0
\end{array}
\right]
\left[\begin{array}{c}
v_{2,1M}\\a_2 \end{array}\right]
=  \left[\begin{array}{c}
    0_{C\times 1}\\0_{n\times 1}\\1
\end{array}\right],\end{eqnarray*}
with $a_1=a_2=0$.
We normalize $v_{21M}$ by requiring $\sum_{i=0}^{N-1}\sum_{j=0}^m\sigma_j\langle(v_{2,1M})_{i,j},\allowbreak(v_{2,1M})_{i,j}\rangle =1,$ where $\sigma_j$ is the Lagrange quadrature coefficient. The implementation of the adjoint eigenfunctions is done by
\begin{eqnarray*}
\begin{bmatrix}(\varphi_1^*)_{W}^{\rm T} & a_1 \end{bmatrix} \left[\begin{array}{cc}
\begin{array}{c}
(D-TA(t))_{C \times M}\\
\delta_0-\delta_1
\end{array} & p\\
q^{\rm T} & 0
\end{array}
\right]
=  \begin{bmatrix}0_{M\times 1} & 1
\end{bmatrix},\end{eqnarray*}

\begin{eqnarray*}
\begin{bmatrix}(v_{1,1}^*)_{W}^{\rm T} & a_2 \end{bmatrix} \left[\begin{array}{cc}
(D-TA(t))_{C \times M} & (v_{1,1})_C\\
\delta_0-\delta_1 & 0_{n \times 1}\\
q^{\rm T} & 0
\end{array}
\right]
=  \begin{bmatrix}T(\varphi_1^*)_{W_1}^{\rm T}L_{C\times M} & 0
\end{bmatrix}\end{eqnarray*}
and
\begin{eqnarray*}
\begin{bmatrix}(v_{2,1}^*)_{W}^{\rm T} & a_3 \end{bmatrix} \left[\begin{array}{cc}
\begin{array}{c}
(D-TA(t))_{C \times M}\\
\delta_0+\delta_1
\end{array} & p_1\\
q_1^{\rm T} & 0
\end{array}
\right]
=  \begin{bmatrix}0_{M\times 1} & 1
\end{bmatrix},\end{eqnarray*}
with $a_1=a_2=a_3=0$. $\varphi_{1W}^*$ and $v_{2,1W}^*$ are then rescaled to ensure that $(\varphi_{1}^*)_{W_1}^{\rm T}\allowbreak L_{C\times M}v_{1,1M}= 1$ and $(v_{2,1}^*)_{W_1}^{\rm T}L_{C\times M}v_{2,1M}=1$.

The first needed normal form coefficient is given by
\begin{equation*}
 a_{20}=\frac{1}{2}(\varphi^*_1)_{W_1}^{\rm T}\left(B_{C\times M \times M}v_{1,1M}v_{1,1M}+2 A_{C\times M}v_{1,1M}\right).
\end{equation*}

The second order functions can be obtained by solving
\begin{eqnarray*}
\left[\begin{array}{cc}
\begin{array}{c}
(D-TA(t))_{C \times M}\\
\delta_0-\delta_1
\end{array} & p\\
(v_{1,1}^*)^{\rm T}_{W_1}L_{C \times M} & 0
\end{array}
\right]
\left[\begin{array}{c}
h_{20,1M}\\a \end{array}\right]
=  \left[\begin{array}{c}
    \mbox{rhs}\\0_{n\times 1}\\0
\end{array}\right]
,\end{eqnarray*}
with
$$\mbox{rhs} = TB_{C\times M \times M}v_{1,1M}v_{1,1M}-2 a_{20}T v_{1,1C} + 2T A_{C\times M} v_{1,1M}+ 2 Tg_C,$$
\begin{eqnarray*}
\left[\begin{array}{cc}
\begin{array}{c}
(D-TA(t))_{C \times M}\\
\delta_0+\delta_1
\end{array} & p_1\\
(v_{2,1}^*)^{\rm T}_{W_1}L_{C \times M} & 0
\end{array}
\right]
\left[\begin{array}{c}
h_{11,1M}\\a \end{array}\right]
=  \left[\begin{array}{c}
    \mbox{rhs}\\0_{n\times 1}\\0
\end{array}\right]
,\end{eqnarray*}
with
$$\mbox{rhs} = TB_{C\times M \times M}v_{1,1M}v_{2,1M}-Tb_{11} v_{2,1C}+TA_{C\times M}v_{2,1M},$$
and 
\begin{eqnarray*}
\left[\begin{array}{cc}
\begin{array}{c}
(D-TA(t))_{C \times M}\\
\delta_0-\delta_1
\end{array} & p\\
(v_{1,1}^*)^{\rm T}_{W_1}L_{C \times M} & 0
\end{array}
\right]
\left[\begin{array}{c}
h_{02,1M}\\a \end{array}\right]
=  \left[\begin{array}{c}
    \mbox{rhs}\\0_{n\times 1}\\0
\end{array}\right]
,\end{eqnarray*}
with
$$\mbox{rhs} = TB_{C\times M \times M}v_{2,1M}v_{2,1M}-2a_{02,1} T v_{1,1C}.$$
The in these functions needed coefficients are given by 
\begin{equation*}
 b_{11}= (v_{2,1}^*)_{W_1}^{\rm T}\left(B_{C\times M \times M}v_{1,1M}v_{2,1M}+A_{C\times M} v_{2,1M}\right)
\end{equation*}
and
\begin{equation*}
 a_{02,1} = \frac{1}{2}(\varphi^*_1)_{W_1}^{\rm T}B_{C\times M \times M}v_{2,1M}v_{2,1M}.
\end{equation*}

At last, we obtain
\begin{eqnarray*}
 a_{30} = \frac{1}{6} (\varphi^*_1)_{W_1}^{\rm T}\left( C_{C\times M \times M\times M}v_{1,1M}v_{1,1M}v_{1,1M}+3 B_{C\times M \times M}h_{20,1M}v_{1,1M}-6 a_{20} h_{20,1C} \right.\\ \nonumber
 +\left.3 (A_{C\times M}h_{20,1M}+B_{C\times M \times M}v_{1,1M}v_{1,1M}) +6 A_{C\times M}v_{1,1M}\right)- a_{20},
\end{eqnarray*}

\begin{eqnarray*}  \\\nonumber
 b_{21} = \frac{1}{2}(v_{2,1}^*)_{W_1}^{\rm T} \left(C_{C\times M \times M\times M}v_{1,1M}v_{1,1M}v_{2,1M}+B_{C\times M \times M}h_{20,1M}v_{2,1M}+2 B_{C\times M \times M}h_{11,1M}v_{1,1M} -2 a_{20} h_{11,1C}\right. &&\\ \nonumber
\left. -2 b_{11} h_{11,1C}  + 2 (A_{C\times M} h_{11,1M}+B_{C\times M \times M}v_{1,1M}v_{2,1M}) +2 A_{C\times M} v_{2,1M}  \right) -b_{11},
\end{eqnarray*}

\begin{eqnarray*}  \\ \nonumber
 a_{12} = \frac{1}{2T} (\varphi^*_1)_{W_1}^{\rm T}\left(C_{C\times M \times M\times M}v_{1,1M}v_{2,1M}v_{2,1M}+B_{C\times M \times M}h_{02,1M}v_{1,1M}+2 B_{C\times M \times M}h_{11,1M}v_{2,1M}-2 b_{11} h_{02,1C}  \right.&& \\ \nonumber
 \left.-2 a_{02,1} h_{20,1C} +A_{C\times M} h_{02,1M}+ B_{C\times M \times M}v_{2,1M}v_{2,1M}\right) - \frac{a_{02,1}}{T},
\end{eqnarray*}
and
\begin{equation*}
 b_{03} = \frac{1}{6T} (v_{2,1}^*)_{W_1}^{\rm T}\left(C_{C\times M \times M\times M}v_{2,1M}v_{2,1M}v_{2,1M}+3 B_{C\times M \times M}h_{02,1M}v_{2,1M}-6 a_{02,1}h_{11,1C}\right).
\end{equation*}

\section{Examples}
\label{Section:Examples}
The computations in this section are done with {\sc matcont} \cite{MATCONT}. In particular, the bordering methods from
\cite{DoGoKu} are used to continue the codim 1 bifurcations of
limit cycles in two parameters. The algorithms described above for
computing the normal form coefficients are also implemented in the
current version of {\sc matcont}.

\subsection{Periodic predator-prey model}
Our first model is a periodically forced predator-prey model, studied in
\cite{KuMuRi:91}, and described by the following differential
equations
\begin{equation} \label{eq:PreyPredator1}
\begin{cases}
\dot x = r \left(1-\frac{x}{K}\right) x - p(x,t) y, \\
\dot y = e p(x,t) y -d y,
\end{cases}
\end{equation}
where $x$ and $y$ are the numbers of individuals respectively of
prey and predator populations or suitable (but equivalent)
measures of density or  biomass. The parameters present in system
\eqref{eq:PreyPredator1} are the intrinsic growth rate $r$, the
carrying capacity $K$, the efficiency $e$ and the death rate $d$ of the
predator. The function $p(x,t)$ is a functional
response, for which the Holling type II is choosen, with constant
attack rate $a$ and half saturation $b(t)$ that varies
periodically with as period one year, i.e.
\[
 p(x,t) = \frac{a x}{b(t) +x }, \qquad b(t) = b_0 (1+\varepsilon \cos 2 \pi t).
\]

Notice that this system can be made autonomous by adding  the
following two differential equations
\[
 \dot{\alpha}=\mu\alpha-\omega\beta-(\alpha^2+\beta^2)\alpha, \qquad
 \dot{\beta}=\omega\alpha+\mu\beta-(\alpha^2+\beta^2)\beta.
\]
In fact, if $\mu$ is positive then this system has as asymptotic
behavior a stable circular limit cycle of radius $\mu$ with angular velocity $\omega$. Therefore, we can set $\mu=1$ and
$\omega=2 \pi$ in order to obtain the forcing function $\sin 2 \pi
t$ as the flow of one of the variables of this subsystem with a
particular phase shift that depends on the initial conditions.
The system becomes
\begin{equation} \label{eq:PreyPredator}
\begin{cases}
\dot x = r \left(1-\frac{x}{K}\right) x - \frac{a xy}{b_0(1+\epsilon \alpha)+x}, \\
\dot y = e \frac{axy}{b_0(1+\epsilon \alpha)+x} -d y,\\
\dot \alpha = \alpha - 2\pi \beta -(\alpha^2+\beta^2)\alpha,\\
\dot \beta = 2\pi\alpha + \beta -(\alpha^2+\beta^2)\beta.\\
\end{cases}
\end{equation}

We have chosen this system as first example since it allows us to
check if the computation of the $\alpha_i$ normal form
coefficients is correct. In fact, in a periodically forced system
the time of the normal form should not depend on the coordinate,
i.e. $d\tau /d t = 1$, and so all the $\alpha_i$ coefficients must
vanish. With fixed $r=2 \pi$, $K=e=1$, $a=4 \pi$ and $d=2 \pi$ we
perform a bifurcation analysis in the remaining parameters
$(\varepsilon, b_0)$ obtaining the bifurcation diagram reported in
Figure \ref{fig:forcedpreypredator}. Since the system is
periodically forced, no equilibria are present. The blue curve,
with label {\tt LPC2}, is a limit point of cycles bifurcation curve of the second iterate,
the magenta curves are supercritical Neimark-Sacker bifurcations
(of the first or of the second iterate, respectively labeled with
{\tt NS1} and {\tt NS2}) while the brown and green curves are
period-doubling bifurcations, brown when subcritical and green
when supercritical (with notation {\tt PD1}, {\tt PD2}, {\tt PD4} and {\tt PD8}).
\begin{figure}[htbp]
\centering
 \footnotesize
 \psfrag{BB}[][c]{$b_0$}
 \psfrag{epsilo}[][c]{$\varepsilon$}
 \includegraphics[width=\textwidth]{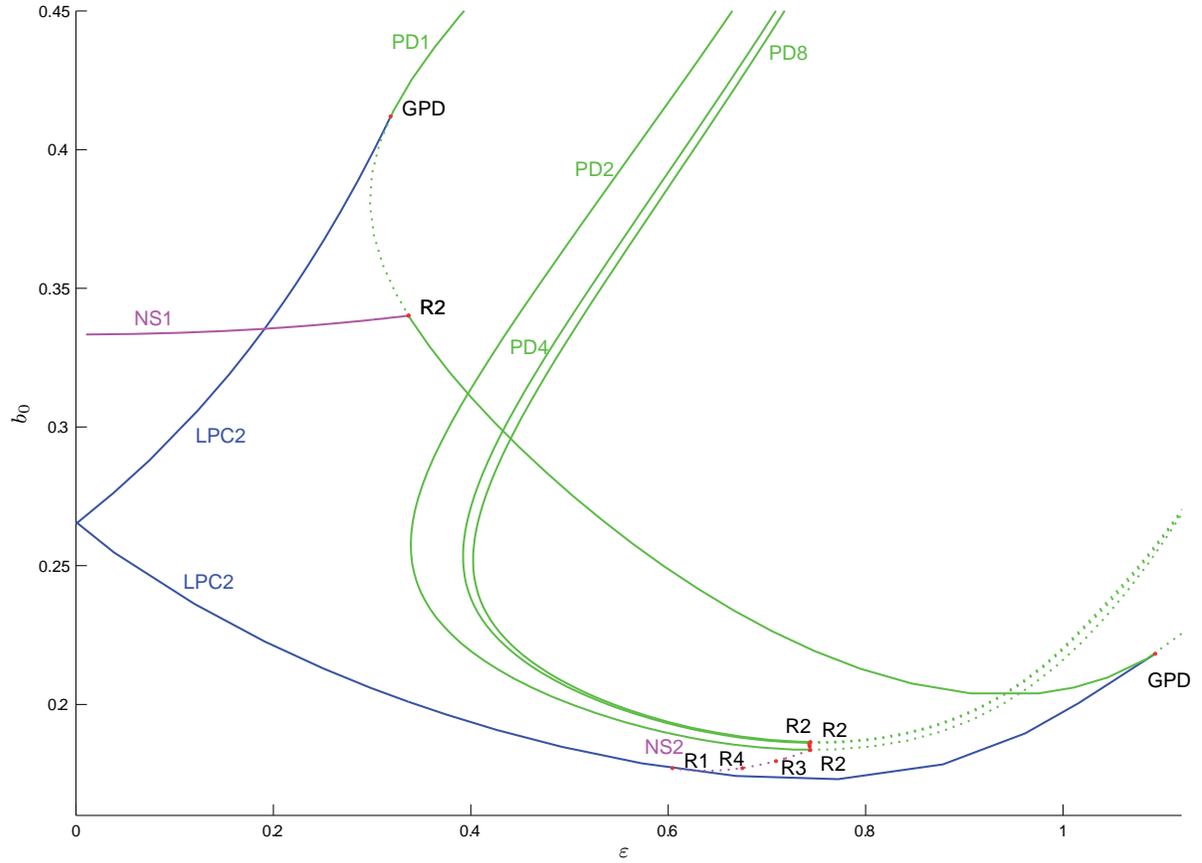}
 \caption{Bifurcation diagram of limit cycles in model \eqref{eq:PreyPredator}. Blue are limit point of cycles bifurcations,
 green period doubling bifurcations and magenta Neimark-Sacker birucations. Continue/dotted curves correspond to
 supercritical/subcritical bifurcations.} \label{fig:forcedpreypredator}
\end{figure}
\bigskip

We now analyze in detail all the detected codimension two points,
reporting the scalar computed coefficients explained in Section
\ref{Section:Implementation}.

\subsubsection{The two {\tt GPD points}}
In Figure \ref{fig:forcedpreypredator} the {\tt LPC2} curve is
tangent to the {\tt PD1} curve in two different {\tt GPD} points.
In the first one, in $(\epsilon,b_0)=(0.319,0.412)$, the limit
point of cycles curve is tangent to the subcritical
period-doubling curve (type presented in Figure
\ref{fig:NF_GPD}-(b)), while in the second one, in
$(\epsilon,b_0)=(1.09,0.218)$, the {\tt LPC2} curve is tangent to
the supercritical part of the {\tt PD} bifurcation curve (i.e. the
type presented in Figure \ref{fig:NF_GPD}-(a)).

Performing the computation of the {\tt GPD} normal form coefficients at
the first point we obtain:
\begin{itemize}
 \item for the first equation of normal form \eqref{eq:NF-GPD}
 the two coefficients $\alpha_1$ and $\alpha_2$, up to a scaling term $T$ and $T^2$ computed through the formula
 \eqref{eq:alpha11-GPD} and \eqref{eq:alpha21-GPD} are zero, up to the accuracy of the computation.
 \item the normal form coefficient of the second equation,
 computed through formula \eqref{eq:e1-GPD} equals
 $e=-58.2867$.
\end{itemize}
Notice that these results are in agreement with what we expected,
i.e. that since the system is periodically forced the time doesn't
depend on the distance from the critical limit cycle, and since we
are in the case presented in Figure \ref{fig:NF_GPD}-(b) the normal form
coefficient $e$ is negative.

From the computation of the {\tt GPD} normal form coefficients at
the second critical point we obtain:
\begin{itemize}
 \item for the first equation of normal form \eqref{eq:NF-GPD}
 the two coefficients equal zero.
 \item the normal form coefficient of the second equation has value
 $e=41.5442$.
\end{itemize}
Also in this case the obtained results are in agreement with the
theory.

\subsubsection{The 1:1 and 1:2 resonance points}
We divide the 1:1 and 1:2 resonance points present in this model into two
groups, namely the {\tt R2} point at $(\epsilon,b_0)=(0.337,0.34)$ and
the cascade of resonance points in the lower part of the graph.

The isolated {\tt R2} point forms the intersection of the
{\tt NS1} curve, the supercritical Neimark-Sacker curve of a limit
cycle with period approximately equal to $1$, and {\tt PD1}. The situation is thus the one
depicted in Figure \ref{fig:NF_R2}-(a). Performing the normal form
coefficient computation we obtain:
\begin{itemize}
 \item for the first equation of normal form \eqref{eq:NF-12C} holds that $\alpha=0$.
 \item for the last equation of the normal form \eqref{eq:NF-12C}
 we have $(a,b)=(3.401426, -12.90745)$.
\end{itemize}
Notice that the obtained results are in accordance with the theory
(no secondary Neimark-Sacker curve implies that $a>0$ and supercritical
Neimark-Sacker curve implies that $b<0$).

In the lower part of the bifurcation diagram a
resonance cascade is present, which accumulates on the sequence of period-doubling curves. A zoom of this part is shown in Figure
\ref{fig:resonance_cascade}.
\begin{figure}[htbp]
\centering
 \footnotesize
 \psfrag{BB}[][c]{$b_0$}
 \psfrag{epsilo}[][c]{$\varepsilon$}
 \includegraphics[width=\textwidth]{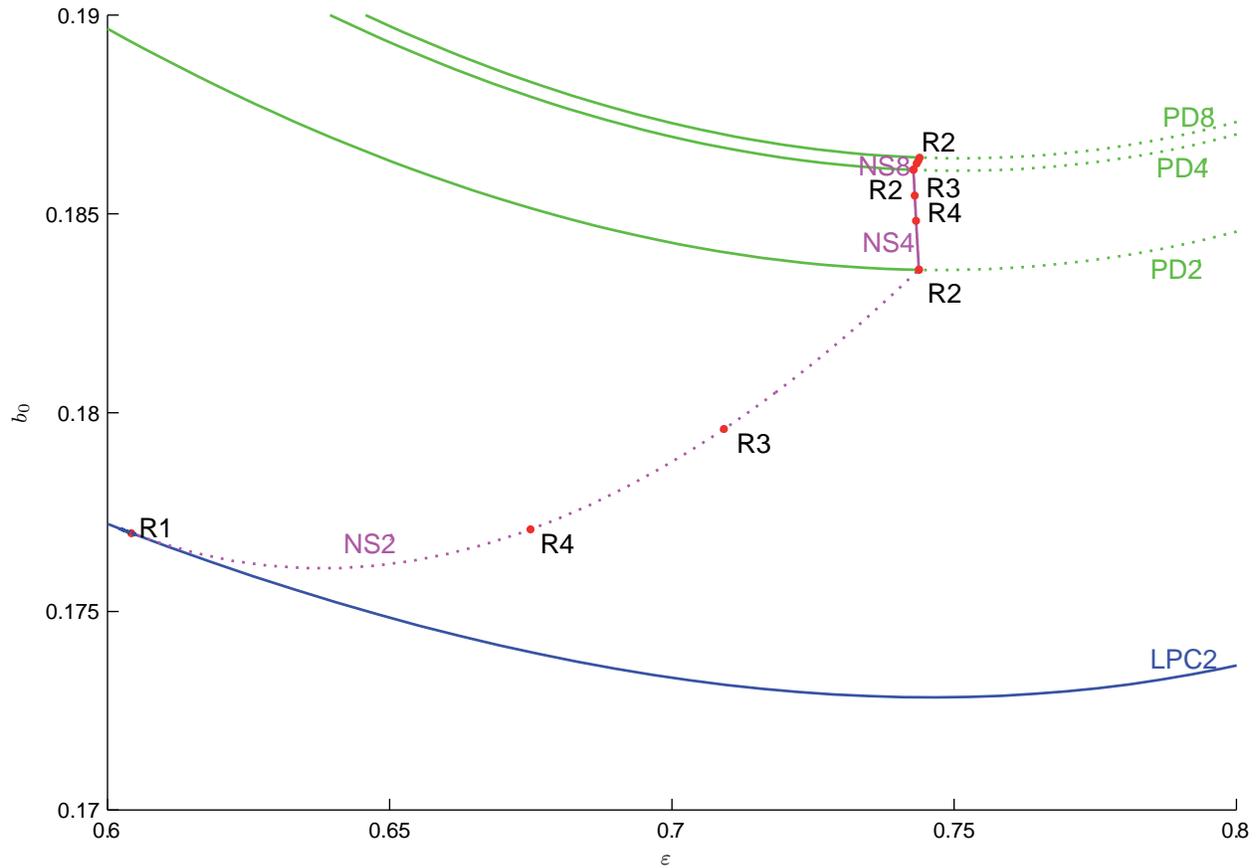}
 \caption{The resonance cascade in model \eqref{eq:PreyPredator}. In blue are limit point of cycles bifurcations,
 green period doubling bifurcations and magenta Neimark-Sacker birucations. Continue/dotted curves correspond to
 supercritical/subcritical bifurcations.} \label{fig:resonance_cascade}
\end{figure}
Each resonance point of this cascade (except for the first
{\tt R1} point on the {\tt LPC2} curve) can be seen in two ways: the
first (and more natural way) is to see them as {\tt R2} points of the
type represented in Figure \ref{fig:NF_R2}-(b) (so with $a<0$ and
the sign of $b$ dependent on the criticality of the incoming
Neimark-Sacker curve), the second way is to see them as {\tt R1} points represented in Figure \ref{fig:NF_R1}-(b).
%
%
Notice that the criticality of each {\tt NS} curve of the cascade
changes at the {\tt R2} point (as depicted in Figure \ref{fig:NF_R2}-(b)).

As first general result we see
that for the first equation of normal form \eqref{eq:NF-12C} there holds that $\alpha=0$ for all points (as expected
since the system is periodically forced). We remark that for the computation of the normal form coefficients of the second equation, the tolerances have to be strong enough. The results are
\begin{itemize}
\item[(on {\tt LPC2})]
 The {\tt R1} point is in $(\epsilon,b_0)=(0.6044021,0.1769608)$. The period 2 limit cycle Neimark-Sacker curve {\tt NS2} starts tangentially to
 the {\tt LPC2} curve and is subcritical. In this situation we therefore expect the product of the two normal form coefficients of the
 last equation of \eqref{eq:NF-11C} to be positive; the
 computed coefficients are $(a,b)=(2.005489e-6,
 6.4806e+9)$, such that $ab=1.299679e+4$.

\item[(on {\tt PD2})] The {\tt R2} point is in $(\epsilon,b_0)=(0.7437713,0.1835935)$.
 On the left side of the {\tt R2} point the {\tt PD2} curve is supercritical, on the
 right side it is subcritical. The {\tt NS2} curve incoming
 in the {\tt R2} point is subcritical, while the {\tt NS4} curve
 outgoing at the {\tt R2} point is supercritical. We are thus in
 the case depicted in Figure \ref{fig:NF_R2}-(b) with time
 reversed. So we expect $b>0$ (subcritical incoming Neimarck-Sacker curve) and
 $a<0$ (there is an outgoing secondary Neimarck-Sacker curve).
 The computed coefficients at the {\tt R2} point are $(a,b)=(-65.76676,
 16.26708)$. Notice that this point is also a degenerate
 {\tt R1} point for the {\tt NS4} curve. In fact, when we
 compute the normal form coefficient at this 1:1 resonance bifurcation point, we obtain $(a,b)=(-1.113774823354237e-4, 6.116846870167980e+12)$.
 In this case the product $ab=-6.812790e+8$ is negative, in accordance with the
 fact that the {\tt NS4} curve (for which we have an {\tt R1} bifurcation) is
 supercritical.

\item[(on {\tt PD4})]
 The {\tt R2} point is in $(\epsilon,b_0)=(0.7427991,0.1861098)$.
 On the left side of the {\tt R2} point the {\tt PD4} curve is supercritical, on the
 right side it is subcritical. The {\tt NS4} curve incoming
 in the {\tt R2} point is supercritical, while the {\tt NS8} curve
 outgoing at the {\tt R2} point is subcritical. We are therefore in
 the case depicted in Figure \ref{fig:NF_R2}-(b). We expect $b<0$
 (supercritical incoming Neimarck-Sacker curve) and
 $a<0$ (there is an outgoing secondary Neimarck-Sacker curve).
 The computed coefficients at the {\tt R2} point are $(a,b)=(-269.3681, -18.15061)$.

\item[(on {\tt PD8})]
 The {\tt R2} point is in $(\epsilon,b_0)=(0.7439079,0.1864190)$.
 On the left side of the {\tt R2} point the {\tt PD8} curve is supercritical, on the
 right side it is subcritical. The {\tt NS8} curve incoming
 in the {\tt R2} point is subcritical: we are thus in
 the case depicted in Figure \ref{fig:NF_R2}-(b) with time
 reversed. Thus, we expect $b>0$ (subcritical incoming Neimarck-Sacker curve) and
 $a<0$ (there is an outgoing secondary Neimarck-Sacker curve, since the cascade continues).
 The computed coefficients of the {\tt R2} point are $(a,b)=(-921.7011, 16.58059)$.

\end{itemize}
All the obtained results are in agreement with the
theory.

\subsubsection{The 1:3 resonance points}
There are two 1:3 resonance points, one on NS2, the other one on
NS4, as can be seen in Figure \ref{fig:resonance_cascade}. These
two points behave in a different way. The Neimark-Sacker curve
corresponding with the first point at $(\epsilon,b_0)=(0.709,0.179)$
is subcritical, so we expect $\Re(c)$ to be positive. The
Neimark-Sacker curve of the second point at
$(\epsilon,b_0)=(0.743,0.185)$ is supercritical, so $\Re(c)$ should
be negative. To check whether we are in a non degenerate case, we
also have to look at $b$, however, as mentioned before, the sign
of $b$ is not relevant. We obtain
\begin{itemize}
    \item for the first {\tt R3} point we have that $(b,\Re(c))=(4.5567 - 4.4567i,  9.155003)$.
    \item for the second {\tt R3} point we have that $(b,\Re(c))=(0.4049 +12.1425i, -8.819864)$.
\end{itemize}
These results are in accordance with the theory.

\subsubsection{The 1:4 resonance points}
There are two 1:4 resonance points, one on NS2, the other one on
NS4, as can be seen in Figure \ref{fig:resonance_cascade}. Also
these two points behave in the same way as the 1:3 resonance bifurcation
points. The Neimark-Sacker curve corresponding with the first
point at $(\epsilon,b_0)=(0.675,0.177)$ is subcritical, so here we
expect $\Re(A)$ to be positive. The Neimark-Sacker curve of the
second point at $(\epsilon,b_0)=(0.743,0.185)$ is supercritical, so
$\Re(A)$ should be negative. Moreover, since those points are part
of a resonance cascade, we should not have limit point bifurcations
of non trivial equilibria, so we are in region I of Figure
\ref{fig:bifR4}. In order to assure that we are not in a
degenerate case, we also need to check that $d\neq 0$. We
obtain
\begin{itemize}
    \item for the first {\tt R4} point we have that $(c,d)=(11.624 -84.897 i, 65 + 92.254 i)$, so $A=0.102999 -0.752278 i$.
    \item for the second {\tt R4} point we have that $(c,d)=(-8.5796 -414.71 i, -416.64 -489.17 i)$, so $A=-0.013352 -0.645406 i$.
\end{itemize}
So for both bifurcation points the value of $A$ belongs to region I, and thus the results are in accordance with the theory.

\subsection{The Steinmetz-Larter model}
The following model of the peroxidase-oxidase reaction was studied
by Steinmetz and Larter \cite{StLa:91}:
\begin{equation}
\left\{\begin{array}{rcl}
\dot A & = & -k_1 ABX- k_3 ABY+ k_7-k_{-7}A,\\
\dot B & = & -k_1 ABX- k_3 ABY+ k_8 ,\\
\dot X & = & k_1 ABX -2k_2 X^2 +2k_3 ABY -k_4X+k_6,\\
\dot Y & = & -k_3 ABY+2k_2X^2-k_5 Y,
\end{array}\right.
\label{eq:StLr}
\end{equation}
where $A,B,X,Y$ are state variables and $k_1$, $k_2$, $k_3$,
$k_4$, $k_5$, $k_6$, $k_7$, $k_8$, and $k_{-7}$ are parameters. We
fix all parameters as reported in the following table
\begin{center}
\begin{tabular}{|l|l|l|l|l|l|l|l|}
\hline
{Par.} & {Value} & {Par.} & {Value} & {Par.} & {Value} & {Par.} & {Value} \\
\hline
$k_1$ & 0.1631021 & $k_2$ & 1250  & $k_3$ & 0.046875 & $k_4$ & 20 \\
$k_5$ & 1.104 & $k_6$ & 0.001 & $k_{-7}$ & 0.1175 &&\\
\hline
\end{tabular}
\end{center}
and we perform a bifurcation analysis in the parameter space
$(k_7,k_8)$. A few curves are reported in Figure
\ref{fig:bif_SL}.
\begin{figure}[htbp]
\centering
 \footnotesize
 \psfrag{k7}[][c]{$k_7$}
 \psfrag{k8}[][c]{$k_8$}
 \includegraphics[width=\textwidth]{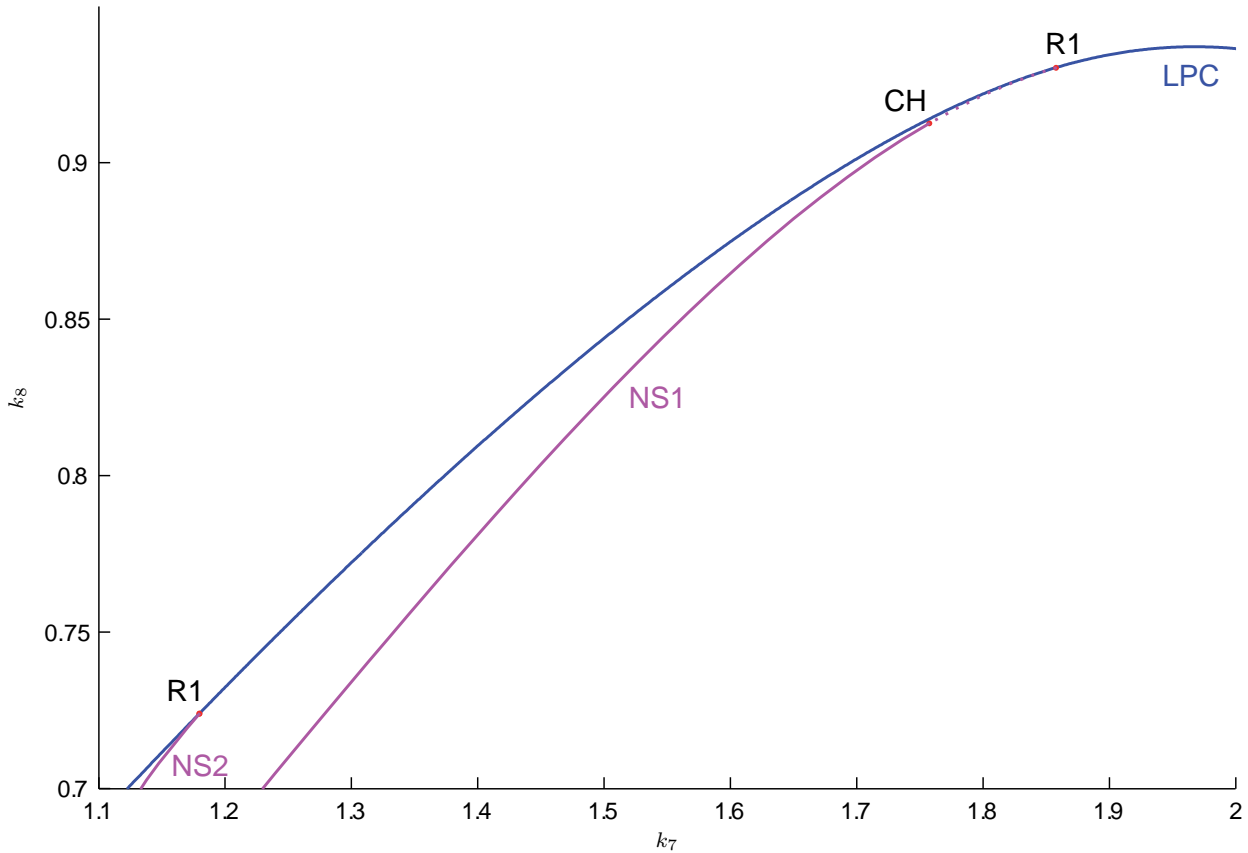}
 \caption{Bifurcation diagram of a limit cycle in model \eqref{eq:StLr}.
 In blue are limit point of cycles bifurcations, green period doubling bifurcations and magenta Neimark-Sacker
 birucations. Continue/dotted curves correspond to supercritical/subcritical bifurcations.} \label{fig:bif_SL}
\end{figure}

\subsubsection{The 1:1 resonance points}
The two 1:1 resonance points have different nature, since in one
the Neimark-Sacker curve rooted at the bifurcation point is supercritical,
while in the other one it is subcritical.

We obtain:
\begin{itemize}
 \item for the {\tt R1} point in $(k_7,k_8)= (1.179554,
 0.7239571)$, the two coefficients of the last equation of \eqref{eq:NF-11C} are
 equal to $(a,b)=( -0.003654362200739, 0.735048055230916)$.
 Their product $ab=-2.686132e-3$ is negative, which corresponds with the fact that the NS
 curve rooted at the {\tt R1} point is supercritical.
 \item for the {\tt R1} point in $(k_7,k_8)= (1.857676, 0.9304220)$, the two coefficients of the last equation of \eqref{eq:NF-11C}
 are equal to $(a,b)=(-0.066429738171756, -2.156596806473489)$.
 Their product $ab=0.1432622$ is positive, and indeed the NS
 curve rooted at the {\tt R1} point is subcritical.
\end{itemize}
So we can conclude that the results are in accordance with the theory.

\subsubsection{The Chenciner points}
As can be seen in Figure \ref{fig:bif_SL} we have detected a {\tt CH} point at
$(k_7,k_8)= (1.757356, 0.9125773)$. The normal form
coefficient at that bifurcation point equals $\Re(e)=1.391931$, hence
positive so the unfolding is the one depicted in
Figure \ref{fig:NF_GNS}-(b).
\begin{figure}[tbhp]
\footnotesize \centering
 \psfrag{e}[][c]{$k_7$}
 \psfrag{d}[][c]{$k_8$}
 \psfrag{a}[][c]{\circledcoloreditem{1}}
 \psfrag{b}[][c]{\circledcoloreditem{2}}
 \psfrag{c}[][c]{\circledcoloreditem{3}}
 \includegraphics[width=.7\textwidth]{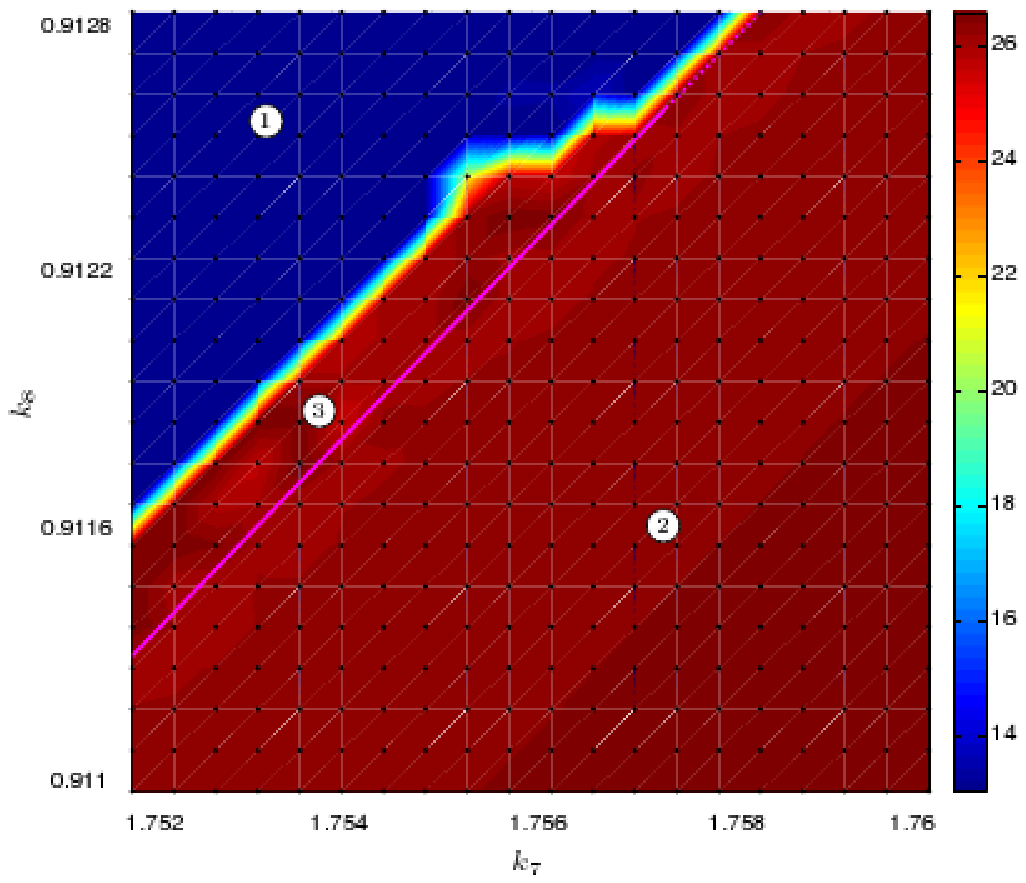}
 \caption{Simulations on a parameter grid (black points) of system \eqref{eq:StLr}. The magenta continue/dotted
 line is the supercritical/subbritical Neimark-Sacker curve. The color represents
 the value of the maximum of the first coordinate of the attractor reached through
 simulation from a point close to the limit cycle. A sketch of the state portrait
 is reported in Figure \ref{fig:NF_GNS}-(b).}  \label{fig:testCH}
\end{figure}
In order to verify if the normal form computation is
correct, one should use tori continuation techniques
\cite{Br:03}, either recurring to Poincar\'e maps
\cite{Ke:85,DoTa:06} or to the so-called {\it invariance
equation} \cite{Di:91,Sc:05,LaCv:06,RaBr:08}. However, these
techniques are not stable, especially in critical cases like
the one we have. In order to validate our result we
thus have to do simulations.

The obtained result is shown in Figure \ref{fig:testCH}. The
indicated regions correspond with the regions from Figure
\ref{fig:NF_GNS}. The green curve between regions 2 and 3 is the
supercritical Neimark-Sacker curve, the red one between region 1
and 2 is the subcritical Neimark-Sacker curve. For each point of
the grid, we have started time integration from a point close to
the orginal limit cycle (a 1 \% perturbation) until an attractor was found. The $1$-norm of the $X$-coordinate of an orbit with time length $1000$ along the attractor is shown in the colormap. In region 2 this attractor is
the original limit cycle, in region 3 it is the inner torus arisen
through the supercritical Neimark-Sacker curve. In region 1 the
original limit cycle is unstable, and so the trajectory which
starts nearby converges to another attractor. Between region 1 and
3 and region 1 and 2 happens a catastrophic bifurcation, i.e. a
drastic  change of the attractor, identified from the change of
color which varies from blue to red. Right above the Chenciner point,
the catastrophic bifurcation is the subcritical NS curve, while left
below it is the limit point of tori ($T_c$) curve. Figure
\ref{fig:testCH} shows that we obtain the scenario which
corresponds with a positive second Lyapunov coefficient.

\subsection{The Lorenz1984 system} This model, taken from \cite{Lo:84}, is a
meteorological model written by Lorenz in 1984 in order to
describe the atmosphere. The equations of the model are
\begin{equation} \label{eq:Lo84model}
\begin{cases}
\dot x = -y^2 - z^2 - ax + aF, \\
\dot y = xy - bxz - y + G, \\
\dot z = bxy + xz - z,
\end{cases}
\end{equation}
where $(a, b, F, G)$ are parameters, with $a=0.25, b=4$. This model, as depicted in
\cite{Sh:95}, has most of the analyzed codimension two
bifurcations of limit cycles. We report in Figure \ref{fig:bifcompleto} a bifurcation diagram
obtained with {\sc matcont}
\begin{figure}[htbp]
\centering
 \footnotesize
 \psfrag{F}[][c]{$F$}
 \psfrag{G}[][c]{$G$}
 \includegraphics[width=\textwidth]{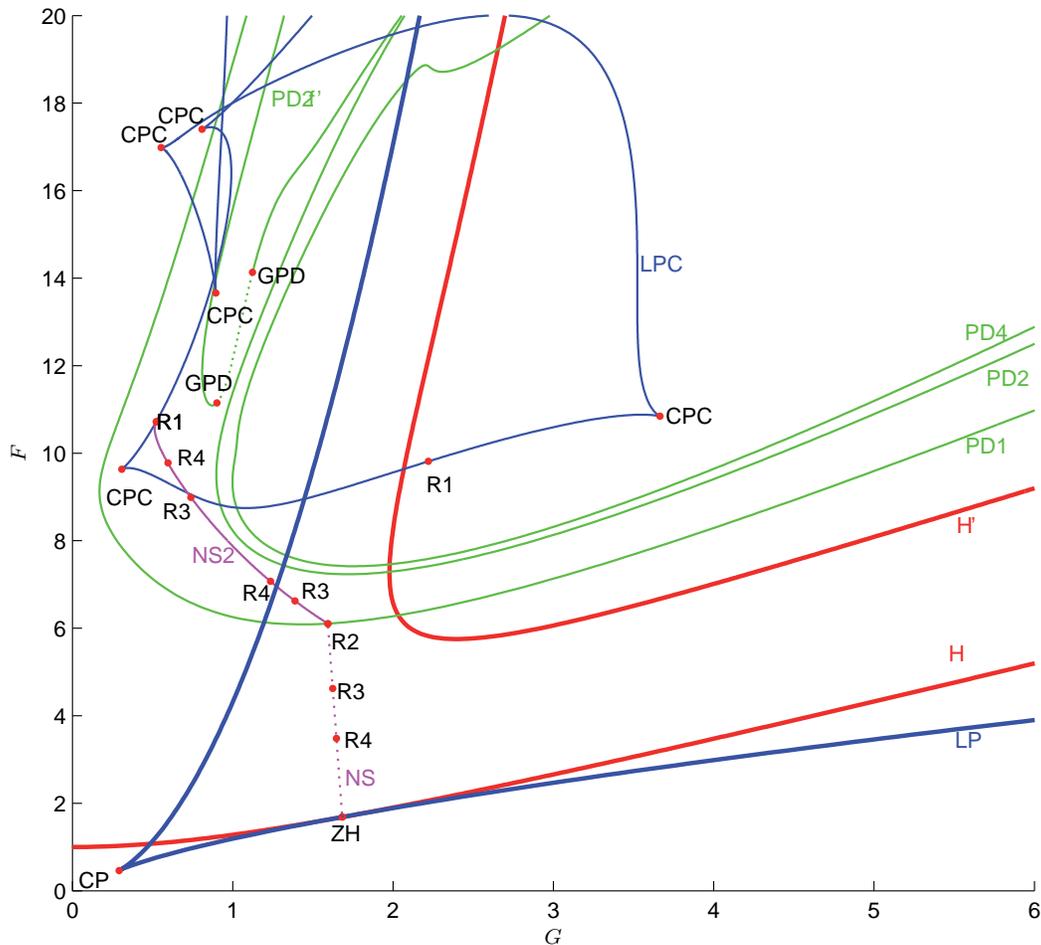}
 \caption{Bifurcation diagram of model \eqref{eq:Lo84model}. The
 thicker curves are bifurcation curves of equilibria, the thin curves are
 bifurcation curves of limit cycles and invariant tori (blue, limit point of cycles,
 green period doubling and magenta Neiamrk-Sacker). Continue/dotted curves correspond to
 supercritical/subcritical bifurcations.} \label{fig:bifcompleto}
\end{figure}
in which bifurcations of equilibria ({\tt LP} stands for limit
point, {\tt H} for Andronov-Hopf) are thicker and limit cycle
bifurcations are thin. In particular, the blue curve is a limit
point of cycles ({\tt LPC}) bifurcation curve, the green ones are
period-doubling ({\tt PD}) bifurcations curves and the magenta
ones are Neimark-Sacker ({\tt NS}) curves. The codimension two
points are marked with a red dot, and, as can be seen in the
figure, almost all cases (except the Chenciner bifurcation and the
fold-flip bifurcation) discussed in Section \ref{Section:Theory}
are present in this model. In the sequel of this section we will
investigate the normal form coefficients of each bifurcation.

\subsubsection{The Swallowtail bifurcation}
\begin{figure}[b]
\footnotesize \centering
 \includegraphics[width=\textwidth]{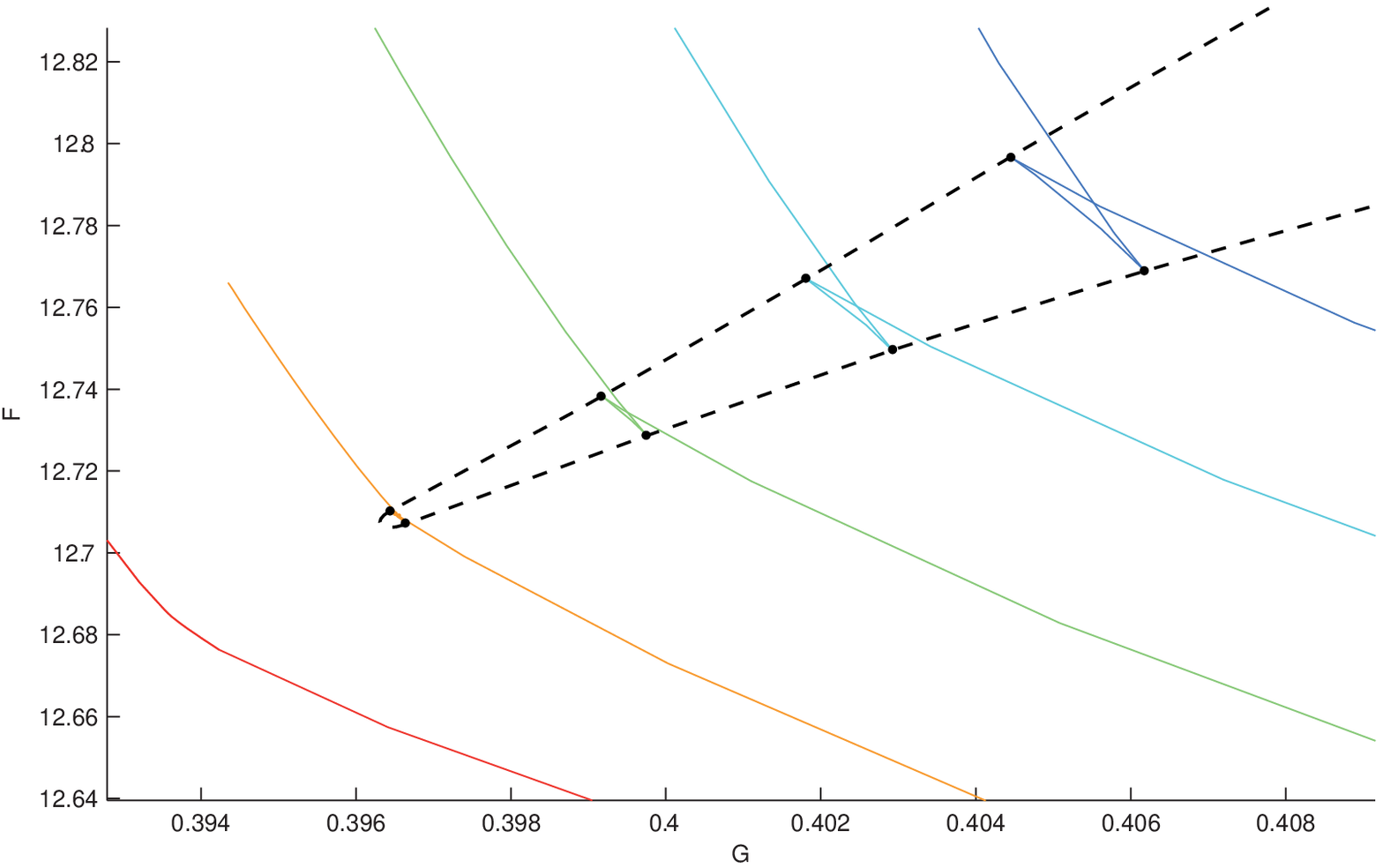} \\
 \medskip
 \definecolor{color1}{rgb}{0,0.5,1}
 \definecolor{color2}{rgb}{0,1,1}
 \definecolor{color3}{rgb}{0.5,1,0.5}
 \definecolor{color4}{rgb}{1.0,0.6,0}
 \definecolor{color5}{rgb}{1,0,0}
 \centering
 \begin{tabular}{c|c|c|c|c}
 $b$ & $(F,G)_{1}$ & $c_1$ & $(F,G)_{2}$ & $c_2$ \\ \hline
 \textcolor{color1}{2.95} & \textcolor{color1}{(12.76893,0.406176)} & \textcolor{color1}{16.4570} & \textcolor{color1}{(12.79664,0.40445)} & \textcolor{color1}{-8.83567} \\
 \textcolor{color2}{2.94} & \textcolor{color2}{(12.74929,0.402934)} & \textcolor{color2}{13.3315} & \textcolor{color2}{(12.76712,0.40183)} & \textcolor{color2}{-7.77721} \\
 \textcolor{color3}{2.93} & \textcolor{color3}{(12.72891,0.399746)} & \textcolor{color3}{10.0534} & \textcolor{color3}{(12.73840,0.39917)} & \textcolor{color3}{-6.47271} \\
 \textcolor{color4}{2.92} & \textcolor{color4}{(12.70759,0.396624)} & \textcolor{color4}{6.30503} & \textcolor{color4}{(12.71071,0.39643)} & \textcolor{color4}{-4.67510} \\
 \textcolor{color5}{2.91} &  &  &  & \\
 \end{tabular}
 \caption{Different limit point of cycles bifurcation curves in the $(F,G)$-plane for different values of the third parameter $b$. The parameter values are reported in the table.} \label{fig:shallowtail}
\end{figure}
The first degeneracy we want to analyze is the vanishing of the
coefficient $c$ in the cusp of cycles normal form (\ref{eq:NF-CPC}). This
bifurcation, named Swallowtail bifurcation, is characterized, in
our case, by the collision and disappearance of two cusp points of
limit cycle. In order to get this codimension three bifurcation we
analyze part of the blue dashed curve in Figure \ref{fig:bifcompleto} for
different parameter values of $b$. The result is shown in Figure
\ref{fig:shallowtail}: in the graph part
of the limit point of cycles manifold is plotted in the $(F,G)$-plane for
different values of parameter $b \in [2.91,2.95]$ (from blue
to red). In the table we can see the behavior of the
critical normal form coefficient $c$, where it exists (the colors of the lines correspond with the bifurcation diagram). Notice how the
behavior of this codim $3$ bifurcation is captured by a smooth vanishing
of the normal form coefficient.

\subsubsection{The degenerate generalized period doubling
bifurcation} On the green curve {\tt PD4} of Figure
\ref{fig:bifcompleto} there are two generalized period-doubling ({\tt GPD}) points. In the first one the flip bifurcation
curve passes from subcritical to supercritical, in the second one
the opposite happens. Computing the normal form coefficient
in the first case gives $e=-1.317656e-3<0$,
therefore there is a limit point of cycles bifurcation curve that starts rightward tangent to the
supercritical part of the period-doubling manifold, while in the
second case $e=2.895460e-3>0$, and so the limit point of cycles
bifurcation starts leftward tangent to the subcritical part of the
{\tt PD} curve. These conclusions can be seen in Figure \ref{fig:GPD1_2},
where the period-doubling curve is black, dotted when
supercritical; in the upper panels are sketched the Poincar\'e map
of the limit cycle involved in the bifurcation. On the yellow
curve the limit cycles sketched in green and red collide and
disappear, while on the violet curve the two involved limit cycles
are sketched in red and blue.
\begin{figure}[htb]
\footnotesize \centering
 \psfrag{F}{$F$}
 \psfrag{G}{$G$}
 \psfrag{p0}[][c]{\circledcoloreditem{0}}
 \psfrag{p1}[][c]{\circledcoloreditem{1}}
 \psfrag{p2}[][c]{\circledcoloreditem{2}}
 \psfrag{p3}[][c]{\circledcoloreditem{3}}
 \includegraphics[width=\textwidth]{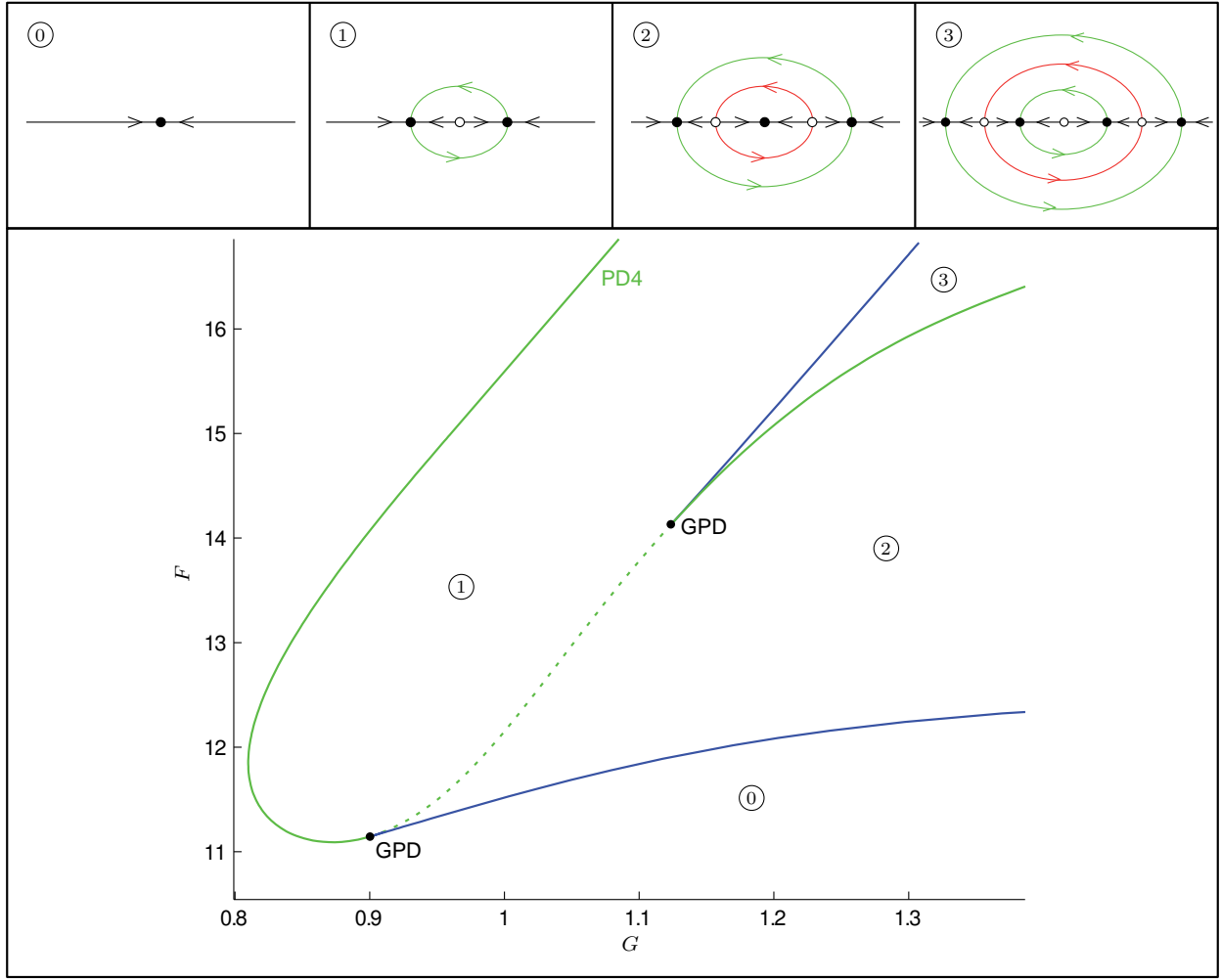}
 \caption{Two generalized period-doubling points with different normal form
 coefficients on the period-doubling bifurcation curve {\tt PD4} of Figure \ref{fig:bifcompleto}.} \label{fig:GPD1_2}
\end{figure}

\subsubsection{The 1:1 resonance points}
Two {\tt R1} points are located on the {\tt LPC} curve. Those two points should have different product of the  normal form coefficients. In fact,
in the first one (in $(F,G)=(10.72,0.522)$) the Neimark-Sacker
curve rooted at the bifurcation point is supercritical (i.e. the system
has the behavior depicted in \ref{fig:NF_R1}-(a)), while in the
second one (in $(F,G)=(9.81,2.22)$) the NS curve is
subcritical (behavior similar to Figure \ref{fig:NF_R1}-(b)). If we apply our analysis we
obtain:
\begin{itemize}
 \item for the first {\tt R1} point $(a,b)=(2.577,-1.2659)$, so
 the product $ab=-3.26237$ is negative.
 \item for the second {\tt R1} point $(a,b)=(-9.887, -2.005)$, so
 the product $ab=19.81858e$ is positive.
\end{itemize}
These results are in accordance with the theory. The blue curve on Figure \ref{fig:bif_LorR1} is the limit point of cycles curve. The violet curves are the Neimark-Sacker curves of first iterate, the green curve is the Neimark-Sacker curve of second iterate.
\begin{figure}[tbhp]
\footnotesize \centering
 \psfrag{FF}{$F$}
 \psfrag{GG}{$G$}
 \subfigure{
 \includegraphics[width=.9\textwidth]{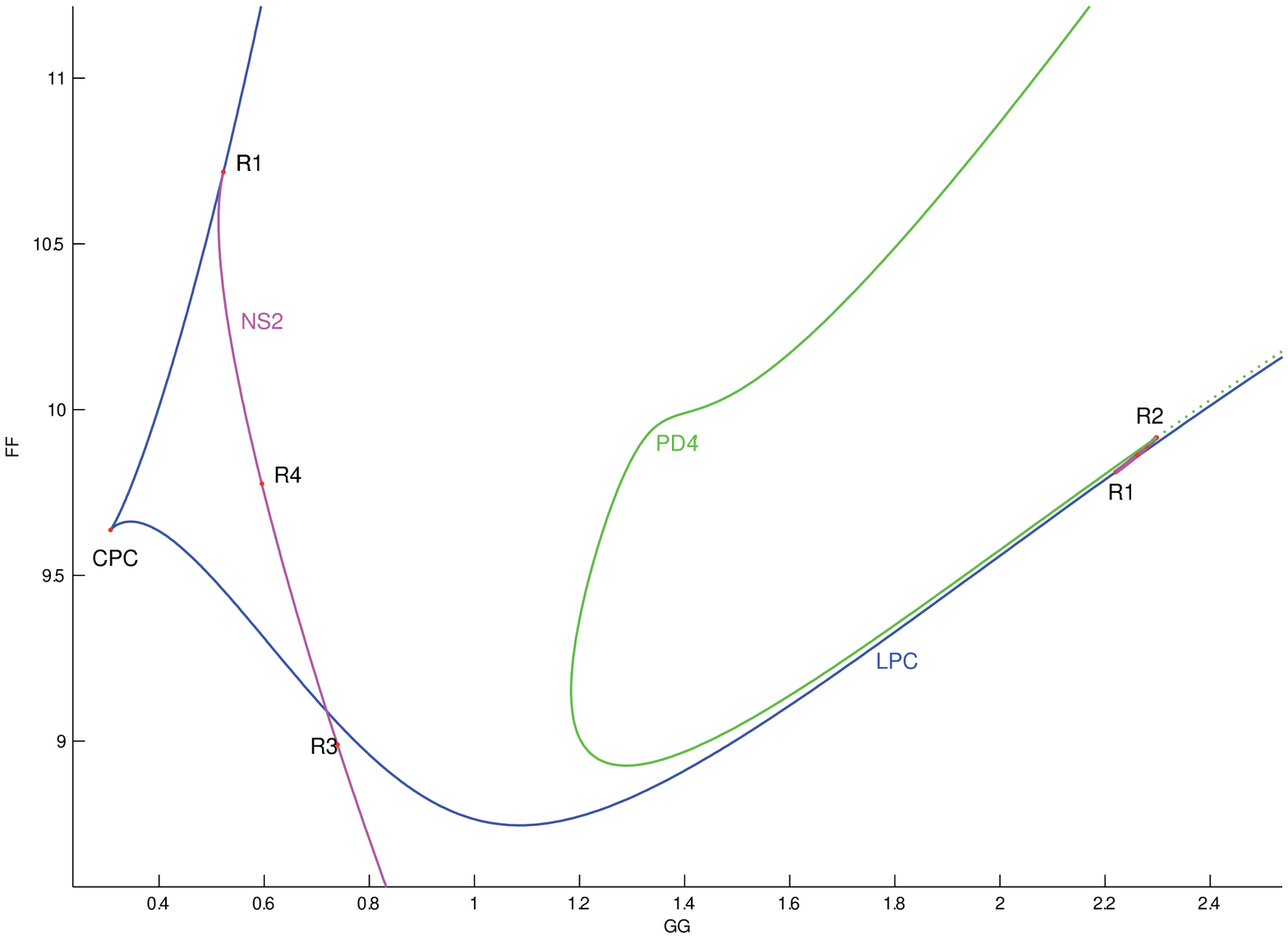}}
 \subfigure{
 \includegraphics[width=.8\textwidth]{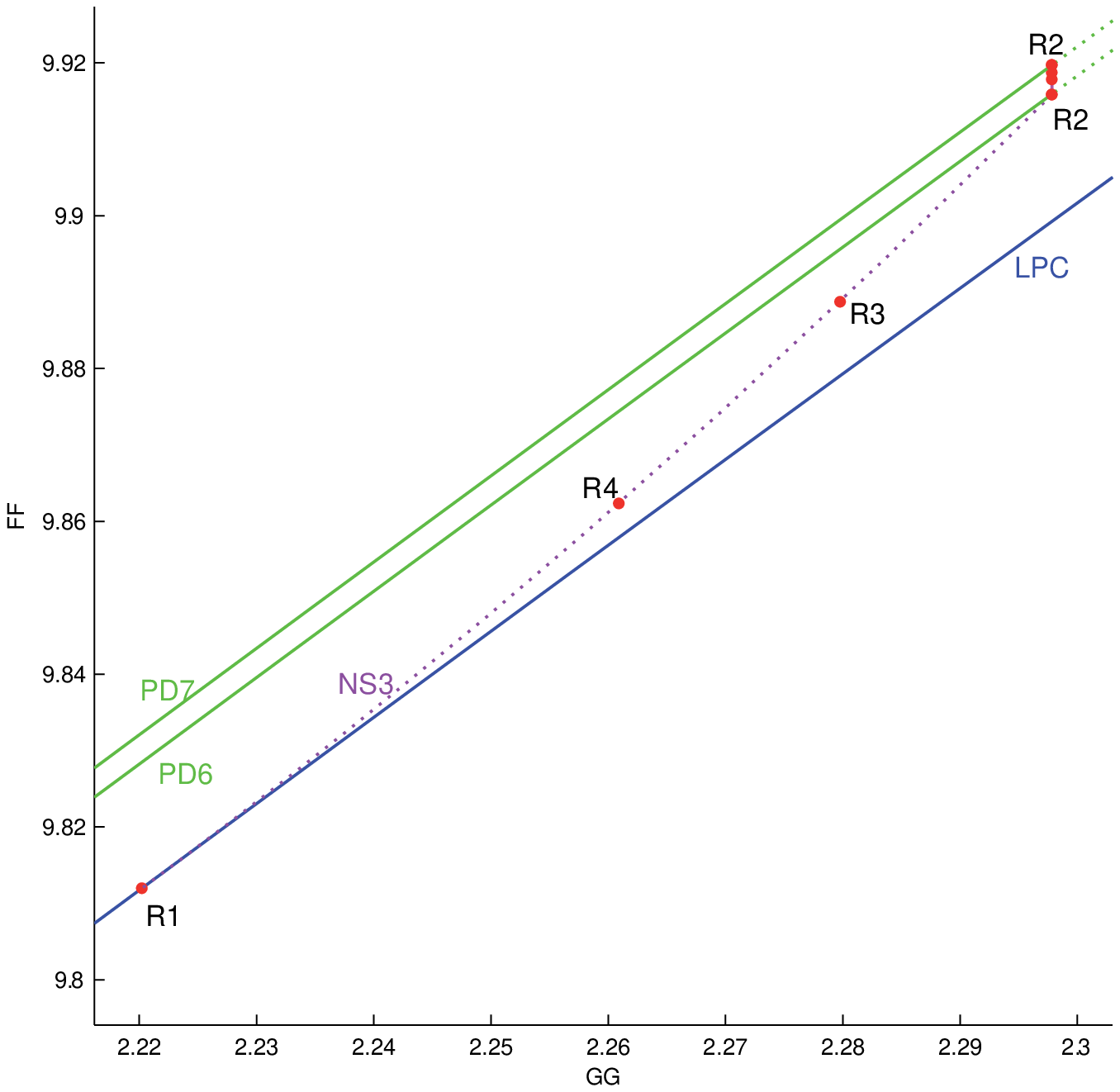}}
 \caption{(a) Two 1:1 resonance points with different behavior on the LPC curve of Figure \ref{fig:bifcompleto}. (b) Zoom on the Lorenz
 cascade that starts at the right {\tt R1} point. In blue
 are the limit point of cycles bifurcation curves, in green the period-doubling curves, in
 violet the Neimark-Sacker curves. Continue/dotted curves correspond to supercritical/subcritical
 curves.} \label{fig:bif_LorR1}
 \end{figure}

\subsubsection{The 1:2 resonance points}
At the unique {\tt R2} point at $(F,G)=(10.72,0.522)$ shown in Figure \ref{fig:bifcompleto}, the incoming
Neimark-Sacker curve, namely {\tt NS}, is subcritical (and so
$b>0$), while the outgoing curve (which exists and so $a<0$), namely {\tt
NS2}, is supercritical, i.e. we are in the case reported in Figure
\ref{fig:NF_R2}-(b) with time reversed. The coefficients
computed at the 1:2 resonance point are $(a,b)=(-0.6329965, 0.1785283)$,
in accordance with the theory.

At the {\tt R1} point located at $(F,G)=(9.81,2.22)$ starts a
resonance cascade, as shown in
Figure \ref{fig:bif_LorR1}-(b). On that cascade we can
find many resonance points which we will analyze in what follows. In particular, since the {\tt R2} points belong to a cascade they are of the same type as presented in Figure \ref{fig:NF_R2}-(b) (so $a<0$), with at each step a change of criticality of
the incoming NS curve. As mentioned before, the first NS curve, born
at the {\tt R1} point, is subcritical, so for the first {\tt R2} point we expect that $b>0$, while for the second
one $b<0$. The obtained numerical results are
\begin{itemize}
 \item for the first {\tt R2} point at $(F,G) = ( 9.9158, 2.2978)$ we have that $(a,b)=(-1.3157, 0.11076)$.
 \item for the second {\tt R2} point at $(F,G) = ( 9.9197, 2.2978)$ we have that $(a,b)=(-2.6228, -0.0564)$.
\end{itemize}
Results are in accordance with the theory.

\subsubsection{The 1:3 resonance points}
We have several 1:3 resonance points at which we can have a closer
look. There is one {\tt R3} point located on the NS curve and two {\tt R3} points on the NS2 curve. The {\tt R3} point corresponding to the first
iterate is at $(F,G)=(4.628,1.624)$, with a positive normal form
coefficient of the Neimark-Sacker curve such that we are in the case represented in Figure \ref{fig:NF_R3}-(b). The {\tt R3} points
corresponding to the second iterate are at $(F,G)=(7.072,1.235)$
and $(F,G)=(8.989,0.7394)$, where the Neimark-Sacker curve is both
times supercritical, so we are in the situation depicted in
Figure \ref{fig:NF_R3}-(a).
\begin{itemize}
    \item for the {\tt R3} point at $(F,G)=(4.628,1.624)$ we have that $(b,\Re(c))=(0.1913 - 0.5464i, 6.185900e-2)$
    \item for the {\tt R3} point at $(F,G)=(7.072,1.235)$ we have that $(b,\Re(c))=(-0.4461 - 0.1901i, -3.612302e-2)$.
    \item for the {\tt R3} point at $(F,G)=(8.989,0.7394)$ we have that $(b,\Re(c))=(-0.1285 + 0.0168i, -1.950822e-2)$.
\end{itemize}
These results are in accordance with the theory.

There are also {\tt R3 }points on the cascade (see Figure \ref{fig:bif_LorR1}-(b)). The first one corresponds with a subcritical NS curve, while
the second one corresponds with a supercritical NS curve.
\begin{itemize}
 \item for the first {\tt R3} point, in $(F,G) = (9.8888, 2.2798)$ we have that $(b,\Re(c))=(-2.9582 - 0.3599i, 0.7383)$.
 \item for the second {\tt R3} point, in $(F,G) = (9.9187, 2.2978)$ we have that $(b,\Re(c))=(2.7447 + 3.5391i, -0.3847)$.
\end{itemize}
Also in this case all results are in accordance with the theory.

\subsubsection{The 1:4 resonance points}
There are $5$ 1:4 resonance points at which we will we have a look. One is
located on the NS curve, two others on the NS2 curve and the last two lie on the resonance cascade
(see Figure \ref{fig:bif_LorR1}-(b)). We obtain
\begin{itemize}
 \item for the {\tt R4} point at $(F,G)=(3.376,1.647)$ we have that $(c,d)=(0.0501 - 0.0746i, -0.0577 - 0.5422i )$ and so $A=0.0918 -0.1368 i$ (subcritical NS curve, case I).
 \item for the {\tt R4} point at $(F,G)=(9.777,0.595)$ we have that $(c,d)=(-0.0151 - 0.1348i, -0.0266 - 0.0411i)$ and so $A=-0.307829 -2.752998 i$ (supercritical NS curve, case VIII).
 \item for the {\tt R4} point at $(F,G)=(6.620,1.390)$ we have that $(c,d)=(-0.0417 - 0.9915i, -0.4283 - 1.0826i)$ and so $A=-0.035841 -0.851653 i$ (supercritical NS curve, case I).
\end{itemize}
For the first and the last point no further bifurcation analysis
is possible to confirm the correctness of the results (present
curves rooted at the point are global bifurcations of limit
cycles). Instead it is possible to continue all local
bifurcations of limit cycles rooted at the second {\tt R4} point,
obtaining the result shown in Figure \ref{fig:R4_8}. Note that we haven't made the distinction between region VII and VIII since we have not computed the fold of torus curve typical for region VIII.

\begin{figure}[tbhp]
\footnotesize \centering
 \psfrag{FF}{$F$}
 \psfrag{GG}{$G$}
 \includegraphics[width=.8\textwidth]{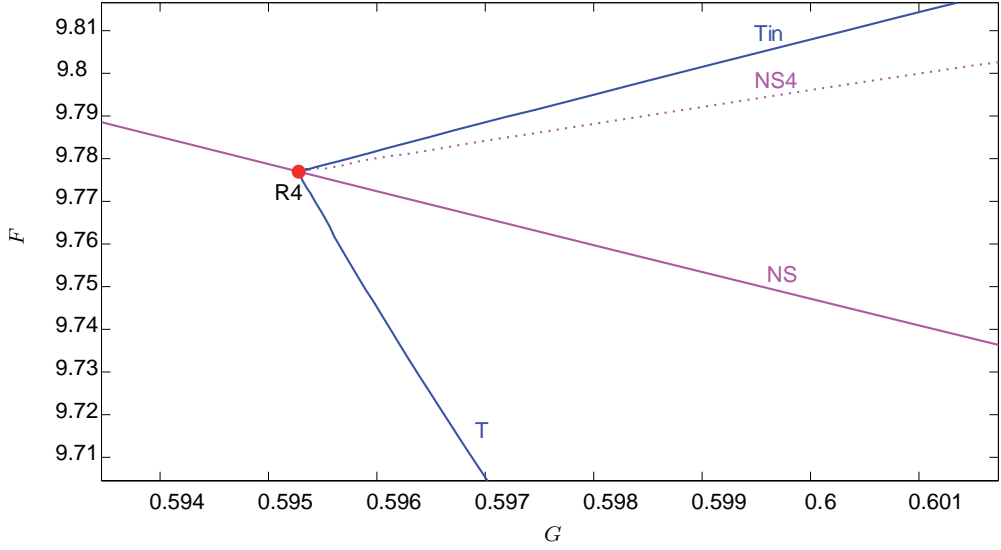}
 \caption{Bifurcation diagram at the {\tt R4} point at $(F,G)=(9.777,0.595)$.
 In blue are the limit point of cycles bifurcation curves, in
 violet the Neimark-Sacker curves. Continue/dotted curves correspond to supercritical/subcritical
 curves.} \label{fig:R4_8}
 \end{figure}

For the cascade (see Figure \ref{fig:bif_LorR1}-(b))
we have that the first point is on a subcritical NS curve, while
the second one is on a supercritical NS curve. Moreover, since they lie on a cascade, they should be
of type I.
\begin{itemize}
 \item for the first {\tt R4} point at $(F,G) = (9.9159, 2.2978)$ we have that $(c,d)=(0.0518 - 1.7633i, -2.0143 + 0.4546i )$ and so $A=0.025102 -0.853919 i$ (subcritical NS curve, case I).
 \item for the first {\tt R4} point at $(F,G) = (9.9197, 2.2978)$ we have that $(c,d)=(-0.0282 - 6.8152i, -10.8446 + 2.1458i )$ and so $A=-0.002550 -0.616491 i$ (supercritical NS curve, case I).
\end{itemize}
Also in this case the results are in accordance with the theory.

\subsection{The Extended Lorenz1984 system}
As done in \cite{KuMeVe:2004}, it is possible to extend the
Lorenz1984 system \eqref{eq:Lo84model} by adding a fourth variable
which takes the influence on the jet stream and the
baroclinic waves of external parameters like the temperature of
the sea surface into account. The obtained system is
\begin{equation} \label{eq:ExtLo84model}
\begin{cases}
 \dot x = -y^2 - z^2 - a x + a F - \gamma u^2, \\
 \dot y = xy - b xz - y + G, \\
 \dot z = b xy + xz - z, \\
 \dot u = -\delta u + \gamma u x + K.
\end{cases}
\end{equation}
We use the parametervalues mentioned in \cite{KuMeVe:2004}, i.e.
$$a=0.25,\, b=1,\, G=0.2,\, \delta=1.04,\, \gamma=0.987, F=1.75,\, K=0.0003.$$
Simulating this system from the trivial initial condition leads to
a limit cycle. In a continuation in $K$ that limit cycle undergoes
a subcritical period-doubling bifurcation. Now, we can do a two
parameter continuation in $(F,K)$ and draw the bifurcation diagram
reported in Figure \ref{fig:bif_ELorenz}.
\begin{figure}[htbp]
\centering \footnotesize
 \psfrag{FF}[][c]{$F$}
 \psfrag{TT}[][c]{$T$}
 \normalsize
 \psfrag{LPPD}[][c]{{\tt LPPD}}
 \includegraphics[width=.6\textwidth]{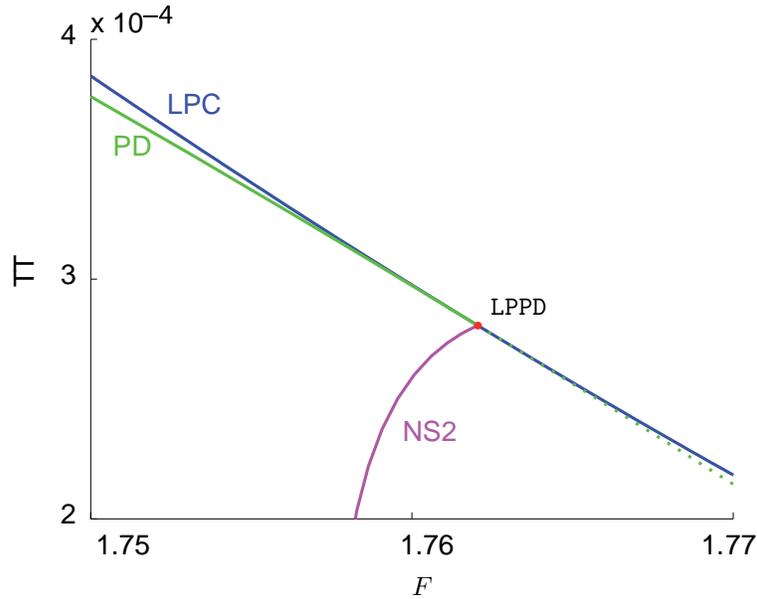}
 \caption{Bifurcation diagram of a limit cycle in model \eqref{eq:ExtLo84model}.
 The blue curve is a limit point of cycles bifurcation, the green curve is a period-doubling curve (continue/dotted curve correspond to supercritical/subcritical
 curves) and the magenta curve is a supercritical Neimark-Sacker bifurcation curve
 of the period doubled limit cycle.} \label{fig:bif_ELorenz}
\end{figure}

\subsubsection{The fold-flip point}
As can be seen in Figure \ref{fig:bif_ELorenz}, a fold-flip
point is detected for $(F,K) = (1.7620,0.2806 \times 10^{-3})$. Since there is a Neimark-Sacker curve of the period doubled
limit cycle rooted at the bifurcation point and the NS curve and the LPC curve lie on different sides of the PD curve, we are in the case represented in Figure
\ref{fig:NF_FF}-(a), i.e. we have $a_{20} b_{11}<0$ and $a_{02}
b_{11}<0$. Moreover, since the NS curve is supercritical, $L_{NS}$
should be negative. Numerically, we obtain that $b_{11}=562.2215,\, a_{20}=-0.5756,\, a_{02}=-0.1036 e-3,\, L_{NS}=-178.8596$. Hence, these results are in accordance with the theory.

\appendix
\section{Derivation of the normal forms}
\label{Appendix:1}
\subsection{Notation}\label{sectionNotation}
Let $M \in \R^{n \times n}$ be the monodromy matrix. In all codimension $2$ cases all critical multipliers, i.e. all multipliers with modulus $1$, have non-degenerate Jordan blocks. Let $M_0$ be the critical Jordan structure, i.e. the block diagonal matrix consisting of the critical Jordan blocks, starting with the block of the trivial multiplier $1$. Let $\mu_k = e^{i\theta_k} (0\leq \theta_k < 2\pi)$ be a critical multiplier with multiplicity $m_k$. The matrix $L_k \in \R^{m_k \times m_k}$ is
defined as
\[
L_k=\begin{pmatrix}
 \sigma_k & 1  & \ldots & 0 \\
 0 & \sigma_k  & \ldots & 0 \\
 \vdots  & \ddots &\ddots & 1\\
 0  & \ldots & 0 & \sigma_k
\end{pmatrix},
\]
where $\sigma_k$ is the \textit{Floquet
exponent of the multiplier} $\mu_k$, with $\sigma_k = i\theta_k/T$ in the case of a positive real multiplier or a complex multiplier $\mu_k$ and $\sigma_k = 0$ for $\mu_k=-1$. The matrix $L_0$ is the block diagonal matrix formed from
the blocks $L_k$ for which $\left|\mu_k\right|=1$, starting with the block that corresponds with multiplier $1$. The matrix $\tilde L_0$ is the
matrix $L_0$ without the first row and the first column.

\subsection{Bifurcations with 2 critical eigenvalues}
\subsubsection{\tt CPC} At the CPC bifurcation the monodromy matrix
has the critical Jordan structure
\[
M_0=\begin{pmatrix} 1 & 1 \\ 0 & 1
\end{pmatrix},
\]
i.e. the multiplier $\mu=1$ is double non semi-simple. According
to the proposed notation $\sigma=0$ and thus
\[
L_0=\begin{pmatrix} 0 & 1 \\ 0 & 0 \end{pmatrix}, \quad \tilde
L_0=0.
\]
We are in a situation where we can apply Theorem 2 from
\cite{Io:88}. In particular, we have that
\[
 \ds\dd{\tau}{t}=1+\xi+p(\tau,\xi), \quad \ds\dd{\xi}{\tau}=\tilde L_0 \xi + P(\tau,\xi),
\]
where $\tau$ plays the role of phase coordinate along the orbit. The polynomials $p$ and $P$ are $T$-periodic in $\tau$ and at least quadratic in $\xi$
such that
\[
\ds\dd{}{\tau}p(\tau,\xi)-\ds\dd{}{\xi}p(\tau,\xi)\tilde L_0^*\xi=0, \quad
\ds\dd{}{\tau}P(\tau,\xi)+\tilde L_0^* P(\tau,\xi)-\ds\dd{}{\xi}P(\tau,\xi)\tilde L_0^*\xi=0\]
for all $\tau$ and $\xi\in \R$. Putting $\tilde
L_0=0$ we obtain
\[
\ds\dd{}{\tau}p(\tau,\xi)=0, \quad \ds\dd{}{\tau}P(\tau, \xi)=0,
\]
i.e. the two polynomials are independent from $\tau$. So, by a Taylor expansion of the two polynomials the normal form becomes
\[
\begin{cases}
 \dd{\tau}{t}= 1+\xi+p(\xi)=1+\xi+\alpha_1\xi^2+\alpha_2'\xi^3+\ldots, \smallskip \\
 \dd{\xi}{\tau}=P(\xi) = b\xi^2+c \xi^3 + \ldots.
\end{cases}
\]
Applying the chain rule gives 
\begin{eqnarray*}
\dd{\xi}{t} & = & (b\xi^2+c \xi^3 + \ldots)(1+\xi+\alpha_1\xi^2+\alpha_2'\xi^3+\ldots)\\
 & = & b\xi^2+b\xi^3+c \xi^3+\ldots\\
 & = & c\xi^3+\ldots,
 \end{eqnarray*}
because of the cusp degeneracy condition. In order to obtain the proposed normal form \eqref{eq:NF-CPC}, we do the substitution $\xi \mapsto -\xi$ and find
\[
\begin{cases}
 \dd{\tau}{t}= 1-\xi+\alpha_1\xi^2+\alpha_2\xi^3+\ldots, \smallskip \\
 \dd{\xi}{t}=c\xi^3+\ldots,
\end{cases}
\]
with $\alpha_2=-\alpha_2'$.
\bigskip
\subsubsection{\tt GPD} At the GPD bifurcation the matrices described
in Section \ref{sectionNotation} are
\[
M_0=\begin{pmatrix} 1 & 0 \\ 0 & -1
\end{pmatrix}, \quad
L_0=\begin{pmatrix} 0 & 0 \\ 0 & 0
\end{pmatrix}, \quad
\tilde L_0 =0.
\]
We are in a case in which we can apply Theorem 3 from
\cite{Io:88}. So we have the following $2 T$-periodic normal form
(using the formula of Theorem 1 from \cite{Io:88})
\[
 \ds\dd{\tau}{t}=1+p(\tau,\xi), \quad \ds\dd{\xi}{\tau}=\tilde L_0 \xi + P(\tau,\xi),
\]
with polynomials $p$ and $P$ $2 T$-periodic in
$\tau$ and at least quadratic in $\xi$ such that
\begin{eqnarray*}
\ds\dd{}{\tau}p(\tau,\xi)-\ds\dd{}{\xi}p(\tau,\xi)\tilde L_0^*\xi=0, &\quad&
\ds\dd{}{\tau}P(\tau,\xi)+\tilde L_0^* P(\tau,\xi)-\ds\dd{}{\xi}P(\tau,\xi)\tilde L_0^*\xi=0, \\
p(\tau+T,\xi)=p(\tau,-\xi), &\quad& P(\tau+T,-\xi)=-P(\tau,\xi).
\end{eqnarray*}
Putting $\tilde L_0=0$ in the first two formulas brings us in the same situation as of the previous case
\[
\ds\dd{}{\tau}p(\tau,\xi)=0, \quad \ds\dd{}{\tau}P(\tau, \xi)=0,
\]
i.e. the two polynomials are independent of $\tau$. This makes it possible to rewrite the last two formulas as
\[
p(\xi)=p(-\xi), \quad P(-\xi)=-P(\xi),
\]
so polynomial $p$ is even ($p=\phi(\xi^2)$) and polynomial
$P$ is odd ($P=\xi \psi(\xi^2)$). Therefore, taking the {\tt GPD} degenerate condition into account, we can write down the first
approximation of our normal form
\[
\begin{cases}
\dd{\tau}{t}=\ds1+\phi(\xi^2)=\ds1+\alpha_1 \xi^2+\alpha_2 \xi^4 + \ldots,\smallskip \\
\dd{\xi}{\tau}=\ds\xi \psi(\xi^2)=c\xi^3+e \xi^5 + \ldots.
\end{cases}
\]
Applying the chain rules gives 
\begin{eqnarray*}
	\dd{\xi}{t} & = & (c\xi^3+e \xi^5 + \ldots)(\ds1+\alpha_1 \xi^2+\alpha_2 \xi^4 + \ldots)\\
	& = & c\xi^3+\alpha_1c\xi^5 +e \xi^5 + \ldots\\
	& = & e \xi^5 + \ldots,
\end{eqnarray*}
because of the GPD degeneracy condition. So we obtain the normal form presented in (\ref{eq:NF-GPD}), namely
\[
\begin{cases}
\dd{\tau}{t}=\ds1+\alpha_1 \xi^2+\alpha_2 \xi^4+\ldots,\smallskip \\
\dd{\xi}{t}=e \xi^5 + \ldots.
\end{cases}
\]
 
\subsection{Bifurcations with 3 critical eigenvalues}

\subsubsection{\tt CH} In the CH case the Jordan block
associated to the trivial multiplier is one-dimensional. We have
\[
M_0=\begin{pmatrix} 1 & 0 & 0 \\ 0 & e^{i \omega T} & 0 \\
0 & 0 & e^{-i \omega T}
\end{pmatrix}, \quad
L_0=\begin{pmatrix} 0&0&0\\0 & i \omega & 0 \\ 0 & 0 & -i \omega
\end{pmatrix}\quad
\tilde L_0=\begin{pmatrix} i \omega & 0 \\ 0 & -i \omega
\end{pmatrix}.
\]
This puts us in a situation in which we can
apply Theorem 1 from \cite{Io:88}. If we assume that
$\frac{\omega T}{2 \pi} \not\in \mathbb Q$, then it follows immediately from the results of Example III.9 from \cite{IoAd:92} that the normal form is given by
\[
\begin{cases}
 \dd{\tau}{t}= 1+\phi(|\xi|^2), \smallskip \\
 \dd{\xi}{\tau}= i \omega \xi + \xi \psi(|\xi|^2),
\end{cases}
\]
where the polynomials $\phi$ and $\psi$ are at least linear in their
argument. $\phi$ is real, while $\psi$ takes values in $\C$. We epand the polynomials up to the fifth order, namely
\[
\begin{cases}
 \dd{\tau}{t}= 1+\alpha_1 |\xi|^2+\alpha_2 |\xi|^4 + \ldots, \smallskip \\
 \dd{\xi}{\tau}= i \omega \xi + c' \xi |\xi|^2+ e' \xi |\xi|^4 + \ldots,
\end{cases}
\]
such that the chain rule gives
\begin{eqnarray*}
	\dd{\xi}{t} & = & (i \omega \xi + c' \xi |\xi|^2+ e' \xi |\xi|^4 + \ldots)(1+\alpha_1 |\xi|^2+\alpha_2 |\xi|^4 + \ldots)\\
	& = & i \omega \xi + i\omega \alpha_1 \xi |\xi|^2+c'\xi |\xi|^2+i\omega \alpha_2 \xi|\xi|^4+\alpha_1c'\xi |\xi|^4 + e'\xi|\xi|^4+\ldots\\
	& = & i \omega \xi + ic\xi |\xi|^2+e\xi|\xi|^4+\ldots.\\
\end{eqnarray*}
Here $i\omega \alpha_1+c'=ic$ since the Lyapunov coefficient of the Neimark-Sacker bifurcation is purely imaginary and $i\omega\alpha_2+\alpha_1c'+e' =e$. This gives us normal form \eqref{eq:NF-GNS}.

\subsubsection{\tt R1} At the R1 bifurcation the matrices described
in Section \ref{sectionNotation} are
\[
M_0=\begin{pmatrix} 1 & 1 & 0 \\ 0 & 1 & 1 \\ 0 & 0 & 1
\end{pmatrix}, \quad
L_0=\begin{pmatrix} 0 & 1 & 0 \\ 0 & 0 & 1 \\ 0 & 0 & 0
\end{pmatrix}, \quad
\tilde L_0 =\begin{pmatrix}  0 & 1 \\ 0 & 0
\end{pmatrix}.
\]
We are in a case in which we can apply Theorem 2 from
\cite{Io:88}. So we can define a $T$-periodic normal form
\[
\ds\dd{\tau}{t}=1+\xi_1+p(\tau,\xi), \quad \ds\dd{\xi}{\tau}=\tilde L_0
\xi + P(\tau,\xi),
\]
where $\xi=(\xi_1,\xi_2)$. The polynomials $p$ and $P$ are $T$-periodic in $\tau$ and
at least quadratic in $(\xi_1,\xi_2)$ such that
\begin{eqnarray*}
\ds\dd{}{\tau}p(\tau,\xi)-\ds\dd{}{\xi}p(\tau,\xi)\tilde L_0^*\xi=0, \quad
\ds\dd{}{\tau}P(\tau,\xi)+\tilde L_0^* P(\tau,\xi)-\ds\dd{}{\xi}P(\tau,\xi)\tilde L_0^*\xi=0.
\end{eqnarray*}
If we write the polynomials in a Fourier expansion, namely
\begin{equation*}
p(\tau,\xi)= \sum_{l=-\infty}^{\infty} p_l(\xi) e^{i \frac{2 \pi l
\tau}{T}}, \quad P(\tau,\xi)= \sum_{l=-\infty}^{\infty} P_l(\xi) e^{i
\frac{2 \pi l \tau}{T}},
\end{equation*}
we obtain for any $l\in \mathbb Z$ the following differential equations
\begin{eqnarray*}
 &\dd{}{\xi}p_l(\xi) \tilde L_0^* \xi-i\frac{2 \pi l}{T} p_l(\xi)=0,& \\
 &\dd{}{\xi}P_l(\xi) \tilde L_0^* \xi-i\frac{2 \pi l}{T} P_l(\xi)- \tilde L_0^*
 P_l(\xi)=0.&
\end{eqnarray*}
Putting our $\tilde L_0$ into the equations and writing
$P_l(\xi_1,\xi_2)=(P^{(1)}_l(\xi_1,\xi_2),P^{(2)}_l(\xi_1,\xi_2))$
we can rewrite them as a set of differential equations in variable $\xi_2$
\begin{eqnarray*}
 \dd{}{\xi_2}p_l(\xi_1,\xi_2) &=& i \frac{2 \pi l}{T \xi_1} p_l(\xi_1,\xi_2), \\
 \dd{}{\xi_2}P^{(1)}_l(\xi_1,\xi_2) &=&  i \frac{2 \pi l}{T \xi_1} P^{(1)}_l(\xi_1,\xi_2), \\
 \dd{}{\xi_2}P^{(2)}_l(\xi_1,\xi_2) &=& \frac{1}{\xi_1} \left(i \frac{2 \pi l}{T} P^{(2)}_l(\xi_1,\xi_2)+P^{(1)}_l(\xi_1,\xi_2)\right).
\end{eqnarray*}
Since
$p_l(\xi_1,\xi_2),P^{(1)}_l(\xi_1,\xi_2)$ and $P^{(2)}_l(\xi_1,\xi_2)$
are polynomials, if $l\neq 0$ the only solution 
is the trivial one. So $l$ equals zero and thus the polynomials are $\tau$ independent.
We obtain
\[
 \dd{}{\xi_2}p_0(\xi_1,\xi_2) =
 \dd{}{\xi_2}P^{(1)}_0(\xi_1,\xi_2) = 0, \qquad
 \dd{}{\xi_2}P^{(2)}_0(\xi_1,\xi_2) =
 \frac{1}{\xi_1}P^{(1)}_0(\xi_1,\xi_2).
\]
The first two equations show that $p_0$ and $P^{(1)}_0$ are independent from $\xi_2$, thus
\[
p_0(\xi_1)=\phi_0(\xi_1), \qquad P^{(1)}_0(\xi_1)=\xi_1 \chi(\xi_1).
\]
Integrating the last differential equation gives
\[
P^{(2)}_0(\xi_1,\xi_2)= \xi_2 \chi(\xi_1)+\psi(\xi_1).
\]
Now we can further simplify our normal form. In fact, since the homological operator associated with $\tilde L_0$ has a
two-dimensional null-space, we can make a change of
variables such that polynomial $P_0^{(1)}$ vanishes (see \cite{IoAd:92}). So we can write
\[
\tilde P_0^{(1)}(\xi_1)=0, \qquad \tilde P_0^{(2)}(\xi_1,\xi_2)=
\xi_2\phi_1(\xi_1) +\phi_2(\xi_1),
\]
where $\phi_1$ and $\phi_2$ are polynomials satisfying
$\phi_1(0)=\phi_2(0)=\left.\dd{\phi_2}{\xi_1}\right|_{\xi_1=0}=0$.

Assembling all the information gives us the following normal form
\[
\begin{cases}
 \ds\dd{\tau}{t}=1+\xi_1+\phi_0(\xi_1)=1+\xi_1+\alpha \xi_1^2+\ldots, \smallskip \\
 \ds\dd{\xi_1}{\tau}= \xi_2, \smallskip \\
 \ds\dd{\xi_2}{\tau}= \xi_2\phi_1(\xi_1) +\phi_2(\xi_1)=a \xi_1^2+ b \xi_1 \xi_2 +\ldots.
\end{cases}
\]
Note that the polynomials $\phi_0$ and $\phi_2$ are at least
quadratic in $\xi_1$, while $\phi_1$ is at least linear in its
argument. In order to obtain the normal form presented in
(\ref{eq:NF-11C}) we apply the chain rule which gives
\begin{eqnarray*}
 \ds\dd{\xi_1}{t} & = & \xi_2(1+\xi_1+\alpha \xi_1^2+\ldots)\\
 & = & \xi_2+\xi_1\xi_2+\ldots
\end{eqnarray*}
and 
\begin{eqnarray*}
 \ds\dd{\xi_2}{t} & = & (a \xi_1^2+ b \xi_1 \xi_2 +\ldots)(1+\xi_1+\alpha \xi_1^2+\ldots)\\
 & = & a \xi_1^2+ b \xi_1 \xi_2 +\ldots.
\end{eqnarray*}

\subsubsection{\tt R2} At the R2 bifurcation the matrices described
in Section \ref{sectionNotation} are
\[
M_0=\begin{pmatrix} 1 & 0 & 0 \\ 0 & -1 & 1 \\ 0 & 0 & -1
\end{pmatrix}, \quad
L_0=\begin{pmatrix} 0 & 0 & 0 \\ 0 & 0 & 1 \\ 0 & 0 & 0
\end{pmatrix}, \quad
\tilde L_0 =\begin{pmatrix}  0 & 1 \\ 0 & 0
\end{pmatrix}.
\]
We are in a case in which we can apply Theorem 3 from
\cite{Io:88}. So we have a $2 T$-periodic normal form
\[
 \ds\dd{\tau}{t}=1+p(\tau,\xi), \quad \ds\dd{\xi}{\tau}=\tilde L_0 \xi + P(\tau,\xi),
\]
where $\xi=(\xi_1,\xi_2)$. The polynomials $p$ and $P$ are $2 T$-periodic in
$\tau$ and at least quadratic in their argument with
\begin{eqnarray}\label{eq:R2_proprieties1}
\ds\dd{}{\tau}p(\tau,\xi)-\ds\dd{}{\xi}p(\tau,\xi)\tilde L_0^*\xi=0, &\quad&
\ds\dd{}{\tau}P(\tau,\xi)+\tilde L_0^* P(\tau,\xi)-\ds\dd{}{\xi}P(\tau,\xi)\tilde L_0^*\xi=0, 
\\ \label{eq:R2_proprieties2} p(\tau+T,\xi)=p(\tau,-\xi),
&\quad& P(\tau+T,-\xi)=-P(\tau,\xi).
\end{eqnarray}
Similar as in the R1 case (since the $\tilde L_0$ matrix is the same) we obtain that all
polynomials are independent from $\tau$, $l$ has to be equal to zero and the first two polynomials
$p$ and $P^{(1)}$ are independent from $\xi_2$.

Since the polynomials obtained are independent from $\tau$, we can
rewrite \eqref{eq:R2_proprieties2} as:
\[
 p(\xi)=p(-\xi), \quad P(-\xi)=-P(\xi),
\]
obtaining that polynomial $p$ is even
($p(\xi_1)=\phi_0(\xi_1^2)$) and the polynomials $P(\xi_1,\xi_2)$
are odd ($P^{(1)}(\xi_1)=\xi_1 \tilde\phi_1(\xi_1^2)$ and
$P^{(2)}(\xi_1,\xi_2)= \xi_2
\tilde\phi_1(\xi_1^2)+\xi_1\tilde\phi_2(\xi_1^2)$). Now we can simplify our normal form by changing variables as discussed in the previous section. So we can write
\[
\tilde P^{(1)}(\xi_1)=0, \qquad \tilde P^{(2)}(\xi_1,\xi_2)=
\xi_2\phi_1(\xi_1^2) +\xi_1\phi_2(\xi_1^2).
\]
Putting all information in the normal form equations gives
the system
\[
\begin{cases}
 \ds\dd{\tau}{t}=1+\phi_0(\xi_1^2)=1+\alpha \xi_1^2+ \ldots, \smallskip \\
 \ds\dd{\xi_1}{\tau}= \xi_2, \smallskip \\
 \ds\dd{\xi_2}{\tau}= \xi_2\phi_1(\xi_1^2) +\xi_1\phi_2(\xi_1^2)=a \xi_1^3+b\xi_1^2\xi_2 + \ldots.
\end{cases}
\]
In order to obtain normal form (\ref{eq:NF-12C}) we
apply the chain rule
\begin{eqnarray*}
\ds\dd{\xi_1}{t} & = & \xi_2(1+\alpha \xi_1^2+ \ldots)\\
& = & \xi_2+\alpha \xi_1^2\xi_2+ \ldots,\\
\ds\dd{\xi_2}{t} & = & (a \xi_1^3+b\xi_1^2\xi_2 + \ldots)(1+\alpha \xi_1^2+ \ldots)\\
& = & a \xi_1^3+b\xi_1^2\xi_2 + \ldots.
\end{eqnarray*}

\subsubsection{\tt R3} This is a simple case, since the Jordan
block associated with the trivial multiplier is one-dimensional and
-1 is not a multiplier of the critical limit cycle. So we can
write
\[
M_0=\begin{pmatrix} 1 & 0 & 0 \\ 0 & e^{i\frac{2\pi}{3}} & 0 \\ 0 &
0 & e^{-i\frac{2\pi}{3}}
\end{pmatrix}, \quad
L_0=\begin{pmatrix} 0 & 0 & 0 \\ 0 & i\frac{2\pi}{3 T} & 0 \\ 0 &
0 & -i\frac{2\pi}{3 T}
\end{pmatrix}, \quad
\tilde L_0=\begin{pmatrix} i\frac{2\pi}{3 T} & 0 \\ 0 & -i\frac{
2\pi}{3 T}
\end{pmatrix}.
\]
We are in a case in which we can apply Theorem 1 from
\cite{Io:88}. So we can define the following $T$-periodic normal form
\[
\ds\dd{\tau}{t}=1+p(\tau,z), \quad \ds\dd{z}{\tau}=\tilde L_0 z + P(\tau,z),
\]
where $z=(z_1,\bar z_1)$. The polynomials $p$ and $P$ are $T$-periodic in $\tau$ and at
least quadratic in their argument such that
\begin{eqnarray*}
\ds\dd{}{\tau}p(\tau,z)-\ds\dd{}{z}p(\tau,z)\tilde L_0^*z=0, \quad
\ds\dd{}{\tau}P(\tau,z)+\tilde L_0^* P(\tau,z)-\ds\dd{}{z}P(\tau,z)\tilde L_0^*z=0.
\end{eqnarray*}

We apply the results obtained in Example III.9 from \cite{IoAd:92} with $\omega T
/2\pi = 1/3$ to obtain
\[
\begin{cases}
 \ds\dd{\tau}{t}= 1 + P_0(|z_1|^2,\bar z_1^3 e^{i 2 \pi \tau/T},z_1^3 e^{-i 2 \pi \tau/T}), \smallskip \\
 \ds\dd{z_1}{\tau}= \frac{i 2 \pi}{3 T} z_1 + z_1 Q_0(|z_1|^2,z_1^3 e^{-i 2 \pi
 \tau/T})+\bar z_1^2 e^{i 2 \pi \tau/T} Q_1(|z_1|^2,\bar z_1^3 e^{i 2 \pi
 \tau/T}).
 \end{cases}
\]
Defining a new variable $\xi= e^{-i \frac{2 \pi \tau}{3T}} z_1$, this
system can be rewritten as
\[
 \begin{cases}
 \ds\dd{\tau}{t}=1 + \phi_0(|\xi|^2,\bar \xi^3 ,\xi^3), \smallskip \\
 \ds\dd{\xi}{\tau}= \xi \phi_1(|\xi|^2,\xi^3)+\bar \xi^2 \phi_2(|\xi|^2,\bar \xi^3),
\end{cases}
\]
with polynomials $\phi_{0}$ and $\phi_{1}$ at least linear in their
arguments, while $\phi_2(0)\neq 0$. Notice that this system is
autonomous and equivariant under the
rotations of angle $2 \pi /3$. Expanding the polynomials gives 
\[
 \begin{cases}
 \ds\dd{\tau}{t}=1+\alpha_1 |\xi|^2+\alpha_2 \xi^3+\alpha_3 \bar \xi^3+ \ldots, \smallskip \\
 \ds\dd{\xi}{\tau}= b \bar \xi^2+  c \xi |\xi|^2 + \ldots,
\end{cases}
\]
and thus
\begin{eqnarray*}
\ds\dd{\xi}{t} & = & (b \bar \xi^2+  c \xi |\xi|^2 + \ldots)(1+\alpha_1 |\xi|^2+\alpha_2 \xi^3+\alpha_3 \bar \xi^3+ \ldots)\\
& = & b \bar \xi^2+  c \xi |\xi|^2 + \ldots,
\end{eqnarray*}
so normal form
\eqref{eq:NF-13C} is obtained.

\subsubsection{\tt R4} As in the previous case the Jordan
block associated with the trivial multiplier is one-dimensional. The
matrices in Section \ref{sectionNotation} are
\[
M_0=\begin{pmatrix} 1 & 0 & 0 \\ 0 & e^{i\frac{\pi}{2}} & 0 \\ 0 &
0 & e^{-i\frac{\pi}{2}}
\end{pmatrix}, \quad
L_0=\begin{pmatrix} 0 & 0 & 0 \\ 0 & i\frac{\pi}{2 T} & 0 \\ 0 & 0
& -i\frac{\pi}{2 T}
\end{pmatrix}, \quad
\tilde L_0=\begin{pmatrix} i\frac{\pi}{2 T} & 0 \\ 0 & -i\frac{
\pi}{2 T}
\end{pmatrix}.
\]
We can apply Theorem 1 from
\cite{Io:88} and define a $T$-periodic normal form
\[
\ds\dd{\tau}{t}=1+p(\tau,z), \quad \ds\dd{z}{\tau}=\tilde L_0 z + P(\tau,z),
\]
where $z=(z_1,\bar z_1)$. The polynomials $p$ and $P$ are $T$-periodic in $\tau$ and at
least quadratic in their argument such that
\begin{eqnarray*}
\ds\dd{}{\tau}p(\tau,z)-\ds\dd{}{z}p(\tau,z)\tilde L_0^*z=0, \quad
\ds\dd{}{\tau}P(\tau,z)+\tilde L_0^* P(\tau,z)-\ds\dd{}{z}P(\tau,z)\tilde L_0^*z=0.
\end{eqnarray*}

Again, we use Example III.9 from \cite{IoAd:92} with $\omega T
/2\pi = 1/4$ and obtain
\[
\begin{cases}
 \ds\dd{\tau}{t}= 1 + P_0(|z_1|^2,\bar z_1^4 e^{i 2 \pi \tau/T},z_1^4 e^{-i 2 \pi \tau/T}), \smallskip \\
 \ds\dd{z_1}{\tau}= \frac{i \pi}{2 T} z_1 + z_1 Q_0(|z_1|^2,z_1^4 e^{-i 2 \pi
 \tau/T})+\bar z_1^3 e^{i 2 \pi \tau/T} Q_1(|z_1|^2,\bar z_1^4 e^{i 2 \pi
 \tau/T}).
 \end{cases}
\]
Defining a new variable $\xi=e^{-i \frac{\pi \tau}{2T}} z_1$, the
system can be rewritten as
\[
 \begin{cases}
 \ds\dd{\tau}{t}=1 + \phi_0(|\xi|^2,\bar \xi^4 ,\xi^4), \smallskip \\
 \ds\dd{\xi}{\tau}= \xi \phi_1(|\xi|^2,\xi^4)+\bar \xi^3 \phi_2(|\xi|^2,\bar \xi^4),
\end{cases}
\]
with polynomials $\phi_{0}$ and $\phi_{1}$ at least linear in their
arguments, while $\phi_2(0)\neq 0$. Notice that this system is
autonomous and equivariant under the
rotations of angle $\pi /2$. Expanding the polynomials gives
\[
 \begin{cases}
 \ds\dd{\tau}{t}=1+\alpha_1 |\xi|^2+\alpha_2 \xi^4+\alpha_3 \bar \xi^4+ \ldots, \smallskip \\
 \ds\dd{\xi}{\tau}=c \xi |\xi|^2 + d \bar \xi^3 + \ldots,
\end{cases}
\]
We still have to transform the second equation, namely 
\begin{eqnarray*}
\ds\dd{\xi}{t} & = & (c \xi |\xi|^2 + d \bar \xi^3 + \ldots)(1+\alpha_1 |\xi|^2+\alpha_2 \xi^4+\alpha_3 \bar \xi^4+ \ldots)\\
& = & c \xi |\xi|^2 + d \bar \xi^3+ \ldots,
\end{eqnarray*}
so normal form \eqref{eq:NF-14C} is obtained.

\subsubsection{\tt LPPD} At the LPPD bifurcation the matrices from Section \ref{sectionNotation} are
\[
M_0=\begin{pmatrix} 1 & 1 & 0 \\ 0 & 1 & 0 \\ 0 & 0 & -1
\end{pmatrix}, \quad
L_0=\begin{pmatrix} 0 & 1 & 0 \\ 0 & 0 & 0 \\ 0 & 0 & 0
\end{pmatrix}, \quad
\tilde L_0 =\begin{pmatrix}  0 & 0 \\ 0 & 0
\end{pmatrix}.
\]

We are in a case in which we can apply Theorem 3 from
\cite{Io:88}. So we can define a $2 T$-periodic normal form
\[
 \ds\dd{\tau}{t}=1+\xi_1+p(\tau,\xi), \quad \ds\dd{\xi}{\tau}=\tilde L_0 \xi + P(\tau,\xi),
\]
where $\xi=(\xi_1,\xi_2)$. The polynomials $p$ and $P$ are $2 T$-periodic in
$\tau$ and at least quadratic in their argument such that
\begin{eqnarray}\label{eq:FF_proprieties1}
&&\ds\dd{}{\tau}p(\tau,\xi)-\ds\dd{}{\xi}p(\tau,\xi)\tilde L_0^*\xi=0, \quad
\ds\dd{}{\tau}P(\tau,\xi)+\tilde L_0^* P(\tau,\xi)-\ds\dd{}{\xi}P(\tau,\xi)\tilde L_0^*\xi=0, \\ \label{eq:FF_proprieties2} &&p(\tau+T,\xi_1,\xi_2)=p(\tau,\xi_1,-\xi_2),\\
\label{eq:FF_proprieties3} &&P^{(1)}(\tau+T,\xi_1,-\xi_2)=P^{(1)}(\tau,\xi_1,\xi_2), \quad P^{(2)}(\tau+T,\xi_1,-\xi_2)=-P^{(2)}(\tau,\xi_1,\xi_2).
\end{eqnarray}
By putting $\tilde L_0$ into \eqref{eq:FF_proprieties1},
we obtain
\[
\ds\dd{}{\tau}p(\tau,\xi_1,\xi_2)= \dd{}{\tau}
P^{(1)}(\tau,\xi_1,\xi_2)=\dd{}{\tau} P^{(2)}(\tau,\xi_1,\xi_2)=0,
\]
so our polynomials are independent from $\tau$. Then, using
\eqref{eq:FF_proprieties2} and \eqref{eq:FF_proprieties3}, there holds
\[
p(\xi_1,\xi_2)=p(\xi_1,-\xi_2), \quad
P^{(1)}(\xi_1,-\xi_2)=P^{(1)}(\xi_1,\xi_2), \quad
P^{(2)}(\xi_1,-\xi_2)=-P^{(2)}(\xi_1,\xi_2),
\]
so the polynomials are of the following form
\begin{eqnarray*}
 &&p=\chi_1(\xi_1)+\chi_2(\xi_2^2)(1+\chi_3(\xi_1)), \\
 &&P^{(1)}=\psi_1(\xi_1)+\psi_2(\xi_2^2)(1+\psi_2(\xi_1)), \\
 &&P^{(2)}=\xi_2 \varphi_1(\xi_1)+ \xi_2\varphi_2(\xi_2^2)(1+\varphi_3(\xi_1)),
\end{eqnarray*} 
with $\chi_1$ and $\psi_1$ at least quadratic in their argument and all
the other polynomials at least linear in their argument.

Assembling all the information gives the following system
\[\left\{
\begin{array}{lcl}
	\ds\dd{\tau}{t}& =& 1+\xi_1+\chi_1(\xi_1)+\chi_2(\xi_2^2)(1+\chi_3(\xi_1)) \\
	& = & 1+\xi_1 +\alpha_{20} \xi_1^2 +\alpha_{02} \xi_2^2 + \alpha_{30}'\xi_1^3+\alpha_{12}'\xi_1\xi_2^2+\ldots,\\
	\ds\dd{\xi_1}{\tau}& = & \psi_1(\xi_1)+\psi_2(\xi_2^2)(1+\psi_2(\xi_1)) \\
	& =& a_{20}' \xi_1^2 +a_{02}' \xi_2^2 + a_{30}'\xi_1^3+a_{12}'\xi_1\xi_2^2+\ldots,\\
	\ds\dd{\xi_2}{\tau}& = & \xi_2 \varphi_1(\xi_1)+ \xi_2\varphi_2(\xi_2^2)(1+\varphi_3(\xi_1))\\
	& = & b_{11}'\xi_1\xi_2+b_{21}'\xi_1^2\xi_2+b_{03} \xi_2^3+\ldots.
\end{array}\right.\]
Applying the chain rule gives
\begin{eqnarray*}
\ds\dd{\xi_1}{t}& =&  a_{20}' \xi_1^2 +a_{02}' \xi_2^2 + a_{30}'\xi_1^3+ a_{20}'\xi_1^3+a_{12}'\xi_1\xi_2^2+a_{02}'\xi_1\xi_2^2+\ldots\\
& =& a_{20}' \xi_1^2 +a_{02}' \xi_2^2 + a_{30}\xi_1^3+ a_{12}\xi_1\xi_2^2+\ldots,
\end{eqnarray*}
with $a_{30}=a_{30}'+a_{20}', a_{12}=a_{12}'+a_{02}'$, and 
\begin{eqnarray*}
\ds\dd{\xi_2}{t}& =&  b_{11}'\xi_1\xi_2+b_{21}'\xi_1^2\xi_2+b_{11}'\xi_1^2\xi_2+b_{03} \xi_2^3+\ldots\\
& =& b_{11}'\xi_1\xi_2+b_{21}\xi_1^2\xi_2+b_{03} \xi_2^3+\ldots,
\end{eqnarray*}
with $b_{21} = b_{21}'+b_{11}'$. Now, we do the substitution $\xi_1\mapsto -\xi_1$ which gives
\[\begin{cases}
 \ds\dd{\tau}{t}=1-\xi_1 +\alpha_{20} \xi_1^2 +\alpha_{02} \xi_2^2 - \alpha_{30}'\xi_1^3-\alpha_{12}'\xi_1\xi_2^2+\ldots, \smallskip \\
 \ds\dd{\xi_1}{t}= -a_{20}' \xi_1^2 -a_{02}' \xi_2^2 + a_{30}\xi_1^3+a_{12}\xi_1\xi_2^2+\ldots, \smallskip \\
 \ds\dd{\xi_2}{t}= -b_{11}'\xi_1\xi_2+b_{21}\xi_1^2\xi_2+b_{03} \xi_2^3+\ldots.
\end{cases}\]
By putting $\alpha_{30}=-\alpha_{30}', \alpha_{12}=-\alpha_{12}', a_{20}=-a_{20}', a_{02}=-a_{02}', b_{11}=-b_{11}'$, we obtain (\ref{eq:NF-FF}).

\section{Poincar\'{e} maps of the periodic normal forms}
\label{Appendix:2}
\subsection{Bifurcations with 2 critical eigenvalues}

\subsubsection{\tt CPC} If we reparametrize time, we obtain the
system
\[
\begin{cases}
 \dd{\tau}{\tau}= 1, \smallskip \\
 \dd{\xi}{\tau}={\displaystyle \frac{c \xi^3}{1-\xi+\alpha_1 \xi^2 + \alpha_2 \xi^3}} + \cdots.
\end{cases}
\]
Note that we can define a Poincar\'e map out of system
\eqref{eq:NF-CPC} evaluating the solution of this system starting
from a point $(0,\eta)$ at time $t=T$. Since the two equations are
uncoupled we can consider only the second one. Making one Picard
iteration \cite{Ku:2004}, we can construct an approximation of
this map as follows
\[
\xi_0=\eta, \qquad \xi_1=\xi_0+\int_{0}^{T} \frac{c
\xi_0^3}{1-\xi_0+\alpha_1 \xi_0^2 + \alpha_2 \xi_0^3} dt
\]
and expanding $\xi_1$ in a Taylor series we obtain
\begin{eqnarray}
 \eta \mapsto \eta +c T \eta ^3 + O(\eta^4).
\label{poincaremapcpc}
\end{eqnarray}
Further iterations do not change this expansion and so (\ref{poincaremapcpc}) is the Poincar\'e map of system \eqref{eq:NF-CPC}. Note that this Poincar\'{e} map is similar to the one for the cusp point of
fixed points \cite{Ku:2004}. Therefore, we can conclude that the behavior
of the system in the neighborhood of the bifurcation is the same.
In particular, referring always to \cite{Ku:2004}, we can draw
the bifurcation diagram of the Poincar\'{e} map and obtain
Figure \ref{fig:NF_CPC}. On the two drawn curves, with label $T_1$ and $T_2$,
two limit cycles collide and disappear. The output given by MatCont is the normal form coefficient $c$.
\begin{figure}[htb]
\centering \subfigure[$c<0$]{
 \includegraphics[width=.48\textwidth]{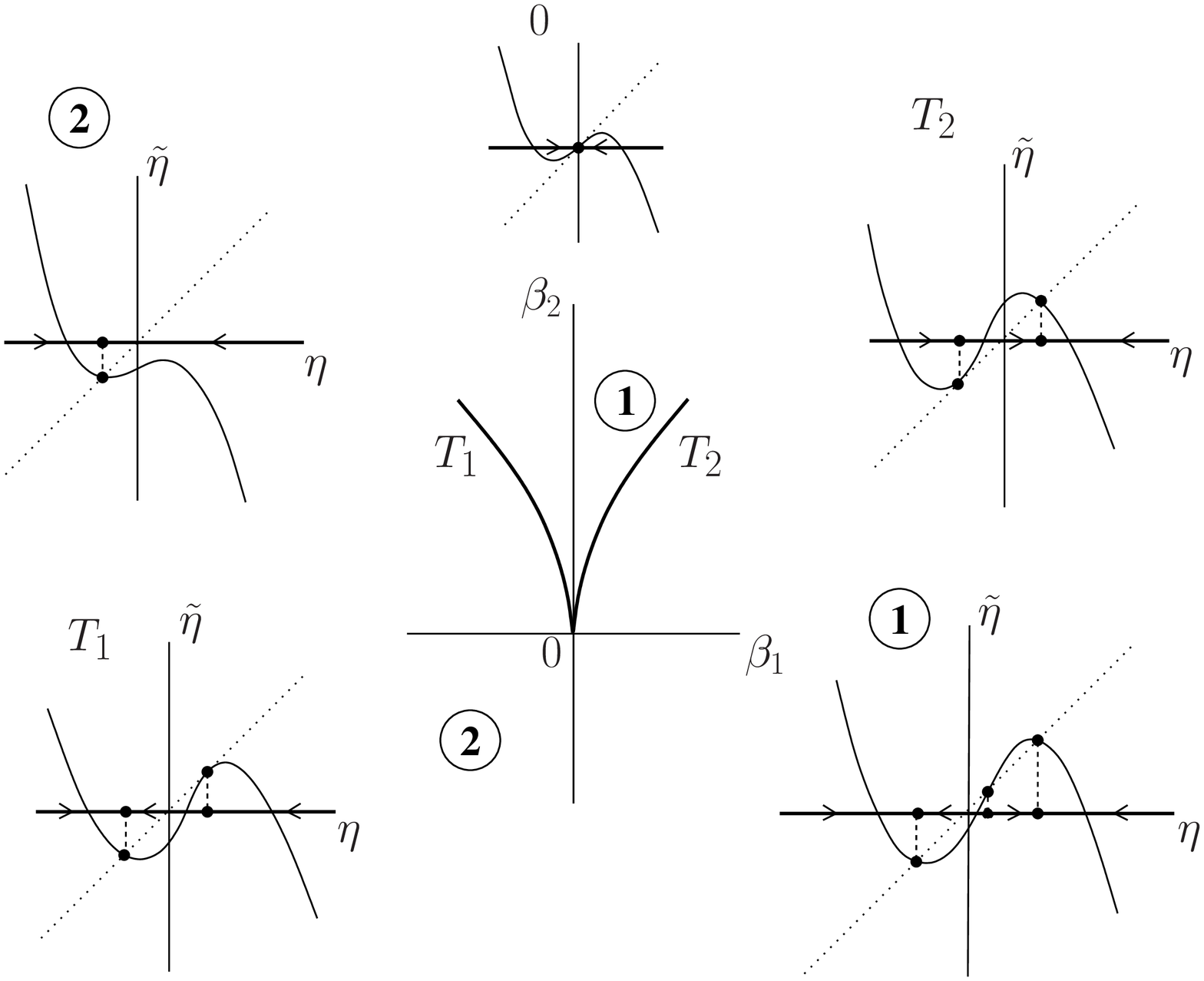}}
\subfigure[$c>0$]{
 \includegraphics[width=.48\textwidth]{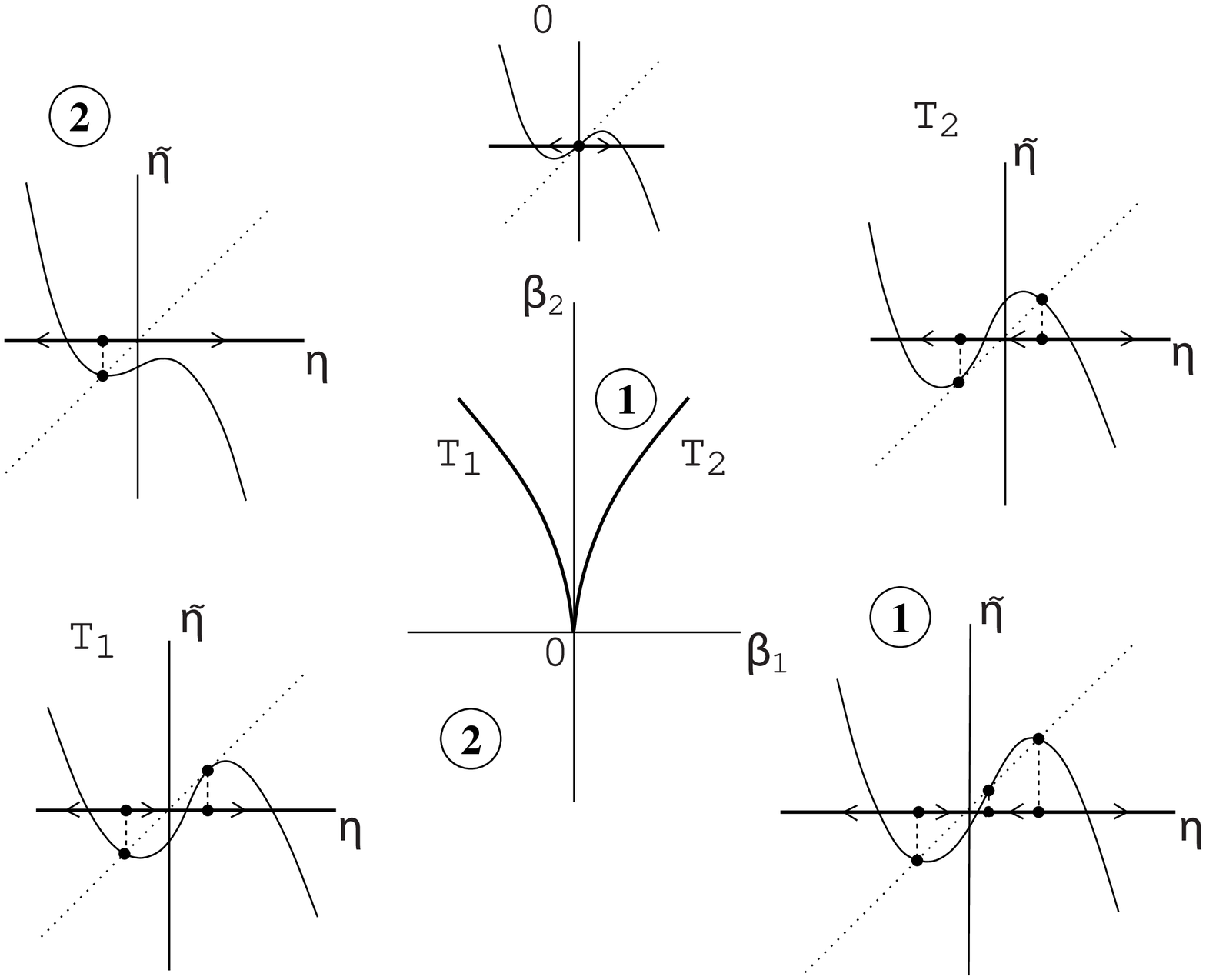}}
 \caption{Bifurcation diagram of the cusp bifurcation of the fixed point normal form.} \label{fig:NF_CPC}
\end{figure}

\subsubsection{\tt GPD} First, we have to calculate the second
iterate of the normal form of the Generalized flip bifurcation of
fixed points:
\begin{equation}\label{eq:GenFlip}
g(v)= -v+\frac{1}{120} \tilde e v^5 \quad\Longrightarrow\quad
g^2(\omega)= v-\frac{1}{60}\tilde e v^5.
\end{equation}
Reparametrizing the time of \eqref{eq:NF-GPD} and doing Taylor expansion up to the fifth order gives
\[
\begin{cases}
\dd{\tau}{\tau}=\ds 1, \smallskip \\
\dd{\xi}{\tau}=\ds e \xi^5 + \ldots,
\end{cases}
\]
where the dots are $O(\xi^6)$ terms $2T$-periodic in $\tau$. Doing
one Picard iteration of the second equation up to $2 T$ we obtain
the Poincar\'e map of our normal form
\[
\xi\mapsto \xi + 2T e \xi^5 + \ldots
\]
and see that it is the same map as the second iterate of the
generalized period-doubling normal form of fixed points. In
particular, since the coefficient of the fifth order term of the second iterate of the normal form of fixed points has opposite sign than the one of limit cycles,
we can conclude that the behavior of the system at the
bifurcation is the same but with opposite sign of the normal form
coefficient. Moreover, since $T$ is nonzero, the
non-degeneracy condition is $e \neq 0$. In fact, if $e>0$ we
obtain the bifurcation diagram reported in Figure
\ref{fig:NF_GPD}-(a), in which the limit point bifurcation of the period doubled
limit cycles ($T^{(2)}$) is tangent to the
supercritical period-doubling branch (the one in which the normal
form coefficient is positive, so labeled as
$F_+^{(1)}$), while if $e$ is negative we are in the opposite
situation, reported in Figure \ref{fig:NF_GPD}-(b). The output given by MatCont is the normal form coefficient $e$.
\begin{figure}[htb]
\centering \subfigure[$e>0$]{
 \includegraphics[width=.49\textwidth]{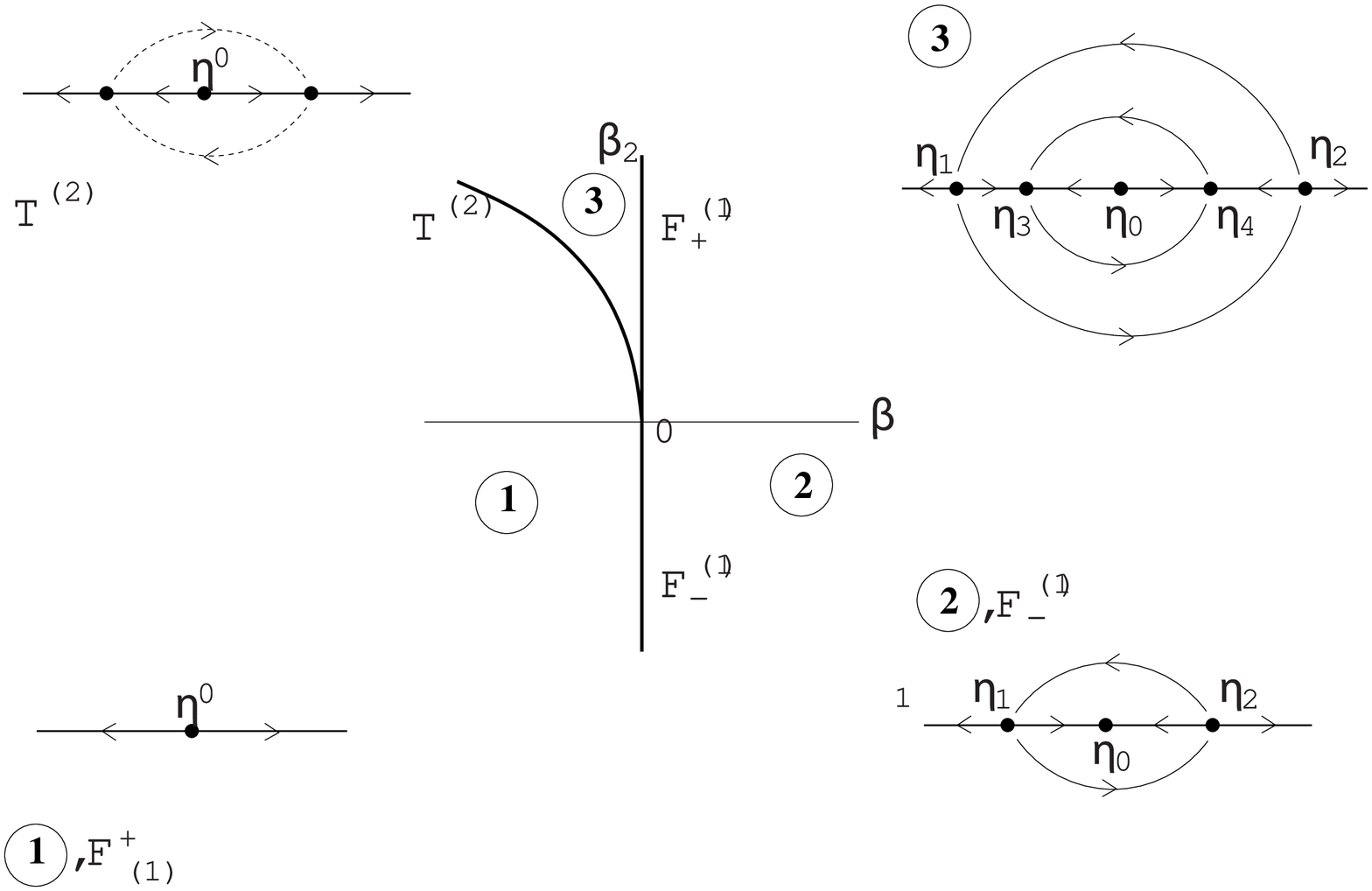}}\subfigure[$e<0$]{
 \includegraphics[width=.49\textwidth]{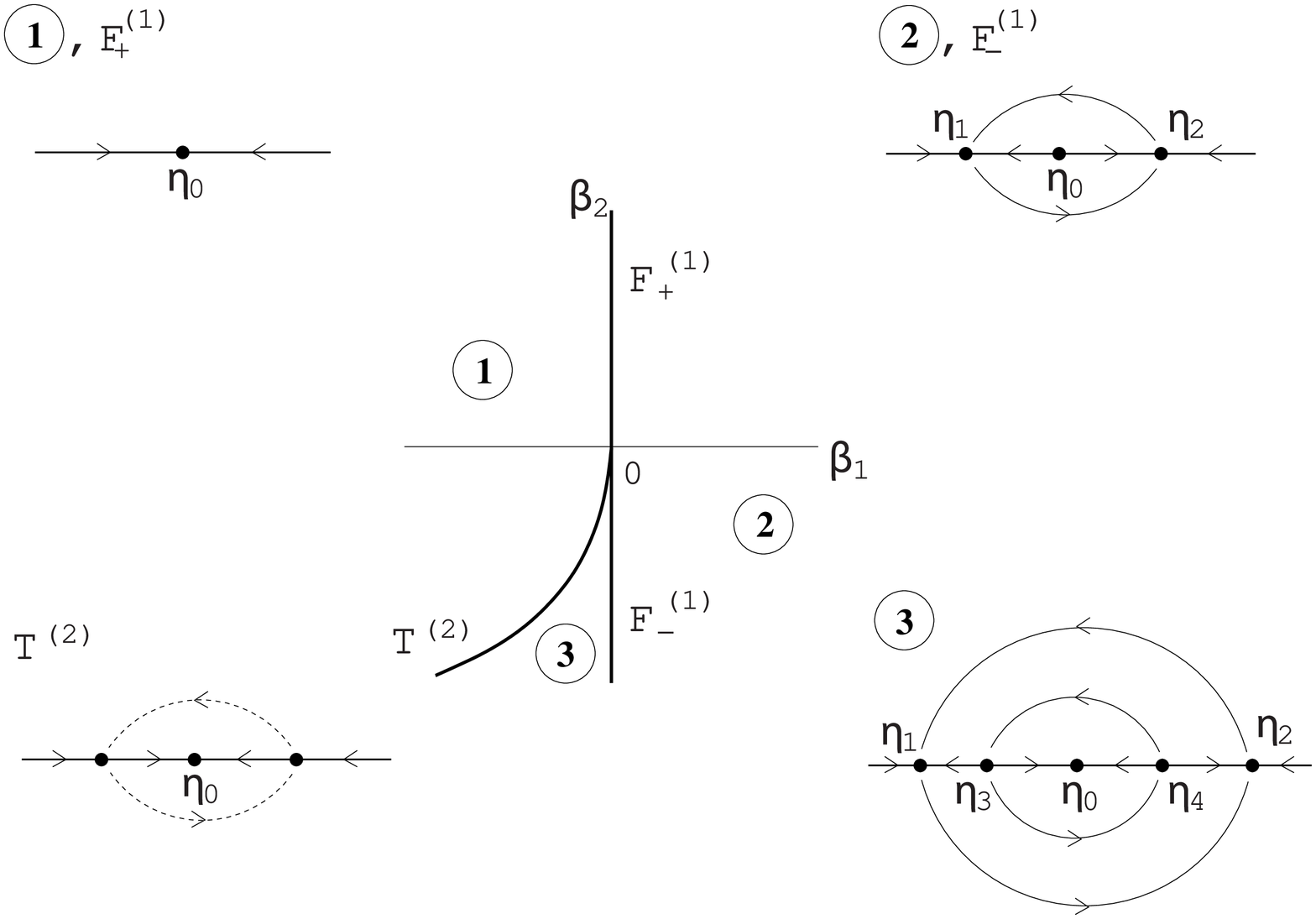}}
 \caption{Bifurcation diagram of the degenerate period-doubling point bifurcation of the fixed point normal form.} \label{fig:NF_GPD}
\end{figure}

Remark that the normal form coefficient of the period-doubling
bifurcation of limit cycles, computed through periodic
normalization as done in \cite{KuDoGoDh:05}, has opposite sign of
the one for fixed points. Also in this case, the normal form
coefficient of the GPD bifurcation of limit cycles has opposite
sign of the one for fixed points.

\subsection{Bifurcations with 3 critical eigenvalues}
\subsubsection{{\tt CH}} In this section we will show how the
periodic normal form \eqref{eq:NF-GNS} is related to the normal
form of the Chenciner bifurcation of fixed points, and how the non-degeneracy condition of the Chenciner bifurcation of limit cycles is related to the Chenciner bifurcation of maps. First note that, if we scale the time,
we can rewrite system \eqref{eq:NF-GNS} as
\[
\begin{cases}
 \dd{\tau}{\tau}=\ds 1,\smallskip \\
 \dd{\xi}{\tau}=\ds i \omega \xi + i (c-\alpha_1 \omega) \xi |\xi|^2+(e-i(\alpha_1 c- \alpha_1^2 \omega +\alpha_2\omega))\xi |\xi|^4+  \ldots. \\
\end{cases}
\]

We need to make a change
of variables in order to obtain a quasi-identity flow. Introducing
the new complex variable $z=e^{-i \omega \tau}\xi$, the second
equation can be rewritten as
\[
\dd{z}{\tau}=i (c -\alpha_1 \omega) z^2 \bar{z}+ \left(e -i
(\alpha_1 c-\alpha_1^2 \omega + \alpha_2 \omega)\right)z^3\bar
z^2+\ldots .
\]

Doing two Picard iterations up to time $T$, we obtain the
rotating Poincar\'e map of the system
\[
\xi \mapsto \xi +iT(c-\alpha_1 \omega )\xi ^2 \bar{\xi }+ T\left(e
-\frac{c^2 T}{2}+\alpha_1 c T \omega -\frac{1}{2} \alpha_1^2 T
\omega ^2+i\left(\alpha_1^2 \omega -\alpha_1 c-\alpha_2 \omega
\right)\right)\xi ^3 \bar{\xi }^2+\ldots.
\]
Note that this system has the same Poincar\'{e} map as the
Chenciner bifurcation in the fixed point case \cite{Ku:2004}, so we
expect the same bifurcation scenario on the Poincar\'{e} map of
the system. Notice that the real part of the first Lyapunov
coefficient is 0, since the Neimark-Sacker bifurcation is
degenerate. The sign of the second Lyapunov coefficient $L_2$ (as
defined on page 420 of \cite{Ku:2004}) determines the bifurcation
scenario. However, from (\ref{eq:NF-GNS}) we can derive that
$\Re(e)<0$ corresponds with a stable critical limit cycle and
$\Re(e)>0$ with an unstable critical limit cycle. Therefore, the case
$\Re(e)<0$ corresponds with the case $L_2<0$ and $\Re(e)>0$
corresponds with $L_2>0$. So $\Re(e)$ and the second
Lyapunov coefficient $L_2$ as defined in \cite{Ku:2004} have the
same sign and vanish at the same time. Since both coefficients have
the same effect and $L_2$ requires more computations, we
compute $\Re(e)$ to determine the bifurcation scenario, and
in this paper we will call $\Re(e)$ the second Lyapunov
coefficient. The bifurcation diagram in the neighborhood of this
codim 2 point can be found by looking at the sign of this value. When
$\Re(e)<0$ the outer invariant curve is stable and the limit point
of tori curve is tangent to the subcritical Neimark-Sacker branch, as
shown in Figure \ref{fig:NF_GNS}-(a). When $\Re(e)>0$ the outer
invariant curve is unstable and the limit point of tori curve is tangent
to the supercritical Neimark-Sacker branch, see Figure
\ref{fig:NF_GNS}-(b). The output given by MatCont is $\Re(e)$.
\begin{figure}[htb]
\centering \subfigure[$\Re(e)<0$]{
 \includegraphics[width=.49\textwidth]{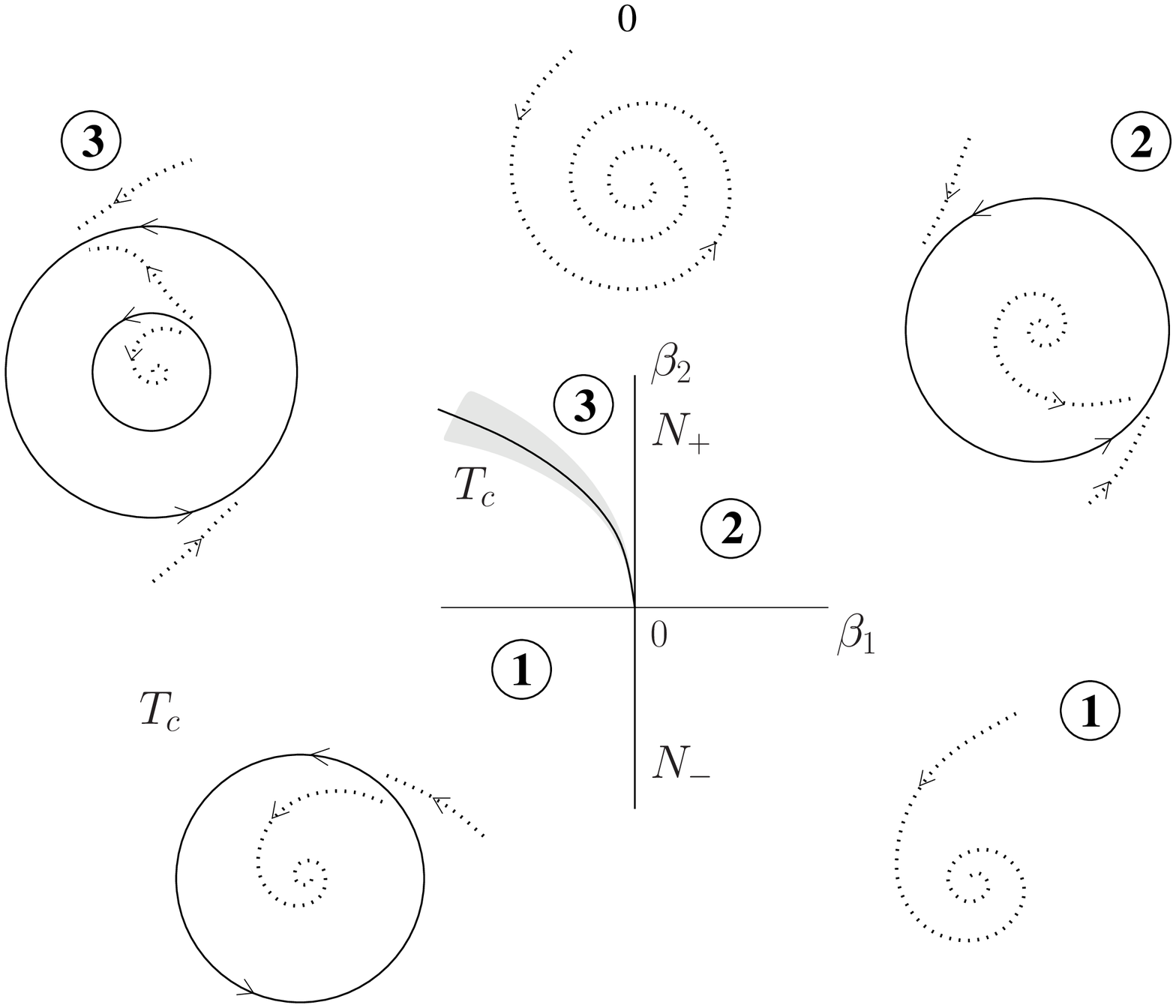}}\subfigure[$\Re(e)>0$]{
 \includegraphics[width=.49\textwidth]{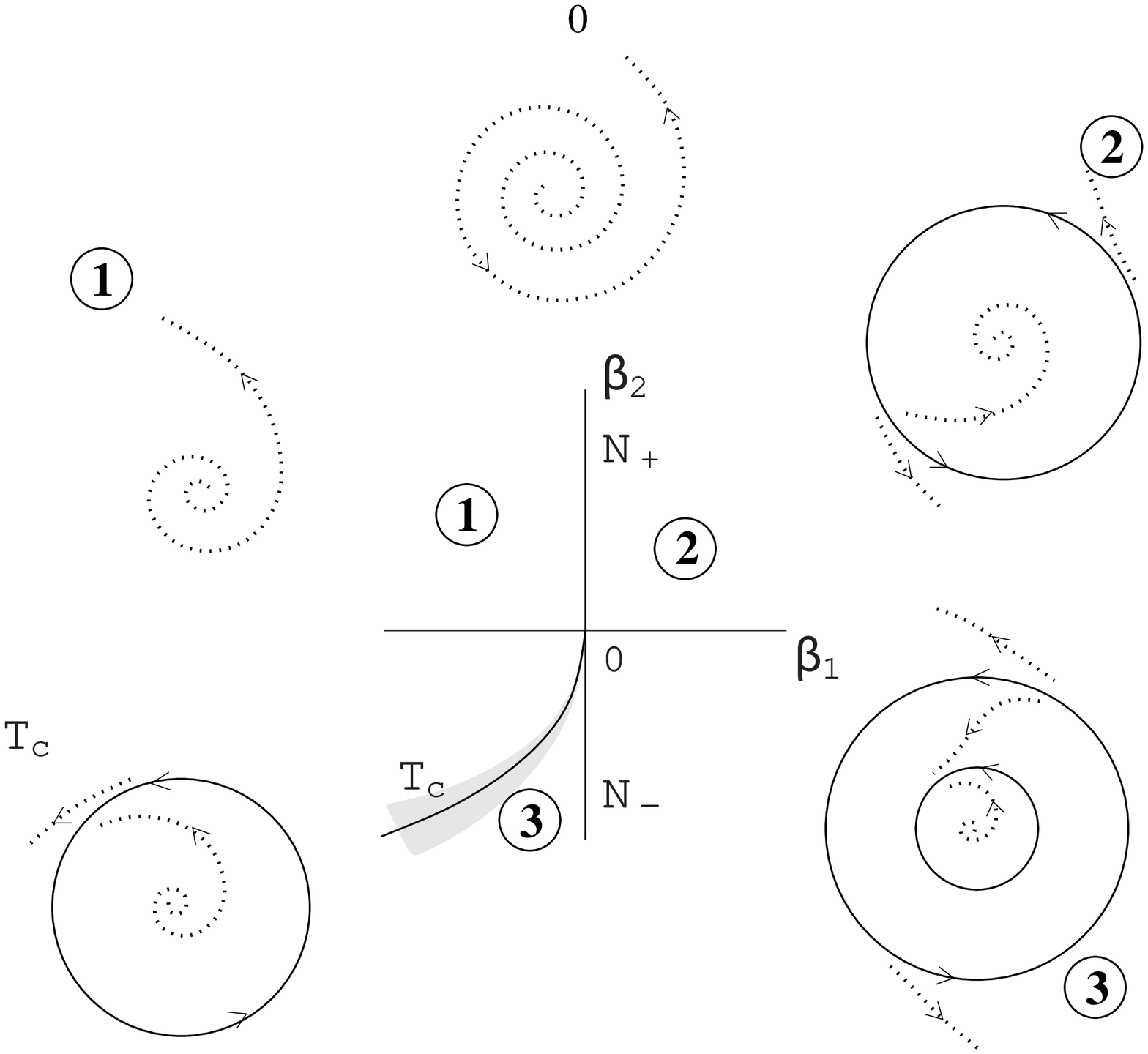}}
 \caption{Bifurcation diagram of the generalized Neimark-Sacker bifurcation of the fixed point normal form.} \label{fig:NF_GNS}
\end{figure}

\subsubsection{\tt R1} In this section we will show how the
periodic normal form \eqref{eq:NF-11C} is related to the normal
form for the 1:1 resonance of fixed points,  which allows us to formulate the
non-degeneracy condition. First note that, if we scale the time,
we can rewrite system \eqref{eq:NF-11C} as
\[
\begin{cases}
 \dd{\tau}{\tau}=\ds 1,\smallskip \\
 \dd{\xi_1}{\tau}=\ds \frac{\xi_2+\xi_1\xi_2}{1 -\xi_1+ \alpha  \xi_1^2}+ \ldots, \smallskip \\
 \dd{\xi_2}{\tau}=\ds \frac{a\xi_1^2+b \xi_1 \xi_2}{1 -\xi_1+ \alpha  \xi_1^2}+ \ldots.
\end{cases}
\]
Doing Picard iterations up to time $T$, it's possible to show
that the generated map is the same, up to second order terms, of the
one generated by Picard iterations up to time 1 of the truncated
system
\[
\begin{cases}
 \dot\xi_1= T (\xi _2+ 2\xi _1\xi _2), \smallskip \\
 \dot\xi_2= T ( a \xi _1^2+ b \xi _1 \xi _2).
\end{cases}
\]
This last system is topological equivalent to the one-time shift ODE which
represents the first iteration of the normal form of 1:1 resonance
of fixed points, i.e. the system
\[
\begin{cases}
\dot \zeta_1=\zeta_2, \\
\dot \zeta_2=a_1 \zeta_1^2+b_1 \zeta_1 \zeta_2,
\end{cases}
\]
since we can transform one in the other using the following change of
variables
\[
\begin{cases}
 \zeta_1= \xi_1- \xi_1^2, \\
 \zeta_2= T \xi_2.
\end{cases}
\]
The bifurcation phenomena in the 1:1 resonance of cycles are the
same of those which appear in the 1:1 resonance of fixed points. Notice
that
\[
a_1= T^2 \ds a, \quad b_1=  T \ds b,
\]
so the non-degeneracy conditions are
\[
a\neq 0, \qquad b\neq 0,
\]
and the cases depend on the sign of the product of $a$
and $b$. In particular, as shown in figure \ref{fig:NF_R1}, if the
two coefficients have different sign the Neimark-Sacker
bifurcation (labeled $H$) is supercritical (with negative normal
form coefficient), while in the other case it is subcritical. The output given by MatCont is the product of the coefficients $a$ and $b$.
\begin{figure}[htb]
\centering \subfigure[$a>0,\, ab<0$]{
 \includegraphics[width=.49\textwidth]{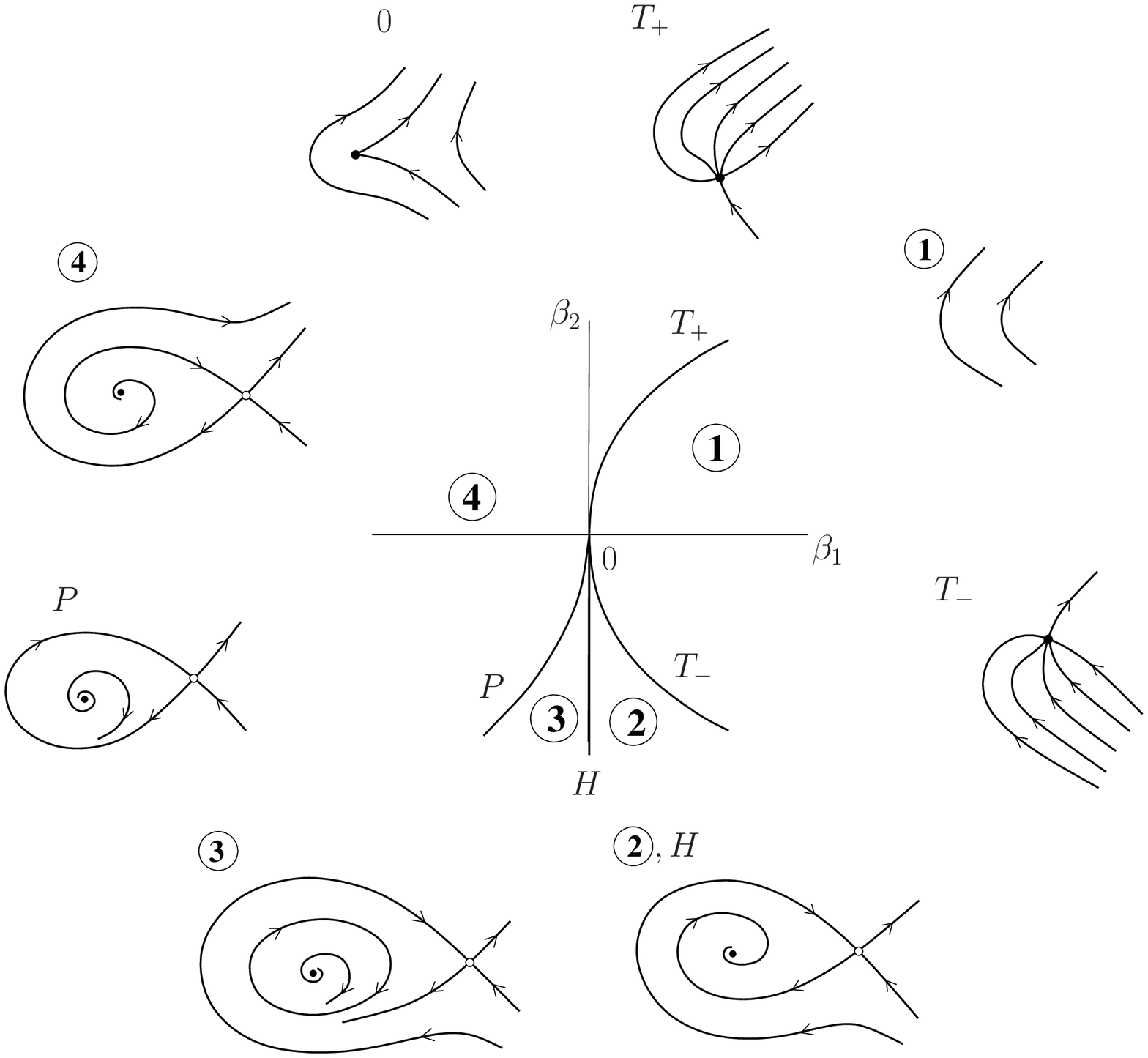}}\subfigure[$a>0,\, ab>0$]{
 \includegraphics[width=.435\textwidth]{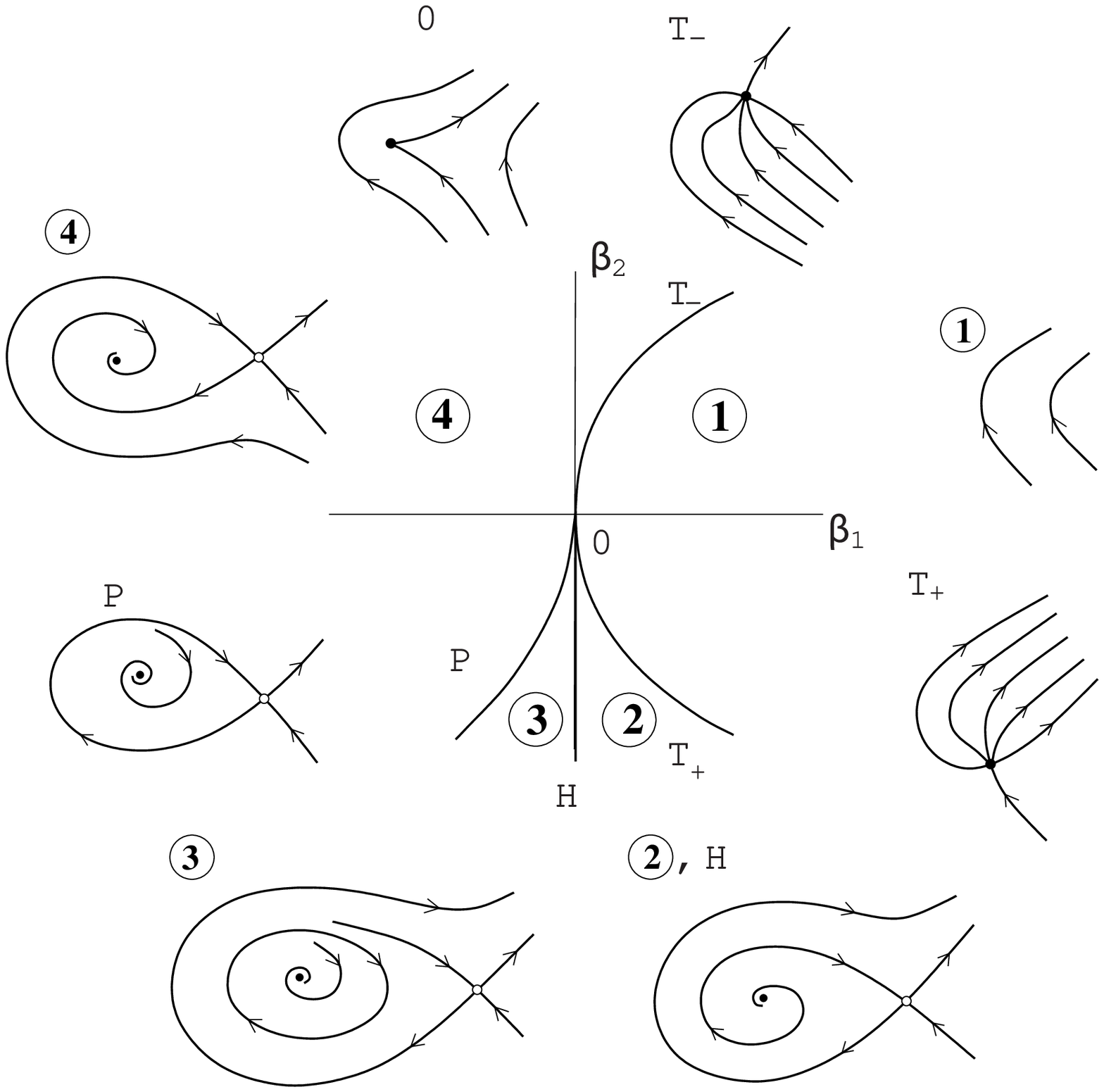}}
 \caption{Bifurcation diagram of the 1:1 resonance bifurcation of the fixed point normal form.
 The other two cases in which $a<0$ can be obtained by a reflection around the origin of the state portraits and a horizontal flip of the bifurcation diagrams.} \label{fig:NF_R1}
\end{figure}

\subsubsection{\tt R2} In this section we will show how the
periodic normal form \eqref{eq:NF-12C} is related with the normal
form for the 1:2 resonance of fixed points, which allows us to formulate the
non-degeneracy condition. First note that, if we scale the time,
we can rewrite system \eqref{eq:NF-12C} as
\[
\begin{cases}
 \dd{\tau}{\tau}=\ds 1,\smallskip \\
 \dd{\xi_1}{\tau}=\ds  \frac{\xi_2+\alpha \xi_1^2\xi_2}{1+\alpha \xi_1^2}+ \ldots, \smallskip \\
 \dd{\xi_2}{\tau}=\ds \frac{a \xi_1^3+ b \xi_1^2\xi_2}{1+\alpha
 \xi_1^2}+ \ldots.
\end{cases}
\]
Doing Picard iterations up to time $2T$, it's possible to show
that the generated map is the same, up to third order terms, of the
one generated by Picard iterations up to time 1 of the truncated
system
\[
\begin{cases}
 \dot\xi_1 = 2 T \xi _2, \smallskip \\
 \dot\xi_2 = 2 T (a \xi_1^3+ b \xi_1^2\xi_2).
\end{cases}
\]
This last system is topological equivalent to the one-time shift ODE which
represents the second iteration of the normal form of 1:2 resonance
of fixed points, i.e. the system
\[
\begin{cases}
\dot \zeta_1=\zeta_2, \\
\dot \zeta_2=a_1 \zeta_1^3+b_1 \zeta_1^2 \zeta_2,
\end{cases}
\]
since we can transform one in the other using the following change of
variables
\[
\begin{cases}
 \zeta_1=\xi _1, \\
 \zeta_2=2 T \xi _2.
\end{cases}
\]
The bifurcation phenomena in the 1:2 resonance of cycles are the
same that appear in the corresponding bifurcation of fixed points. Notice
that in this case
\[
a_1=4 T^2 \ds a, \quad b_1=  2 T \ds b.
\]
so the non-degeneracy conditions are
\[
a\neq 0, \qquad b\neq 0.
\]
We have four different unfoldings as possible bifurcation diagrams, determined by the signs of the
coefficients. The ones with negative $b$ are reported in Figure \ref{fig:NF_R2}. The other two cases can be obtained
by reversing the arrows of the phase portraits and making a vertical flip both of the state portraits and of the bifurcation diagrams. The primary
Neimark-Sacker bifurcation (labeled $H^{(1)}$) is supercritical
(with negative normal form coefficient) if our coefficient $b$ is
negative, subcritical otherwise. Moreover if $a<0$ a secondary
Neimark-Sacker bifurcation ($H^{(2)}$) is rooted at the 1:2 resonance point with opposite criticality of the primary one. The output given by MatCont is $(a,b)$.
\begin{figure}[htb]
\centering \subfigure[$b<0, \,a>0$]{
 \includegraphics[width=.49\textwidth]{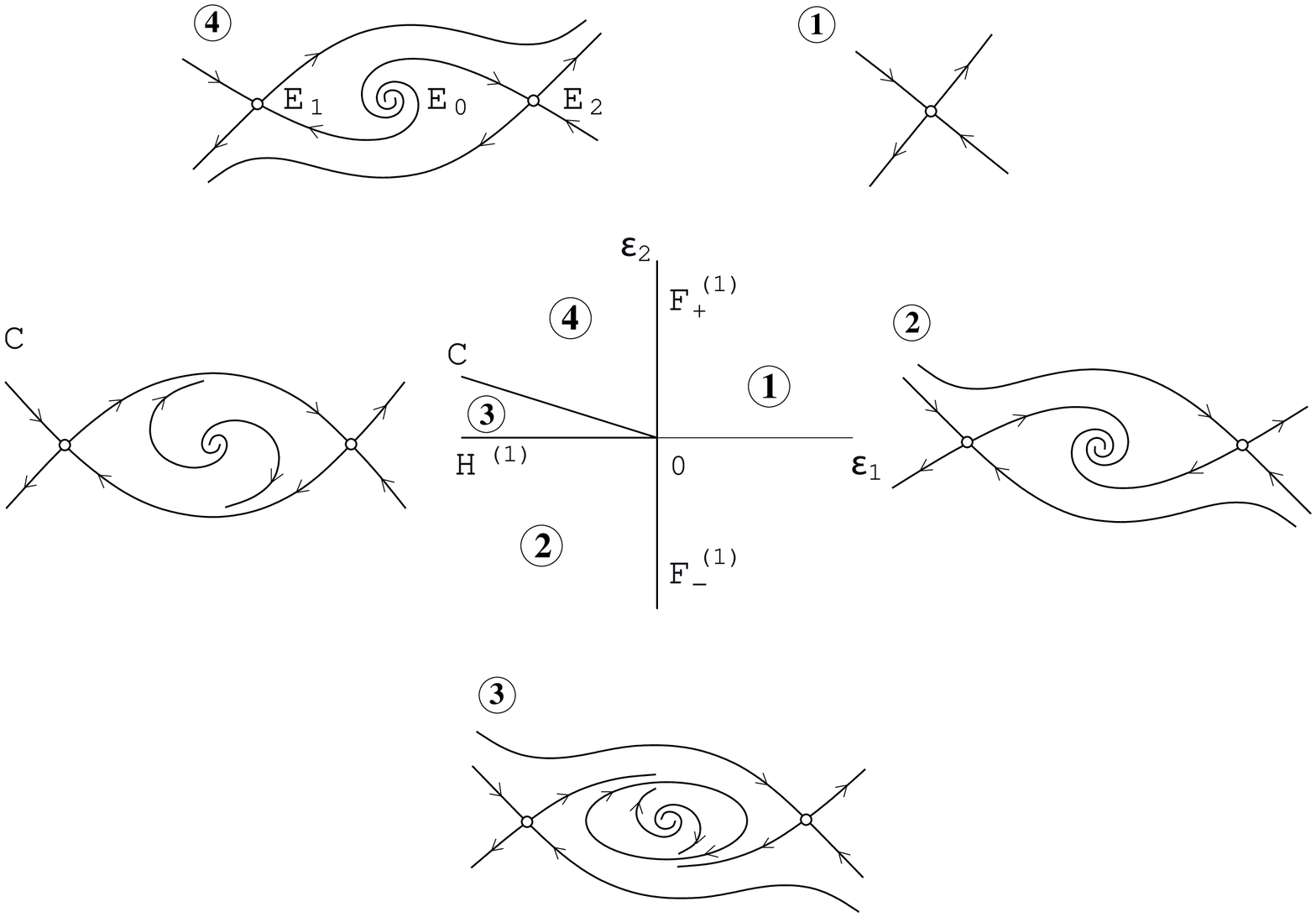}}\subfigure[$b<0,\, a<0$]{
 \includegraphics[width=.49\textwidth]{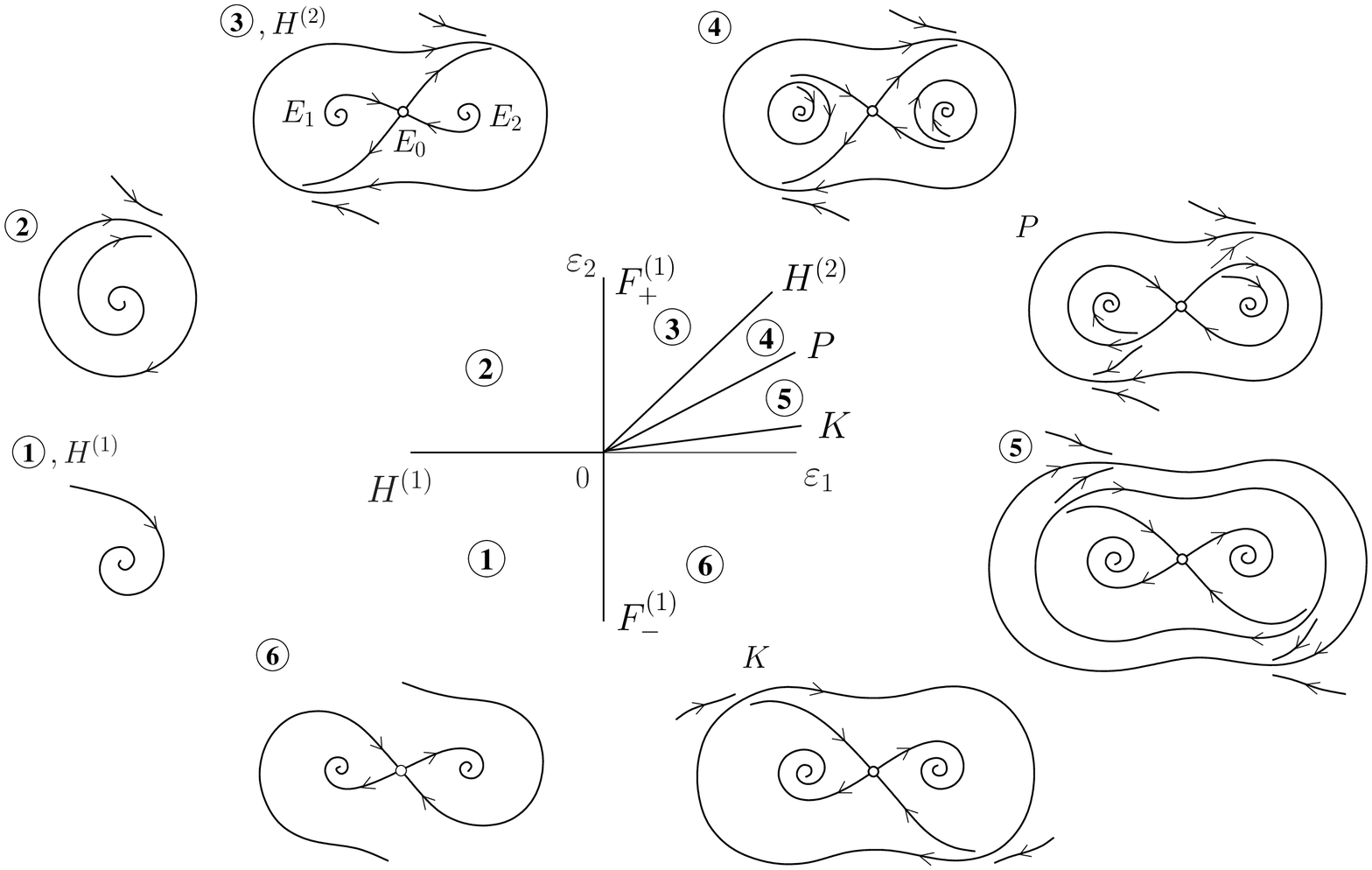}}
 \caption{Bifurcation diagram of the 1:2 resonance bifurcation of the fixed point normal form. The other two
 possible cases in which $b>0$ can be obtained by reversing time and making a
 vertical flip both of the state portraits and of the bifurcation diagrams.} \label{fig:NF_R2}
\end{figure}

\subsubsection{\tt R3}  In this section we will show how the
periodic normal form \eqref{eq:NF-13C} is related with the normal
form for the 1:3 resonance of fixed points, which allows us to formulate the
non-degeneracy condition. First note that, if we scale the time,
we can rewrite system \eqref{eq:NF-13C} as
\[
\begin{cases}
 \dd{\tau}{\tau}=\ds 1 ,\smallskip \\
 \dd{\xi}{\tau}=\ds b \bar \xi^2 + c \xi |\xi|^2 + \ldots. \\
\end{cases}
\]
Doing Picard iterations up to time $3T$, it's possible to show
that the generated map is the same, up to cubic terms, of the
one generated by Picard iterations up to time 1 of the truncated
system
\[
 \dot \xi = 3T b \bar \xi^2 + 3 T c \xi |\xi|^2.
\]
This system is topological equivalent to the one-time shift ODE which
represents the third iteration of the normal form of 1:3 resonance
of fixed points, i.e. the system
\[
\dot \zeta =b_1 \bar\zeta^2 + c_1 \zeta |\zeta|^2.
\]
The bifurcation phenomena in the 1:3 resonance of cycles are the
same of the ones which appear in the 1:3 resonance of fixed points, if the
non-degeneracy conditions are satisfied, i.e.
\[
 b\neq 0, \qquad \Re(c)\neq 0.
\]
As can be seen in Figure \ref{fig:NF_R3}, if $\Re(c) < 0$ the Neimark-Sacker bifurcation (labeled $N$) is supercritical (with negative normal form coefficient), while in the other case it is subcritical. The output given by MatCont is $(b,\Re(c))$.
\begin{figure}[htb]
\centering \subfigure[$\Re(c)<0$]{
 \includegraphics[width=.48\textwidth]{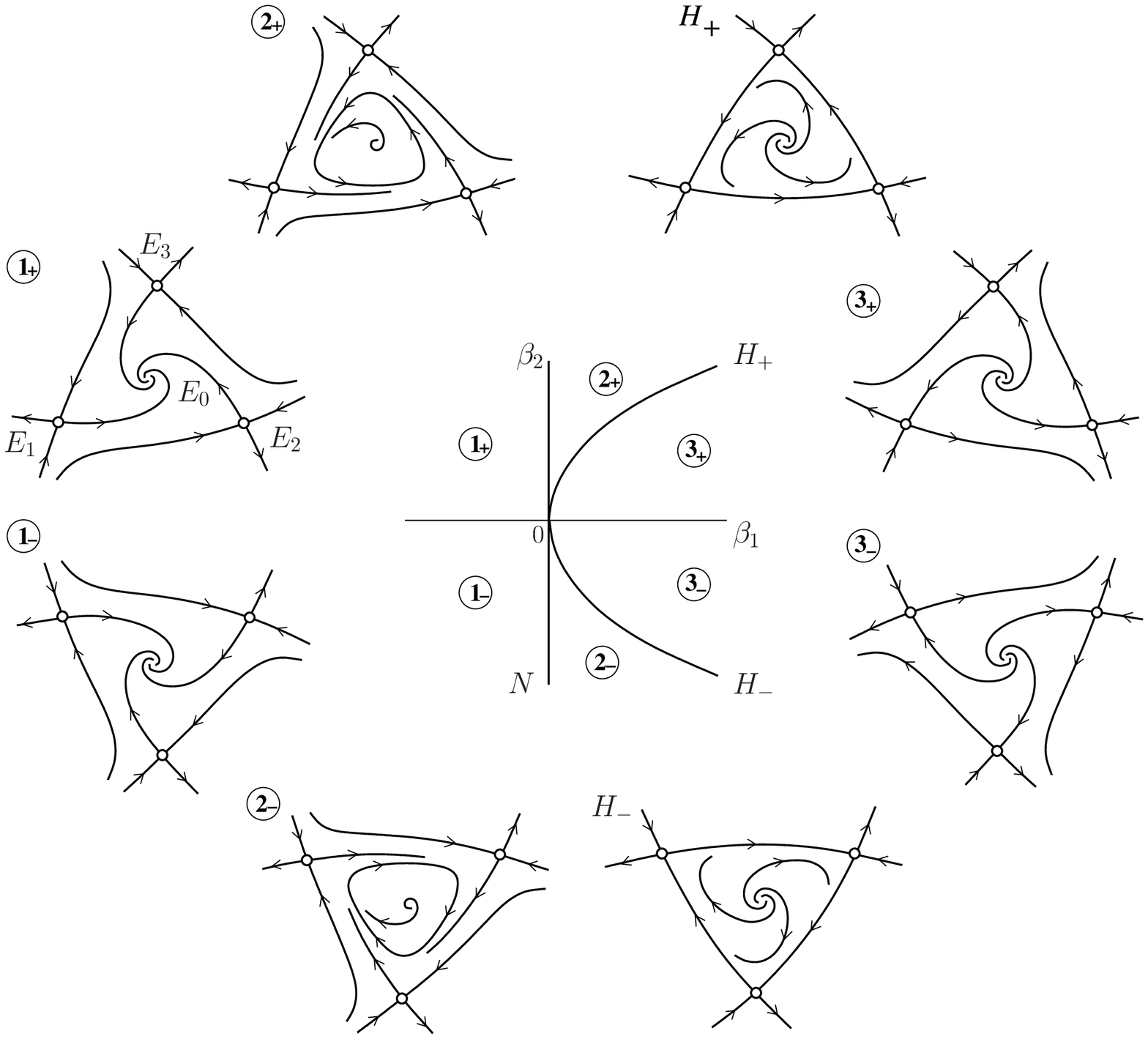}}
 \subfigure[$\Re(c)>0$]{
 \includegraphics[width=.48\textwidth]{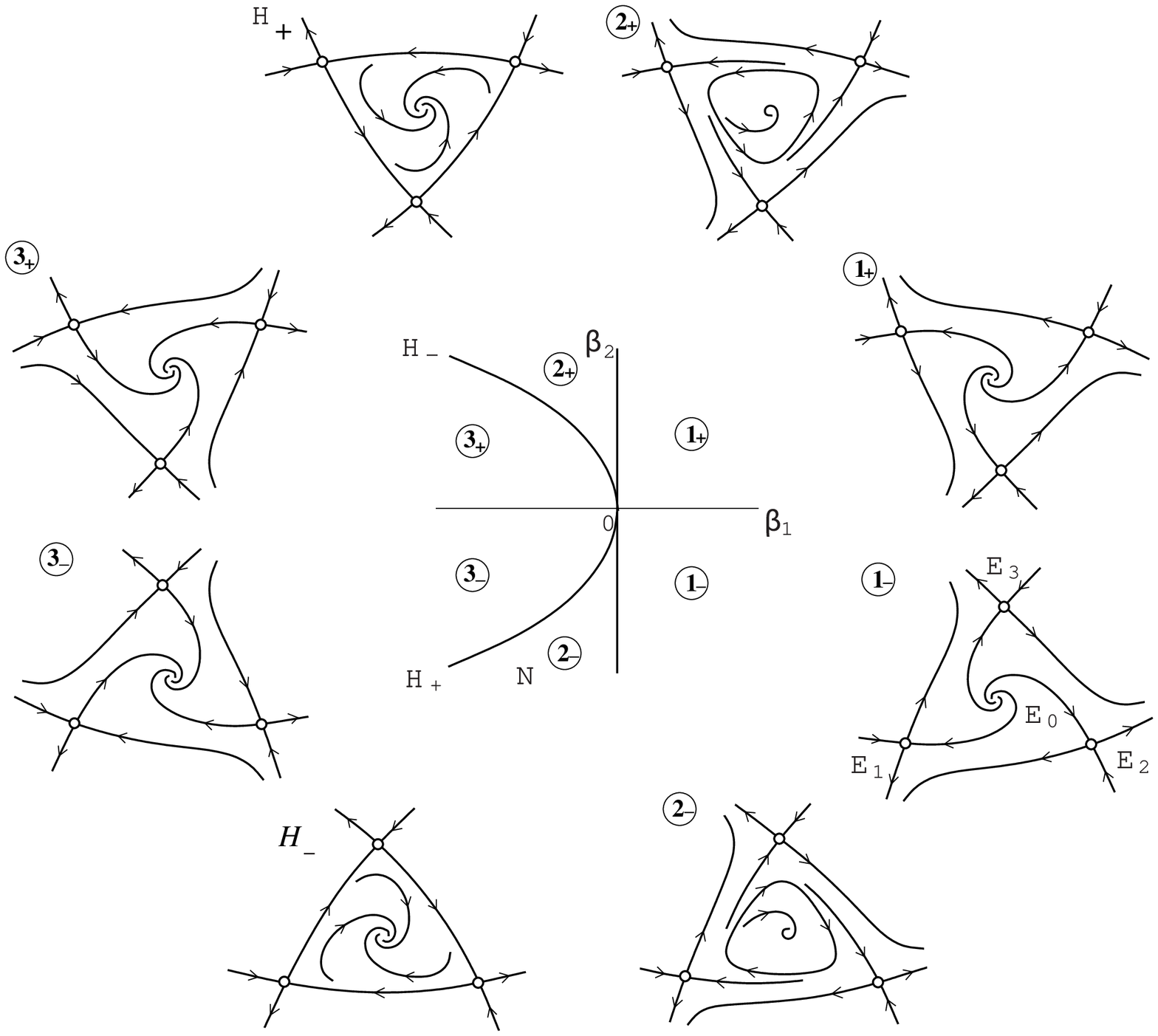}}
 \caption{Bifurcation diagram of the 1:3 resonance bifurcation of the fixed point normal form.} \label{fig:NF_R3}
\end{figure}

\subsubsection{{\tt R4}} In this section we will show how the
periodic normal form \eqref{eq:NF-14C} is related with the normal
form for the 1:4 resonance of fixed points, which allows us to formulate the
non-degeneracy condition. First note that, if we scale the time,
we can rewrite system \eqref{eq:NF-14C} as
\[
\begin{cases}
 \dd{\tau}{\tau}=\ds 1,\smallskip \\
 \dd{\xi}{\tau}=\ds c \xi |\xi|^2 + d \bar \xi^3 +  \ldots. \\
\end{cases}
\]
Doing Picard iterations up to time $4T$, it's possible to show
that the generated map is the same, up to cubic terms, of the
one generated by Picard iterations up to time 1 of the truncated
system
\[
 \dot \xi= 4 T c \xi |\xi|^2+4T d \bar \xi^3.
\]
This system is topologically equivalent to the one-time shift ODE which
represents the fourth iteration of the normal form of 1:4 resonance
of fixed points, i.e. the system
\[
\dot \zeta = c_1 \zeta |\zeta|^2 + d_1 \bar\zeta^3.
\]
The bifurcation phenomena in the 1:4 resonance of cycles are the ones which appear in the corresponding bifurcation of fixed points, if the
non-degeneracy condition is satisfied, i.e.
\[
d\neq 0.
\]
After defining
\[
A=\frac{c}{|d|},
\]
we can determine which bifurcations occur by looking at the place of $A$ in the Gauss plane (see Figure \ref{fig:bifR4}). We
here report the different possible
bifurcation diagrams. A complete investigation can be found in the
literature \cite{Kr:94,Ku:2004}.

\begin{figure}[htb]
\centering \footnotesize
 \psfrag{r}[Bc][Bc]{$\Re(A)$}
 \psfrag{ia}[Bc][Bc]{$\Im(A)$}
 \psfrag{c1}[Bc][Bc]{$c_1$}
 \psfrag{c2}[Bc][Bc]{$c_2$}
 \psfrag{d}[Bc][Bc]{$d$}
 \psfrag{e}[Bc][Bc]{$e$}
 \psfrag{i}[Bc][Bc]{\rectitem{I}}
 \psfrag{ti}[Bc][Bc]{\rectitem{$\widetilde{\mathrm{I}}$}}
 \psfrag{2}[Bc][Bc]{\rectitem{II}}
 \psfrag{t2}[Bc][Bc]{\rectitem{$\widetilde{\mathrm{II}}$}}
 \psfrag{3}[Bc][Bc]{\rectitem{III}}
 \psfrag{t3}[Bc][Bc]{\rectitem{$\widetilde{\mathrm{III}}$}}
 \psfrag{4}[Bc][Bc]{\rectitem{IV}}
 \psfrag{t4}[Bc][Bc]{\rectitem{$\widetilde{\mathrm{IV}}$}}
 \psfrag{5}[Bc][Bc]{\rectitem{V}}
 \psfrag{t5}[Bc][Bc]{\rectitem{$\widetilde{\mathrm{V}}$}}
 \psfrag{6}[Bc][Bc]{\rectitem{VI}}
 \psfrag{t6}[Bc][Bc]{\rectitem{$\widetilde{\mathrm{VI}}$}}
 \psfrag{7}[Bc][Bc]{\rectitem{VII}}
 \psfrag{t7}[Bc][Bc]{\rectitem{$\widetilde{\mathrm{VII}}$}}
 \psfrag{8}[Bc][Bc]{\rectitem{VIII}}
 \psfrag{t8}[Bc][Bc]{\rectitem{$\widetilde{\mathrm{VIII}}$}}
 \psfrag{3a}[Bc][Bc]{\rectitem{$\mathrm{III_a}$}}
 \psfrag{t3a}[Bc][Bc]{\rectitem{$\widetilde{\mathrm{III_a}}$}}
 \psfrag{4a}[Bc][Bc]{\rectitem{$\mathrm{IV_a}$}}
 \psfrag{t4a}[Bc][Bc]{\rectitem{$\widetilde{\mathrm{IV_a}}$}}
 \psfrag{5a}[Bc][Bc]{\rectitem{$\mathrm{V_a}$}}
 \psfrag{t5a}[Bc][Bc]{\rectitem{$\widetilde{\mathrm{V_a}}$}}
 \psfrag{5b}[Bc][Bc]{\rectitem{$\mathrm{V_b}$}}
 \psfrag{t5b}[Bc][Bc]{\rectitem{$\widetilde{\mathrm{V_b}}$}}
 \includegraphics[width=.9\textwidth]{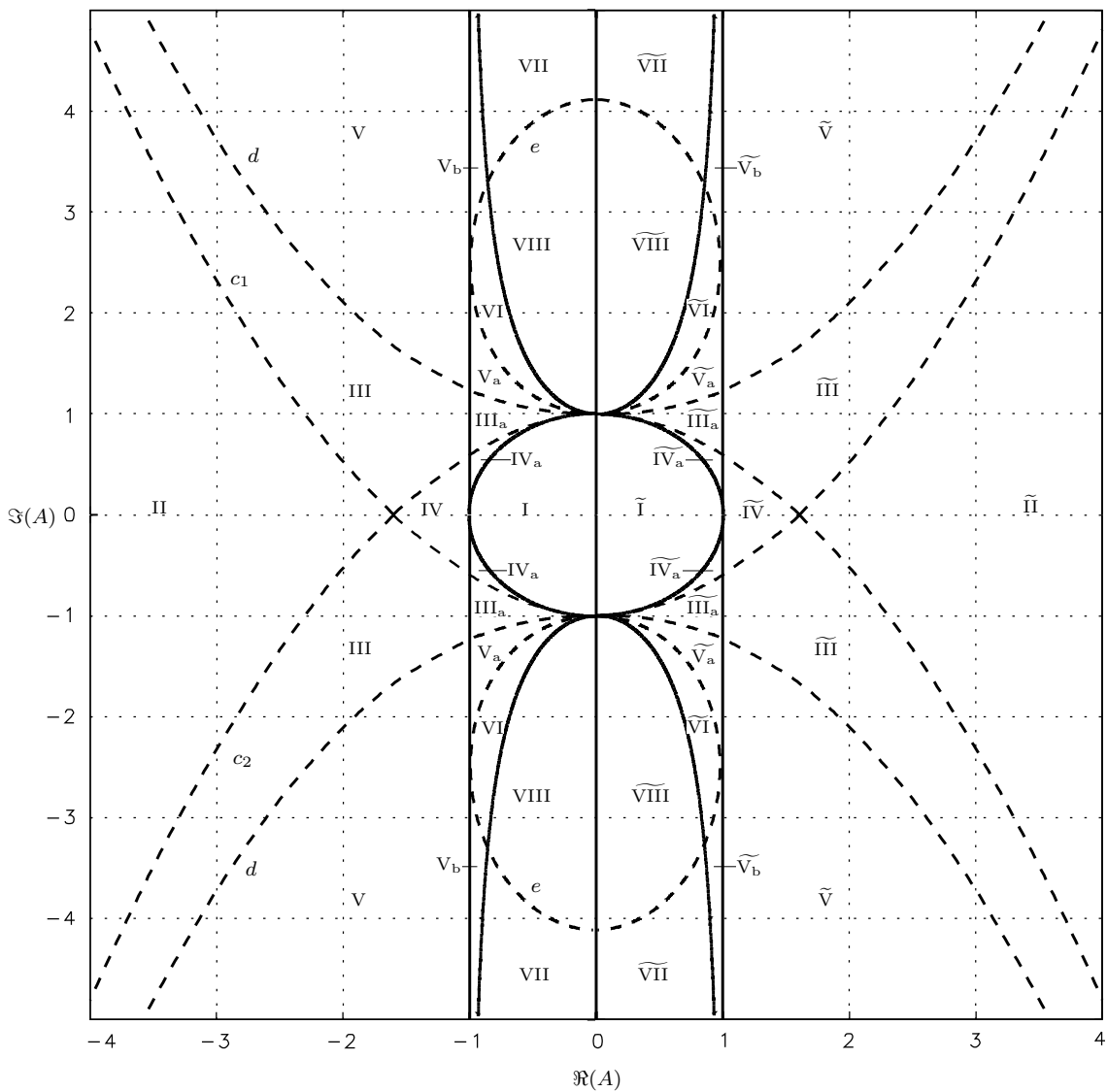}
 \caption{Partitioning of the $A$ plane into topologically different regions.} \label{fig:bifR4}
\end{figure}

Many topologically different bifurcation diagrams can be found
near a 1:4 resonance point. The analysis, if one excludes higher
codimension situations, can be reduced to 22 different
cases, which, as mentioned before, depend on the value of A. First of all, analyzing the normal
form, one can divide the Gauss plane into two big regions: in the semiplane $\Re(A)<0$ the
primary Neimark-Sacker bifurcation is supercritical, in the semiplane $\Re(A)>0$ it
is subcritical. What happens in the semiplane $\Re(A)>0$ can therefore be
obtained by inverting the direction of the vector fields. We can
further reduce the analysis to the third quadrant of the Gauss
plane, since the 12 possible cases are topologically
equivalent paired through the transformation $\zeta \mapsto \bar\zeta$.
The different regions are reported in Figure
\ref{fig:bifR4}, in which only some curves (the continuous lines)
are known analytically, the dashed curves are computed
numerically.

Figure \ref{fig:bifR4_cases} shows the possible bifurcation diagrams
with the sketches of the phase portraits for the Poincar\'e maps in the
case that $\Re(A)<0$. Many
local and global bifurcations are involved in the different
scenarios. We use the following notation, consistent with the
rest of the text:
\begin{itemize}
 \item[{\tt N}:] Neimark-Sacker bifurcation. In regions VII and VIII we also have a Neimark-Sacker bifurcation of the period-4 limit cycle.
 \item[{\tt T}:] Fold bifurcation of the period-4 limit cycles. There are three possibilities. Superscript {\tt in}, {\tt on} or {\tt out}
 means that the bifurcation happens inside, on or outside the invariant curve.
 \item[{\tt H}:] Homoclinic connection of the period-4 saddle
 limit cycle. Superscript {\tt S} means that the born invariant
 curve is smaller than the limit cycle (a square looking homoclinic connection), {\tt C} that it is bigger (a clover looking homoclinic connection),
 and {\tt L} means that the born invariant curve is around the period-4
 limit cycle; subscript {\tt +} ({\tt -}) means that the
 saddle quantity is positive (negative), so the borning invariant curve is
 repelling (attracting).
 \item[{\tt F}:] Fold bfirucation of the tori.
\end{itemize}
The output given by MatCont is $(A,d)$.
\begin{figure}[p]
\centering \footnotesize
 \psfrag{p1}[Bc][Br]{\rectitem{I}}
 \psfrag{p2}[Bc][Br]{\rectitem{II}}
 \psfrag{p3}[Bc][Br]{\rectitem{III}}
 \psfrag{p4}[Bc][Br]{\rectitem{IV}}
 \psfrag{p5}[Bc][Br]{\rectitem{V}}
 \psfrag{p6}[Bc][Br]{\rectitem{VI}}
 \psfrag{p7}[Bc][Br]{\rectitem{VII}}
 \psfrag{p8}[Bc][Br]{\rectitem{VIII}}
 \psfrag{p3a}[Bc][Br]{\rectitem{$\mathrm{III_a}$}}
 \psfrag{p4a}[Bc][Br]{\rectitem{$\mathrm{IV_a}$}}
 \psfrag{p5a}[Bc][Br]{\rectitem{$\mathrm{V_a}$},\rectitem{$\mathrm{V_b}$}}
 \includegraphics[width=0.9\textwidth]{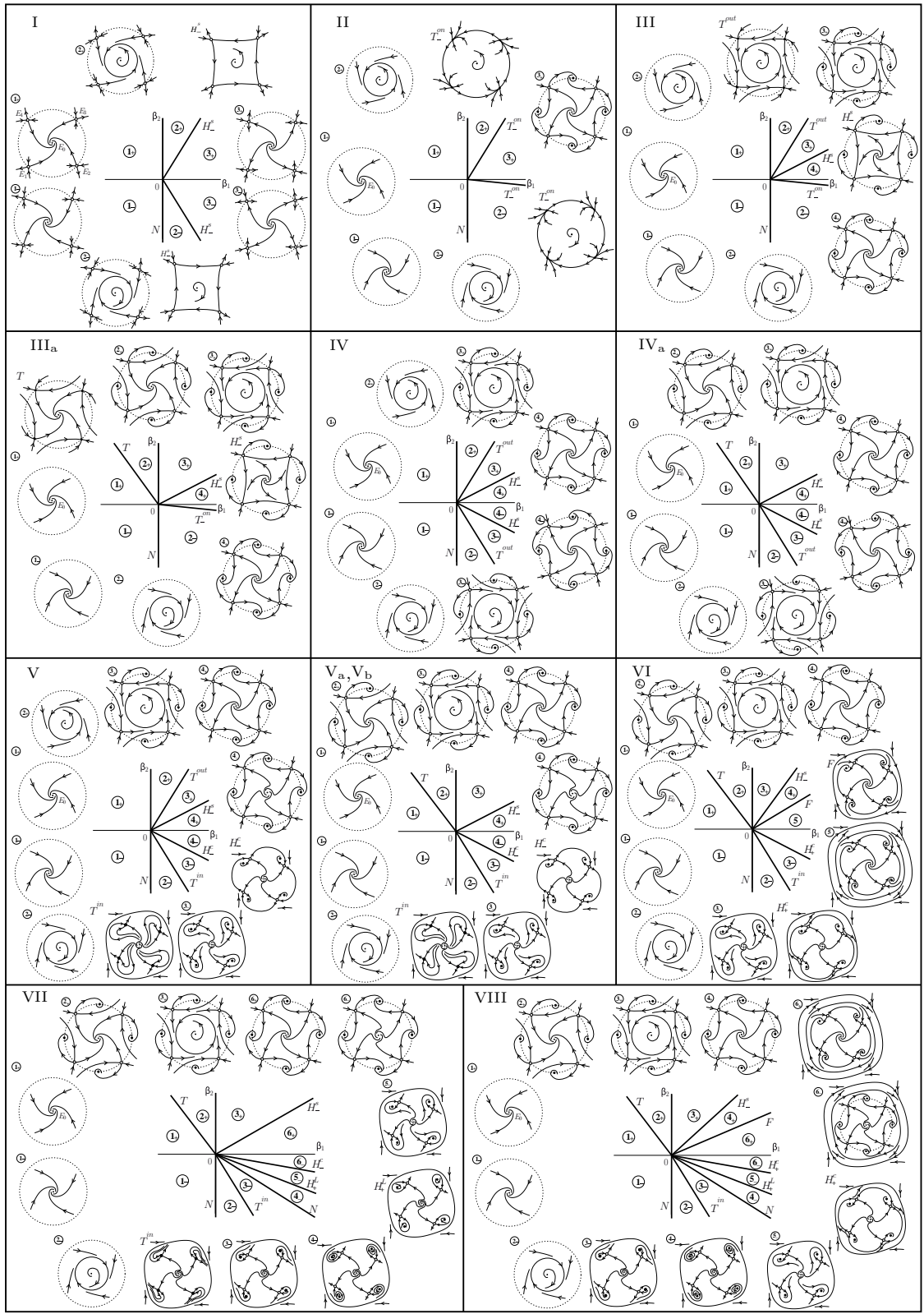}
 \caption{Bifurcation diagrams locally to the 1:4 resonance bifurcation in the different
 regions of figure \ref{fig:bifR4}. The cases in which $Re(A)>0$ can be obtained with the transformation
 $t\rightarrow -t$, $\beta\rightarrow -\beta$.} \label{fig:bifR4_cases}
\end{figure}


\subsubsection{\tt LPPD}
In this section we will show how the periodic normal form
\eqref{eq:NF-FF} is related to the normal form for the fold-flip
of fixed points, which allows us to formulate the non-degeneracy condition. As for the
{\tt R2} case we show the topological equivalence of the $2T$-shift map of our system with the $1$-shift map of the approximating vector field of the fold-flip normal form for fixed
points. First of all we can prove, by scaling the time and using
Picard iterations, that the Poincar\'e map of our normal form is the
same, up to cubic terms, as the one-shift map of the truncated system
\[
 \begin{cases}
  \ds\dot \xi_1= 2T a_{20}\xi_1^2 + 2 T a_{02}  \xi_2^2 + 2 T (a_{30}+a_{20})\xi_1^3 + 2 T (a_{12}+a_{02})\xi_1 \xi_2^2, \smallskip \\
  \ds\dot \xi_2=  2 T b_{11} \xi_1 \xi_2 + 2 T (b_{21}+b_{11})\xi_1^2 \xi_2 + 2 T b_{03} \xi_2^3.
 \end{cases}
\]
This system is topologically equivalent with the one-time shift ODE which represents the second iteration of the normal form of the fold-flip bifurcation of fixed points, i.e. the system
\begin{equation}
\begin{cases}
 \ds\dot \zeta_1 = a_1 \zeta_1^2 + b_1 \zeta_2^2 + (c_1-a_1^2) \zeta_1^3+ (d_1-a_1 b_1 + b_1) \zeta_1 \zeta_2^2, \smallskip \\
 \ds\dot \zeta_2 = -\zeta_1 \zeta_2 + \frac{1}{2}(a_1-1) \zeta_1^2 \zeta_2+ \frac{1}{2} b_1 \zeta_2^3,
\end{cases} \label{eq:shift_FF}
\end{equation}
since this second system can be obtained (neglecting higher order
terms) from the first one using the transformation
\[
\begin{cases}
 \zeta_1= -2 b_{11} T \xi _1-2 T\left(b_{11}+b_{21}+a_{20} b_{11} T+b_{11}^2 T \right) \xi_1^2 - 2 T (b_{03}+a_{02} b_{11} T)\xi_2^2, \\
 \zeta_2= 2 b_{11} T \xi _2.
\end{cases}
\]
This transformation should be invertible, so one non-degeneracy
condition is involved, namely
\[
 b_{11}\neq 0.
\]
If this condition is satisfied, then the system can be put in the
form \eqref{eq:shift_FF}, where the constants are defined as
\begin{gather*}
 a_1=-\frac{a_{20}}{b_{11}},\qquad b_1= -\frac{a_{02}}{b_{11}}, \qquad c_1 = \frac{a_{20}+a_{30}+ 2 a_{20}^2 T}{2 b_{11}^2T},\\
 d_1 = \frac{-2 a_{20} b_{03}+3 a_{02} b_{11}+a_{12} b_{11}+2 b_{03} b_{11}+2 a_{02} b_{21}+2 a_{02} a_{20} b_{11} T+6 a_{02} b_{11}^2 T}{2 b_{11}^3 T}
\end{gather*}
and from those values we can understand which types of bifurcation
the system has. In particular (see \cite{Ku:2004,KuMeVe:2004} for
more details) three more non-degeneracy conditions are involved
\begin{itemize}
 \item if $a_{20}\neq 0$ there are two limit cycles that collide and disappear
 \item if $a_{02}\neq 0$ a period doubled limit cycle born in this point.
\end{itemize}
Moreover if
$\frac{a_{02}}{b_{11}}<0$ a torus bifurcation occurs on the period
doubled orbit, with Lyapunov coefficient
\[
 L_{NS}=-2 a_{20}^2 b_{03}-3 a_{02}a_{30} b_{11}+a_{20}(a_{12} b_{11}+2 b_{03} b_{11}+2 a_{02} b_{21})
\]
and so, in order to avoid degeneracy, we also assume $L_{NS}\neq
0$.

In Figure \ref{fig:NF_FF} the four possible scenarios are reported
depending on the sign of the normal form coefficients. The output given by MatCont is $(b_{11},a_{20},a_{02},L_{NS})$.
\begin{figure}[htb]
\centering
 \subfigure[$a_{20} b_{11}<0, \,a_{02} b_{11}<0$]{
 \includegraphics[width=.54\textwidth]{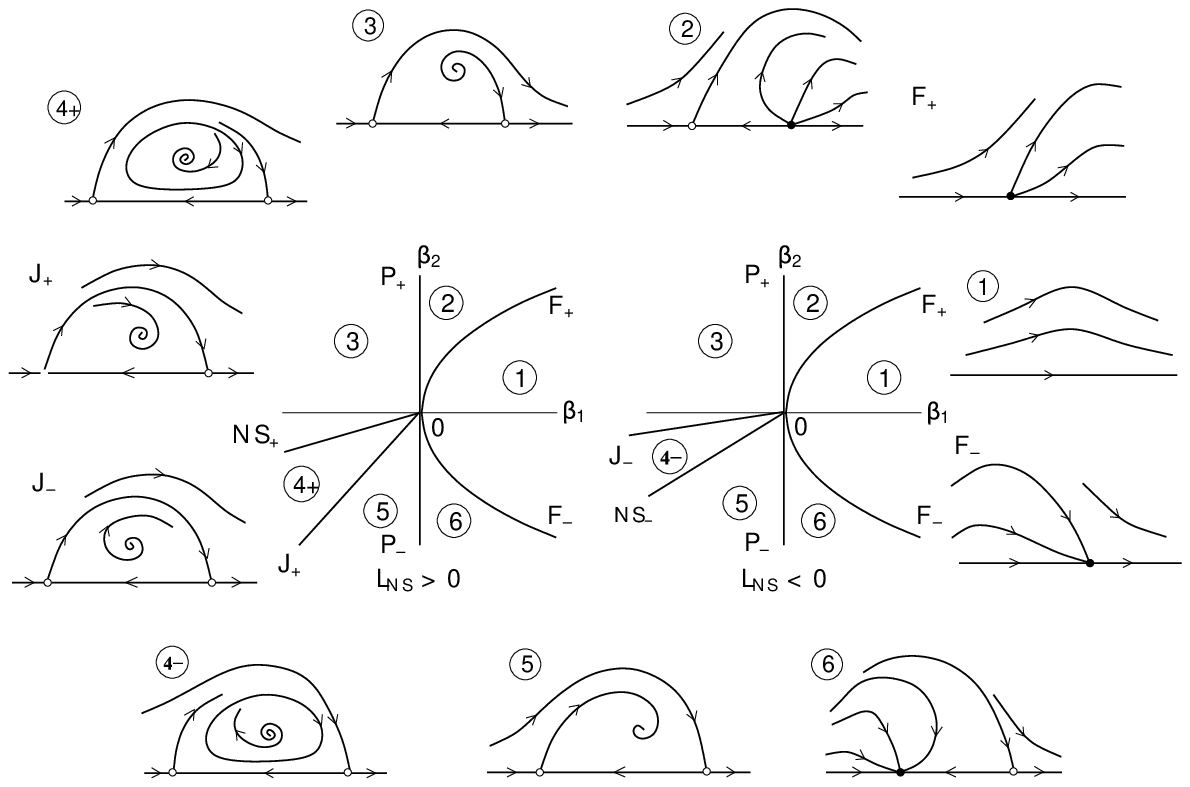}}\subfigure[$a_{20} b_{11}<0, \,a_{02} b_{11}>0$]{
 \includegraphics[width=.44\textwidth]{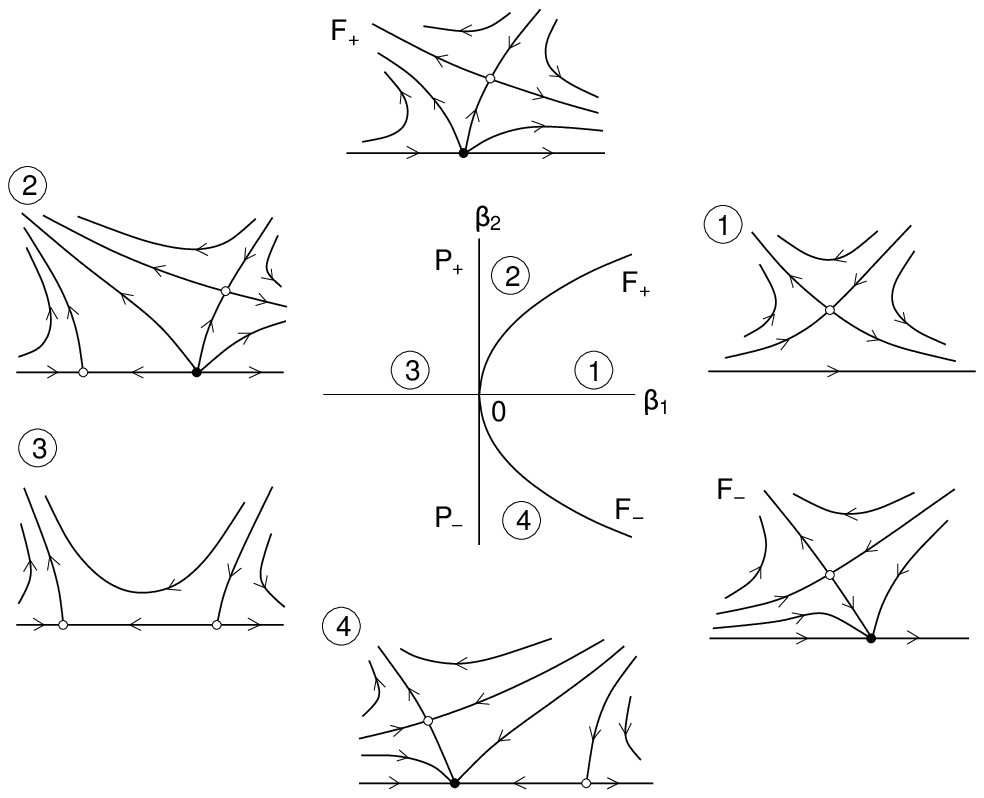}}
 \subfigure[$a_{20} b_{11}>0, \,a_{02} b_{11}<0$]{
 \includegraphics[width=.54\textwidth]{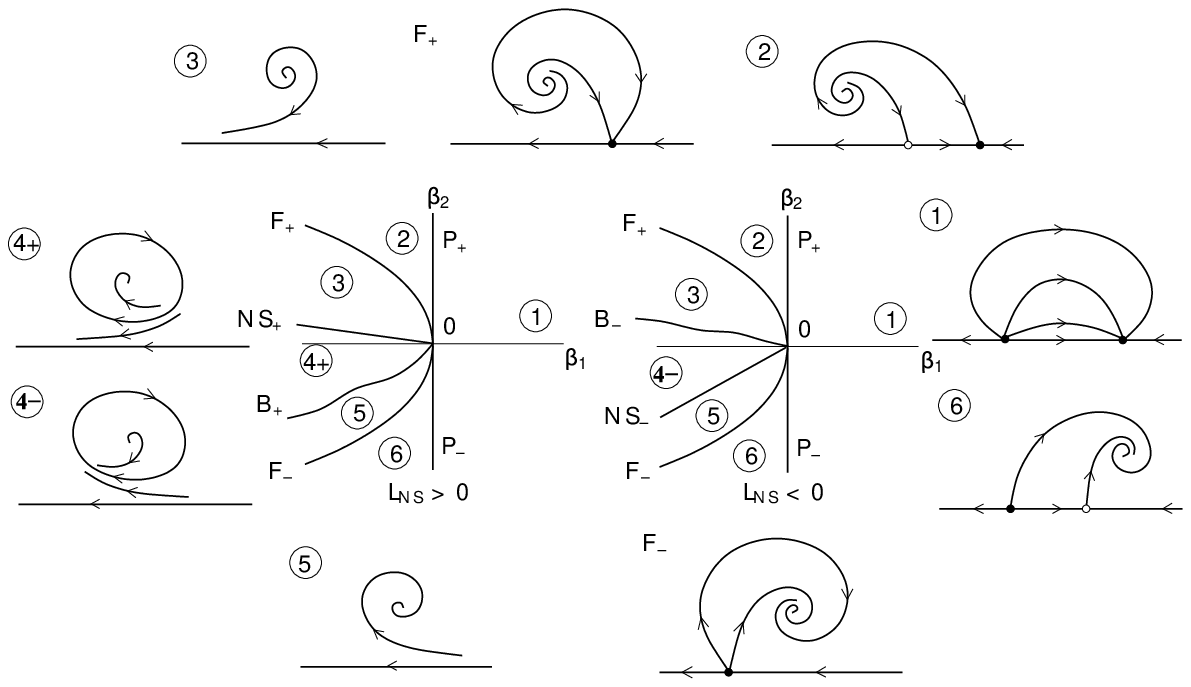}}\subfigure[$a_{20} b_{11}>0, \,a_{02} b_{11}>0$]{
 \includegraphics[width=.44\textwidth]{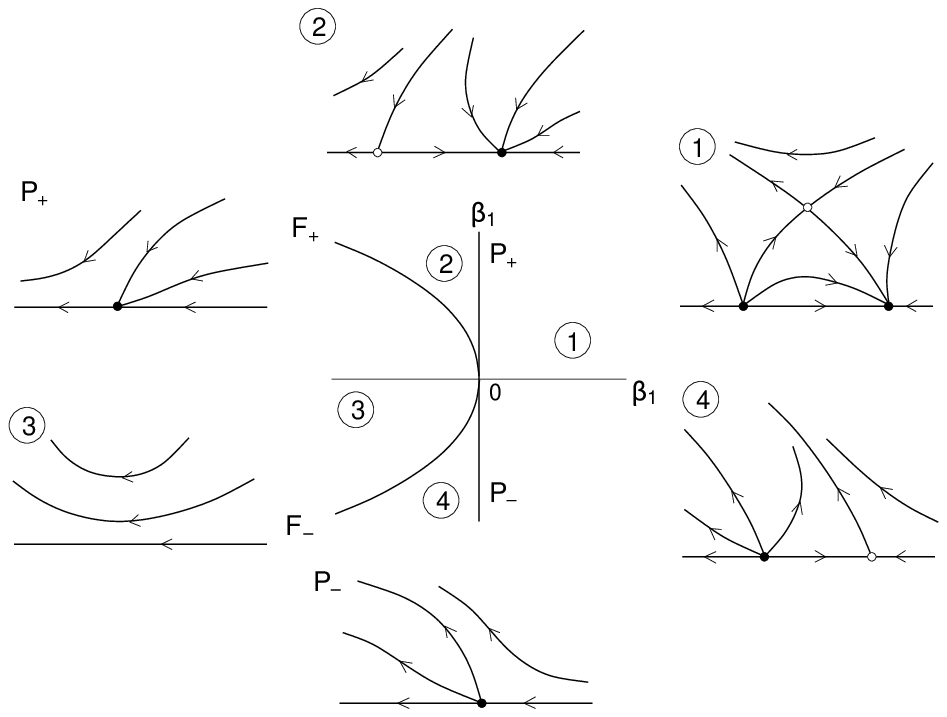}}
 \caption{Bifurcation diagrams of a fold-flip bifurcation of the fixed point normal form.} \label{fig:NF_FF}
\end{figure}

\section{Kernels of some differential-difference operators}
\label{Appendix:3}

In Section~\ref{Section:Implementation} we used the orthogonality with respect to
the following inner product: if $\zeta_1,\zeta_2 \in {\cal C}^0([0,1],{\mathbb C^n})$ and 
$\eta_1,\eta_2\in {\mathbb C}^n$, then 
$$
\left\langle\left[\begin{array}{c}\zeta_1\\\eta_1\end{array}\right],
\left[\begin{array}{c}\zeta_2\\\eta_2
\end{array}\right]\right\rangle=\int_0^1 \langle \zeta_1(t),\zeta_2(t)\rangle ~dt
+\langle \eta_1,\eta_2 \rangle
=\int_0^1\zeta_1^{\rm H}(t)\zeta_2(t)dt+\eta_1^{\rm H}\eta_2.
$$
If this inner product vanishes, then we say that the corresponding vectors are orthogonal
and write 
$$
\left[\begin{array}{c}\zeta_1\\\eta_1\end{array}\right] \bot
\left[\begin{array}{c}\zeta_2\\\eta_2
\end{array}\right].
$$
In Section~\ref{Section:Implementation} we used some propositions, from which we will give the proof in this appendix.

\begin{proposition} \label{Proposition1}
Consider two differential-difference operators 
$$
\phi_{1,2}:{\cal C}^1([0,1],{\mathbb R^n}) \rightarrow 
{\cal C}^0([0,1],{\mathbb R^n}) \times {\mathbb R}^n,
$$
with
$$
\phi_1(\zeta)=\left[\begin{array}{c}\dot{\zeta}-TA(t)\zeta \\\zeta(0)-\zeta(1)\end{array}\right],
\phi_2(\zeta)=\left[\begin{array}{c}\dot{\zeta}+
TA^{\rm T}(t)\zeta \\\zeta(0)-\zeta(1)\end{array}\right].
$$
If $\zeta \in {\cal C}^1([0,1],{\mathbb R^n})$, then
$\zeta\in {\rm Ker}(\phi_1)$ if and only if
$$
\left[\begin{array}{c}\zeta\\\zeta(0)\end{array}\right]\bot~\phi_2({\cal C}^1([0,1],
{\mathbb R^n})),
$$ 
and
$\zeta\in {\rm Ker}(\phi_2)$ if and only if
$$\left[\begin{array}{c}\zeta\\\zeta(0)\end{array}\right]\bot~\phi_1({\cal C}^1([0,1],
{\mathbb R^n})).
$$
\end{proposition}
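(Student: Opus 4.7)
The plan is to establish a Green-type identity relating the two operators via integration by parts, and then extract the two implications from it by choosing test functions appropriately. The proposition has the flavour of the classical fact that the kernel of a linear operator equals the annihilator of the range of its formal adjoint, so this approach should be routine once the boundary terms are handled correctly.

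First I would take arbitrary $\zeta_1,\zeta_2\in\mathcal{C}^1([0,1],\mathbb{R}^n)$ and compute
$$
\left\langle\begin{pmatrix}\zeta_1\\ \zeta_1(0)\end{pmatrix},\phi_2(\zeta_2)\right\rangle
=\int_0^1\langle\zeta_1,\dot\zeta_2+TA^{\rm T}(t)\zeta_2\rangle\,dt+\langle\zeta_1(0),\zeta_2(0)-\zeta_2(1)\rangle.
$$
Integration by parts on the first summand produces the boundary terms $\langle\zeta_1(1),\zeta_2(1)\rangle-\langle\zeta_1(0),\zeta_2(0)\rangle$, which combine neatly with the discrete boundary contribution, and transfers the derivative onto $\zeta_1$; the transpose in $A^{\rm T}$ cancels against the inner product to yield $\langle TA(t)\zeta_1,\zeta_2\rangle$. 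After collecting terms the calculation should reduce to the Green identity
$$
\left\langle\begin{pmatrix}\zeta_1\\ \zeta_1(0)\end{pmatrix},\phi_2(\zeta_2)\right\rangle
=-\left\langle\phi_1(\zeta_1),\begin{pmatrix}\zeta_2\\ \zeta_2(1)\end{pmatrix}\right\rangle,
$$
valid for all $\zeta_1,\zeta_2\in\mathcal{C}^1([0,1],\mathbb{R}^n)$.

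With this identity in hand, the forward direction is immediate: if $\zeta_1\in\mathrm{Ker}(\phi_1)$ then the right-hand side vanishes for every $\zeta_2$, so $(\zeta_1,\zeta_1(0))$ is orthogonal to $\phi_2(\mathcal{C}^1)$. For the converse, assume the left-hand side vanishes for every $\zeta_2$. I would first restrict to test functions $\zeta_2$ compactly supported in $(0,1)$; then the discrete term $\langle\zeta_1(0)-\zeta_1(1),\zeta_2(1)\rangle$ contributes nothing, and the fundamental lemma of the calculus of variations forces $\dot\zeta_1-TA(t)\zeta_1\equiv 0$. Once the ODE part is known to vanish, only the boundary summand remains, and choosing $\zeta_2$ with arbitrary terminal value $\zeta_2(1)\in\mathbb{R}^n$ (a smooth extension is easy) pins down $\zeta_1(0)=\zeta_1(1)$. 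Hence $\zeta_1\in\mathrm{Ker}(\phi_1)$.

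The second equivalence is obtained by the exact same argument with the roles of $\phi_1$ and $\phi_2$ interchanged: replacing $A(t)$ by $-A^{\rm T}(t)$ turns $\phi_1$ into $\phi_2$ and vice versa, so the identical Green identity applies. The only place requiring care is the bookkeeping of the boundary contributions in the integration by parts; if sign errors creep in the duality between the two operators will look broken. So the main obstacle is simply verifying that the surface terms $\langle\zeta_1(1),\zeta_2(1)\rangle-\langle\zeta_1(0),\zeta_2(0)\rangle$ from IBP combine with $\langle\zeta_1(0),\zeta_2(0)-\zeta_2(1)\rangle$ to yield a clean expression in $\zeta_1(0)-\zeta_1(1)$ paired against $\zeta_2(1)$; everything else is either a routine density argument or a direct consequence of the identity.
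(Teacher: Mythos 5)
Your proposal is correct and follows essentially the same route as the paper: the same integration by parts with the same combination of boundary terms, followed by the same two-step converse (first kill the differential part with test functions vanishing at the endpoint, then use arbitrary terminal values to force $\zeta(0)=\zeta(1)$). The only cosmetic difference is that you package the computation as a single symmetric Green identity $\langle(\zeta_1,\zeta_1(0)),\phi_2(\zeta_2)\rangle=-\langle\phi_1(\zeta_1),(\zeta_2,\zeta_2(1))\rangle$ and read off both directions from it, whereas the paper performs the integration by parts separately for each implication.
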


\begin{proof}
We will focus on the first assertion. If $\zeta$ is in the kernel of $\phi_1$, then $\dot{\zeta}-TA(t)\zeta=0$ and $\zeta(0)-\zeta(1)=0$.
For all $g \in {{\cal C}}^1([0,1],{\mathbb R^n})$ we have
$$
\begin{array}{ll}
 & \int_0^1g^{\rm T}(t)\dot{\zeta}(t)dt-\int_0^1Tg^{\rm T}(t)A(t)\zeta(t)dt=0 \\
\Rightarrow & g^{\rm T}(t)\zeta(t)|_0^1-\int_0^1\dot{g}^{\rm T}(t)\zeta(t)dt-
\int_0^1Tg^{\rm T}(t)A(t)\zeta(t)dt=0\\
\Rightarrow & g^{\rm T}(1)\zeta(1)-g^{\rm T}(0)\zeta(0)-\int_0^1(\dot{g}(t)+
TA^{\rm T}(t)g(t))^{\rm T}\zeta(t)dt=0\\
\Rightarrow & -(g(0)-g(1))^{\rm T}\zeta(0)-\int_0^1(\dot{g}(t)+TA^{\rm T}(t)g(t))^{\rm T}
\zeta(t)dt=0\\
\Rightarrow & \left\langle\left[\begin{array}{c}\dot{g}+TA^{\rm T}(t)g\\g(0)-g(1)\end{array}\right],
\left[\begin{array}{c}\zeta\\\zeta(0)\end{array}\right]\right\rangle=0.
\end{array}
$$
Conversely, assume that
$\langle\left[\begin{matrix} \zeta\\ \zeta(0) \end{matrix}\right],
\left[\begin{matrix} \dot{g}+TA^{\rm T}(t)g\\ g(0)-g(1) \end{matrix}\right]\rangle=0$
for all $g\in{{\cal C}}^1([0,1],{\mathbb R^n})$.
Then,
$$
\begin{array}{ll}
 & \int_0^1\zeta^{\rm T}(t)(\dot{g}(t)+TA^{\rm T}(t)g(t))dt+\zeta^{\rm T}(0)
(g(0)-g(1))=0\\
\Rightarrow & \zeta^{\rm T}(1)g(1)-\zeta^{\rm T}(0)g(0)-\int_0^1(\dot{\zeta}(t)-TA(t)\zeta(t))^{\rm T}g(t)dt+\zeta^{\rm
T}(0)(g(0)-g(1))=0\\
\Rightarrow & -(\zeta(0)-\zeta(1))^{\rm T}g(1)-\int_0^1(\dot{\zeta}(t)-
TA(t)\zeta(t))^{\rm T}g(t)dt=0.\\
\end{array}
$$
If $\dot{\zeta}(t)-TA(t)\zeta(t)\neq 0$, then there exists a $g(t)$ with $g(1)=0$ such that
$$
\int_0^1(\dot{\zeta}(t)-TA(t)\zeta(t))^{\rm T}g(t)dt\neq 0.
$$ 
This is impossible, so 
$\dot{\zeta}(t)-TA(t)\zeta(t)=0$. Hence $(\zeta(0)-\zeta(1))^{\rm T}g(1)=0$ for all $g$;
and thus there must hold that $\zeta(0)-\zeta(1)=0.$ From both observations it follows that $\zeta\in {\rm Ker}(\phi_1)$.

The proof of the second assertion is similar.
\qquad\end{proof}

\begin{proposition} \label{Proposition2}
Consider 
$\phi_{1,2}: {\cal C}^1([0,1],{\mathbb R}^n) \rightarrow 
{\cal C}^0([0,1],{\mathbb R^n}) \times {\mathbb R}^n,$
where
$$
\phi_1(\zeta)=\left[\begin{array}{c}\dot{\zeta}-TA(t)\zeta\\\zeta(0)+\zeta(1)\end{array}\right],
\phi_2(\zeta)=\left[\begin{array}{c}\dot{\zeta}+TA^{\rm T}(t)\zeta\\\zeta(0)+\zeta(1)\end{array}\right].
$$
If $\zeta \in {\cal C}^1([0,1],{\mathbb R^n})$, 
then $\zeta\in {\rm Ker}(\phi_1)$ if and only if
$$
\left[\begin{array}{c}\zeta\\\zeta(0)\end{array}\right]\bot~
\phi_2({\cal C}^1([0,1],{\mathbb R^n})),$$ 
and
$\zeta\in {\rm Ker}(\phi_2)$ if and only if
$$
\left[\begin{array}{c}\zeta\\\zeta(0)\end{array}\right]\bot~
\phi_1({\cal C}^1([0,1],
{\mathbb R^n})).
$$
\end{proposition}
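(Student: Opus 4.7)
My plan is to imitate the proof of Proposition~\ref{Proposition1} essentially verbatim, the only change being that the periodic boundary term $g^{\rm T}(1)\zeta(1)-g^{\rm T}(0)\zeta(0)$ must now be collapsed using the anti-periodic identification instead of the periodic one. Concretely, I would focus on the first assertion (the second being symmetric under the swap $A\leftrightarrow -A^{\rm T}$) and argue both implications by integration by parts.

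For the forward direction, assume $\zeta\in{\rm Ker}(\phi_1)$, so $\dot\zeta=TA(t)\zeta$ and $\zeta(0)+\zeta(1)=0$. For any $g\in{\cal C}^1([0,1],\mathbb R^n)$ I would compute
\[
0=\int_0^1 g^{\rm T}(t)\bigl(\dot\zeta(t)-TA(t)\zeta(t)\bigr)\,dt
=g^{\rm T}(t)\zeta(t)\big|_0^1-\int_0^1\bigl(\dot g(t)+TA^{\rm T}(t)g(t)\bigr)^{\rm T}\zeta(t)\,dt.
\]
Using $\zeta(1)=-\zeta(0)$, the boundary term becomes $-g^{\rm T}(1)\zeta(0)-g^{\rm T}(0)\zeta(0)=-(g(0)+g(1))^{\rm T}\zeta(0)$, so rearranging gives exactly
\[
\left\langle\left[\begin{array}{c}\dot g+TA^{\rm T}(t)g\\ g(0)+g(1)\end{array}\right],\left[\begin{array}{c}\zeta\\ \zeta(0)\end{array}\right]\right\rangle=0,
\]
which is the desired orthogonality for every $g$, i.e.\ $\bigl[\zeta;\zeta(0)\bigr]\bot\phi_2({\cal C}^1([0,1],\mathbb R^n))$.

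For the converse, assume the orthogonality holds for all $g$. Running the same integration by parts backwards I obtain
\[
-(\zeta(0)+\zeta(1))^{\rm T}g(1)-\int_0^1\bigl(\dot\zeta(t)-TA(t)\zeta(t)\bigr)^{\rm T}g(t)\,dt=0
\]
for every $g\in{\cal C}^1([0,1],\mathbb R^n)$. First I would test against functions $g$ with $g(1)=0$ to conclude by a density/variational argument that $\dot\zeta-TA(t)\zeta\equiv 0$; then, with that integral term gone, varying $g(1)$ freely forces $\zeta(0)+\zeta(1)=0$, i.e.\ $\zeta\in{\rm Ker}(\phi_1)$. The second assertion is proved identically after swapping the roles of $A(t)$ and $-A^{\rm T}(t)$, noting that the anti-periodic boundary is self-adjoint in the same way the periodic one was.

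I do not expect any real obstacle: the only subtle point is keeping track of the sign flips caused by the anti-periodic boundary (making sure that $g^{\rm T}(1)\zeta(1)-g^{\rm T}(0)\zeta(0)$ is rewritten as $-(g(0)+g(1))^{\rm T}\zeta(0)$ rather than the $-(g(0)-g(1))^{\rm T}\zeta(0)$ that appeared in the periodic case), and invoking a standard density argument to extract the pointwise ODE from the integral identity. Both are routine.
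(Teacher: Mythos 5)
Your proposal is correct and is exactly what the paper intends: the paper's own ``proof'' of Proposition~\ref{Proposition2} consists of the single remark that it is similar to Proposition~\ref{Proposition1}, and your adaptation of the boundary term to the anti-periodic case is the right one. The only blemish is a harmless sign slip in the converse direction, where the boundary term comes out as $+(\zeta(0)+\zeta(1))^{\rm T}g(1)$ rather than $-(\zeta(0)+\zeta(1))^{\rm T}g(1)$; either way, varying $g(1)$ still forces $\zeta(0)+\zeta(1)=0$, so the argument stands.
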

\unskip

\begin{proof}
The proof is similar to the proof of Proposition~\ref{Proposition1}.
\qquad\end{proof}

\begin{proposition} \label{Proposition3}
Consider 
$\phi_{1,2}: {\cal C}^1([0,1],{\mathbb C^n}) \rightarrow 
{\cal C}^0([0,1],{\mathbb C^n}) \times {\mathbb C^n},$
where
$$
\phi_1(\zeta)=\left[\begin{array}{c}\dot{\zeta}-TA(t)\zeta 
+i\theta \zeta \\\zeta(0)-\zeta(1)\end{array}\right],
\phi_2(\zeta)=\left[\begin{array}{c}\dot{\zeta}
+TA^{\rm T}(t)\zeta +i\theta \zeta\\\zeta(0)-\zeta(1)\end{array}\right].
$$
If $\zeta \in {\cal C}^1([0,1],{\mathbb C^n})$, then
$\zeta\in {\rm Ker}(\phi_1)$ if and only if
$$
\left[\begin{array}{c}\zeta\\\zeta(0)\end{array}\right]\bot~
\phi_2({\cal C}^1([0,1],{\mathbb C^n})),
$$ 
and
$\zeta\in {\rm Ker}(\phi_2)$ if and only if
$$
\left[\begin{array}{c}\zeta\\\zeta(0)\end{array}\right]\bot~
\phi_1({\cal C}^1([0,1],{\mathbb C^n})).
$$
\end{proposition}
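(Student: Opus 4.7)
The plan is to follow exactly the same integration-by-parts argument used in Proposition~\ref{Proposition1}, but carried out with the Hermitian inner product, taking care of how the real parameter $\theta$ interacts with complex conjugation. The key observation is that the operators $\phi_1$ and $\phi_2$ are formal adjoints with respect to this inner product (up to boundary terms), and the periodic boundary conditions are self-dual.

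First I would fix an arbitrary $g \in {\cal C}^1([0,1],{\mathbb C^n})$ and compute the inner product
\[
\left\langle \phi_2(g),\left[\begin{array}{c}\zeta\\\zeta(0)\end{array}\right]\right\rangle
= \int_0^1 \bigl(\dot g(t)+TA^{\rm T}(t)g(t)+i\theta g(t)\bigr)^{\rm H}\zeta(t)\,dt
+ \bigl(g(0)-g(1)\bigr)^{\rm H}\zeta(0).
\]
Since $A(t)$, $T$ and $\theta$ are real, the conjugate of the integrand expands to $\dot g^{\rm H}\zeta + g^{\rm H} TA(t)\zeta - i\theta\, g^{\rm H}\zeta$. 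Applying integration by parts to the first piece gives $\int_0^1 \dot g^{\rm H}\zeta\,dt = g^{\rm H}(1)\zeta(1)-g^{\rm H}(0)\zeta(0)-\int_0^1 g^{\rm H}\dot\zeta\,dt$. Collecting everything yields the clean identity
\[
\left\langle \phi_2(g),\left[\begin{array}{c}\zeta\\\zeta(0)\end{array}\right]\right\rangle
= g^{\rm H}(1)\bigl(\zeta(1)-\zeta(0)\bigr)
- \int_0^1 g^{\rm H}(t)\bigl(\dot\zeta(t)-TA(t)\zeta(t)+i\theta\zeta(t)\bigr)\,dt.
\]
The forward implication is then immediate: if $\zeta\in{\rm Ker}(\phi_1)$, both terms vanish for every $g$.

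For the reverse implication, assume the right-hand side vanishes for every $g\in{\cal C}^1([0,1],{\mathbb C^n})$. First restrict to $g$ vanishing at the endpoints; the boundary term drops out and the fundamental lemma of the calculus of variations (applied in the complex-valued setting by testing with $g=\chi\,(\dot\zeta-TA\zeta+i\theta\zeta)$ where $\chi$ is a smooth bump) forces $\dot\zeta-TA(t)\zeta+i\theta\zeta=0$ pointwise. Feeding this back into the identity leaves $g^{\rm H}(1)(\zeta(1)-\zeta(0))=0$ for every $g$, which forces $\zeta(0)=\zeta(1)$. Hence $\zeta\in{\rm Ker}(\phi_1)$.

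The second equivalence is proved by the same calculation with the roles of $\phi_1$ and $\phi_2$ swapped, which leads to the symmetric identity
\[
\left\langle \phi_1(g),\left[\begin{array}{c}\zeta\\\zeta(0)\end{array}\right]\right\rangle
= g^{\rm H}(1)\bigl(\zeta(1)-\zeta(0)\bigr)
- \int_0^1 g^{\rm H}(t)\bigl(\dot\zeta(t)+TA^{\rm T}(t)\zeta(t)+i\theta\zeta(t)\bigr)\,dt,
\]
and the same reasoning applies. The only subtle point — which I flag as the one step to verify carefully — is the sign bookkeeping for the $i\theta$ term under conjugation: since $\theta\in\mathbb R$, the factor $i\theta$ produces $-i\theta$ in the Hermitian adjoint, which is exactly what is needed to make $\phi_1$ and $\phi_2$ formal adjoints with the $+i\theta$ shift appearing in \emph{both} operators (rather than with opposite signs, as naive intuition might suggest).
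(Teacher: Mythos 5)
Your proposal is correct and follows essentially the same route as the paper: the identical integration-by-parts computation (with the Hermitian inner product) establishing that $\phi_2$ is, up to sign and boundary terms, the formal adjoint of $\phi_1$, followed by the standard density/fundamental-lemma argument for the converse that the paper spells out in Proposition~\ref{Proposition1} and merely cites as ``similar'' here. Your flagged subtlety about the $i\theta$ term is exactly right: conjugation sends $i\theta$ to $-i\theta$, which combines with the sign flip from integrating the derivative by parts so that $+i\theta$ appears in both operators, just as in the paper's displayed chain of implications.
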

\unskip

\begin{proof}
If $\zeta$ is in the kernel of $\phi_1$, then $\dot{\zeta}-TA(t)\zeta+i\theta \zeta=0$ and $\zeta(0)-\zeta(1)=0$.
For all $g \in {{\cal C}}^1([0,1],{\mathbb C^n})$ we have
$$
\begin{array}{ll}
 & \int_0^1g^{\rm H}(t)\dot{\zeta}(t)dt-\int_0^1Tg^{\rm H}(t)A(t)\zeta(t)dt+\int_0^1i\theta g^{\rm H}(t)\zeta(t)=0 \\
\Rightarrow & g^{\rm H}(t)\zeta(t)|_0^1-\int_0^1\dot{g}^{\rm H}(t)\zeta(t)dt-
\int_0^1Tg^{\rm H}(t)A(t)\zeta(t)dt+\int_0^1 i\theta g^{\rm H}(t)\zeta(t)=0\\
\Rightarrow & g^{\rm H}(1)\zeta(1)-g^{\rm H}(0)\zeta(0)-\int_0^1(\dot{g}(t)+
TA^{\rm T}(t)g(t)+i\theta g(t))^{\rm H}\zeta(t)dt=0\\
\Rightarrow & -(g(0)-g(1))^{\rm H}\zeta(0)-\int_0^1(\dot{g}(t)+TA^{\rm T}(t)g(t)+i\theta g(t))^{\rm H}
\zeta(t)dt=0\\
\Rightarrow & \left\langle\left[\begin{array}{c}\dot{g}+TA^{\rm T}(t)g+i\theta g\\g(0)-g(1)\end{array}\right],
\left[\begin{array}{c}\zeta\\\zeta(0)\end{array}\right]\right\rangle=0.
\end{array}
$$
The proofs of the reverse implication and the second assertion are similar.
\qquad\end{proof}

\begin{proposition} \label{Proposition6}
Consider 
$\phi_{1,2}: {\cal C}^1([0,1],{\mathbb C^n}) \rightarrow 
{\cal C}^0([0,1],{\mathbb C^n}) \times {\mathbb C^n},$
where
$$
\phi_1(\zeta)=\left[\begin{array}{c}\dot{\zeta}-TA(t)\zeta \\\zeta(0)-e^{-i\theta}\zeta(1)\end{array}\right],
\phi_2(\zeta)=\left[\begin{array}{c}\dot{\zeta}
+TA^{\rm T}(t)\zeta \\\zeta(0)-e^{-i\theta}\zeta(1)\end{array}\right].
$$
If $\zeta \in {\cal C}^1([0,1],{\mathbb C^n})$, then
$\zeta\in {\rm Ker}(\phi_1)$ if and only if
$$
\left[\begin{array}{c}\zeta\\\zeta(0)\end{array}\right]\bot~
\phi_2({\cal C}^1([0,1],{\mathbb C^n})),
$$ 
and
$\zeta\in {\rm Ker}(\phi_2)$ if and only if
$$
\left[\begin{array}{c}\zeta\\\zeta(0)\end{array}\right]\bot~
\phi_1({\cal C}^1([0,1],{\mathbb C^n})).
$$
\end{proposition}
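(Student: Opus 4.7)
The plan is to follow exactly the integration-by-parts template used in Propositions~\ref{Proposition1}--\ref{Proposition3}, adapting only the boundary contribution to accommodate the twisted periodicity condition $\zeta(0)=e^{-i\theta}\zeta(1)$. I will prove the first equivalence in detail; the second follows by an entirely symmetric argument (swapping the roles of $A(t)$ and $-A^{\rm T}(t)$, which corresponds to interchanging $\phi_1$ and $\phi_2$).

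First I would take $\zeta\in {\rm Ker}(\phi_1)$ and, for an arbitrary test function $g\in{\cal C}^1([0,1],{\mathbb C^n})$, compute $\int_0^1 g^{\rm H}(t)(\dot\zeta(t)-TA(t)\zeta(t))\,dt$, which is zero. Integrating the first summand by parts produces the boundary term $g^{\rm H}(1)\zeta(1)-g^{\rm H}(0)\zeta(0)$ together with the interior integral $-\int_0^1(\dot g(t)+TA^{\rm T}(t)g(t))^{\rm H}\zeta(t)\,dt$. Using $\zeta(1)=e^{i\theta}\zeta(0)$, the boundary term rewrites as $-(g(0)-e^{-i\theta}g(1))^{\rm H}\zeta(0)$, which is precisely the second component of the inner product of $\phi_2(g)$ against $[\zeta\,;\,\zeta(0)]$. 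Rearranging gives the orthogonality relation $\langle\phi_2(g),[\zeta\,;\,\zeta(0)]\rangle=0$ for every $g$.

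For the converse, I would assume $[\zeta\,;\,\zeta(0)]\bot\phi_2({\cal C}^1([0,1],{\mathbb C^n}))$ and reverse the integration-by-parts identity to obtain, for every $g$, a relation of the form $-(\zeta(0)-e^{-i\theta}\zeta(1))^{\rm H}g(1)-\int_0^1(\dot\zeta(t)-TA(t)\zeta(t))^{\rm H}g(t)\,dt=0$. Choosing test functions supported in the interior of $[0,1]$ (so $g(1)=0$) and nonvanishing against any candidate $\dot\zeta-TA(t)\zeta$ forces $\dot\zeta=TA(t)\zeta$, and then taking $g$ with $g(1)$ arbitrary forces the boundary condition $\zeta(0)=e^{-i\theta}\zeta(1)$, i.e.\ $\zeta\in {\rm Ker}(\phi_1)$.

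The only genuine subtlety, compared to Propositions~\ref{Proposition1} and \ref{Proposition2}, is the bookkeeping of the complex factor $e^{\pm i\theta}$ inside the Hermitian transpose; one must carefully track that the complex conjugate of $e^{-i\theta}$ is $e^{i\theta}$ so that the boundary term collected after integration by parts lines up with the boundary block of $\phi_2$ rather than with its conjugate. No new analytic idea is needed, and all the manipulations are formally identical to those in Proposition~\ref{Proposition3}.
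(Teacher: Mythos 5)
Your proposal is correct and follows essentially the same integration-by-parts argument as the paper: in the forward direction the boundary term $g^{\rm H}(1)\zeta(1)-g^{\rm H}(0)\zeta(0)$ is rewritten via $\zeta(1)=e^{i\theta}\zeta(0)$ as $-(g(0)-e^{-i\theta}g(1))^{\rm H}\zeta(0)$, exactly as in the paper, and your converse (test functions vanishing at $t=1$ first, then arbitrary $g(1)$) is the standard argument the paper leaves implicit with ``the proofs of the reverse implication and the second assertion are similar.'' No discrepancy worth noting beyond an immaterial overall factor of $e^{i\theta}$ in your boundary term for the converse.
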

\unskip

\begin{proof}
If $\zeta$ is in the kernel of $\phi_1$, then $\dot{\zeta}-TA(t)\zeta=0$ and $\zeta(0)-e^{-i\theta}\zeta(1)=0$.
For all $g \in {{\cal C}}^1([0,1],{\mathbb C^n})$ we have
$$
\begin{array}{ll}
 & \int_0^1g^{\rm H}(t)\dot{\zeta}(t)dt-\int_0^1Tg^{\rm H}(t)A(t)\zeta(t)dt=0 \\
\Rightarrow & g^{\rm H}(t)\zeta(t)|_0^1-\int_0^1\dot{g}^{\rm H}(t)\zeta(t)dt-
\int_0^1Tg^{\rm H}(t)A(t)\zeta(t)dt=0\\
\Rightarrow & g^{\rm H}(1)\zeta(1)-g^{\rm H}(0)\zeta(0)-\int_0^1(\dot{g}(t)+
TA^{\rm T}(t)g(t))^{\rm H}\zeta(t)dt=0\\
\Rightarrow & -(g(0)-e^{-i\theta}g(1))^{\rm H}\zeta(0)-\int_0^1(\dot{g}(t)+TA^{\rm T}(t)g(t))^{\rm H}
\zeta(t)dt=0\\
\Rightarrow & \left\langle\left[\begin{array}{c}\dot{g}+TA^{\rm T}(t)g\\g(0)-e^{-i\theta}g(1)\end{array}\right],
\left[\begin{array}{c}\zeta\\\zeta(0)\end{array}\right]\right\rangle=0.
\end{array}
$$
The proofs of the reverse implication and the second assertion are similar.
\qquad\end{proof}

\begin{proposition} \label{Proposition4}
Consider two differential-difference operators $\phi_{1,2}:{\cal C}^1([0,1],{\mathbb R^n}) \rightarrow 
{\cal C}^0([0,1],{\mathbb R^n}) \times {\mathbb R}^n,$
where
$$
\phi_1(\zeta)=\left[\begin{array}{c}\dot{\zeta}-TA(t)\zeta \\\zeta(0)-\zeta(1)\end{array}\right],
\ \ \phi_2(\zeta)=\left[\begin{array}{c}\dot{\zeta}+
TA^{\rm T}(t)\zeta \\\zeta(0)-\zeta(1)\end{array}\right].
$$
If $\zeta \in {\cal C}^1([0,1],{\mathbb R^n})$, then
$$\phi_1(\zeta) = \left[\begin{array}{c}g \\0\end{array} \right]$$
if and only if
$$
\left\langle\left[\begin{array}{c}\zeta\\\zeta(0)\end{array}\right],\left[\begin{array}{c}\dot{h}+TA^{\rm T}(t)h\\ h(0) -h(1)\end{array} \right]\right\rangle = -\left\langle\left[\begin{array}{c}g\\0\end{array}\right],\left[\begin{array}{c}h\\ 0\end{array} \right]\right\rangle,$$
for all $h \in {{\cal C}}^1([0,1],{\mathbb R^n})$. Furthermore
$$\phi_2(\zeta) = \left[\begin{array}{c}g \\0\end{array} \right]$$
if and only if
$$
\left\langle\left[\begin{array}{c}\zeta\\\zeta(0)\end{array}\right],\left[\begin{array}{c}\dot{h}-TA(t)h\\ h(0) -h(1)\end{array} \right]\right\rangle = -\left\langle\left[\begin{array}{c}g\\0\end{array}\right],\left[\begin{array}{c}h\\ 0\end{array} \right]\right\rangle,$$
for all $h \in {{\cal C}}^1([0,1],{\mathbb R^n})$.\end{proposition}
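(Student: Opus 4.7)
The plan is to mimic the integration-by-parts argument used in Propositions~\ref{Proposition1}, \ref{Proposition2}, \ref{Proposition3}, and \ref{Proposition6}, extended to handle a nonzero right-hand side $g$ in the first component. Only the first equivalence needs to be carried out in detail: the second one follows by interchanging the roles of $A$ and $A^{\rm T}$, since all manipulations rely only on the formal adjointness of $\zeta \mapsto \dot\zeta - T A(t)\zeta$ and $\zeta \mapsto \dot\zeta + T A^{\rm T}(t)\zeta$ under the $L^2$-pairing on $[0,1]$.

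For the forward direction, assume $\dot\zeta - T A(t)\zeta = g$ and $\zeta(0) - \zeta(1) = 0$. Pair the differential equation with an arbitrary $h \in {\cal C}^1([0,1],{\mathbb R^n})$ and integrate, so $\int_0^1 h^{\rm T}(t)\bigl(\dot\zeta(t) - T A(t)\zeta(t)\bigr)\,dt = \int_0^1 h^{\rm T}(t) g(t)\,dt$. Integrating the $h^{\rm T}\dot\zeta$ term by parts moves the derivative onto $h$ and produces the boundary contribution $h(1)^{\rm T}\zeta(1) - h(0)^{\rm T}\zeta(0)$; using $\zeta(1) = \zeta(0)$ this collapses to $-(h(0)-h(1))^{\rm T}\zeta(0)$. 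Rearranging signs yields exactly
\[
\int_0^1 \zeta^{\rm T}\bigl(\dot h + T A^{\rm T}(t) h\bigr)\,dt + \zeta(0)^{\rm T}\bigl(h(0)-h(1)\bigr) = -\int_0^1 g^{\rm T}(t) h(t)\,dt,
\]
which is the claimed bilinear identity.

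For the reverse direction, start from that identity and undo the integration by parts: $\int_0^1 \zeta^{\rm T}\dot h \,dt = [\zeta^{\rm T} h]_0^1 - \int_0^1 \dot\zeta^{\rm T} h\,dt$, which after collecting the boundary contributions and using the $\zeta(0)^{\rm T}(h(0)-h(1))$ term gives
\[
(\zeta(1)-\zeta(0))^{\rm T} h(1) - \int_0^1 \bigl(\dot\zeta - T A(t)\zeta - g\bigr)^{\rm T} h\,dt = 0, \qquad \forall\, h \in {\cal C}^1([0,1],{\mathbb R^n}).
\]
First restrict to test functions $h$ that vanish at $t=1$ and are otherwise arbitrary; this forces $\dot\zeta - T A(t)\zeta - g \equiv 0$ on $[0,1]$ by the standard fundamental lemma of the calculus of variations (available thanks to the ${\cal C}^1$ regularity). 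Substituting back, the remaining boundary term $(\zeta(1)-\zeta(0))^{\rm T} h(1) = 0$ holds for all $h$, which in turn forces $\zeta(0) = \zeta(1)$, completing $\phi_1(\zeta) = [g,0]^{\rm T}$.

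The main obstacle is purely bookkeeping, namely tracking the signs in the boundary terms and being careful that the two test-function sub-steps in the reverse direction are logically ordered: one first isolates the PDE by choosing $h(1)=0$ and only then uses the already-established PDE to read off the boundary condition. The analogous statement for $\phi_2$ follows verbatim after transposing $A$ and reversing the sign in the differential part, so only one proof needs to be written out.
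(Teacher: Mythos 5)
Your proposal is correct and follows essentially the same integration-by-parts/adjointness argument as the paper: the forward direction matches the paper's displayed chain of implications line for line, and your reverse direction (which the paper dismisses as ``similar'') correctly mirrors the two-step test-function argument the paper spells out in Proposition~\ref{Proposition1}. No gaps.
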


\begin{proof}
We focus on the first assertion. Suppose that $\dot{\zeta}(t)-TA(t)\zeta(t)=g(t)$ and $\zeta(0)-\zeta(1)=0$.
For all $h \in {{\cal C}}^1([0,1],{\mathbb R^n})$ we have
$$
\begin{array}{ll}
 & \int_0^1h^{\rm T}(t)\dot{\zeta}(t)dt-\int_0^1Th^{\rm T}(t)A(t)\zeta(t)dt=\int_0^1h^{\rm T}(t)g(t)dt \\
\Rightarrow & h^{\rm T}(t)\zeta(t)|_0^1-\int_0^1\dot{h}^{\rm T}(t)\zeta(t)dt-
\int_0^1Th^{\rm T}(t)A(t)\zeta(t)dt=\int_0^1h^{\rm T}(t)g(t)dt\\
\Rightarrow & h^{\rm T}(1)\zeta(1)-h^{\rm T}(0)\zeta(0)-\int_0^1(\dot{h}(t)+
TA^{\rm T}(t)h(t))^{\rm T}\zeta(t)dt=\int_0^1h^{\rm T}(t)g(t)dt\\
\Rightarrow & \int_0^1\zeta^{\rm T}(t)(\dot{h}(t)+TA^{\rm T}(t)h(t))dt +\zeta^{\rm T}(0)(h(0)-h(1))=-\int_0^1g^{\rm T}(t)h(t)dt\\
\Rightarrow & \left\langle\left[\begin{array}{c}\zeta\\\zeta(0)\end{array}\right],\left[\begin{array}{c}\dot{h}+TA^{\rm T}(t)h\\ h(0) -h(1)\end{array} \right]\right\rangle = -\left\langle\left[\begin{array}{c}g\\0\end{array}\right],\left[\begin{array}{c}h\\ 0\end{array} \right]\right\rangle.
\end{array}
$$
The proofs of the reverse implication and the second assertion are similar.
\qquad\end{proof}

\begin{proposition} \label{Proposition5}
Consider two differential-difference operators $\phi_{1,2}:{\cal C}^1([0,1],{\mathbb R^n}) \rightarrow 
{\cal C}^0([0,1],{\mathbb R^n}) \times {\mathbb R}^n,$
where
$$
\phi_1(\zeta)=\left[\begin{array}{c}\dot{\zeta}-TA(t)\zeta \\\zeta(0) +\zeta(1)\end{array}\right],
\ \ \phi_2(\zeta)=\left[\begin{array}{c}\dot{\zeta}+
TA^{\rm T}(t)\zeta \\\zeta(0) +\zeta(1)\end{array}\right].
$$
If $\zeta \in {\cal C}^1([0,1],{\mathbb R^n})$, then
$$\phi_1(\zeta) = \left[\begin{array}{c}g \\0\end{array} \right]$$
if and only if
$$
\left\langle\left[\begin{array}{c}\zeta\\\zeta(0)\end{array}\right],\left[\begin{array}{c}\dot{h}+TA^{\rm T}(t)h\\ h(0) +h(1)\end{array} \right]\right\rangle = -\left\langle\left[\begin{array}{c}g\\0\end{array}\right],\left[\begin{array}{c}h\\ 0\end{array} \right]\right\rangle,$$
$\forall h \in {{\cal C}}^1([0,1],{\mathbb R^n})$. Furthermore
$$\phi_2(\zeta) = \left[\begin{array}{c}g \\0\end{array} \right]$$
if and only if
$$
\left\langle\left[\begin{array}{c}\zeta\\\zeta(0)\end{array}\right],\left[\begin{array}{c}\dot{h}-TA(t)h\\ h(0) +h(1)\end{array} \right]\right\rangle = -\left\langle\left[\begin{array}{c}g\\0\end{array}\right],\left[\begin{array}{c}h\\ 0\end{array} \right]\right\rangle,$$
$\forall h \in {{\cal C}}^1([0,1],{\mathbb R^n})$.\end{proposition}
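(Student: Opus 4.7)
The plan is to mirror the proof of Proposition~\ref{Proposition4} almost verbatim, since the only structural change is that the periodic boundary relation $\zeta(0)-\zeta(1)=0$ has been replaced by the anti-periodic one $\zeta(0)+\zeta(1)=0$. A single sign flip propagates through the integration-by-parts identity in the natural way, and the adjoint structure of the pairing is preserved.

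For the forward direction I would assume $\phi_1(\zeta)=[g,0]^{\mathrm T}$ and, for an arbitrary test function $h\in{\cal C}^1([0,1],\mathbb{R}^n)$, pair the ODE $\dot\zeta-TA(t)\zeta=g$ with $h$ and integrate over $[0,1]$. Integration by parts in $\int_0^1 h^{\mathrm T}\dot\zeta\,dt$ produces the boundary contribution $h^{\mathrm T}(1)\zeta(1)-h^{\mathrm T}(0)\zeta(0)$, which, using $\zeta(1)=-\zeta(0)$, rearranges so that after moving the derivative onto $h$ one obtains in one line exactly
\[
\int_0^1\zeta^{\mathrm T}(\dot h+TA^{\mathrm T}h)\,dt+\zeta^{\mathrm T}(0)(h(0)+h(1))=-\int_0^1 g^{\mathrm T}h\,dt,
\]
which is the claimed identity.

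For the reverse direction I would extract the two ingredients (the ODE and the anti-periodic boundary condition) in two stages by exploiting the freedom in $h$. First, restricting to test functions $h$ that vanish at $t=1$ kills the boundary contribution, and the identity collapses to a pure volume integral of the form $\int_0^1(\dot\zeta-TA\zeta-g)^{\mathrm T}h\,dt=0$; the fundamental lemma of the calculus of variations then forces $\dot\zeta-TA\zeta=g$ on $(0,1)$, and hence on all of $[0,1]$ by continuity of $\dot\zeta$. With this ODE now in hand, repeating the integration-by-parts reduction for arbitrary $h$ cancels the volume term, leaving $(\zeta(0)+\zeta(1))^{\mathrm T}h(1)=0$ for every $h$; choosing, e.g., $h(t)=tv$ with arbitrary $v\in\mathbb{R}^n$ yields $\zeta(0)+\zeta(1)=0$. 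The second assertion, concerning $\phi_2$, follows from the same computation with the roles of $A$ and $A^{\mathrm T}$ interchanged in the integration by parts.

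I do not foresee any substantive obstacle: this is a routine duality argument, and the only mild care required is in the reverse direction, where one must separate the differential information from the boundary information by choosing test functions whose interior behavior and endpoint values can be varied independently. Constant or affine test functions handle the boundary extraction immediately once the ODE has been established via compactly-supported test functions.
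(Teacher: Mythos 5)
Your proof is correct and follows essentially the same route as the paper: the forward direction is the identical integration-by-parts computation, and your two-stage reverse argument (first localizing with test functions vanishing at $t=1$ to recover the ODE, then varying $h(1)$ to extract the anti-periodic boundary condition) is exactly the scheme the paper spells out for Proposition~\ref{Proposition1} and then declares ``similar'' here. No gaps.
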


\begin{proof}
Suppose that $\dot{\zeta}(t)-TA(t)\zeta(t)=g(t)$ and $\zeta(0)+\zeta(1)=0$.
For all $h \in {{\cal C}}^1([0,1],{\mathbb R^n})$ we have
$$
\begin{array}{ll}
 & \int_0^1h^{\rm T}(t)\dot{\zeta}(t)dt-\int_0^1Th^{\rm T}(t)A(t)\zeta(t)dt=\int_0^1h^{\rm T}(t)g(t)dt \\
\Rightarrow & h^{\rm T}(t)\zeta(t)|_0^1-\int_0^1\dot{h}^{\rm T}(t)\zeta(t)dt-
\int_0^1Th^{\rm T}(t)A(t)\zeta(t)dt=\int_0^1h^{\rm T}(t)g(t)dt\\
\Rightarrow & h^{\rm T}(1)\zeta(1)-h^{\rm T}(0)\zeta(0)-\int_0^1(\dot{h}(t)+
TA^{\rm T}(t)h(t))^{\rm T}\zeta(t)dt=\int_0^1h^{\rm T}(t)g(t)dt\\
\Rightarrow & \int_0^1\zeta^{\rm T}(t)(\dot{h}(t)+TA^{\rm T}(t)h(t))dt +\zeta^{\rm T}(0)(h(0)+h(1))=-\int_0^1g^{\rm T}(t)h(t)dt\\
\Rightarrow & \left\langle\left[\begin{array}{c}\zeta\\\zeta(0)\end{array}\right],\left[\begin{array}{c}\dot{h}+TA^{\rm T}(t)h\\ h(0) +h(1)\end{array} \right]\right\rangle = -\left\langle\left[\begin{array}{c}g\\0\end{array}\right],\left[\begin{array}{c}h\\ 0\end{array} \right]\right\rangle.
\end{array}
$$
The proofs of the reverse implication and the second assertion are similar.
\qquad\end{proof}

\bibliographystyle{abbrv}
\bibliography{LC2NF}

\begin{thebibliography}{10}

\bibitem{capd}
{CAPD}: Computer assisted proofs in dynamics.

\bibitem{tides}
{TIDES}: A taylor integrator for differential equations.

\bibitem{Ar:83}
{\mbox{V}}.~I. Arnol'd.
\newblock {\em Geometrical Methods in the Theory of Ordinary Differential
  Equations}.
\newblock Springer-Verlag, New York, 1983.

\bibitem{BVPbook:95}
U.~M. Ascher, R.~M.~M. Mattheij, and R.~D. Russell.
\newblock {\em Numerical solution of boundary value problems for ordinary
  differential equations}, volume~13 of {\em Classics in Applied Mathematics}.
\newblock Society for Industrial and Applied Mathematics (SIAM), Philadelphia,
  PA, 1995.
\newblock Corrected reprint of the 1988 original.

\bibitem{barrio:06}
R.~Barrio.
\newblock Sensitivity analysis of ode's/dae's using the taylor series method.
\newblock {\em SIAM J. Sci. Comput.}, 27(6):1929--1947, 2006.

\bibitem{HB:2002}
W.~Beyn, A.~Champneys, E.~J. Doedel, W.~Govaerts, {\mbox{Yu}}.~A. Kuznetsov,
  and B.~Sandstede.
\newblock Numerical continuation and computation of normal forms.
\newblock In B.~Fiedler, editor, {\em Handbook of Dynamical Systems}, volume~2,
  pages 149--219. American Mathematical Society, Burlington, MA, 3rd edition,
  2002.

\bibitem{ChWa:86}
S.-N. Chow and D.~Wang.
\newblock Normal forms of bifurcating periodic orbits.
\newblock In {\em Multiparameter bifurcation theory ({A}rcata, {C}alif.,
  1985)}, volume~56 of {\em Contemp. Math.}, pages 9--18. Amer. Math. Soc.,
  Providence, RI, 1986.

\bibitem{DoTa:06}
H.~Dankowicz and G.~Thakur.
\newblock A {N}ewton method for locating invariant tori of maps.
\newblock {\em Internat. J. Bifur. Chaos Appl. Sci. Engrg.}, 16(5):1491--1503,
  2006.

\bibitem{BoSw:73}
C.~{De Boor} and B.~Swartz.
\newblock Collocation at gaussian points.
\newblock {\em SIAM J. Numer. Anal.}, 10(4):582--606, 1973.

\bibitem{MATCONT}
A.~Dhooge, W.~Govaerts, and {\mbox{Yu}}.~A. Kuznetsov.
\newblock {MATCONT}: {A} {MATLAB} package for numerical bifurcation analysis of
  {ODE}s.
\newblock {\em ACM Trans. Math. Software}, 29(2):141--164, 2003.

\bibitem{sac2003}
A.~Dhooge, W.~Govaerts, {\mbox{Yu}}.~A. Kuznetsov, W.~Mestrom, and A.~M. Riet.
\newblock {CL\_matcont}: {A} continuation toolbox in {Matlab}.
\newblock In {\em Symposium on Applied Computing}, Melbourne, Florida, 2003.
  ACM.

\bibitem{Di:91}
L.~Dieci, J.~Lorenz, and R.~D. Russell.
\newblock Numerical calculation of invariant tori.
\newblock {\em SIAM J. Sci. Statist. Comput.}, 12(3):607--647, 1991.

\bibitem{DoKeKe:91b}
E.~Doedel, H.~B. Keller, and J.~P. Kernevez.
\newblock Numerical analysis and control of bifurcation problems ({II}):
  {Bifurcation} in infinite dimensions.
\newblock {\em Int. J. Bifurcation and Chaos}, 1(4):745--772, 1991.

\bibitem{AUTO97}
E.~J. Doedel, A.~R. Champneys, T.~F. Fairgrieve, {\mbox{Yu}}.~A. Kuznetsov,
  B.~Sandstede, and X.~J. Wang.
\newblock {\sc auto97}: Continuation and bifurcation software for ordinary
  differential equations (with {H}om{C}ont), 1997.

\bibitem{DoGoKu}
E.~J. Doedel, W.~Govaerts, and {\mbox{Yu}}.~A. Kuznetsov.
\newblock Computation of periodic solution bifurcations in {ODEs} using
  bordered systems.
\newblock {\em SIAM J. Numer. Anal.}, 41(2):401--435.

\bibitem{ElIoTi:87}
C.~Elphick, G.~Iooss, and E.~Tirapegui.
\newblock Normal form reduction for time-periodically driven differential
  equations.
\newblock {\em Phys. Lett. A}, 120(9):459--463, 1987.

\bibitem{Elf:87}
C.~Elphick, E.~Tirapegui, M.~E. Brachet, P.~Coullet, and G.~Iooss.
\newblock A simple global characterization for normal forms of singular vector
  fields.
\newblock {\em Phys. D}, 29(1-2):95--127, 1987.

\bibitem{GoGhKuMe:07}
W.~Govaerts, R.~{Khoshsiar Ghaziani}, {\mbox{Yu}}.~A. Kuznetsov, and H.~G.~E.
  Meijer.
\newblock Numerical methods for two-parameter local bifurcation analysis of
  maps.
\newblock {\em SIAM J. Sci. Comput.}, 29(6):2644--2667, 2007.

\bibitem{ADOL-C}
A.~Griewank, D.~Juedes, and J.~Utke.
\newblock Algorithm 755: {ADOL-C}: {A} package for the automatic
  differentiation of algorithms written in {C/{C}++}.
\newblock {\em ACM Trans. Math. Software}, 22(2):131--167, 1996.

\bibitem{GuHo:83}
J.~Guckenheimer and P.~Holmes.
\newblock {\em Nonlinear Oscillations, Dynamical Systems and Bifurcations of
  Vector Fields}.
\newblock Springer-Verlag, New York, 1983.

\bibitem{GuMe:2000}
J.~Guckenheimer and B.~Meloon.
\newblock Computing periodic orbits and their bifurcations with automatic
  differentiation.
\newblock {\em SIAM J. Sci. Comput.}, 22(3):951--985, 2000.

\bibitem{Io:79}
G.~Iooss.
\newblock {\em Bifurcation of maps and applications}, volume~36 of {\em
  North-Holland Mathematics Studies}.
\newblock North-Holland Pub. Co., Amsterdam, 1979.

\bibitem{Io:88}
G.~Iooss.
\newblock Global characterization of the normal form for a vector field near a
  closed orbit.
\newblock {\em J. Differential Equations}, 76(1):47--76, 1988.

\bibitem{IoAd:92}
G.~Iooss and M.~Adelmeyer.
\newblock {\em Topics in bifurcation theory and applications}, volume~3 of {\em
  Advanced Series in Nonlinear Dynamics}.
\newblock World Sci. Pub. Co. Inc., River Edge, New York, 1992.

\bibitem{IoJo:80}
G.~Iooss and D.~D. Joseph.
\newblock {\em Elementary stability and bifurcation theory}.
\newblock Springer-Verlag, New York, 1990.

\bibitem{Ke:85}
I.~G. Kevrekidis, R.~Aris, L.~D. Schmidt, and S.~Pelikan.
\newblock Numerical computation of invariant circles of maps.
\newblock {\em Phys. D}, 16(2):243--251, 1985.

\bibitem{Kr:94}
B.~Krauskopf.
\newblock Bifurcation sequences at {$1:4$} resonance: an inventory.
\newblock {\em Nonlinearity}, 7(3):1073--1091, 1994.

\bibitem{Ku:99}
{\mbox{Yu}}.~A. Kuznetsov.
\newblock Numerical normalization techniques for all codim {$2$} bifurcations
  of equilibria in {ODE}s.
\newblock {\em SIAM J. Numer. Anal.}, 36(4):1104--1124, 1999.

\bibitem{Ku:2004}
{\mbox{Yu}}.~A. Kuznetsov.
\newblock {\em Elements of Applied Bifurcation Theory}.
\newblock Springer-Verlag, New York, 2004.
\newblock 3rd~ed.

\bibitem{KuDoGoDh:05}
{\mbox{Yu}}.~A. Kuznetsov, W.~Govaerts, E.~J. Doedel, and A.~Dhooge.
\newblock Numerical periodic normalization for codim 1 bifurcations of limit
  cycles.
\newblock {\em SIAM J. Numer. Anal.}, 43(4):1407--1435, 2005.

\bibitem{CONTENT}
{\mbox{Yu}}.~A. Kuznetsov and V.~V. Levitin.
\newblock Content: A multiplatform environment for analyzing dynamical systems,
  1997.

\bibitem{KuMe:2005}
{\mbox{Yu}}.~A. Kuznetsov and H.~G.~E. Meijer.
\newblock Numerical normal forms for codim 2 bifurcations of fixed points with
  at most two critical eigenvalues.
\newblock {\em SIAM J. Sci. Comput.}, 26(6):1932--1954, 2005.

\bibitem{KuMeVe:2004}
{\mbox{Yu}}.~A. Kuznetsov, H.~G.~E. Meijer, and L.~van Veen.
\newblock The fold-flip bifurcation.
\newblock {\em Int. J. Bifurcation and Chaos}, 14(7):2253--2282, 2004.

\bibitem{KuMuRi:91}
{\mbox{Yu}}.~A. Kuznetsov, S.~Muratori, and S.~Rinaldi.
\newblock Bifurcations and chaos in a periodic predator-prey model.
\newblock {\em Internat. J. Bifur. Chaos Appl. Sci. Engrg.}, 2(1):117--128,
  1992.

\bibitem{LaCv:06}
Y.~Lan, C.~Chandre, and P.~Cvitanovic.
\newblock Newton's descent method for the determination of invariant tori.
\newblock {\em Phys. Rev. E}, 74(4, Part 2), 2006.

\bibitem{Lo:84}
E.~N. Lorenz.
\newblock Irregularity: a fundamental property of the atmosphere.
\newblock {\em Tellus}, 36(A):98--110, 1984.

\bibitem{Br:03}
B.~Rasmussen.
\newblock {\em Numerical methods for the continuation of invariant tori}.
\newblock ProQuest LLC, Ann Arbor, MI, 2003.
\newblock Thesis (Ph.D.)--Georgia Institute of Technology.

\bibitem{RaBr:08}
B.~Rasmussen and L.~Dieci.
\newblock A geometrical method for the approximation of invariant tori.
\newblock {\em J. Comput. Appl. Math.}, 216(2):388--412, 2008.

\bibitem{Sc:05}
F.~Schilder, H.~M. Osinga, and W.~Vogt.
\newblock Continuation of quasi-periodic invariant tori.
\newblock {\em SIAM J. Appl. Dyn. Syst.}, 4(3):459--488 (electronic), 2005.

\bibitem{Sh:95}
A.~Shilnikov, G.~Nicolis, and C.~Nicolis.
\newblock Bifurcation and predictability analysis of a low-order atmospheric
  circulation model.
\newblock {\em Int. J. Bifurcation and Chaos}, 5(6):1701--1711, 1995.

\bibitem{StLa:91}
C.~G. Steinmetz and R.~Larter.
\newblock The quasiperiodic route to chaos in a model of the peroxidase-oxidase
  reaction.
\newblock {\em J. Chem. Phys.}, 94(2):1388--1396, 1991.

\bibitem{ToGeTe:98}
G.~Torrini, R.~Genesio, and A.~Tesi.
\newblock On the computation of characteristic multipliers for predicting limit
  cycle bifurcations.
\newblock {\em Chaos Solitons Fractals}, 9(1-2):121--133, 1998.
\newblock Applications of chaos (Inow{\l}odz, 1996).

\end{thebibliography}

\end{document}